\newtheorem{thm}{Theorem}[section]
\newtheorem{lemma}[thm]{Lemma}
\newtheorem{prop}[thm]{Proposition}
\newtheorem{cor}[thm]{Corollary}
\newtheorem{thmx}{Theorem}
\newtheorem{assum}[thmx]{Assumption}
\newtheorem{rmkx}[thmx]{Remark}
\theoremstyle{definition}
\newtheorem{rmk}[thm]{Remark}
\newtheorem{defn}[thm]{Definition}
\newcommand{\ga}{\gamma}
\newcommand{\ep}{\epsilon}
\newcommand{\ka}{\kappa}
\newcommand{\vep}{\varepsilon}
\newcommand{\vph}{\varphi}
\newcommand{\vrh}{\varrho}
\newcommand{\vsi}{\varsigma}
\newcommand{\vth}{\vartheta}
\newcommand{\pa}{\partial}
\newcommand{\N}{\mathbb{N}}
\newcommand{\R}{\mathbb{R}}
\renewcommand{\S}{\mathbb{S}}
\newcommand{\mca}{\mathcal{A}}
\newcommand{\mcb}{\mathcal{B}}
\newcommand{\mcl}{\mathcal{L}}
\newcommand{\mcm}{\mathcal{M}}
\newcommand{\mco}{\mathcal{O}}
\newcommand{\mcq}{\mathcal{Q}}
\newcommand{\mcr}{\mathcal{R}}
\newcommand{\mcu}{\mathcal{U}}
\newcommand{\mcv}{\mathcal{V}}
\newcommand{\mcz}{\mathcal{Z}}
\newcommand{\mfa}{\mathfrak{a}}
\newcommand{\mfb}{\mathfrak{b}}
\newcommand{\mfc}{\mathfrak{c}}
\newcommand{\mfd}{\mathfrak{d}}
\newcommand{\mfp}{\mathfrak{p}}
\newcommand{\mfq}{\mathfrak{q}}
\newcommand{\mtc}{\mathtt{c}}
\newcommand{\msr}{\mathscr{R}}
\newcommand{\bmu}{\bar{\mu}}
\newcommand{\hrh}{\hat{\rho}}
\newcommand{\tc}{\tilde{c}}
\newcommand{\tih}{\tilde{h}}
\newcommand{\tw}{\tilde{w}}
\newcommand{\tde}{\tilde{\delta}}
\newcommand{\tmu}{\tilde{\mu}}
\newcommand{\trh}{\tilde{\rho}}
\newcommand{\txi}{\tilde{\xi}}
\newcommand{\tvrh}{\tilde{\varrho}}
\newcommand{\tvph}{\tilde{\varphi}}
\newcommand{\wtc}{\widetilde{C}}
\newcommand{\loc}{\textnormal{loc}}
\newcommand{\I}{\textnormal{I}}
\newcommand{\II}{\textnormal{II}}
\newcommand{\III}{\textnormal{III}}
\newcommand{\supp}{\textnormal{supp}}
\renewcommand{\(}{\left(}
\renewcommand{\)}{\right)}
\newcommand{\la}{\left\langle}
\newcommand{\ra}{\right\rangle}
\newcommand{\bs}[1]{\boldsymbol{#1}}
\numberwithin{equation}{section}
\begin{document}
\title{Sharp quantitative stability of the Yamabe problem}

\author{Haixia Chen}
\address[Haixia Chen]{Department of Mathematics and Research Institute for Natural Sciences, College of Natural Sciences, Hanyang University, 222 Wangsimni-ro Seongdong-gu, Seoul 04763, Republic of Korea}
\email{hxchen29@hanyang.ac.kr chenhaixia157@gmail.com}

\author{Seunghyeok Kim}
\address[Seunghyeok Kim]{Department of Mathematics and Research Institute for Natural Sciences, College of Natural Sciences, Hanyang University, 222 Wangsimni-ro Seongdong-gu, Seoul 04763, Republic of Korea,
School of Mathematics, Korea Institute for Advanced Study, 85 Hoegiro Dongdaemun-gu, Seoul 02455, Republic of Korea.}
\email{shkim0401@hanyang.ac.kr shkim0401@gmail.com}

\begin{abstract}
Given a smooth closed Riemannian manifold $(M,g)$ of dimension $N \ge 3$, we derive sharp quantitative stability estimates for nonnegative functions near the solution set of the Yamabe problem on $(M,g)$.
The seminal work of Struwe (1984) \cite{S} states that if $\Gamma(u) := \|\Delta_g u - \frac{N-2}{4(N-1)} R_g u + u^{\frac{N+2}{N-2}}\|_{H^{-1}(M)} \to 0$, then $\|u-(u_0+\sum_{i=1}^{\nu} \mcv_i)\|_{H^1(M)} \to 0$
where $u_0$ is a solution to the Yamabe problem on $(M,g)$, $\nu \in \N \cup \{0\}$, and $\mcv_i$ is a bubble-like function.
If $M$ is the round sphere $\S^N$, then one may take $u_0 \equiv 0$ and a bubble itself serves as a natural candidate for $\mcv_i$.
If $M$ is not conformally equivalent to $\S^N$, then either $u_0 > 0$ or $u_0 \equiv 0$, there is no canonical choice of $\mcv_i$, and so a careful selection of $\mcv_i$ must be made to attain optimal estimates.

For $3 \le N \le 5$, we construct suitable $\mcv_i$'s and then establish the inequality $\|u-(u_0+\sum_{i=1}^{\nu} \mcv_i)\|_{H^1(M)}$ $ \le C\zeta(\Gamma(u))$
where $C > 0$ and $\zeta(t) = t$, consistent with the result of Figalli and Glaudo (2020) \cite{FG} on $\S^N$.
In the case of $N \ge 6$, we investigate the single-bubbling phenomenon $(\nu = 1)$ on generic Riemannian manifolds $(M,g)$,
proving that $\zeta(t)$ is determined by $N$, $u_0$, and $g$, and can be much larger than $t$.
This exhibits a striking difference from the result of Ciraolo, Figalli, and Maggi (2018) \cite{CFM} on $\S^N$. All of the estimates presented herein are optimal.
\end{abstract}

\date{\today}
\subjclass[2020]{Primary: 35A23, 26D10, Secondary: 35B35, 58J60}
\keywords{Quantitative stability, optimal estimates, Yamabe problem, Struwe's global compactness result, bubble-like functions}
\maketitle

\section{Introduction}
\subsection{Motivations}\label{subsec:mot}
Throughout the paper, we always assume that $(M,g)$ is a smooth closed Riemannian manifold of dimension $N \ge 3$.

\medskip
The Yamabe problem is one of the classical problems in geometric analysis, which asks the existence of a metric on $M$ with a constant scalar curvature in the conformal class $[g]$ of $g$.
This problem is equivalent to searching for a positive solution $u$ on $M$ to the Yamabe equation
\begin{equation}\label{u00}
-\Delta_gu + \ka_NR_gu = cu^{2^*-1}, \quad u \ge 0 \quad \text{on } M
\end{equation}
where $\ka_N := \frac{N-2}{4(N-1)}$, $2^* := \frac{2N}{N-2}$, $R_g$ is the scalar curvature on $(M,g)$, and $c \in \R$ is a constant.
The linear operator $\mcl_g := -\Delta_g+\ka_NR_g$ is called the conformal Laplacian on $(M,g)$.

\medskip
Since the existence of a positive least energy solution to \eqref{u00} was established through a series of works of Yamabe \cite{Ya}, Trudinger \cite{Tr}, Aubin \cite{Au}, and Schoen \cite{Sc} (see also Lee and Parker \cite{LP}), researchers have attempted to comprehend the whole solution structure of \eqref{u00}.
To describe it, let us define the Yamabe quotient $Q_{(M,g)}$ and the Yamabe invariant $Y(M,[g])$ of $(M,g)$ by
\[Q_{(M,g)}(u) = \frac{\ka_N^{-1} \int_M u \mcl_g u\, dv_g}{\(\int_M u^{2^*} dv_g\)^{\frac{N-2}{N}}}
= \frac{\int_M R_h dv_h}{\(\int_M dv_h\)^{\frac{N-2}{N}}} \quad \text{for } h = u^{\frac{4}{N-2}}g \in [g],\ 0 < u \in C^{\infty}(M)\]
where $dv_g$ is the volume form on $(M,g)$ and
\begin{equation}\label{eq:Yinv}
Y(M,[g]) = \inf\left\{Q_{(M,g)}(u): 0 < u \in C^{\infty}(M)\right\}.
\end{equation}
The constant $c$ in \eqref{u00} is positive (negative, respectively) if and only if $Y(M,[g])$ is positive (negative, resp.).
If $Y(M,[g]) < 0$, it is easy to see that \eqref{u00} has a unique solution. When $Y(M,[g]) = 0$, \eqref{u00} reduces to a linear equation and solutions are unique up to a constant multiple.
Therefore, the only case of significant interest is $Y(M,[g]) > 0$, in which case a number of high-energy solutions may exist.
For example, if $M = \S^1(r)\times \S^{N-1}$ where $\S^1(r)$ is the circle of radius $r > 0$ equipped with the standard metric and $\S^{N-1}$ is the $(N-1)$-dimensional round sphere of radius $1$,
then the number of inequivalent solutions is one if $r$ is small, is non-decreasing in $r$, and tends to $\infty$ as $r \to \infty$; refer to \cite{Sc2}.
Using gluing techniques, Pollack \cite{Po} showed that for any manifold $M$ with positive scalar curvature and $n \in \N$,
there is a dense set (in the $C^0$-topology) of the positive conformal classes for which \eqref{u00} has more than $n$ inequivalent solutions.

As a means to understand the entire solution set of \eqref{u00}, Schoen \cite{Sc3} asked whether the set is compact in $C^2(M)$ provided $M$ is not conformally equivalent to $\S^N$. He also suggested a general strategy to answer this question.
Based on his idea, Khuri, Marques, and Schoen \cite{KMS} proved that the $C^2(M)$-compactness holds for $N \le 24$ under the validity of the positive mass theorem. See also Druet \cite{Dr} and Li and Zhang \cite{LZ, LZ2} for the preceding results.
Surprisingly, counterexamples exist for $N \ge 25$ as shown by Brendle \cite{Br2} and Brendle and Marques \cite{BM}, which illustrates a deep and mysterious behavior of the solution set of \eqref{u00}.

Researchers also studied whether the compactness is preserved under a perturbation of equation \eqref{u00}. Because the literature on this topic is so vast, we mention a few initial results only.
The first result in this direction was achieved by Druet \cite{Dr2}.
By applying ``the $C^0$-theory for blow-up" developed in \cite{DHR}, he deduced the $C^2(M)$-compactness of positive solutions $\{u_{\ep}\}_{\ep \in \R}$ (for $\ep \in \R$ small) to critical equations with $3 \le N \le 5$,
\begin{equation}\label{eq:pert}
-\Delta_gu_{\ep} + h_{\ep}u_{\ep} = u_{\ep}^{2^*-1} \quad \text{on } M \quad \text{where } h_{\ep} \to h_0 \text{ in } C^2(M) \text{ as } \ep \to 0
\end{equation}
under a certain pointwise condition on the function $h_0-\ka_NR_g$ on $M$.
On the other hand, by perturbing \eqref{u00} suitably (e.g. letting $h_{\ep} = \ka_NR_g+\ep$ in \eqref{eq:pert} for $\ep \in \R$ small),
one can make the perturbed equation admit one of the following types of blowing-up solutions; solutions with single or multiple blowing-up points \cite{EPV, RV3}, the bubble clusters \cite{PV, RV2}, and the bubble towers \cite{MPV, Pr}.
In view of Struwe's global compactness result \cite{S} depicted in Theorem \ref{thm:Struwe} below and Schoen's strategy in \cite{Sc3}, these solutions represent essentially all the possible blow-up scenarios.

\medskip
There is yet another approach to studying the solution structure of \eqref{u00}, which is closely related to the aforementioned ones and the main topic of this paper.
We will derive a quantitative version of Theorem \ref{thm:Struwe} for general smooth closed Riemannian manifolds $(M,g)$.

For the moment, we assume that $M = \S^N$, in which case the quantitative analysis of Theorem \ref{thm:Struwe} was completed in the recent works \cite{CFM, FG, DSW}.
The inverse stereographic projection, a conformal map from $\R^N$ to $\S^N$, allows us to work on the Euclidean space $\R^N$ instead.
Struwe \cite{S} proved that if $u$ is a nonnegative element of the homogeneous Sobolev space $\dot{H}^1(\R^N)$,
then $u$ tends to a finite sum of weakly interacting bubbles in the $\dot{H}^1(\R^N)$-sense as $\Gamma(u) := \|\Delta u+u^{2^*-1}\|_{\dot{H}^{-1}(\R^N)} \to 0$. Here, a bubble refers to a function of the form
\begin{equation}\label{eq:bubble}
U_{\delta,\sigma}(y) = \alpha_N \(\frac{\delta}{\delta^2+|y-\sigma|^2}\)^{\frac{N-2}{2}} \quad \text{for } y \in \R^N, \quad \alpha_N := (N(N-2))^{\frac{N-2}{4}}
\end{equation}
where $\delta > 0$ and $\sigma \in \R^N$. It is well-known that the set of all positive solutions to the Yamabe equation in $\R^N$
\[-\Delta u = u^{2^*-1}, \quad u \ge 0 \quad \text{in } \R^N\]
is precisely the set $\{U_{\delta,\sigma}: \delta > 0,\, \sigma \in \R^N\}$ and any nonzero constant multiple of $U_{\delta,\sigma}$ attains the sharp Sobolev constant
\begin{equation}\label{eq:S}
S := \inf\left\{\frac{\|u\|_{\dot{H}^1(\R^N)}}{\|u\|_{L^{2^*}(\R^N)}}: u \in \dot{H}^1(\R^N) \setminus \{0\}\right\} \quad \text{where } \|u\|_{\dot{H}^1(\R^N)} := \|\nabla u\|_{L^2(\R^N)}.
\end{equation}
In \cite{CFM}, Ciraolo, Figalli, and Maggi derived the first sharp quantitative estimate of Struwe's result for $N \ge 3$:
For a nonnegative function $u$ in $\dot{H}^1(\R^N)$ with $\frac{1}{2}S^N \le \|u\|_{\dot{H}^1(\R^N)}^2 \le \frac{3}{2}S^N$ and sufficiently small $\Gamma(u)$,
it holds that $\|u-U_1\|_{\dot{H}^1(\R^N)} \le C\Gamma(u)$ for some bubble $U_1$.
If $\|u\|_{\dot{H}^1(\R^N)}^2 \ge \frac{3}{2}S^N$, the number of the bubbles associated to $u$ is at least two and delicate interactions between different bubbles occur, resulting in astonishing dimensional dependent estimates:
Suppose that $2 \le \nu \in \N$ and $u$ is a nonnegative element in $\dot{H}^1(\R^N)$ with $(\nu-\frac{1}{2})S^N \le \|u\|_{\dot{H}^1(\R^N)}^2 \le (\nu+\frac{1}{2})S^N$ and sufficiently small $\Gamma(u)$.
Then there is a constant $C > 0$ depending only on $N$ and $\nu$ such that
\[\bigg\|u-\sum_{i=1}^{\nu} U_i\bigg\|_{\dot{H}^1(\R^N)} \le C\begin{cases}
\Gamma(u) &\text{if } 3 \le N \le 5 \text{ (by Figalli and Glaudo \cite{FG})},\\
\Gamma(u)|\log \Gamma(u)|^{\frac{1}{2}} &\text{if } N=6 \text{ (by Deng, Sun, and Wei \cite{DSW})},\\
\Gamma(u)^{\frac{N+2}{2(N-2)}} &\text{if } N \ge 7 \text{ (by Deng, Sun, and Wei \cite{DSW})}
\end{cases}\]
for some bubbles $U_1,\ldots,U_{\nu}$. Also, this inequality is optimal.

In this paper, we carry out the above type of analysis on smooth closed Riemannian manifolds $(M,g)$ that are not conformally equivalent to $\S^N$.
We examine when $3 \le N \le 5$ and an arbitrary number of bubbles may develop, or $N \ge 6$ and only single bubble develops. For $M = \S^N$, our study corresponds to that of \cite{CFM, FG}.
As we will discuss further in the rest of the introduction, our general setting requires a variety of new perspectives, ideas, and techniques.
One of our notable discoveries is that the sharp quantitative estimate depends on $N$ even for the single-bubbling case.
In fact, it also relies on the metric $g$ and a solution $u_0$ to \eqref{u00}, which makes the problem quite intricate.

\subsection{Global compactness result}
Let us remind the global compactness result of Struwe \cite{S} combined with the interaction estimate of Bahri and Coron \cite{BC}.
Although the original statement is formulated for a smooth bounded domain in $\R^N$, it readily extends to any smooth closed Riemannian manifold $(M,g)$; refer to \cite{Br,H,DT}.

\medskip
Let $r_0 > 0$ be a sufficiently small number, particularly much smaller than the injectivity radius of $(M,g)$, and $\chi \in C^{\infty}_c([0,\infty))$ a cut-off function such that
\begin{equation}\label{chi0}
0 \le \chi \le 1 \quad \text{on } [0,\infty), \quad \chi = 1 \quad \text{on } [0,\tfrac{r_0}{2}], \quad \text{and} \quad \chi = 0 \quad \text{on } [r_0,\infty).
\end{equation}
Given $(\delta,\xi) \in (0,\infty) \times M$, we define
\begin{equation}\label{udx}
\mcu_{\delta,\xi}(x) = \mcu_{\delta,\xi}^g(x) = U_{\delta,0}\big(d_g(x,\xi)\big) \quad \text{for } x \in M
\end{equation}
where $d_g(x,\xi)$ is the geodesic distance between $x$ and $\xi$ on $(M,g)$ and we abused the notation by writing $U_{\delta,0}(y) = U_{\delta,0}(|y|)$. Then we have the following result.
\begin{thmx}\label{thm:Struwe}
Assume that $(M,g)$ is a smooth closed Riemannian manifold of dimension $N \ge 3$ with positive Yamabe invariant so that \eqref{u00} with $c=1$ has a positive solution.
Let $\ka_N = \frac{N-2}{4(N-1)}$, $2^* = \frac{2N}{N-2}$, $\mcl_g = -\Delta_g+\ka_NR_g$ be the conformal Laplacian on $(M,g)$, $H^1(M)$ the Sobolev space endowed with the norm
\begin{equation}\label{eq:norm}
\|u\|_{H^1(M)} := \left[\int_M \(|\nabla_g u|_g^2+\ka_NR_gu^2\) dv_g\right]^{\frac{1}{2}},
\end{equation}
and $H^{-1}(M)$ its dual.

Let $\{u_n\}_{n \in \N}$ be a sequence of nonnegative functions in $H^1(M)$ such that
\[\|u_n\|_{H^1(M)} \le C_0 \quad \text{and} \quad \left\|\mcl_g u_n-u_n^{2^*-1}\right\|_{H^{-1}(M)} \to 0 \text{ as } n \to \infty\]
for some constant $C_0 > 0$. After passing to a subsequence if necessary, one can find a nonnegative function $u_0 \in C^{\infty}(M)$, a number $\nu \in \N \cup \{0\}$ satisfying $\nu \le C_0^2S^{-N}$,
and a sequence $\{(\delta_{1n},\ldots,\delta_{\nu n}, \xi_{1n},\ldots,\xi_{\nu n})\}_{n \in \N} \subset (0,\infty)^{\nu} \times M^{\nu}$ such that the followings hold:
\begin{itemize}
\item[-] $u_0$ is a smooth solution to the Yamabe equation \eqref{u00} with $c=1$. By the strong maximum principle, we have either $u_0 > 0$ or $u_0 = 0$ on $M$.
\item[-] For all $1 \le i \ne j \le \nu$, we have that $\delta_{in} \to 0$ and
\begin{equation}\label{eq:inter}
\frac{\delta_{in}}{\delta_{jn}} + \frac{\delta_{jn}}{\delta_{in}} + \frac{d_g(\xi_{in},\xi_{jn})^2}{\delta_{in}\delta_{jn}}\to \infty \quad \text{as } n \to \infty.
\end{equation}
\item[-] It holds that
\begin{equation}\label{eq:inter2}
\bigg\|u_n-\bigg(u_0+\sum_{i=1}^{\nu}\mcv_{in}\bigg)\bigg\|_{H^1(M)} \to 0 \quad \text{as } n \to \infty
\end{equation}
where each $\mcv_{in} := \mcv_{\delta_{in},\xi_{in}}$ is a bubble-like function on $M$. Throughout the paper, a bubble-like function refers to a function whose asymptotic profile gets closer to a truncated bubble $\chi(d_g(\cdot,\xi_{in})) \mcu_{\delta_{in},\xi_{in}}$ in $H^1(M)$ as $\delta_{in} \to 0$. In other words,
\begin{equation}\label{eq:bubblel}
\left\|\mcv_{in}-\chi(d_g(\cdot,\xi_{in}))\mcu_{\delta_{in},\xi_{in}}\right\|_{H^1(M)} \to 0 \quad \text{as } n \to \infty \quad \text{for } i = 1,\ldots,\nu.
\end{equation}
\end{itemize}	
\end{thmx}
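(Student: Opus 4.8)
The plan is to run the standard profile-decomposition (bubbling) argument for the bounded Palais--Smale sequence $\{u_n\}$, adapted to the closed manifold $(M,g)$, extracting the bubbles one at a time by an induction on the energy level. First I would use the bound $\sup_n \|u_n\|_{H^1(M)} \le C_0$ to pass to a subsequence with $u_n \rightharpoonup u_0$ weakly in $H^1(M)$, strongly in $L^q(M)$ for every $q < 2^*$ (compact Sobolev embedding), and a.e.\ on $M$. Testing $\mcl_g u_n - u_n^{2^*-1} \to 0$ in $H^{-1}(M)$ against a fixed smooth function and letting $n \to \infty$ shows that $u_0$ is a nonnegative weak, hence (elliptic regularity) smooth, solution of \eqref{u00} with $c=1$, and the strong maximum principle yields the dichotomy $u_0 > 0$ or $u_0 \equiv 0$. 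Since the Yamabe invariant is positive, the quantity in \eqref{eq:norm} is a norm equivalent to the usual one, and testing the equation for $u_0$ against $u_0$ gives $\tfrac{1}{2}\|u_0\|_{H^1(M)}^2 - \tfrac{1}{2^*}\int_M u_0^{2^*}\, dv_g = \tfrac{1}{N}\|u_0\|_{H^1(M)}^2 \ge 0$, a fact I will need for termination.

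Next I would set $w_n := u_n - u_0 \rightharpoonup 0$ in $H^1(M)$. A Brezis--Lieb type expansion of both the quadratic form and the nonlinearity gives $\|w_n\|_{H^1(M)}^2 = \|u_n\|_{H^1(M)}^2 - \|u_0\|_{H^1(M)}^2 + o(1)$ and shows that $\{w_n\}$ is still an approximate Palais--Smale sequence for the energy. If $w_n \to 0$ strongly I stop with $\nu = 0$; otherwise concentration is forced, and by a concentration-function argument in the spirit of Lions and Struwe I would locate points $\xi_{1n} \in M$ and scales $\delta_{1n} \to 0$ so that the rescaled functions $y \mapsto \delta_{1n}^{(N-2)/2} w_n(\exp_{\xi_{1n}}(\delta_{1n} y))$, read in normal coordinates and truncated at scale $r_0$, converge weakly in $\dot{H}^1(\R^N)$ to some $U \not\equiv 0$. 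Because $\delta_{1n} \to 0$, in these coordinates the rescaled metric converges to the Euclidean one and the curvature term $\ka_N R_g$ scales away, so $U$ is a nonnegative weak solution of $-\Delta U = U^{2^*-1}$ on $\R^N$; by the Caffarelli--Gidas--Spruck classification $U = U_{\delta,\sigma}$ for some $\delta > 0$, $\sigma \in \R^N$, and after adjusting $(\delta_{1n},\xi_{1n})$ I may assume $U = U_{1,0}$. I then take $\mcv_{1n}$ to be a bubble-like function realizing this profile, so that \eqref{eq:bubblel} holds with $i=1$ and $\|\mcv_{1n}\|_{H^1(M)}^2 \to S^N$.

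The induction step is to replace $w_n$ by $w_n - \mcv_{1n}$ and repeat. Here I would invoke the weak interaction estimate of Bahri and Coron \cite{BC}: truncated bubbles whose parameters satisfy \eqref{eq:inter} are asymptotically orthogonal in $H^1(M)$, and $\mcl_g \mcv_{in} - \mcv_{in}^{2^*-1} \to 0$ in $H^{-1}(M)$. Using this together with another Brezis--Lieb expansion of the nonlinearity in the weakly interacting regime, I would check that $w_n - \mcv_{1n}$ is again an approximate Palais--Smale sequence and that the energy drops by exactly $S^N$, i.e.\ $\|w_n - \mcv_{1n}\|_{H^1(M)}^2 = \|w_n\|_{H^1(M)}^2 - S^N + o(1)$. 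Iterating, at each stage either the current remainder converges to $0$ strongly in $H^1(M)$ (stop), or one more bubble is extracted at a cost of $S^N + o(1)$ in the norm \eqref{eq:norm}; since the total energy is bounded by $C_0^2$ and $\|u_0\|_{H^1(M)}^2 \ge 0$, this must terminate after $\nu \le C_0^2 S^{-N}$ steps, and the stopping criterion is precisely \eqref{eq:inter2}. The mutual separation \eqref{eq:inter} then comes for free: if two of the extracted profiles had comparable scales and centers closer than $(\delta_{in}\delta_{jn})^{1/2}$, their rescaled weak limits would coincide, contradicting that each was obtained as a nonzero residual profile after the previous one had been removed. (Negative parts of the remainders are handled as usual, converging to $0$ in $H^1(M)$.)

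The step I expect to be the main obstacle is the induction step on $(M,g)$: showing that subtracting the bubble-like function $\mcv_{in}$ preserves the approximate Palais--Smale property and produces an exact drop of $S^N$. This requires controlling, in $H^{-1}(M)$, the quantity $\mcl_g\big(\chi(d_g(\cdot,\xi))\mcu_{\delta,\xi}\big) - \big(\chi(d_g(\cdot,\xi))\mcu_{\delta,\xi}\big)^{2^*-1}$ as $\delta \to 0$ --- accounting for the curvature term $\ka_N R_g$, the error produced by the cut-off $\chi$ at scale $r_0$, and the discrepancy between the geodesic-distance bubble $\mcu_{\delta,\xi}$ and a genuine solution of the flat model equation --- as well as the expansion of the nonlinearity in the presence of several weakly interacting profiles. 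For $M = \S^N$ these estimates are classical; on a general $(M,g)$ they are exactly what the adaptations in \cite{Br,H,DT} supply, and they form the foundation on which the refined, dimension-dependent analysis in the rest of the paper is built.
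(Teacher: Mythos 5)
The paper does not give its own proof of Theorem~\ref{thm:Struwe}; it states that Struwe's bounded-domain result readily extends to closed manifolds and simply cites \cite{S}, the interaction estimate of \cite{BC}, and the manifold adaptations \cite{Br,H,DT}. Your sketch is a faithful reconstruction of the standard iterated profile-decomposition and energy-quantization argument carried out in those references, so it takes essentially the same route as the paper.
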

\noindent The interaction estimate \eqref{eq:inter}, traced back to Bahri and Coron \cite[(5)]{BC}, implies that each bubble-like function $\mcv_{in}$ is less likely to interact with the other bubbles at the $H^1(M)$-level as $n \to \infty$.
A combination of \eqref{eq:inter}, \eqref{eq:inter2}, and \eqref{eq:S} yields
\[\|u_n\|_{H^1(M)}^2 = \|u_0\|_{H^1(M)}^2 + \sum_{i=1}^{\nu} \|U_{\delta_{in},0}\|_{\dot{H}^1(\R^N)}^2 + o(1) = \|u_0\|_{H^1(M)}^2 + \nu S^N + o(1)\]
where $o(1) \to 0$ as $n \to \infty$. This forces the bound $\nu \le C_0^2S^{-N}$.

On the other hand, if $M = \R^N \cup \{\infty\}$ (the one-point compactification of the Euclidean space), then one may take $u_0 \equiv 0$ and a bubble \eqref{eq:bubble} itself serves as a natural candidate for $\mcv_{in}$.
In contrast, if $M$ is not conformally equivalent to $\S^N$, then $u_0$ may be either positive or identically 0, and there is no a canonical choice of $\mcv_{in}$ in general.
Moreover, constructing $\mcv_{in}$ that accurately approximates $u_n$ is essential in achieving sharp quantitative estimates.
By recalling the resolution of the Yamabe problem \cite{Au, Sc, LP}, we will make use of bubbles, cut-off functions, conformal changes of a metric, and the Green's function of $\mcl_g$ to build $\mcv_{in}$'s; see \eqref{ui}, \eqref{vi}, and \eqref{w1}.

\subsection{Main results}
In this paper, we will work on the following setting.
\begin{assum}\label{assum}
Let $(M,g)$ be a smooth closed Riemannian manifold of dimension $N \ge 3$ that is not conformally equivalent to $\S^N$.
We assume that the Yamabe invariant $Y(M,[g])$ is positive so that the definition of the norm in \eqref{eq:norm} makes sense. Suppose that a nonnegative function $u$ in $H^1(M)$ satisfies
\begin{equation}\label{tui}
\bigg\|u-\bigg(u_0+\sum_{i=1}^{\nu} \chi\big(d_g(\cdot,\txi_i)\big)\mcu_{\tde_i,\txi_i}\bigg)\bigg\|_{H^1(M)} \le \vep_0
\end{equation}
for some small $\vep_0 > 0$ and $\nu \in \N$. Here, $u_0$ is a solution to \eqref{u00} with $c=1$, $\mcu_{\delta,\xi}$ is the function in \eqref{udx}, and $(\tde_i,\txi_i) \in (0,\infty) \times M$ satisfies that $\tde_i \le \vep_0$ and
\[\max\left\{\(\frac{\tde_i}{\tde_j} + \frac{\tde_j}{\tde_i} + \frac{d_g(\txi_i,\txi_j)^2}{\tde_i\tde_j}\)^{-\frac{N-2}{2}}: i, j=1,\ldots,\nu,\ i \ne j\right\} \le \vep_0.\]
Let also $\Gamma(u) = \|\mcl_g u-u^{2^*-1}\|_{H^{-1}(M)}$.
\end{assum}
\begin{rmkx}
In Theorem \ref{thm:Struwe}, the above situation happens when $u_n \nrightarrow u_0$ strongly in $H^1(M)$. In Section \ref{se6}, we also treat the simplest case $\nu = 0$.
\end{rmkx}

We now list our main results. First of all, we are concerned with $3 \le N \le 5$. Specifically, we address the case $u_0 > 0$ in Theorem \ref{th1.2} and the situation $u_0 = 0$ in Theorem \ref{th1.3}.
\begin{thm}\label{th1.2}
Suppose that $3 \le N \le 5$ and Assumption \ref{assum} holds with $u_0 > 0$ on $M$. We also assume that $u_0$ is non-degenerate, meaning that the kernel of the operator $\mcl_g-(2^*-1)u_0^{2^*-2}$ on $H^1(M)$ is trivial.
Given $(\delta,\xi) \in (0,\infty) \times M$, we set a nonnegative function $\mcv_{\delta,\xi}$ on $M$ by
\begin{equation}\label{ui}
\mcv_{\delta,\xi}(x) = \chi(d_g(x,\xi))\mcu_{\delta,\xi}(x) + (1-\chi(d_g(x,\xi))) U_{\delta,0}\(\tfrac{r_0}{2}\) \quad \text{for } x \in M.
\end{equation}
Here, $U_{\delta,0}$ is a bubble in \eqref{eq:bubble}, $\chi$ is a cut-off function satisfying \eqref{chi0}, and $r_0 > 0$ is a small number.
After reducing the size of $\vep_0 > 0$ if needed, one can find $\nu$ functions $\mcv_1 := \mcv_{\delta_1,\xi_1}, \ldots, \mcv_{\nu} := \mcv_{\delta_{\nu},\xi_{\nu}}$ such that
\begin{equation}\label{eq:sqe1}
\bigg\|u-\bigg(u_0+\sum_{i=1}^{\nu} \mcv_i\bigg)\bigg\|_{H^1(M)} \le C\Gamma(u).
\end{equation}
Here, $C > 0$ is a large constant depending only on $N$, $\nu$, $u_0$, and $(M,g)$.
\end{thm}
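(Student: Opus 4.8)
The plan is to run a finite-dimensional reduction (Lyapunov–Schmidt type argument) directly in the quantitative regime, using the control on $\Gamma(u)$ rather than a compactness/contradiction scheme. Write $Z = u_0 + \sum_{i=1}^\nu \chi(d_g(\cdot,\txi_i))\mcu_{\tde_i,\txi_i}$ for the approximate object from Assumption \ref{assum}, so $\|u - Z\|_{H^1(M)} \le \vep_0$. The configuration space is $\mca = (0,\infty)^\nu \times M^\nu$, and for each admissible $\bs{p}=(\delta_1,\ldots,\delta_\nu,\xi_1,\ldots,\xi_\nu)$ close to $(\tde_i,\txi_i)$ we set $\sigma_{\bs p}= u_0 + \sum_i \mcv_{\delta_i,\xi_i}$ with $\mcv_{\delta,\xi}$ as in \eqref{ui}. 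The tangent space to the bubble manifold at $\bs p$ is spanned by the (approximate) Jacobi fields $\pa_{\delta_i}\mcv_{\delta_i,\xi_i}$ and $\pa_{\xi_i}\mcv_{\delta_i,\xi_i}$ (suitably rescaled), together with — and this is where $u_0>0$ and non-degeneracy enter — the zero-dimensional part: since $\mcl_g - (2^*-1)u_0^{2^*-2}$ is invertible on $H^1(M)$, the $u_0$-component needs no free parameters. First I would choose $\bs p$ near $(\tde_i,\txi_i)$ by an \emph{optimal selection}: minimize $\bs p \mapsto \|u - \sigma_{\bs p}\|_{H^1(M)}$ over the admissible region (or equivalently impose the orthogonality conditions $\la u - \sigma_{\bs p}, \psi \ra_{H^1(M)} = 0$ for $\psi$ ranging over the $(N+1)\nu$-dimensional approximate kernel). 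Standard estimates on the Gram matrix of these Jacobi fields — which is nearly diagonal and uniformly invertible because of the weak-interaction hypothesis \eqref{eq:inter} built into Assumption \ref{assum} — show this selection exists and the minimizing configuration stays $O(\vep_0)$-close to the original one.

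Next, write $\rho := u - \sigma_{\bs p}$ for this optimal $\bs p$; by construction $\rho$ is orthogonal in $H^1(M)$ to the approximate kernel $K_{\bs p}$. The goal is the \emph{coercivity-plus-consistency} estimate
\begin{equation*}
\|\rho\|_{H^1(M)} \le C\,\Gamma(u).
\end{equation*}
For this I would expand $\Gamma(u) = \|\mcl_g u - u^{2^*-1}\|_{H^{-1}(M)}$ around $\sigma_{\bs p}$:
\begin{equation*}
\mcl_g u - u^{2^*-1} = \big[\mcl_g \sigma_{\bs p} - \sigma_{\bs p}^{2^*-1}\big] + \big[\mcl_g \rho - (2^*-1)\sigma_{\bs p}^{2^*-2}\rho\big] - \big[u^{2^*-1}-\sigma_{\bs p}^{2^*-1}-(2^*-1)\sigma_{\bs p}^{2^*-2}\rho\big].
\end{equation*}
The first bracket is the \emph{error of the approximate solution} $E_{\bs p} := \mcl_g\sigma_{\bs p} - \sigma_{\bs p}^{2^*-1}$; a direct computation (using that $U_{\delta,0}$ solves the flat equation, that $u_0$ solves \eqref{u00}, and estimating the cross terms $u_0 \mcv_i$, the conformal-Laplacian correction terms near each $\xi_i$, and the cut-off errors, all via the weak-interaction bounds) gives $\|E_{\bs p}\|_{H^{-1}(M)} = o(1)$ as $\vep_0\to 0$, in fact with a quantitative rate that is $o(\|\rho\|)$ once we know $\|\rho\|\lesssim\Gamma(u)$ — but to bootstrap we only need that $E_{\bs p}$ is dominated by $\Gamma(u)$ plus a term absorbable by the left side, which for $3\le N\le 5$ works because the dangerous self-interaction term $\mcv_i^{2^*-1}$ away from $\xi_i$ and the $u_0\mcv_i$ interaction are subcritical enough in these low dimensions (this is exactly the dimensional restriction that makes $\zeta(t)=t$). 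The second bracket is the linearized operator $L_{\bs p}\rho := \mcl_g\rho - (2^*-1)\sigma_{\bs p}^{2^*-2}\rho$, and the crucial analytic input is its \emph{coercivity on the orthogonal complement of $K_{\bs p}$}: there is $c_0>0$, independent of $\bs p$ in the admissible range, with $\|L_{\bs p}\rho\|_{H^{-1}(M)} \ge c_0\|\rho\|_{H^1(M)}$ whenever $\rho\perp K_{\bs p}$. This follows by a now-standard argument: localize near each bubble (where after rescaling $L_{\bs p}$ converges to $-\Delta - (2^*-1)U_{1,0}^{2^*-2}$ on $\R^N$, whose kernel is exactly the span of the flat Jacobi fields) and on the complement of all bubbles (where $L_{\bs p}$ converges to $\mcl_g - (2^*-1)u_0^{2^*-2}$, invertible by the non-degeneracy hypothesis), then glue. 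The third bracket is the superlinear remainder, bounded in $H^{-1}(M)$ by $C\|\rho\|_{H^1(M)}^{\min(2,2^*-1)}$, hence negligible for small $\vep_0$.

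Putting these together: $c_0\|\rho\| \le \|L_{\bs p}\rho\|_{H^{-1}} \le \Gamma(u) + \|E_{\bs p}\|_{H^{-1}} + C\|\rho\|^{\min(2,2^*-1)}$. The superlinear term is absorbed for $\vep_0$ small. For $\|E_{\bs p}\|_{H^{-1}}$ one must be careful: a crude bound gives only $o(1)$, not $O(\Gamma(u))$, so the honest route is to also project the equation $\mcl_g u - u^{2^*-1} = \text{(error)}$ onto the kernel $K_{\bs p}$ — the $(N+1)\nu$ \emph{reduced equations} — and show that the orthogonality of $\rho$ forces the parameters $\bs p$ to satisfy a system whose solvability pins $\bs p$ so that $\|E_{\bs p}\|_{H^{-1}} \le C(\Gamma(u) + \|\rho\|_{H^1(M)}^2)$, with the quadratic term absorbable; here the non-degeneracy of $u_0$ and the linear-algebra invertibility of the reduced system's Jacobian (again from nearly-diagonal Gram matrices) are what make it work. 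I expect \textbf{the main obstacle} to be precisely this coupling between the error estimate for $E_{\bs p}$ and the reduced equations: one cannot estimate $E_{\bs p}$ in $H^{-1}(M)$ well enough \emph{a priori}, so the argument must treat the infinite-dimensional estimate for $\rho$ and the finite-dimensional estimate for $\bs p$ simultaneously, and the sharp constant $\zeta(t)=t$ emerges only because in dimensions $3,4,5$ every interaction term (bubble–bubble, bubble–$u_0$, bubble–curvature, cut-off) decays strictly faster than the linear-in-$\Gamma(u)$ threshold; verifying each of these decay rates against the threshold, uniformly in the admissible configuration space, is the technical heart of the proof.
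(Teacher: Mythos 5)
Your outline follows the same Lyapunov--Schmidt framework as the paper: optimal parameter selection yielding orthogonality to Jacobi fields, coercivity of the linearized operator, expansion of $\mcl_g u-u^{2^*-1}$ into error $E_{\bs p}$ plus linear part plus superlinear remainder, and ``reduced equations'' to close the circular dependence between $\|E_{\bs p}\|_{H^{-1}}$ and $\Gamma(u)$. You also correctly flag that this last closure is the heart of the matter. However, the mechanism you propose for that step is wrong, and there are two other places worth clarifying.

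\textbf{The reduced equations are not closed by Jacobian invertibility.} You describe Proposition \ref{lou0}'s analogue as ``linear-algebra invertibility of the reduced system's Jacobian ... from nearly-diagonal Gram matrices.'' That is not the mechanism, and an argument built on it would not close. The parameters $\bs p$ are already fixed by the minimization; nothing is being solved for. What happens in the paper is that projecting the equation only onto $\mcz^0_j$ (the dilation direction) produces the leading terms
\[
\mfa_N\,u_0(\xi_j)\,\delta_j^{\frac{N-2}{2}} + (2^*-1)\sum_{i\ne j}\int_{B^g_{r_0/2}(\xi_j)}\mcu_j^{2^*-2}\,\delta_j\tfrac{\pa\mcu_j}{\pa\delta_j}\,\mcv_i\,dv_g,
\]
and the point is \emph{positivity and non-cancellation}: $\mfa_N>0$ and $u_0(\xi_j)>0$ make the first term $\simeq\delta_j^{(N-2)/2}$ with a definite sign, and Lemma \ref{le29} shows the interaction integral is $\gtrsim q_{ij}$ whenever $\delta_i\ge\delta_j$. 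Combining this sign structure with an induction over the sorted dilation parameters (borrowed from \cite{DSW}) gives $\mcq+\max_\ell\delta_\ell^{(N-2)/2}\lesssim\|f\|_{H^{-1}(M)}$, which in turn dominates $\|E_{\bs p}\|_{H^{-1}}$. If $u_0(\xi_j)$ could vanish, or if the interaction integrals could change sign or cancel the $u_0$ contribution, no Gram-matrix argument would save the estimate; the sign structure is the content, not the near-diagonality. You should rethink this step around those two lower bounds rather than around invertibility of a linear system.

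\textbf{The coercivity step is sound in principle but under-specified.} You want operator-norm invertibility of $L_{\bs p}=\mcl_g-(2^*-1)(u_0+\sum\mcv_i)^{2^*-2}$ on $K_{\bs p}^\perp$ with $K_{\bs p}$ the span of Jacobi fields only. That is a correct target, but be aware that the associated quadratic form is indefinite on that space: $\mcl_g-(2^*-1)u_0^{2^*-2}$ has strictly negative directions (e.g.\ $\psi_1\propto u_0$ since $\bmu_1=1<2^*-1$), so a ``localize and add up lower bounds'' argument is not enough; you need the Fredholm/spectral version of invertibility in the $u_0$-region. The paper sidesteps this by proving the weaker quadratic-form coercivity in Proposition \ref{41}, but only after enlarging the orthogonality to include the $\mcv_i$ and the low eigenfunctions $\psi_m$, and then handling the corresponding coefficients $\beta_i,\beta_i^k,\vth_m$ separately in Lemma \ref{le24} (and the paper uses a blow-up contradiction argument, not bump-function localization). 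Either route works, but yours cannot be a straight coercivity estimate. Finally, the paper is explicit that the quantity $\max_\ell\delta_\ell^{(N-2)/2}$, arising from the $u_0$--bubble interaction, is non-comparable to $\mcq$ and needs its own line of control in Propositions \ref{pr2.2} and \ref{lou0}; your sketch does not see this extra term.
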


In Theorems \ref{th1.3} and \ref{th1.4}, we exploit the notion of conformal normal coordinates introduced by Lee and Parker \cite{LP} to devise $\mcv_i$'s:
Given any $\theta \in \N$ and $\xi \in M$, there exists a smooth positive function $\Lambda_{\xi}$ on $M$ such that $\Lambda_{\xi}(\xi)=1$, $\nabla_g\Lambda_{\xi}(\xi)=0$, and the conformal metric $g_{\xi} := \Lambda_{\xi}^{4/(N-2)}g$ satisfies
\begin{equation}\label{dvg}
\det g_{\xi}(y) = 1 + \mco\big(|y|^{\theta}\big)
\end{equation}
in $g_{\xi}$-normal coordinates $y$ around $\xi$. For our purpose, we pick $\theta$ large enough.
According to Cao \cite{C} and G\"unther \cite{G}, \eqref{dvg} can be improved to $\det g_{\xi}(y) = 1$.
\begin{thm}\label{th1.3}
Suppose that $3 \le N \le 5$ and Assumption \ref{assum} holds with $u_0 = 0$ on $M$. Given $(\delta,\xi) \in (0,\infty) \times M$, we set a nonnegative function $\mcv_{\delta,\xi}$ on $M$ by
\begin{equation}\label{vi}
\mcv_{\delta,\xi}(x) = \ga_NG_g(x,\xi) \left[\chi(d_{g_{\xi}}(x,\xi))d_{g_{\xi}}(x,\xi)^{N-2}\mcu_{\delta,\xi}^{g_{\xi}}(x) + (1-\chi(d_{g_{\xi}}(x,\xi))) \alpha_N \delta^{\frac{N-2}{2}}\right]
\end{equation}
for $x \in M$. Here, $\alpha_N > 0$ is the number in \eqref{eq:bubble}, $\ga_N := (N-2)|\S^{N-1}|$, $|\S^{N-1}|$ is the surface measure of the sphere $\S^{N-1}$, and $G_g$ is the Green's function of the conformal Laplacian $\mcl_g$.
After reducing the size of $\vep_0 > 0$ if needed, one can find $\nu$ functions $\mcv_1 := \mcv_{\delta_1,\xi_1}, \ldots, \mcv_{\nu} := \mcv_{\delta_{\nu},\xi_{\nu}}$ such that
\begin{equation}\label{eq:sqe2}
\bigg\|u-\sum_{i=1}^{\nu} \mcv_i\bigg\|_{H^1(M)} \le C\Gamma(u).
\end{equation}
Here, $C > 0$ is a large constant depending only on $N$, $\nu$, and $(M,g)$.
\end{thm}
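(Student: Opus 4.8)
The plan is to follow the finite-dimensional Lyapunov--Schmidt reduction philosophy that underlies the quantitative stability literature, but adapted to the $u_0=0$ situation where the limiting object is a sum of ``Green-function bubbles'' rather than ordinary Euclidean bubbles. First I would fix, for each $i$, the parameters $(\delta_i,\xi_i)$ by an optimal-selection argument: starting from the approximate parameters $(\tde_i,\txi_i)$ supplied by Assumption \ref{assum}, one minimizes $\|u-\sum_{i=1}^\nu \mcv_{\delta_i,\xi_i}\|_{H^1(M)}$ over a small neighborhood of $(\tde_i,\txi_i)$ in $(0,\infty)^\nu\times M^\nu$. Since $\vep_0$ is small and the map $(\delta,\xi)\mapsto \mcv_{\delta,\xi}$ is smooth and nondegenerate (its differential spans an approximate tangent space), such a minimizer exists and lies in the interior; write $\rho:=u-\sum_{i=1}^\nu\mcv_i$ for the resulting error. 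The minimality gives the orthogonality conditions $\rho\perp \pa_{\delta_i}\mcv_i$ and $\rho\perp \pa_{(\xi_i)_a}\mcv_i$ in $H^1(M)$ for each $i$ and each coordinate direction $a$, i.e.\ $\rho$ lies in a space on which the linearized operator $\mcl_g-(2^*-1)(\sum\mcv_i)^{2^*-2}$ is (after projecting off the approximate kernel) coercive.

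The second step is the coercivity/invertibility estimate. One needs: there is $c>0$ independent of $\vep_0$ such that for all $\phi\in H^1(M)$ orthogonal to the $\{\pa_{\delta_i}\mcv_i,\pa_{(\xi_i)_a}\mcv_i\}$,
\[
\int_M\Big(|\nabla_g\phi|_g^2+\ka_NR_g\phi^2-(2^*-1)\Big(\sum_{i=1}^\nu\mcv_i\Big)^{2^*-2}\phi^2\Big)\,dv_g\ \ge\ c\,\|\phi\|_{H^1(M)}^2.
\]
This is proved by contradiction and blow-up: if it failed along a sequence, one rescales around each bubble, uses \eqref{eq:inter} to decouple the bubbles, and reduces to the nondegeneracy of the Euclidean bubble (the kernel of $-\Delta-(2^*-1)U_{1,0}^{2^*-2}$ on $\dot H^1(\R^N)$ is spanned exactly by $\pa_\delta U$ and $\pa_\sigma U$), getting a contradiction with the orthogonality. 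The passage from $\mcv_i$ to $\chi\mcu_{\delta_i,\xi_i}$ uses \eqref{eq:bubblel}-type control, and the error $R_g$-term and the Green-function weight $\ga_NG_g(\cdot,\xi_i)$ are lower order near the concentration point since $\ga_NG_g(x,\xi)d_{g_\xi}(x,\xi)^{N-2}\to 1$ as $x\to\xi$ in conformal normal coordinates. Here $u_0=0$ helps: there is no positive background solution whose interaction with the bubbles must be separately controlled.

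The third step estimates $\|\rho\|_{H^1(M)}$ in terms of $\Gamma(u)$. Testing the equation $\mcl_g u-u^{2^*-1}=f$ (with $\|f\|_{H^{-1}}=\Gamma(u)$) against $\rho$ and using $u=\sum\mcv_i+\rho$ gives
\[
\big\langle(\mcl_g-(2^*-1)(\textstyle\sum\mcv_i)^{2^*-2})\rho,\rho\big\rangle
= \langle f,\rho\rangle - \Big\langle \mcl_g\big(\textstyle\sum\mcv_i\big)-\big(\textstyle\sum\mcv_i\big)^{2^*-1},\rho\Big\rangle + (\text{quadratic and higher in }\rho).
\]
By Step 2 the left side dominates $c\|\rho\|^2$; the first term on the right is $\le\Gamma(u)\|\rho\|$; the nonlinear remainder terms are $O(\|\rho\|^{\min(2^*-1,2)})=o(\|\rho\|^2)$ when $3\le N\le 5$ (here $2^*-1\le 5$ but the interaction cross-terms among the $\mcv_i$ are controlled by the $\vep_0$-smallness in \eqref{eq:inter}), and—this is the crucial dimensional point—the ``error of the approximate solution'' term $\|\mcl_g(\sum\mcv_i)-(\sum\mcv_i)^{2^*-1}\|_{H^{-1}}$ must itself be bounded by $C\Gamma(u)$ up to absorbable terms. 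The main obstacle, and the reason $3\le N\le 5$ is imposed, is precisely this: one must show the construction \eqref{vi} is accurate enough that the self-error of $\sum_{i=1}^\nu\mcv_i$ is controlled linearly by $\Gamma(u)$ rather than producing a genuinely larger term (as happens for $N\ge 6$, per the abstract). This requires a careful expansion of $\mcl_g\mcv_{\delta,\xi}-\mcv_{\delta,\xi}^{2^*-1}$ using the conformal normal coordinates \eqref{dvg} with $\theta$ large (so $\det g_\xi\equiv 1$ by Cao--G\"unther, killing the dangerous curvature contributions), the defining equation of the Green's function $G_g$ away from the pole, and the matching of the two pieces of \eqref{vi} across $\{d_{g_\xi}(\cdot,\xi)\sim r_0\}$; in low dimensions all remaining terms decay fast enough in $\delta$ to be dominated by $\Gamma(u)$ after using the a~priori smallness of $\rho$ and a bootstrap. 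Combining Steps 2 and 3 yields $\|\rho\|_{H^1(M)}\le C\Gamma(u)$, which is exactly \eqref{eq:sqe2}; the constant depends only on $N$, $\nu$, and $(M,g)$ since $u_0=0$. Finally, to handle $\nu\ge 2$ one repeats the decoupling estimate simultaneously across all bubbles, the interaction terms being absorbed thanks to \eqref{eq:inter} and the smallness hypothesis on the parameters in Assumption \ref{assum}.
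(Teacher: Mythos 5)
Your Steps 1 and 2 (parameter selection by minimization, orthogonality, and the blow-up coercivity argument on $E^\perp$) match the paper's Proposition \ref{co} and the beginning of the proof. Your Step 3, however, contains the essential gap. You treat the bound
\[
\Big\|\mcl_g\Big(\textstyle\sum\mcv_i\Big)-\Big(\textstyle\sum\mcv_i\Big)^{2^*-1}\Big\|_{H^{-1}(M)} \lesssim \Gamma(u)
\]
as something that follows from the accuracy of the ansatz \eqref{vi} in low dimensions (``all remaining terms decay fast enough in $\delta$ to be dominated by $\Gamma(u)$''). This is not available. The best one gets from expanding $\mcl_g\mcv_i-\mcv_i^{2^*-1}$ and the cross terms in conformal normal coordinates is
\[
\|\II_2\|_{L^{\frac{2N}{N+2}}(M)}+\|\II_3\|_{L^{\frac{2N}{N+2}}(M)} \lesssim \mcq+\max_\ell\delta_\ell^{N-2},
\]
(the paper's Lemma \ref{lem3.3}), where $\mcq$ and $\delta_\ell$ are free parameters having no a~priori relation to $\Gamma(u)$. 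Testing against $\rho$ therefore yields only $\|\rho\|_{H^1(M)}\lesssim \Gamma(u)+\mcq+\max_\ell\delta_\ell^{N-2}$ (Proposition \ref{pr3.2}), not \eqref{eq:sqe2}.

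The missing ingredient is a separate estimate showing $\mcq+\max_\ell\delta_\ell^{N-2}\lesssim\Gamma(u)$ (Proposition \ref{3.3}), and this is where the real work lies. It is obtained by testing the equation against the derivatives $\mcz^0_j=\delta_j\,\pa_{\delta_j}\mcv_j$, expanding the resulting projection integrals, and exploiting sign information. Concretely: (i) the self-error projection $\int_M \II_3\,\mcz^0_j\,dv_g$ is shown to equal $\mfb_N A_g(\xi_j)\delta_j^{N-2}(1+o(1))$ where $A_g(\xi_j)$ is the mass, and its strict positivity -- a consequence of the positive mass theorem of Schoen--Yau -- is what forces $\delta_j^{N-2}\lesssim\Gamma(u)$; (ii) the cross-term projection of $\II_2$ produces a quantity comparable to $q_{ij}$ with a definite sign when $\delta_i\ge\delta_j$, and an inductive argument (descending on the ordering of $\delta_i$) then forces $\mcq\lesssim\Gamma(u)$. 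Neither the positive mass theorem nor the $\mcz^0_j$-projection/sign argument appears in your plan, yet without them the dimensional restriction $3\le N\le 5$ only controls the \emph{size} of the projections, not their coercive sign. As written, your Step 3 asserts the conclusion of this argument rather than proving it, so the proposal does not close the proof.
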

Next, we handle the case when $N \ge 6$ and only a single-bubbling is permitted. Interestingly, it turns out that the quantitative estimate depends on $N$, $u_0$, and $g$. This is a new phenomenon.
Here and after, l.c.f. stands for locally conformally flat.
\begin{thm}\label{th1.4}
Suppose that $N \ge 6$ and Assumption \ref{assum} holds with $\nu = 1$. We also assume that
\begin{itemize}
\item[-] if $u_0 > 0$ on $M$, then $u_0$ is non-degenerate;
\item[-] in the case that $(M,g)$ is non-l.c.f., if either \textup{[}$N \ge 11$ and $u_0 > 0$\textup{]} or \textup{[}$N \ge 6$ and $u_0 = 0$\textup{]}, then the Weyl curvature tensor $\textup{Weyl}_g(\txi_1)$ at $\txi_1 \in M$ is nonzero.
\end{itemize}
Given $(\delta,\xi) \in (0,\infty) \times M$, we set a nonnegative function $\mcv_{\delta,\xi}$ on $M$ by
\begin{equation}\label{w1}
\begin{medsize}
\mcv_{\delta,\xi}(x) = \begin{cases}
\begin{aligned}
&\ga_NG_g(x,\xi) \left[\chi(d_{g_{\xi}}(x,\xi))d_{g_{\xi}}(x,\xi)^{N-2}\mcu_{\delta,\xi}^{g_{\xi}}(x) \right. \\
&\hspace{55pt} \left. + (1-\chi(d_{g_{\xi}}(x,\xi))) \alpha_N\delta^{\frac{N-2}{2}}\right]
\end{aligned}
&\text{if } \left[\begin{aligned}
& N \ge 6,\, (M,g) \text{ is l.c.f., or} \\
& u_0=0,\, 6 \le N \le 10,\, (M,g)\text{ is non-l.c.f.}
\end{aligned}\right], \\
\Lambda_{\xi}(x) \chi(d_{g_{\xi}}(x,\xi))\mcu_{\delta,\xi}^{g_{\xi}}(x)
&\text{if } \left[\begin{aligned}
& u_0>0,\, N \ge 6,\, (M,g) \text{ is non-l.c.f., or}\\
& u_0=0,\, N \ge 11,\, (M,g) \text{ is non-l.c.f.}
\end{aligned}\right]
\end{cases}
\end{medsize}
\end{equation}
for $x \in M$. After reducing the size of $\vep_0 > 0$ if needed, one can find a function $\mcv_1 := \mcv_{\delta_1,\xi_1}$ and a large constant $C > 0$ that depends only on $N$, $u_0$, and $(M,g)$ such that the following inequalities hold:

\medskip \noindent \textup{(1)} In case that $u_0 > 0$ on $M$, we have
\begin{equation}\label{eq:sqe3}
\|u-(u_0+\mcv_1)\|_{H^1(M)} \le C\zeta(\Gamma(u))
\end{equation}
where $\zeta \in C^0([0,\infty))$ satisfies
\begin{equation}\label{eq:zeta1}
\zeta(t) = \begin{cases}
t|\log t|^{\frac{1}{2}} &\text{if } N=6,\\
t^{\frac{N+2}{2(N-2)}} &\text{if } 7 \le N \le 10 \text{ or } [N \ge 11 \text{ and } (M,g) \text{ is l.c.f.}],\\
t^{\frac{N+2}{16}} &\text{if } 11 \le N \le 13 \text{ and } (M,g) \text{ is non-l.c.f.},\\
t &\text{if } N \ge 14 \text{ and } (M,g) \text{ is non-l.c.f.}
\end{cases}
\end{equation}
for $t > 0$.

\medskip \noindent \textup{(2)} In case that $u_0 = 0$ on $M$, we have
\begin{equation}\label{eq:sqe4}
\|u-\mcv_1\|_{H^1(M)} \le C\zeta(\Gamma(u))
\end{equation}
where $\zeta \in C^0([0,\infty))$ satisfies
\begin{equation}\label{eq:zeta2}
\zeta(t) = \begin{cases}
t|\log t|^{\frac12} &\text{if } N=6,\\
t^{\frac{N+2}{2(N-2)}} &\text{if } N \ge 7 \text{ and } (M,g) \text{ is l.c.f.},\\
t &\text{if } N \ge 7 \text{ and } (M,g) \text{ is non-l.c.f.}
\end{cases}
\end{equation}
for $t > 0$.
\end{thm}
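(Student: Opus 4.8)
Since Assumption~\ref{assum} already places $u$ within $\vep_0$ of the ansatz $u_0 + \chi(d_g(\cdot,\txi_1))\mcu_{\tde_1,\txi_1}$ in $H^1(M)$, the first step is to optimize the parameters: one shows that there exist $(\delta_1,\xi_1)$ (close to $(\tde_1,\txi_1)$) and, when $u_0>0$, a corrected background solution, minimizing $\|u - (u_0 + \mcv_{\delta_1,\xi_1}) - \phi\|$ over a suitable orthogonality class. Concretely, writing $\sigma = u - u_0 - \mcv_{\delta_1,\xi_1}$, one imposes that $\sigma$ be $H^1$-orthogonal to the approximate kernel spanned by $Z_0 = \partial_\delta \mcv_{\delta_1,\xi_1}$ and $Z_j = \partial_{\xi_1^j}\mcv_{\delta_1,\xi_1}$ (and, if $u_0>0$, one uses non-degeneracy of $\mcl_g - (2^*-1)u_0^{2^*-2}$ to absorb a component there as well). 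The existence of such optimal parameters is a standard implicit-function / degree argument once $\vep_0$ is small; this is where the particular \emph{shape} of $\mcv_{\delta,\xi}$ in \eqref{ui}, \eqref{vi}, \eqref{w1} — matching truncated bubbles, Green's function tails, or conformal-normal-coordinate corrections — is chosen precisely so that the error terms in the linearized equation are as small as possible.

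**The second step is the coercivity estimate.** On the orthogonal complement of the approximate kernel, the linearized operator $\mcl_g - (2^*-1)\mcv_{\delta_1,\xi_1}^{2^*-2}$ (plus the $u_0$-contribution) is invertible with a bound independent of $\delta_1$; this follows from the classical spectral analysis of $-\Delta - (2^*-1)U^{2^*-2}$ on $\R^N$ combined with the weak interaction between the bubble and $u_0$, plus non-degeneracy of $u_0$ when $u_0>0$. Hence $\|\sigma\|_{H^1(M)} \lesssim \|\mcl_g(u_0+\mcv_{\delta_1,\xi_1}) - (u_0+\mcv_{\delta_1,\xi_1})^{2^*-1}\|_{H^{-1}(M)} + \|P_{\perp}(\text{nonlinear remainder})\|_{H^{-1}(M)} + \Gamma(u)$. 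The nonlinear remainder is controlled by $\|\sigma\|_{H^1}^{\min(2,2^*-1)}$, which for $3\le N\le 5$ is $\|\sigma\|^2$ (superlinear) and can be absorbed; for $N\ge 6$ it is $\|\sigma\|^{2^*-1}$ with $2^*-1\le 2$, still absorbable for small $\vep_0$. The remaining driver is the \emph{error of the approximate solution} $\mcR := \mcl_g\mcv_{\delta_1,\xi_1} - \mcv_{\delta_1,\xi_1}^{2^*-1}$ (interacting with $u_0$), whose $H^{-1}(M)$-norm must be estimated sharply in terms of $\delta_1$.

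**The crux — and the main obstacle — is step three: converting the $\delta_1$-dependent bound into a bound in $\Gamma(u)$, and showing the two balance at the claimed rate $\zeta$.** One computes, by testing $\Gamma(u)$ against the kernel directions $Z_0, Z_j$, a system of algebraic relations between $\delta_1$, the geometric quantities (Weyl tensor at $\txi_1$, the mass of the Green's function, $u_0(\txi_1)$), and $\Gamma(u)$: schematically $\delta_1^{a} \lesssim \Gamma(u) + \|\sigma\|_{H^1}$ while simultaneously $\|\mcR\|_{H^{-1}} \lesssim \delta_1^{b}$, where the exponents $a,b$ depend on $N$, on whether $(M,g)$ is l.c.f., and on the sign of $u_0$ — this is exactly the source of the case distinctions in \eqref{eq:zeta1}–\eqref{eq:zeta2}. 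The dimension thresholds ($N=6$ log correction; $N=7$–$10$ vs.\ $N\ge 11$; $N=14$) arise from comparing the bubble-self-interaction term (scaling like $\delta_1^{N-2}$ or $\delta_1^2|\log\delta_1|$ for $N=6$), the Weyl-curvature term ($\delta_1^4$ in non-l.c.f.\ cases, present only when $N$ is large enough that it dominates), and the $u_0$-interaction term ($\delta_1^{(N-2)/2}$). Solving for the worst case of $\delta_1$ in terms of $\Gamma(u)$ and plugging back gives $\|\sigma\|_{H^1} \lesssim \zeta(\Gamma(u))$; together with $\|u - (u_0+\mcv_1)\|_{H^1} = \|\sigma\|_{H^1}$ this yields \eqref{eq:sqe3}–\eqref{eq:sqe4}. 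The delicate point, where the non-degeneracy hypothesis on $u_0$ and the nonvanishing of $\textup{Weyl}_g(\txi_1)$ are genuinely used, is ensuring the leading coefficient in the $\delta_1$-expansion of the reduced functional does not degenerate, so that the inversion $\delta_1 \leftrightarrow \Gamma(u)$ is quantitatively controlled and the resulting estimate is sharp rather than merely an upper bound.
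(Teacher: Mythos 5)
Your outline captures the high-level architecture of the paper's argument — optimize $(\delta_1,\xi_1)$ via orthogonality to $\mcz_1^k$, prove coercivity on the orthogonal complement, test the equation against $\mcz_1^0$, and trade powers of $\delta_1$ for powers of $\Gamma(u)$ — but there is a concrete gap in the way you propose to close the argument. The schematic system you write down, $\delta_1^a \lesssim \Gamma(u) + \|\sigma\|_{H^1}$ together with $\|\sigma\|_{H^1} \lesssim \Gamma(u) + \delta_1^b$, is vacuous whenever $b < a$, since then $\delta_1^a \lesssim \delta_1^b$ is automatic and the first relation gives no leverage on $\delta_1$. This is exactly the regime one is in for $u_0 > 0$ and $N > 6$: the error $\III_2 = (u_0+\mcv_1)^{2^*-1}-u_0^{2^*-1}-\mcv_1^{2^*-1}$ contributes $\|\III_2\|_{L^{2N/(N+2)}} \simeq \delta_1^{(N+2)/4}$ to the coercivity bound (so $b = \frac{N+2}{4}$), while testing $\III_2$ against $\mcz_1^0$ produces the larger quantity $\delta_1^{(N-2)/2}$ (so $a = \frac{N-2}{2}$ for $7\le N\le 10$, or $a=4$ for $N\ge 11$), and $\frac{N+2}{4} < \frac{N-2}{2}$ for $N>6$. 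What actually closes the loop in the paper (Proposition \ref{pr44}) is the sharper statement that the contribution of $\rho$ to the $\mcz_1^0$-pairing is $o(\delta_1^a)+o(\|f\|_{H^{-1}})$, not $O(\|\rho\|_{H^1})$; establishing this requires both the smallness of $\mcl_g\mcz_1^0-(2^*-1)\mcv_1^{2^*-2}\mcz_1^0$ in $L^{2N/(N+2)}(M)$ and a \emph{pointwise} upper bound on $\rho$, obtained via the decomposition $\rho = \trh_0 + \trh_1$ of Lemma \ref{le4.2}, where $\trh_0$ solves a projected nonlinear problem and its size is controlled through a weighted Green's function representation (Proposition \ref{a5}). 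This refinement is absent from your proposal, and without it the kernel test does not yield $\delta_1^a \lesssim \Gamma(u)$.

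A second, related gap is the $N=6$ logarithm. Your coercivity step, based on the $H^{-1}$ (i.e.\ $L^{2N/(N+2)}$) estimate of the error, only gives $\|\III_2\|_{L^{2N/(N+2)}} \lesssim \delta_1^2|\log\delta_1|^{2/3}$, hence after inverting $\delta_1^2 \lesssim \Gamma(u)$ one would obtain $\|\rho\|_{H^1} \lesssim \Gamma(u)|\log\Gamma(u)|^{2/3}$, which is strictly weaker than the asserted $\zeta(t) = t|\log t|^{1/2}$. The sharp power $|\log|^{1/2}$ again comes from the pointwise estimate: Corollary \ref{cr43} shows $\|\trh_0\|_{H^1}^2 \lesssim \delta_1^4|\log\delta_1|$ by integrating the pointwise bound in \eqref{tr0}, whence $\|\rho\|_{H^1} \lesssim \|f\|_{H^{-1}} + \delta_1^2|\log\delta_1|^{1/2}$. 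The paper flags exactly this point as one of its technical novelties (Subsection \ref{subsec:novel}(4)); it is the part your Lyapunov--Schmidt sketch leaves unaddressed, and the part that would fail if one tried to execute your step 2 using $H^{-1}$-norm estimates alone.
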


\begin{rmk}
We present several remarks on Theorems \ref{th1.2}--\ref{th1.4}.

\medskip \noindent (1) The non-degeneracy assumption for $u_0$ is generic.
By \cite[Theorem 10.3]{KMS}, one can perturb the metric $g$ on $M$ slightly so that every positive solution to \eqref{u00} with the new metric is non-degenerate, provided $3 \le N \le 24$ and the positive mass theorem is valid.

In contrast, there are concrete examples for which $u_0$ is non-degenerate: Let $M = \S^1(r) \times \S^{N-1}$ be a manifold that appeared in Subsection \ref{subsec:mot}.
According to \cite[Proposition 3.4]{RV}, the constant solution $u_0 = (\frac{N-2}{2})^{(N-2)/2}$ to \eqref{u00} with $c=1$ is non-degenerate for all $r \in (0,\infty) \setminus \{l/\sqrt{N-2}: l \in \N\}$.

\medskip \noindent (2) In our proof, we crucially use the positive mass theorem when $3 \le N \le 5$ or [$N \ge 6$ and $(M,g)$ is l.c.f.]; refer to Lemmas \ref{le36} and \ref{s44}, and Proposition \ref{pr47}.
Their validity was proved by Schoen and Yau \cite{SY1, SY2}.

\medskip \noindent (3) As a matter of fact, the choice of $\mcv_{\delta,\xi}$ in \eqref{vi} is applicable to all cases in Theorems \ref{th1.2}--\ref{th1.4}.
This $\mcv_{\delta,\xi}$ is qualitatively similar to the test functions of Schoen in \cite[Section 1]{Sc}, of Brendle in \cite[(203)]{Br}, and of Esposito, Pistoia, and V\'etois in \cite[(2.7)--(2.8)]{EPV}.

However, we decided to select simpler test functions \eqref{ui} in Theorem \ref{th1.2} and \eqref{w1} in Theorem \ref{th1.4}, respectively,
to manifest which factors determine the right-hand side of the quantitative estimates \eqref{eq:sqe1}, \eqref{eq:sqe2}, \eqref{eq:sqe3}, and \eqref{eq:sqe4}; see Subsection \ref{subsec:novel}(3) for more discussion.

\medskip \noindent (4) We opted to work only with nonnegative $u$ for Theorems \ref{th1.2}, \ref{th1.3}, and \ref{th1.4}, where as the authors in \cite{FG, DSW} permitted $u$ to assume both positive and negative values.
Our choice reflects the geometric significance of positive solutions to the Yamabe equation \eqref{u00}, and forces the $H^1(M)$-weak limit $u_0$ of $u$ as $\Gamma(u) \to 0$ to be either positive or $0$ on $M$.
\end{rmk}
\begin{rmk}
We provide comments regarding the cases that are untouched in this paper.

\medskip \noindent (1) Deducing the sharp quantitative estimate for the multiple bubble case with $N \ge 6$ is more difficult, because we must take into account of the effect of a solution $u_0$ to \eqref{u00},
the metric $g$, and the mutual interaction between bubble-like functions $\mcv_1,\ldots,\mcv_{\nu}$ simultaneously.
In view of \cite{DSW}, we may also need a priori bubble-tree analysis, which is extremely complicated.
We expect that the $C^0$-theory of Druet, Hebey, and Robert \cite{DHR} and Premoselli \cite{Pr2} will be helpful.

\medskip \noindent (2) We may attempt to remove the non-degeneracy assumption on positive $u_0$ by perturbing it in a suitable sense as in Brendle \cite{Br}, Robert and Vetois \cite{RV3}, or Premoselli \cite{Pr2}.

\medskip \noindent (3) In Theorem \ref{th1.4}, a generic condition $\textup{Weyl}_g(\txi_1) \ne 0$ was imposed for some non-l.c.f. manifolds $(M,g)$ to avoid additional technical issues.
If $\textup{Weyl}_g(\txi_1) = 0$ for those manifolds, one must consider the vanishing rate of the Weyl tensor near $\txi_1$ to seek the optimal function $\zeta$.
\end{rmk}

The following theorem demonstrates the optimality of Theorems \ref{th1.2}, \ref{th1.3}, and \ref{th1.4}.
\begin{thm}\label{th1.6}
Let $\zeta$ be a continuous function on $[0,\infty)$ given by $\zeta(t) = t$ in the setting of Theorems \ref{th1.2} and \ref{th1.3}, and by \eqref{eq:zeta1} and \eqref{eq:zeta2} in the setting of Theorems \ref{th1.4}.
Estimates \eqref{eq:sqe1} in Theorem \ref{th1.2}, \eqref{eq:sqe2} in Theorem \ref{th1.3}, and \eqref{eq:sqe3} and \eqref{eq:sqe4} in Theorem \ref{th1.4} are all sharp in the following sense:
Given any $\vep_0 > 0$, there exists nonnegative $u_* \in H^1(M)$ satisfying \eqref{tui} such that
\[\inf\left\{\bigg\|u_*-\bigg(u_0+\sum_{i=1}^{\nu} \mcv_{\delta_i,\xi_i}\bigg)\bigg\|_{H^1(M)}: (\delta_i,\xi_i) \in (0,\infty) \times M \text{ for } i = 1,\ldots,\nu\right\} \ge C\zeta(\Gamma(u_*))\]
where $C > 0$ depends only on $N$, $\nu$, $u_0$, and $(M,g)$.
\end{thm}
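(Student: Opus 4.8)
The plan is to produce, for each small $\delta>0$, an explicit near-solution $u_*=u_*^{(\delta)}$ of \eqref{u00} assembled from the very blocks appearing in \eqref{ui}, \eqref{vi}, \eqref{w1}, but carrying an \emph{irreducible} error, and then to show that $\Gamma(u_*)\to0$ as $\delta\to0$ while the distance of $u_*$ to the family $\mathcal{F}:=\{u_0+\sum_{i=1}^{\nu}\mcv_{\delta_i,\xi_i}\}$ stays comparable to $\zeta(\Gamma(u_*))$. Fix points $\xi_{*,1},\dots,\xi_{*,\nu}\in M$ (mutually far apart when $\nu\ge2$) adapted to the regime: in each case $\xi_{*,i}$ is taken to be a critical point of the leading $\xi$-dependent coefficient of the reduced energy, e.g. a critical point of $u_0$ when $u_0>0$, of the mass function of $\mcl_g$ in the l.c.f. cases, or a point with $\textup{Weyl}_g(\xi_{*,i})\ne0$ in the non-l.c.f. cases allowed by the hypotheses of Theorem \ref{th1.4}. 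Set $W_\delta:=u_0+\sum_{i=1}^{\nu}\mcv_{\delta,\xi_{*,i}}$ and let $K_\delta$ be the approximate kernel spanned by the $\partial_{\delta}$- and $\partial_{\xi}$-derivatives of the $\mcv_{\delta,\xi_{*,i}}$. The non-degeneracy of $u_0$ (when $u_0>0$) and of the standard bubble make the linearization of \eqref{u00} at $W_\delta$ invertible on the orthogonal complement $K_\delta^{\perp}$, uniformly in $\delta$; hence the Lyapunov--Schmidt correction $\phi_\delta\in K_\delta^{\perp}$, defined as the unique small solution of $\Pi_{K_\delta^{\perp}}\big(\mcl_g(W_\delta+\phi_\delta)-(W_\delta+\phi_\delta)_+^{2^*-1}\big)=0$, exists and obeys
\[ c\,\rho(\delta)\le\|\phi_\delta\|_{H^1(M)}\le C\,\rho(\delta),\qquad \rho(\delta):=\big\|\mcl_g W_\delta-W_\delta^{2^*-1}\big\|_{H^{-1}(M)}. \]
The lower bound is the crucial one: the residual of $W_\delta$ has a component in $K_\delta^{\perp}$ of size $\sim\rho(\delta)$, which is exactly what the deliberately simple choices \eqref{ui}, \eqref{vi}, \eqref{w1} make visible. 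Finally put $u_*:=(W_\delta+\phi_\delta)_+$, which is nonnegative and, since $\|\phi_\delta\|_{H^1(M)}\to0$ and $\mcv_{\delta,\xi}-\chi(d_g(\cdot,\xi))\mcu_{\delta,\xi}\to0$ in $H^1(M)$ as $\delta\to0$, meets the closeness requirement \eqref{tui} once $\delta$ is small.

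Two estimates then close the argument. \textbf{(i) Upper bound on $\Gamma(u_*)$.} By the defining equation for $\phi_\delta$, the residual $\mcl_g u_*-u_*^{2^*-1}$ lies in $K_\delta$ up to a lower-order remainder produced by the truncation $(\cdot)_+$; the customary reduction computation identifies its $K_\delta$-part with the gradient of the one-variable reduced functional $J(\delta):=E_g(W_\delta+\phi_\delta)$, where $E_g$ is the energy functional associated with \eqref{u00}. The energy expansions already used to prove the upper bounds in Theorems \ref{th1.2}--\ref{th1.4} give $J(\delta)=\textup{const}+f(\delta)(1+o(1))$ with $f$ an explicit function — a power of $\delta$, with an extra $|\log\delta|$ when $N=6$ — determined by $N$, $u_0$, $g$; because $\xi_{*,i}$ is critical for the spatial coefficient, the $\xi$-derivatives of $J$ are of strictly lower order, so $\Gamma(u_*)=|J'(\delta)|(1+o(1))\le C\,|f'(\delta)|=:t_\delta\to0$. \textbf{(ii) Lower bound on the distance.} For a competitor $(\delta_i,\xi_i)_{i=1}^{\nu}$ write
\[ u_*-\Big(u_0+\sum_{i=1}^{\nu}\mcv_{\delta_i,\xi_i}\Big)=\phi_\delta+\sum_{i=1}^{\nu}\big(\mcv_{\delta,\xi_{*,i}}-\mcv_{\delta_i,\xi_i}\big)+(\text{lower-order}). \]
A quantitative coercivity estimate near $\mathcal{F}$ — the same splitting that underlies the upper-bound proofs, run in reverse — shows that a configuration realizing (up to a fixed factor) the infimum must have $(\delta_i,\xi_i)$ within $O(\rho(\delta))$ of $(\delta,\xi_{*,i})$ in the natural parametrization; for such parameters $\mcv_{\delta,\xi_{*,i}}-\mcv_{\delta_i,\xi_i}$ belongs to $K_\delta$ modulo a term of size $o(\|\phi_\delta\|_{H^1(M)})$, so the orthogonality $\phi_\delta\perp K_\delta$ together with Pythagoras gives $\big\|u_*-(u_0+\sum_i\mcv_{\delta_i,\xi_i})\big\|_{H^1(M)}\ge\|\phi_\delta\|_{H^1(M)}(1-o(1))\ge c\,\rho(\delta)$.

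\textbf{(iii) Matching.} It remains to verify $\rho(\delta)\ge c\,\zeta(t_\delta)$, i.e. that the ratio of the irreducible error $\rho(\delta)$ to the reduced-gradient size $t_\delta\sim|f'(\delta)|$ reproduces the profiles \eqref{eq:zeta1}, \eqref{eq:zeta2}. This is a term-by-term comparison of exponents — and of the logarithmic factor at $N=6$ — in the expansions of the residual and of the energy along the present family, and it holds precisely because $\zeta$ was chosen so that the upper bounds \eqref{eq:sqe1}, \eqref{eq:sqe2}, \eqref{eq:sqe3}, \eqref{eq:sqe4} are attained along it; when $\zeta(t)=t$ it reduces to $\rho(\delta)\gtrsim|J'(\delta)|=\Gamma(u_*)(1+o(1))$, which is automatic. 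Combining (i)--(iii), $\big\|u_*-(u_0+\sum_i\mcv_{\delta_i,\xi_i})\big\|_{H^1(M)}\ge c\,\rho(\delta)\ge c\,\zeta(t_\delta)\ge c\,\zeta(\Gamma(u_*))$ for every competitor, with constants depending only on $N$, $\nu$, $u_0$, $(M,g)$; letting $\delta$ be small enough that \eqref{tui} holds yields the assertion.

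The step I expect to be the main obstacle is the lower bound $\|\phi_\delta\|_{H^1(M)}\ge c\,\rho(\delta)$ together with its use in (ii). One must extract the leading profile of $\phi_\delta$ by solving the linearized problem $L_\delta\phi_\delta=-(\mcl_g W_\delta-W_\delta^{2^*-1})+(\text{projection onto }K_\delta)$ and check that the right-hand side does not lie, to leading order, in the near-kernel $K_\delta$, so that no cancellation shrinks $\phi_\delta$ below $\rho(\delta)$; this is sensitive to the exact shape of \eqref{ui}, \eqref{vi}, \eqref{w1}, to whether $u_0>0$ or $u_0=0$ and whether $(M,g)$ is l.c.f., and to the threshold dimensions ($N=6$ with its logarithm, and $N=10,11,13,14$ in \eqref{eq:zeta1}). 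One must also make the minimization over $(\delta_i,\xi_i)$ uniform in $\delta$, excluding that a far-away or degenerating competitor beats the near-diagonal one — a quantitative version of the compactness/coercivity already present in the proofs of Theorems \ref{th1.2}--\ref{th1.4}. A subsidiary nuisance is keeping $u_*\ge0$ while controlling the truncation error; this is vacuous when $u_0>0$ and, when $u_0=0$, is handled by noting that $(\cdot)_+$ alters $W_\delta+\phi_\delta$ only on a small set and contributes to both $\Gamma(u_*)$ and the distance at order $o(\rho(\delta))$.
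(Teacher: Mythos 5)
Your plan captures the right overall shape only for the ``hard'' cases of Theorem~\ref{th1.4}, where $\zeta(t)$ is strictly larger than $t$; for the cases where $\zeta(t)=t$ (all of Theorems~\ref{th1.2}, \ref{th1.3}, and the $N\ge14$ non-l.c.f.\ branch of Theorem~\ref{th1.4}), the paper does something genuinely simpler that your Lyapunov--Schmidt scheme does not reproduce without extra work. There the paper (Subsection~\ref{se5.1}) never builds a reduced solution at all: it takes $u_*=u_0+\sum_i\mcv_i+\vep\phi_\delta$ with an \emph{arbitrary} smooth $\phi_\delta$ orthogonal to the $\mcz_i^k$'s, $\|\phi_\delta\|_{H^1}\simeq1$, and $\vep=\delta^{(N-2)/2}$ chosen to dominate $\mcq$ and $\max_\ell\delta_\ell^{(N-2)/2}$. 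Then $\Gamma(u_*)\lesssim\vep$ is a one-line estimate, and the lower bound is the Taylor/Pythagoras argument on competitors. In contrast, your approach would produce the Lyapunov--Schmidt correction $\phi_\delta$, whose $H^1$-norm is \emph{a priori} only bounded above by $\rho(\delta)$; asserting $\|\phi_\delta\|_{H^1}\gtrsim\rho(\delta)$ is equivalent to asserting that $\Pi_{K_\delta^\perp}\big(\mcl_gW_\delta-W_\delta^{2^*-1}\big)$ has $H^{-1}$-norm $\simeq\rho(\delta)$, which is not automatic and is not needed in the paper's proof of these cases.

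For Subsections~\ref{se5.2}--\ref{se5.3}, what you call $\phi_\delta$ is precisely the $\rho$ the paper defines in \eqref{rhoex}, and your three-part outline matches the paper's Steps~1--3. But the decisive estimate — your ``the residual of $W_\delta$ has a component in $K_\delta^\perp$ of size $\sim\rho(\delta)$'' — is exactly the nontrivial part, and you leave it unproved. The paper proves it by a concrete mechanism you do not mention: decomposing $\rho=\trh_0+\trh_1$ (Lemmas~\ref{le4.2} and~\ref{le4.8}), writing $\trh_0(x)=\int_MG_g(x,z)\III_2(z)\,(dv_g)_z+\mfp(x)$ (resp.\ with $\III_3$ when $u_0=0$), then bounding $\|\trh_0\|_{H^1}^2\gtrsim\int_M\III_2\,G_g\,\III_2\,dv_g-|\int_M\III_2\mfp\,dv_g|$, and crucially exploiting the \emph{pointwise positivity} of $\III_2$ (via \eqref{eq:III2}), $\III_3$ (via \eqref{i3ex}, \eqref{i36}, \eqref{i37}), and of the Green's function $G_g$ (via \eqref{gxy}), so that no cancellation can shrink the main term. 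Without this positivity argument you cannot exclude that the near-kernel absorbs most of the residual. Your step (iii) also understates the work: the comparison is not purely an exponent count — at $N=6$ one must track the mismatch between $|\log\delta_1|^{2/3}$ (the $L^{2N/(N+2)}$ bound), $|\log\delta_1|^{1/2}$ (the genuine $H^{-1}$ size obtained via $\int\III_2G_g\III_2$), and $|\log\delta_1|$ (the Lagrange multiplier for $u_0=0$, non-l.c.f.), and then convert $\delta_1$-powers into $\Gamma(u_*)$-powers using monotonicity, as done at the end of Section~\ref{se4} and in Step~3 of Subsection~\ref{se5.3}. Minor points: the criticality requirement on $\xi_{*,i}$ and the truncation $(\cdot)_+$ are both unnecessary and absent from the paper's proof.
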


Finally, by combining Theorem \ref{thm:Struwe} and Theorems \ref{th1.2}, \ref{th1.3}, and \ref{th1.4}, we obtain
\begin{cor}\label{cor:main}
Let $S > 0$ be the sharp Sobolev constant in \eqref{eq:S} and $\nu_0 \in \N \cup \{0\}$. We assume that every positive solution to \eqref{u00} with $c=1$ is non-degenerate.

\medskip \noindent \textup{(1)} Assume that $3 \le N \le 5$ and $\nu_0 \in \N \cup \{0\}$. If $u$ is a nonnegative function in $H^1(M)$ with $\|u\|_{H^1(M)}^2 \le (\nu_0+\frac{1}{2})S^N$, then there exists a constant $C > 0$ depending only on $N$, $\nu_0$, and $(M,g)$ such that
\begin{equation}\label{eq:cor}
\begin{medsize}
\displaystyle \inf\left\{\bigg\|u-\bigg(u_0+\sum_{i=1}^{\nu} \mcv_{\delta_i,\xi_i}\bigg)\bigg\|_{H^1(M)}: u_0 \text{ solves } \eqref{u00} \text{ with } c=1,\, \mcv_{\delta_i,\xi_i} \in \mcb,\, \nu = 0,\ldots,\nu_0\right\} \le C\zeta(\Gamma(u))
\end{medsize}
\end{equation}
where $\zeta(t) = t$ for $t \in [0,\infty)$ and
\begin{multline*}
\mcb := \{\mcv_{\delta,\xi}: \mcv_{\delta,\xi} \text{ is a bubble-like function defined by \eqref{ui} if } u_0 > 0 \\ \text{ and \eqref{vi} if } u_0 = 0,\, (\delta,\xi) \in (0,\infty) \times M\}.
\end{multline*}
We obey the convention $\sum_{i=1}^0 \mcv_{\delta_i,\xi_i} = 0$.

\medskip \noindent \textup{(2)} Assume that $N \ge 6$ and $(M,g)$ is l.c.f. If $u$ is a nonnegative function in $H^1(M)$ with $\|u\|_{H^1(M)}^2 \le \frac{3}{2}S^N$, then there exists a constant $C > 0$ depending only on $N$ and $(M,g)$ such that \eqref{eq:cor} with $\nu_0 = 1$ holds where $\zeta \in C^0([0,\infty))$ satisfies
\[\zeta(t) = \begin{cases}
t|\log t|^{\frac12} &\text{if } N=6,\\
t^{\frac{N+2}{2(N-2)}} &\text{if } N \ge 7\\
\end{cases}\]
for $t > 0$ and
\begin{equation}\label{eq:cor2}
\mcb := \{\mcv_{\delta,\xi}: \mcv_{\delta,\xi} \text{ is a bubble-like function defined by \eqref{w1}},\, (\delta,\xi) \in (0,\infty) \times M\}.
\end{equation}

\medskip \noindent \textup{(3)} Assume that $N \ge 6$ and the Weyl tensor on $(M,g)$ never vanishes.
If $u$ is a nonnegative function in $H^1(M)$ with $\|u\|_{H^1(M)}^2 \le \frac{3}{2}S^N$, then there exists a constant $C > 0$ depending only on $N$ and $(M,g)$ such that \eqref{eq:cor} with $\nu_0 = 1$ holds where $\zeta \in C^0([0,\infty))$ satisfies
\[\zeta(t) = \begin{cases}
t|\log t|^{\frac12} &\text{if } N=6,\\
t^{\frac{N+2}{2(N-2)}} &\text{if } 7 \le N \le 10,\\
t^{\frac{N+2}{16}} &\text{if } 11 \le N \le 13,\\
t &\text{if } N \ge 14
\end{cases}\]
and $\mcb$ is defined by \eqref{eq:cor2}.
\end{cor}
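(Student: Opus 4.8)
The plan is to obtain Corollary~\ref{cor:main} from Struwe's global compactness theorem (Theorem~\ref{thm:Struwe}) together with the three quantitative estimates of Theorems~\ref{th1.2}, \ref{th1.3}, and \ref{th1.4}, via a compactness-contradiction argument; this also explains why the constant in \eqref{eq:cor} may depend only on $N$, $\nu_0$, $(M,g)$, since the argument pins down one weak limit $u_0$ at a time. I would work under the standing hypotheses inherited from Assumption~\ref{assum}, namely that $(M,g)$ is not conformally equivalent to $\S^N$ and $Y(M,[g])>0$ (so that $\mcl_g u=u^{2^*-1}$ has a positive solution and Theorem~\ref{thm:Struwe} applies). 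Fix one of the three cases of the corollary and write $\nu_0$ for the corresponding integer ($\nu_0=1$ in cases (2) and (3)), $\zeta$ for the corresponding modulus, and $\mcb$ for the corresponding family. First I would record two elementary facts valid for every admissible $u$ (nonnegative, $\|u\|_{H^1(M)}^2\le(\nu_0+\tfrac12)S^N$): the left-hand side $L(u)$ of \eqref{eq:cor} is at most $\|u\|_{H^1(M)}$ (take $\nu=0$ and $u_0\equiv0$, which solves \eqref{u00} with $c=1$), and $\Gamma(u)=\|\mcl_g u-u^{2^*-1}\|_{H^{-1}(M)}\lesssim\|u\|_{H^1(M)}+\|u\|_{H^1(M)}^{2^*-1}$; both are thus bounded in terms of $N,\nu_0,(M,g)$ only, so for $\Gamma(u)$ bounded below by any fixed threshold the estimate $L(u)\le C\zeta(\Gamma(u))$ is immediate once $C$ is large. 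Hence it suffices to treat $\Gamma(u)\to0$.

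Suppose the corollary fails. Then there is a sequence $\{u_k\}$ of admissible functions with $\Gamma(u_k)>0$ and $L(u_k)/\zeta(\Gamma(u_k))\to\infty$ (the case $\Gamma(u_k)=0$ is excluded, since then $u_k$ solves \eqref{u00} with $c=1$ and $L(u_k)=0$). As $L(u_k)$ is bounded, $\zeta(\Gamma(u_k))\to0$, hence $\Gamma(u_k)\to0$. I would then apply Theorem~\ref{thm:Struwe} to $\{u_k\}$ with $C_0=[(\nu_0+\tfrac12)S^N]^{1/2}$: after passing to a subsequence it produces a nonnegative smooth solution $u_0$ of \eqref{u00} with $c=1$, an integer $\nu\le C_0^2S^{-N}=\nu_0+\tfrac12$ (so $\nu\in\{0,\dots,\nu_0\}$), and parameters $(\delta_{ik},\xi_{ik})$ with $\delta_{ik}\to0$ obeying the interaction relation \eqref{eq:inter}, such that \eqref{eq:inter2} and \eqref{eq:bubblel} hold. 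Combining \eqref{eq:inter2} with the triangle inequality and \eqref{eq:bubblel} yields
\[\bigg\|u_k-\bigg(u_0+\sum_{i=1}^{\nu}\chi(d_g(\cdot,\xi_{ik}))\,\mcu_{\delta_{ik},\xi_{ik}}\bigg)\bigg\|_{H^1(M)}\to0,\]
and \eqref{eq:inter} shows that the maximum appearing in Assumption~\ref{assum} tends to $0$; with $\delta_{ik}\to0$, this certifies that for all large $k$ the function $u_k$ satisfies Assumption~\ref{assum} with this $u_0$, this $\nu$, and $(\tde_i,\txi_i)=(\delta_{ik},\xi_{ik})$.

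Now I would invoke the appropriate quantitative theorem. If $\nu=0$, then $u_k\to u_0$ strongly in $H^1(M)$ and the non-bubbling analysis of Section~\ref{se6} --- coercivity of $\mcl_g$ when $u_0\equiv0$, of the non-degenerate linearization $\mcl_g-(2^*-1)u_0^{2^*-2}$ when $u_0>0$ --- gives $\|u_k-u_0\|_{H^1(M)}\le C\Gamma(u_k)$. If $\nu\ge1$: when $u_0>0$ it is a positive solution of \eqref{u00} with $c=1$, hence non-degenerate by the blanket hypothesis of the corollary, and I would apply Theorem~\ref{th1.2} for $3\le N\le5$ and Theorem~\ref{th1.4}(1) for $N\ge6$; when $u_0\equiv0$, Theorem~\ref{th1.3} for $3\le N\le5$ and Theorem~\ref{th1.4}(2) for $N\ge6$. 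In case (3) with $(M,g)$ non-l.c.f., the requirement $\textup{Weyl}_g(\txi_1)\ne0$ in Theorem~\ref{th1.4} holds because the Weyl tensor of $(M,g)$ never vanishes; in case (2), $(M,g)$ is l.c.f.\ and no Weyl condition intervenes. In each instance the chosen theorem delivers $\mcv_1,\dots,\mcv_\nu$ belonging to the family $\mcb$ of the corollary --- of the form \eqref{ui}, \eqref{vi}, or \eqref{w1} exactly as in the definition of $\mcb$ --- with $\|u_k-(u_0+\sum_{i=1}^\nu\mcv_i)\|_{H^1(M)}\le C(N,\nu,u_0,(M,g))\,\Gamma(u_k)$. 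Since $(u_0,\nu,\{\mcv_i\})$ is one admissible competitor in the infimum \eqref{eq:cor}, and $\Gamma(u_k)\le\zeta(\Gamma(u_k))$ for $\Gamma(u_k)$ small (using $\zeta(t)=t$ in case (1) and $\zeta(t)\ge t$ near $0$ in cases (2)--(3)), one concludes $L(u_k)\le C(N,\nu,u_0,(M,g))\,\zeta(\Gamma(u_k))$ for all large $k$, so $L(u_k)/\zeta(\Gamma(u_k))$ stays bounded --- contradicting $L(u_k)/\zeta(\Gamma(u_k))\to\infty$. This contradiction establishes the corollary.

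Essentially all the analytic weight sits in Theorems~\ref{thm:Struwe} and \ref{th1.2}--\ref{th1.4}, so no individual step above is hard in itself; the part I expect to require real care --- and would check most carefully --- is the case-splitting in the third paragraph: confirming that every blow-up profile produced by Struwe's theorem is governed by exactly one of Theorems~\ref{th1.2}--\ref{th1.4}, that the side conditions those theorems demand (non-degeneracy of a positive weak limit $u_0$, and non-vanishing of $\textup{Weyl}_g(\txi_1)$ in the borderline dimensions) match precisely the blanket hypotheses imposed on $(M,g)$ in the corollary, and that the family $\mcb$ of the corollary coincides with the class of test functions \eqref{ui}/\eqref{vi}/\eqref{w1} that these theorems output. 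A lesser point is the bookkeeping of the second paragraph --- verifying that Struwe's decomposition genuinely places $u_k$ inside the hypothesis set of Assumption~\ref{assum}.
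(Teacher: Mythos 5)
Your argument is essentially the paper's own: a contradiction argument that feeds a hypothetical sequence of counterexamples into Struwe's global compactness theorem, verifies Assumption~\ref{assum}, and then invokes Theorems~\ref{th1.2}--\ref{th1.4} (for $\nu\ge1$) or a separate non-degeneracy/spectral argument (for $\nu=0$) to force a uniform bound. The paper writes out only part~(1) in detail and declares (2)--(3) ``similar,'' whereas you handle all three cases uniformly via $\zeta$; that is merely a difference in exposition, not in method.

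One small but real imprecision in your third paragraph: you assert that ``in each instance the chosen theorem delivers \dots\ $\|u_k-(u_0+\sum_{i=1}^\nu\mcv_i)\|_{H^1(M)}\le C\,\Gamma(u_k)$,'' and then upgrade to $\zeta(\Gamma(u_k))$ using $\Gamma\le\zeta$ near $0$. That description is accurate for Theorems~\ref{th1.2}, \ref{th1.3}, and the $\nu=0$ branch, all of which genuinely produce a linear bound in $\Gamma$, but Theorem~\ref{th1.4} for $N\ge6$ produces the bound directly with $\zeta(\Gamma(u))$ on the right-hand side, \emph{not} $\Gamma(u)$. The correct statement is: for $\nu\ge1$ and $N\ge6$ invoke Theorem~\ref{th1.4} to get $\le C\zeta(\Gamma(u_k))$ directly, and use the comparison $\Gamma\le\zeta$ only in the $\nu=0$ branch (and trivially in part~(1)). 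The conclusion is unchanged, but as written the intermediate claim misstates what Theorem~\ref{th1.4} gives.
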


\subsection{Related results}
Quantitative stability for sharp functional inequalities is a classical subject that have attracted to researchers for decades.
In a seminal work \cite{BL}, Brezis and Lieb raised a question of quantitative stability for extermizers of the Sobolev embedding $\dot{H}^1(\R^N) \hookrightarrow L^{2^*}(\R^N)$. Bianchi and Egnell \cite{BE} answered it by deriving
\begin{equation}\label{eq:BE1}
\|u\|_{\dot{H}^1(\R^N)}^2 - S^2\|u\|_{L^{2^*}(\R^N)}^2 \ge C_{\text{BE}} \inf\left\{\left\|u - cU_{\delta,\sigma}\right\|_{\dot{H}^1(\R^N)}^2: \delta > 0,\ \sigma \in \R^N,\ c \in \R \right\}
\end{equation}
for any $u \in H^1(\S^N)$ and some $C_{\text{BE}} > 0$ determined by $N$; refer to a recent paper of Dolbeault, Esteban, Figalli, Frank, and Loss \cite{DEFFL} where the authors proved the explicit stability constant estimate $C_{\text{BE}} \ge \frac{\beta}{N}$ for some $\beta > 0$ independent of $N$.
Let $g_0$ be the metric on the round sphere $\S^N$ and $\mcm_{(M,g)}$ the set of minimizers of \eqref{eq:Yinv} that attain the Yamabe invariant.
Owing to the conformal equivalence between the manifolds $\R^N \cup \{\infty\}$ and $\S^N$, inequality \eqref{eq:BE1} is rephrased as
\begin{equation}\label{eq:BE2}
Q_{(\S^N,g_0)}(u) - Y(\S^N,[g_0]) \ge \wtc_{\text{BE}}\, \frac{\inf\left\{\|u-v\|_{H^1(\S^N)}^2: v \in \mcm_{(\S^N,g_0)} \right\}}{\|u\|_{H^1(\S^N)}^2}
\end{equation}
for any $0 \le u \in H^1(\S^N)$ and some $\wtc_{\text{BE}} > 0$.
By utilizing the Łojasiewicz inequality, Engelstein, Neumayer, and Spolaor \cite{ENS} recently obtained a generalization of \eqref{eq:BE1}--\eqref{eq:BE2} that holds on any smooth closed Riemannian manifold $(M,g)$.
Their main result is that if $(M,g)$ is not conformally equivalent to $(\S^N,g_0)$, then there exists $\wtc_{\text{ENS}} > 0$ and $\ga \ge 0$ depending on $(M,g)$ such that
\begin{equation}\label{eq:ENS}
Q_{(M,g)}(u) - Y(M,[g]) \ge \wtc_{\text{ENS}}\, \frac{\inf\left\{\|u-v\|_{H^1(M)}^{2+\ga}: v \in \mcm_{(M,g)} \right\}}{\|u\|_{H^1(M)}^{2+\ga}}
\end{equation}
for any $0 \le u \in H^1(M)$. In addition, one can take $\ga = 0$ generically (in the sense made in \cite{ENS}), but $\ga = 2$ is optimal if $M = \S^1(\frac{1}{\sqrt{N-2}}) \times \S^{N-1}$ as shown by Frank \cite{Fr}.
In \cite{NV}, Nobili and Violo established a similar stability result on a wide class of Riemannian manifolds, which makes a direct comparison between almost extremal functions and bubbles.

\medskip
Inequalities \eqref{eq:BE2} and \eqref{eq:ENS} concern the stability of the variational problem \eqref{eq:Yinv} near its minimizers, or equivalently, that of equation \eqref{u00} near positive least energy solutions.
On the other hand, our quantitative estimates \eqref{eq:sqe1}, \eqref{eq:sqe2}, \eqref{eq:sqe3}, and \eqref{eq:sqe4} take into account the overall solution structure of \eqref{u00}, so their accompanying analysis is much more cumbersome.
The latter type of studies have been spotlighted after the works of \cite{CFM, FG, DSW} mentioned in Subsection \ref{subsec:mot}.
Analogous results were achieved for the Caffarelli-Kohn-Nirenberg inequalities \cite{WW}, the fractional Sobolev inequalities \cite{Ar, DK, CKW},
the half-harmonic maps \cite{DSW2}, the Poincar\'e-Sobolev inequalities \cite{BGKM}, the Hardy-Littlewood-Sobolev inequalities \cite{LZZ, PYZ}, among others.

\subsection{Novelty of the proof}\label{subsec:novel}
Our argument is influenced by Deng, Sun, and Wei \cite{DSW}, which essentially provides an alternative proof of \cite[Theorem 3.3]{FG} for $3 \le N \le 5$ as a by-product.
To work on arbitrary smooth compact Riemannian manifolds $(M,g)$, one has to develop several new technical novelties. We briefly explain the unique features of our proof.

\medskip \noindent (1) Unlike the case $M = \S^N$, one has to consider when $u_0$ is positive. The presence of such $u_0$ increases the complexity of the analysis as can be seen
in the derivation of a coercivity inequality (see e.g. Proposition \ref{41}) and evaluation of the interaction strength between $u_0$ and bubble-like functions $\mcv_i$ (see e.g. Lemmas \ref{le2.3}, \ref{le24}, and \ref{le2.4}).
Particularly, compared to \cite{DSW}, we also have to control an additional term $\max_{i=1,\ldots,\nu} \delta_i^{(N-2)/2}$ in Propositions \ref{pr2.2} and \ref{lou0}. This term is non-comparable to $\mcq$ in \eqref{rq} directly.

\medskip \noindent (2) In contrast to \cite{CFM}, the function $\zeta$ in the quantitative estimates \eqref{eq:sqe3} and \eqref{eq:sqe4} may be significantly larger than $t$ even for the single-bubbling case with $N \ge 6$.
This phenomenon happens due to the interplay of the bubbles in $\R^N$, a solution $u_0$ to \eqref{u00}, and the metric $g$
(more precisely, the Weyl tensor $\textup{Weyl}_g$ and the mass $A_g$ on $(M,g)$ evaluated at the centers of the bubbles; refer to \eqref{gx} and the following sentence for the definition of the mass).

The dimensions $N = 11$ and $14$ appear in \eqref{eq:zeta1} since they are the smallest dimensions such that $\frac{N-2}{2} > 4$ (see \eqref{eq:pr442}, \eqref{eq:s43}, and \eqref{n13m}) and $\frac{N+2}{4} \ge 4$ (see \eqref{4i32}, \eqref{123}, and \eqref{eq:rhoest}), respectively.

\medskip \noindent (3) The choice of the bubble-like functions $\mcv_i$ depends on the dimension $N$, geometric assumptions on $(M,g)$, and whether $u_0$ is positive or identically $0$ on $M$.

If $3 \le N \le 5$ and $u_0 > 0$, then $u_0$ is the most dominant factor for the quantitative estimate, enabling us to take a truncated bubble $\chi(d_g(\cdot,\xi_i))\mcu_{\delta_i,\xi_i}$ for $\mcv_i$ near the concentration point $\xi_i \in M$.
The term $(1-\chi(d_g(\cdot,\xi)))U_{\delta,0}(\tfrac{r_0}{2})$ is required to capture the interactions among different $\mcv_i$'s.

If $3 \le N \le 5$ and $u_0 = 0$, then we need more precise information of $\mcv_i$ than before, which we achieve by using conformal changes of the metric $g$ and the Green's function $G_g$ of the conformal Laplacian $\mcl_g$.

If $N \ge 6$ and $\nu = 1$, then the combined effects of the bubbles in $\R^N$, $u_0$, and $g$ determine which choice of $\mcv_1$ is the simplest.
To illustrate, suppose that $u_0 = 0$ and $(M,g)$ is non-l.c.f. In order to achieve \eqref{eq:sqe4}, one has to deduce an optimal estimate for the $L^{2N/(N+2)}(M)$-norm of the term $\III_3$ in \eqref{III23}.
If $N \ge 11$, one can obtain it by simply taking $\mcv_1 = \Lambda_{\xi_1} \chi(d_{g_{\xi_1}}(\cdot,\xi_1))\mcu_{\delta_1,\xi_1}^{g_{\xi_1}}$; see \eqref{123}.
If $6 \le N \le 10$, one needs improve it as in \eqref{w1}; see \eqref{124}.

\medskip \noindent (4) Unlike \cite{CFM}, we need pointwise estimates of $\rho := u-(u_0+\mcv_1)$ to deduce the optimal estimates for $N \ge 6$ and $\nu = 1$.
If $N=6$, the $L^{2N/(N+2)}(M)$-estimates of the error terms in the proof of Propositions \ref{pr41} and \ref{pr47} yield merely a rough estimate of powers of the $|\log \delta_1|$ terms in \eqref{l4u1} and \eqref{l4q}.
To obtain the optimal result (see Corollaries \ref{cr43} and \ref{cr46}), we appeal to pointwise estimates of $\rho$ (see Lemmas \ref{le4.2} and \ref{le4.8}).
We need pointwise estimates of $\rho$ even for $N \ge 7$; refer to \eqref{poi} and Subsections \ref{se5.2} and \ref{se5.3}.

\medskip \noindent (5) In the derivation of the coercivity inequalities in Propositions \ref{41} and \ref{co}, we do not use bump functions as in \cite{FG}, providing a relatively simpler proof. This method is also used in \cite{CKW}.

\medskip \noindent (6) In Section \ref{se5}, we give a proof for the optimality of Theorem \ref{th1.2} and \ref{th1.3}.
When $M = \S^N$, this result was taken for granted in \cite{FG} and \cite{DSW}, and can be shown by modifying \cite[Remark 1.2]{CFM} suitably.
However, we decided to include the proof here to point out the necessity of delicate estimates arising from the interaction between different bubbles.

\subsection{Structure of the paper}
Our paper is organized as follows:

In Section \ref{se2}, we handle the case when $3 \le N \le 5$ and $u_0 > 0$, proving Theorem \ref{th1.2}.

In Section \ref{se3}, we treat the case when $3 \le N \le 5$ and $u_0 = 0$, deducing Theorem \ref{th1.3}.

In Section \ref{se4}, we deal with the situation when $N \ge 6$ and $\nu = 1$, establishing Theorem \ref{th1.4}.

The proofs of Theorems \ref{th1.2}--\ref{th1.4} follow a parallel structure, though the difficult parts in each theorem vary, as depicted in the following table. 
\begin{table}[!ht]
\centering
\renewcommand\arraystretch{2}{
\begin{tabular}{|c|c|c|c|c|}
\hline
\diagbox{Results}{Cases} & {\makecell{$3 \le N \le 5$,\\ $u_0>0$}} & {\makecell{$3 \le N \le 5$,\\ $u_0=0$}} & {\makecell{$N \ge 6,\, \nu=1$,\\$u_0>0$}} & {\makecell{$N \ge 6,\, \nu=1$,\\ $u_0=0$}}\\
\hline
{\makecell{coercivity\\estimates for\\multi-bubbles}} & Proposition \ref{41} & Proposition \ref{co} & (not applicable) & (not applicable) \\
\hline
{\makecell{$L^{2N/(N+2)}(M)$-\\estimates for the\\error terms}} & Lemma \ref{s2l2} & Lemma \ref{lem3.3} & {\makecell{in the proof of\\Proposition \ref{pr41} }} & {\makecell{in the proof of\\Proposition \ref{pr4.7}}}\\
\hline
{\makecell{{\footnotesize{$\|\rho\|_{H^1(M)} \lesssim \|f\|_{H^{-1}(M)}$}}\\{\footnotesize{+(auxiliary terms)}}}} & {\makecell{Proposition \ref{pr2.2}\\(followed by\\Lemmas \ref{le2.3}\\and \ref{le24})}}
& Proposition \ref{pr3.2} & {\makecell{Proposition \ref{pr41}\\(for $N \ge 7$),\\Corollary \ref{cr43}\\(for $N=6$)}} & {\makecell{Proposition \ref{pr4.7}\\(for $N \ge 7$),\\Corollary \ref{cr46}\\(for $N=6$)}}\\
\hline
{\makecell{(auxiliary terms)\\$\lesssim \|f\|_{H^{-1}(M)}$}} & Proposition \ref{lou0} & Proposition \ref{3.3} & Proposition \ref{pr44} & Proposition \ref{pr47}\\
\hline
{\makecell{projections of\\the error terms in\\ the $\delta_j\frac{\partial{\mcv_j}}{\partial \delta_j}$-direction}} & {\makecell{Lemmas \ref{le2.4},\\\ref{le29}, and \ref{le2.6}}} & {\makecell{Lemmas \ref{le35}\\and \ref{le36}}} & {\makecell{Lemmas \ref{s43}\\and \ref{s44}}} & {\makecell{in the proof of\\Proposition \ref{pr47}}}\\
\hline
\end{tabular}}
\end{table}

In Section \ref{se5}, we show that the quantitative stability estimate stated in Theorems \ref{th1.2}, \ref{th1.3}, and \ref{th1.4} are all optimal, deriving Theorem \ref{th1.6}.

In Section \ref{se6}, we prove Corollary \ref{cor:main}.

In Appendices \ref{a} and \ref{a2}, we present some useful estimates and technical computations that are necessary in the proof of the main theorems.

\subsection{Conventions}
Here, we list some notations that will be used throughout the paper.
\begin{itemize}
\item[-] Given any $\delta > 0$ and $\sigma = (\sigma^1,\ldots,\sigma^N) \in \R^N$, the solution space of the linear problem
    \[-\Delta v=(2^*-1) U_{\delta,\sigma}^{2^*-2} v \quad \text { in } \R^N, \quad v \in \dot{H}^1(\R^N)\]
    is spanned by the functions
    \[Z_{\delta,\sigma}^0 := \delta \frac{\pa U_{\delta,\sigma}}{\pa \delta} \quad \text{and} \quad Z_{\delta,\sigma}^k := \delta \frac{\pa U_{\delta,\sigma}}{\pa \sigma^k} \quad \text{for } k=1,\ldots,N.\]
    Let $U$ be the standard bubble $U_{1,0}$ and $Z^k = Z_{1,0}^k$ for $k=0,\ldots,N$.
\vspace{1mm}

\item[-] The notations $\nabla_g$, $\Delta_g$, $\la\cdot,\cdot\ra_g$, $|\cdot|_g$, $dv_g$ and $\exp^g$ stand for the gradient, the Laplace-Beltrami operator,
    the inner product, the norm, the volume form and the exponential map with respect to the metric $g$, respectively.
    If the metric $g$ is Euclidean, we drop the subscript $g$.
    Also, the subscript $x$ in the integral $\int_M \cdots (dv_g)_x$ represents the variable of integration.
\vspace{1mm}

\item[-] We occasionally use the Einstein summation convention for repeated indices.
\vspace{1mm}

\item[-] For $\xi \in M$, a metric $g$ on $M$, and $r \in (0,\infty)$, we write $B^g_r(\xi) = \{x \in M: d_g(x,\xi) \le r\}$ and $(B^g_r(\xi))^c = \{x \in M: d_g(x,\xi) > r\}$.
    Let also $B_r(0) = \{y \in \R^N: |y| \le r\}$ and $B^c_r(0) = \{y \in \R^N: |y| > r\}$.
\vspace{1mm}

\item[-] $h_1=\mco(h_2)$ means that $|h_1| \le C|h_2|$ for a universal constant $C>0$ independent of $\vep_0 > 0$ in \eqref{tui}
    and the parameters $(\delta_1,\ldots,\delta_{\nu},\xi_1,\ldots,\xi_{\nu}) \in (0,\infty)^{\nu} \times M^{\nu}$ of bubble-like functions $\mcv_{\delta_1,\xi_1},\ldots,\mcv_{\delta_{\nu},\xi_{\nu}}$.
    Also, we write $h_1 = o(h_2)$ if $h_1/|h_2| \to 0$ as $\vep_0 \to 0$.
\vspace{1mm}

\item[-] $h_1 \lesssim h_2$ or $h_1 \gtrsim h_2$ denote that $h_1 \le Ch_2$ or $h_1 \ge Ch_2$ for a universal constant $C>0$, respectively.
    We write $h_1 \simeq h_2$ if $h_1 \lesssim h_2$ and $h_1 \gtrsim h_2$.
\vspace{1mm}

\item[-] Given a condition (C), we let $\bs{1}_{\textup{(C)}} = 1$ if (C) is true and 0 otherwise.
\end{itemize}

\section{The case $3 \le N \le 5$ and $u_0 > 0$}\label{se2}
This section is devoted to the proof of Theorem \ref{th1.2}. Throughout this section, we always assume that $3 \le N \le 5$ and $u_0 > 0$ on $M$.

\subsection{Setting of the problem}\label{subsec:set}
Given $\xi \in M$, we choose an orthonormal basis $\{\frac{\pa}{\pa \xi^1},\ldots,\frac{\pa}{\pa \xi^N}\}$ on the tangent space $T_{\xi}M$, isomorphic to $\R^N$, and define
\[\frac{\pa\mcv_{\delta,\xi}}{\pa \xi^k}(x) = \left. \frac{d}{dt} \mcv_{\delta,\exp^g_{\xi}\big(t \frac{\pa}{\pa \xi^k}\big)}(x) \right|_{t=0} \quad \text{for } x \in M.\]
Then we set $\mcv_i = \mcv_{\delta_i,\xi_i}$ as in \eqref{ui},
\begin{equation}\label{eq:wtmcz}
\mcz^0_i = \delta_i \frac{\pa\mcv_i}{\pa\delta_i}, \quad \text{and} \quad \mcz^k_i = \delta_i \frac{\pa\mcv_i}{\pa\xi_i^k} \quad \text{for } i=1,\ldots,\nu \text{ and } k=1,\ldots,N.
\end{equation}
By Assumption \ref{assum}, there exist $(\delta_1,\ldots,\delta_{\nu}, \xi_1,\ldots,\xi_{\nu}) \subset (0,\infty)^{\nu} \times M^{\nu}$ and $\vep_1 > 0$ small such that $\vep_1 \to 0$ as $\vep_0 \to 0$,
\[\begin{medsize}
\displaystyle \bigg\|u-\bigg(u_0+\sum_{i=1}^{\nu}\mcv_i\bigg)\bigg\|_{H^1(M)} = \inf\left\{\bigg\|u-\bigg(u_0+\sum_{i=1}^{\nu} \mcv_{\tde_i,\txi_i}\bigg)\bigg\|_{H^1(M)}: \(\tde_i,\txi_i\) \in (0,\infty) \times M,\ i=1,\ldots,\nu\right\} \le \vep_1,
\end{medsize}\]
$\delta_i \le \vep_1$ for $i=1,\ldots,\nu$, and
\begin{equation}\label{eq:bubblei}
\max\left\{\(\frac{\delta_i}{\delta_j} + \frac{\delta_j}{\delta_i} + \frac{d_g(\xi_i,\xi_j)^2}{\delta_i\delta_j}\)^{-\frac{N-2}{2}}: i,j = 1,\ldots,\nu\right\} \le \vep_1;
\end{equation}
refer to \cite[Appendix A]{BC}. Setting $\rho = u-(u_0+\sum_{i=1}^{\nu}\mcv_i)$ and $f = \mcl_gu-u^{2^*-1}$, we have
\begin{equation}\label{eqrho}
\begin{cases}
\displaystyle \mcl_g\rho - (2^*-1)\bigg(u_0+\sum_{i=1}^{\nu}\mcv_i\bigg)^{2^*-2}\rho = f+\I_1[\rho]+\I_2+\I_3+\I_4 \quad \text{on } M,\\
\displaystyle \big\langle \rho,\mcz^k_i \big\rangle_{H^1(M)} = 0 \quad \text{for } i=1,\ldots,\nu \text{ and } k=0,\ldots,N
\end{cases}
\end{equation}
where $\la \cdot,\cdot \ra_{H^1(M)}$ be the inner product on $H^1(M)$ associated with the norm $\|\cdot\|_{H^1(M)}$ in \eqref{eq:norm},
\[\I_1[\rho] := \bigg(u_0+\sum_{i=1}^{\nu}\mcv_i+\rho\bigg)^{2^*-1} - \bigg(u_0 + \sum_{i=1}^{\nu}\mcv_i\bigg)^{2^*-1} - (2^*-1) \bigg(u_0 + \sum_{i=1}^{\nu} \mcv_i\bigg)^{2^*-2}\rho,\]
\begin{equation}\label{eq:I2}
\I_2 := \bigg(u_0 + \sum_{i=1}^{\nu}\mcv_i\bigg)^{2^*-1}-u_0^{2^*-1}-\bigg(\sum_{i=1}^{\nu} \mcv_i\bigg)^{2^*-1},
\end{equation}
\begin{equation}\label{eq:I3I4}
\I_3 := \bigg(\sum_{i=1}^{\nu} \mcv_i\bigg)^{2^*-1} - \sum_{i=1}^{\nu} \mcv_i^{2^*-1}, \quad \text{and} \quad \I_4 := \sum_{i=1}^{\nu} \(-\mcl_g\mcv_i+\mcv_i^{2^*-1}\).
\end{equation}
To prove Theorem \ref{th1.2}, it is enough to verify that
\begin{equation}\label{rf}
\|\rho\|_{H^1(M)} \lesssim \|f\|_{H^{-1}(M)}
\end{equation}
provided $\vep_0 > 0$ small.

On the other hand, since the Yamabe invariant $Y(M,[g])$ is assumed to be positive, so is the conformal Laplacian $\mcl_g$.
Then the spectral theorem guarantees the existence of sequences of functions $\{\psi_m\}_{m \in \N} \in H^1(M)$ and positive numbers $\{\bmu_m\}_{m\in\N}$ satisfying the following properties:
\begin{itemize}
\item[-] $\psi_m$ solves an eigenvalue problem
    \[\mcl_g\psi_m = \bmu_m u_0^{2^*-2}\psi_m \quad \text{on } M.\]
\item[-] The set $\{\psi_m\}_{m\in\N}$ is an orthonormal basis of the space $L^2(M,u_0^{2^*-2}dv_g)$. Thus
    \begin{equation}\label{psij}
    \int_M u_0^{2^*-2}\psi_l\psi_m dv_g = \delta^{lm} := \begin{cases}
    1 &\text{for } l=m,\\
    0 &\text{for } l \ne m.
    \end{cases}
    \end{equation}
\item[-] $0<\bmu_1<\bmu_2 \le \bmu_3 \le \cdots \to \infty$.
\end{itemize}
Elliptic regularity ensures that $\psi_m \in C^{\infty}(M)$ for all $m \in \N$. Since $u_0 > 0$ on $M$, we also know that $\bmu_1=1$ and $\psi_1=\|u_0\|_{L^{2^*}(M)}^{-N/(N-2)}u_0$.
For later use, let $L$ be the greatest number such that $0<\bmu_l<2^*-1 = \frac{N+2}{N-2}$ for all $l\le L$.

Finally, it is noteworthy that
\begin{equation}\label{maeu}
-\Delta_g u=-\Delta u-\(g^{ij}-\delta^{ij}\) \pa_{ij}^2 u + g^{ij}\Gamma_{ij}^k \pa_ku
\end{equation}
in $g$-normal coordinates around any fixed point $\xi \in M$, where $\Gamma_{ij}^k$ is the Christoffel symbol, and
\begin{equation}\label{exp}
g^{ij}(x) = \delta^{ij}(x)+\mco\(d_g(x,\xi)^2\) \quad \text{and} \quad \(g^{ij} \Gamma_{ij}^k\)(x) = \mco(d_g(x,\xi))
\end{equation}
for $x \in M$ near $\xi$.

\subsection{Preliminary computations}
Let us define
\begin{equation}\label{rq}
\begin{cases}
\displaystyle q_{ij} = \left[{\frac{\delta_i}{\delta_j}} + {\frac{\delta_j}{\delta_i}} + \frac{d_g(\xi_i,\xi_j)^2}{{\delta_i\delta_j}}\right]^{-\frac{N-2}{2}}, \quad \mcq = \max\{q_{ij}: i,j = 1,\ldots,\nu\} \le \vep_1, \\
\displaystyle \msr_{ij} = \max\left\{\sqrt{\frac{\delta_i}{\delta_j}}, \sqrt{\frac{\delta_j}{\delta_i}}, \frac{d_g(\xi_i,\xi_j)}{\sqrt{\delta_i\delta_j}}\right\} \simeq q_{ij}^{-\frac{1}{N-2}}.
\end{cases}
\end{equation}
The following lemma serves estimates for the inner products of $\mcv_i$, $\mcz^k_i$ and $\psi_m$, which will be frequently invoked later.
\begin{lemma}\label{le2p}
Assume that $i,j \in \{1,\ldots,\nu\}$, $k,l \in \{0,1,\ldots,N\}$, and $m \in \N$. We have
\[\la \mcv_i,\mcv_i \ra_{H^1(M)} = \int_{\R^N} U^{2^*} + o\Big(\delta_i^{\frac{N-2}{2}}\Big), \quad \big\langle \mcz^k_i,\mcv_i \big\rangle_{H^1(M)} = o\Big(\delta_i^{\frac{N-2}{2}}\Big),\]
\[\big\langle \mcz^k_i,\mcz^l_i \big\rangle_{H^1(M)} = \big\|Z^k\big\|_{\dot{H}^1(\R^N)}^2 \delta^{kl} + o\Big(\delta_i^{\frac{N-2}{2}}\Big),\]
and
\[\left|\la \mcv_i,\mcv_j \ra_{H^1(M)}\right| + \left|\big\langle \mcz^k_i,\mcv_j \big\rangle_{H^1(M)}\right| + \left|\big\langle \mcz^k_i,\mcz^l_j \big\rangle_{H^1(M)}\right| = \mco(q_{ij}) + o\Big(\max_{\ell = 1,\ldots,\nu}\delta_{\ell}^{\frac{N-2}{2}}\Big)\]
provided $i \ne j$. Additionally,
\[\left|\la \psi_m,\mcv_i \ra_{H^1(M)}\right| + \left|\big\langle \psi_m,\mcz^k_i \big\rangle_{H^1(M)}\right| = \mco\Big(\delta_i^{\frac{N-2}{2}}\Big).\]
\end{lemma}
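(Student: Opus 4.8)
The plan is to reduce every inner product to an integral over $M$ using the definition \eqref{eq:norm} of $\langle\cdot,\cdot\rangle_{H^1(M)}$, namely $\langle\phi,\varphi\rangle_{H^1(M)} = \int_M (\langle\nabla_g\phi,\nabla_g\varphi\rangle_g + \kappa_N R_g \phi\varphi)\,dv_g$, and then exploit that after integration by parts this equals $\int_M (\mcl_g\phi)\varphi\,dv_g$ whenever $\phi$ is smooth enough. For the bubble-like functions we first record that, by the definition \eqref{ui}, inside $B^g_{r_0/2}(\xi_i)$ the function $\mcv_i$ coincides with the geodesic bubble $\mcu_{\delta_i,\xi_i}$, while on $(B^g_{r_0}(\xi_i))^c$ it is the constant $U_{\delta_i,0}(r_0/2) = \mco(\delta_i^{(N-2)/2})$, and on the annulus it interpolates between the two; the same structural decomposition holds for $\mcz^k_i = \delta_i\partial\mcv_i/\partial(\cdot)$. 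Passing to $g$-normal coordinates at $\xi_i$ and using \eqref{maeu}--\eqref{exp}, the leading term of $\langle\mcv_i,\mcv_i\rangle_{H^1(M)}$ comes from $\int_{B_{r_0}(0)}|\nabla U_{\delta_i,0}|^2\,dy$, which after the rescaling $y = \delta_i z$ equals $\int_{\R^N}|\nabla U|^2 + \mco(\delta_i^{N-2})= \int_{\R^N}U^{2^*} + \mco(\delta_i^{N-2})$; every correction — the metric error $g^{ij}-\delta^{ij} = \mco(|y|^2)$, the Christoffel term, the $\kappa_N R_g\mcv_i^2$ term, the annular transition piece, and the tail on $(B^g_{r_0}(\xi_i))^c$ — is $o(\delta_i^{(N-2)/2})$ by a direct size count (for $3\le N\le 5$ one checks $\delta_i^{N-2} = o(\delta_i^{(N-2)/2})$ and the boundary/annulus contributions are $\mco(\delta_i^{N-2})$ as well). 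The estimates $\langle\mcz^k_i,\mcv_i\rangle_{H^1(M)} = o(\delta_i^{(N-2)/2})$ and $\langle\mcz^k_i,\mcz^l_i\rangle_{H^1(M)} = \|Z^k\|_{\dot H^1}^2\delta^{kl} + o(\delta_i^{(N-2)/2})$ follow the same way, using that $\int_{\R^N}\langle\nabla Z^k,\nabla Z^l\rangle = \|Z^k\|_{\dot H^1}^2\delta^{kl}$ by orthogonality of the Jacobi fields of the standard bubble and that $Z^k$ has enough decay to make all perturbative terms lower order.

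For the cross terms $i\ne j$ the strategy is different: here one uses the Euler--Lagrange-type identity $\langle\mcv_i,\mcv_j\rangle_{H^1(M)} = \int_M(\mcl_g\mcv_i)\mcv_j\,dv_g = \int_M \mcv_i^{2^*-1}\mcv_j\,dv_g + \int_M(\mcl_g\mcv_i - \mcv_i^{2^*-1})\mcv_j\,dv_g$. The first integral is a classical bubble-interaction integral, and by the Bahri--Coron estimate \cite[(5)]{BC} (quoted in the excerpt, cf.\ \eqref{eq:inter} and \eqref{rq}) it is $\mco(q_{ij})$ up to lower order; the second integral is controlled by the $L^{2N/(N+2)}(M)$-smallness of $\mcl_g\mcv_i - \mcv_i^{2^*-1}$ paired with $\|\mcv_j\|_{L^{2^*}(M)} = \mco(1)$, and a separate size count shows this contributes at most $\mco(q_{ij}) + o(\max_\ell\delta_\ell^{(N-2)/2})$ — the extra $\delta_\ell^{(N-2)/2}$ appearing precisely because of the constant tail $(1-\chi)U_{\delta,0}(r_0/2)$ in \eqref{ui} and the curvature term $\kappa_N R_g\mcv_i$, which are not seen in the Euclidean picture. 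The same scheme applies to $\langle\mcz^k_i,\mcv_j\rangle_{H^1(M)}$ and $\langle\mcz^k_i,\mcz^l_j\rangle_{H^1(M)}$, differentiating the interaction integral in $\delta_i$ or $\xi_i^k$ and noting that each derivative is absorbed by the factor $\delta_i$ in the definition \eqref{eq:wtmcz}, so the order $q_{ij}$ is unchanged.

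Finally, for the inner products against the eigenfunctions $\psi_m$, I would again write $\langle\psi_m,\mcv_i\rangle_{H^1(M)} = \int_M(\mcl_g\psi_m)\mcv_i\,dv_g = \bar\mu_m\int_M u_0^{2^*-2}\psi_m\mcv_i\,dv_g$ using the eigenvalue equation, and similarly for $\mcz^k_i$. Since $\psi_m, u_0 \in C^\infty(M)$ are fixed (independent of the blow-up parameters) and bounded, the integral is dominated by $\int_M \mcv_i\,dv_g$ (resp.\ $\int_M|\mcz^k_i|\,dv_g$), which by the rescaling $y = \delta_i z$ and the truncation structure is $\mco(\delta_i^{(N-2)/2})$ for $3\le N\le 5$ — here the point is that $\int_{B_{r_0}(0)} U_{\delta_i,0}\,dy = \mco(\delta_i^{(N-2)/2}\cdot\delta_i^{(N-2)/2}\cdot\delta_i^{?})$; more precisely $\int_{\R^N}U_{\delta,0} = \delta^{(N-2)/2}\int_{|z|<r_0/\delta}\alpha_N(1+|z|^2)^{-(N-2)/2}\,dz$, and for $N\le 5$ the integral converges or grows slower than any negative power, giving exactly $\mco(\delta_i^{(N-2)/2})$, with the constant tail of $\mcv_i$ contributing the same order. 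The main obstacle I anticipate is the bookkeeping in the cross-term estimates: one must simultaneously track two genuinely different small quantities, the interaction size $q_{ij}$ and the "global" scale $\max_\ell\delta_\ell^{(N-2)/2}$, which by the remark in Subsection \ref{subsec:novel}(1) are not directly comparable, and one has to be careful that differentiating the interaction integrals in the bubble parameters (to get the $\mcz$-estimates) does not worsen the power of $q_{ij}$ — this requires the precise form of the Bahri--Coron estimates and a careful choice of which region of $M$ carries the dominant contribution, rather than any single hard computation.
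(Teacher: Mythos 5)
Your proposal follows essentially the same route the paper takes — structural decomposition of $\mcv_i$ and $\mcz^k_i$ in normal coordinates, rescaling to identify the Euclidean leading terms, Bahri--Coron bubble-interaction estimates for the cross terms, and a direct size count (in $L^{2N/(N+2)}$, against $L^{2^*}$) for the remaining error pieces; the paper simply records the pointwise expansion of $\mcz^k_i$ via \eqref{eq:dist1}--\eqref{eq:dist2} and declares the rest standard, and your outline is precisely those standard computations. One cosmetic slip in the $\psi_m$ paragraph: the change of variables $y=\delta z$ yields the prefactor $\delta^{(N+2)/2}$ rather than $\delta^{(N-2)/2}$, and the rescaled integral $\int_{|z|<r_0/\delta}(1+|z|^2)^{-(N-2)/2}\,dz$ grows like $\delta^{-2}$ for every $N\ge 3$ instead of converging as you assert — the two errors offset, so the stated conclusion $\int_M\mcv_i\,dv_g=\mco\big(\delta_i^{(N-2)/2}\big)$ is still correct and agrees with Lemma~\ref{a4}.
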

\begin{proof}
Using \eqref{eq:dist1} and \eqref{eq:dist2} with $\xi = \xi_i$ and $y_2 = 0$, we obtain
\begin{multline*}
\mcz^k_i(x) \\
= \begin{cases}
\displaystyle (N-2)\alpha_N \frac{\delta_i^{\frac{N}{2}}}{(\delta_i^2+|y|^2)^{\frac{N}{2}}} \left[y^k + \mco\(|y|^3\)\right] &\text{if } x = \exp_{\xi_i}^g(y) \in B^g_{\frac{r_0}{2}}(\xi_i),\, y = (y^1,\cdots,y^N), \\
\displaystyle \mco\Big(\delta_i^{\frac{N-2}{2}}\Big) &\text{if } x \in M \setminus B^g_{\frac{r_0}{2}}(\xi_i)
\end{cases}
\end{multline*}
for $i=1,\ldots,\nu$ and $k=1,\ldots,N$. Once we have this, the proof becomes standard.
\end{proof}		
Next, we present a coercivity estimate tailored to our setting, which serves as an important tool in the proof of Proposition \ref{pr2.2}.
Its Euclidean version can be found in \cite[Proposition 3.10]{FG} where bump functions is a key ingredient. Here, we present a different proof based on a blow-up argument.
Because the proof is a bit lengthy, we defer it to Appendix \ref{subsec:coer}.
\begin{prop}\label{41}
Suppose that $u_0$ is non-degenerate. Let
\begin{multline}\label{eq:Eperp}
E^{\perp} = \left\{\vrh \in H^1(M): \la \vrh,\mcv_i \ra_{H^1(M)} = \big\langle \vrh,\mcz^k_i \big\rangle_{H^1(M)} = \la \vrh,\psi_m \ra_{H^1(M)} = 0,\right. \\
\left. \text{for } i=1,\ldots,\nu,\ k=0,1,\ldots,N,\ m=1,\ldots,L\right\}.
\end{multline}
Then there exists a constant $c_0 \in(0,1)$ such that
\begin{equation}\label{eq:coer}
(2^*-1) \int_M \bigg(u_0+\sum_{i=1}^{\nu}\mcv_i\bigg)^{2^*-2}\vrh^2 dv_g \le c_0\|\vrh\|_{H^1(M)}^2 \quad \text{for any } \vrh \in E^{\perp}.
\end{equation}
\end{prop}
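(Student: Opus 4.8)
The plan is to establish the coercivity inequality \eqref{eq:coer} by a contradiction-compactness (blow-up) argument, exploiting the non-degeneracy of $u_0$ and the fact that the bubbles $\mcv_i$ concentrate. Suppose \eqref{eq:coer} fails for every $c_0 \in (0,1)$; then for each $n$ there exist parameters $(\delta_{1n},\ldots,\delta_{\nu n},\xi_{1n},\ldots,\xi_{\nu n})$ satisfying the interaction bounds \eqref{eq:bubblei} and a function $\vrh_n \in E^{\perp}$ with $\|\vrh_n\|_{H^1(M)} = 1$ such that $(2^*-1)\int_M (u_0 + \sum_i \mcv_{in})^{2^*-2}\vrh_n^2\, dv_g \to 1$. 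The idea is to analyze $\vrh_n$ near each concentration point $\xi_{in}$ after rescaling, and away from all of them, and to show that in each regime the limiting problem forces $\vrh_n \to 0$ strongly, contradicting $\|\vrh_n\|_{H^1(M)}=1$.

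First I would record the algebraic inequality $(a_0 + \sum_i a_i)^{2^*-2} \le (1+\epsilon) a_0^{2^*-2} + C_\epsilon \sum_i a_i^{2^*-2}$ valid for nonnegative reals (since $2^*-2 \le 1$ for $N \ge 4$, and handled directly for $N=3$ where the exponent is $4 > 1$ but only finitely many bubbles appear), so that the quadratic form splits, up to small errors coming from the overlaps between distinct bubbles and between $u_0$ and the bubbles — these overlaps are $o(1)$ by the interaction estimates in Lemma \ref{le2p} and standard bubble-interaction bounds. Then I would treat three contributions: (i) the piece of $\vrh_n$ seen at scale $\delta_{in}$ around $\xi_{in}$, which after the rescaling $y \mapsto \exp_{\xi_{in}}^g(\delta_{in} y)$ and the conformal flattening of the metric converges weakly in $\dot H^1(\R^N)$ to a solution $W_i$ of $-\Delta W_i = (2^*-1)U^{2^*-2}W_i$; (ii) the ``slow'' part of $\vrh_n$, which converges weakly in $H^1(M)$ to some $\vrh_0$ solving $\mcl_g \vrh_0 = (2^*-1)u_0^{2^*-2}\vrh_0$ on $M$; (iii) the remaining mass, which must vanish because $u_0^{2^*-2}$ and the bubble densities have negligible $L^{N/2}$-mass outside shrinking neighborhoods.

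The orthogonality conditions then kill both limits. The condition $\la \vrh_n, \mcz_{in}^k\ra_{H^1(M)} = 0$ for $k = 0,\ldots,N$ passes to the limit and forces $W_i$ to be orthogonal to $\mathrm{span}\{Z^0,\ldots,Z^N\}$, which is the entire solution space of the linearized bubble equation, so $W_i \equiv 0$; the condition $\la \vrh_n,\mcv_{in}\ra_{H^1(M)}=0$ is a redundant but consistent extra constraint. For the slow part, the constraints $\la \vrh_n,\psi_m\ra_{H^1(M)} = 0$ for $m=1,\ldots,L$ together with non-degeneracy of $u_0$ force $\vrh_0 \equiv 0$: indeed $\vrh_0$ lies in the eigenspace sum for eigenvalues $\bmu_m \ge 2^*-1$, but a solution of $\mcl_g\vrh_0 = (2^*-1)u_0^{2^*-2}\vrh_0$ would require $2^*-1$ to be an eigenvalue $\bmu_m$; non-degeneracy of $u_0$ means exactly that $\mcl_g - (2^*-1)u_0^{2^*-2}$ has trivial kernel, so $\vrh_0 = 0$. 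Having shown all three pieces vanish, I would upgrade weak convergence to strong convergence of the quadratic form: $(2^*-1)\int_M(u_0+\sum_i\mcv_{in})^{2^*-2}\vrh_n^2\,dv_g \to 0$, contradicting the assumption that this tends to $1$. A byproduct: since $\|\vrh_n\|^2_{H^1} = 1$ but the RHS quantity goes to $0$, the constant $c_0 := \limsup$ of the ratio is strictly below $1$, which is what we want.

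The main obstacle I anticipate is the bookkeeping in regime (iii) and at the interface between scales — i.e., showing rigorously that no mass of $\vrh_n$ escapes detection by the three limiting problems, which requires a careful partition (e.g. dyadic annuli $\delta_{in} R \le d_g(x,\xi_{in}) \le r_0/R$ and the complement) together with the concentration-compactness principle applied to the measure $(u_0+\sum_i\mcv_{in})^{2^*-2}\vrh_n^2\,dv_g$, uniform control of the cross terms via \eqref{eq:bubblei}, and the fact that $\|\mcv_{in}\|_{L^{2^*}}$ over the ``neck'' region is $o(1)$. The conformal-normal-coordinate change is needed to make the rescaled equation converge to the clean Euclidean linearized equation rather than one with error terms; I would invoke the expansions \eqref{maeu}--\eqref{exp} to control those errors, which are lower order precisely because $3 \le N \le 5$ keeps the relevant error exponents subcritical in the rescaled variable.
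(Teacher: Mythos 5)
Your overall strategy—a blow-up-by-contradiction argument separating the bubble scales from the slow scale, with orthogonality killing the limiting profiles—is the same as the paper's, but there is a substantive gap at its core: you assert that the weak limit $\vrh_0$ of the slow part solves $\mcl_g\vrh_0=(2^*-1)u_0^{2^*-2}\vrh_0$ and that the rescaled limit $W_i$ solves $-\Delta W_i=(2^*-1)U^{2^*-2}W_i$, but neither claim is justified, and neither is true for a generic near-maximizing sequence. An arbitrary $\vrh_n\in E^{\perp}$ with $\|\vrh_n\|_{H^1(M)}=1$ satisfies no PDE, and so its weak limits at any scale need not satisfy one either. Without a PDE for the limits, the orthogonality conditions are far too weak to conclude: $\la\vrh_0,\psi_m\ra_{H^1(M)}=0$ for $m\le L$, together with $\int_{\R^N}\nabla W_i\cdot\nabla Z^k=0$ and $\int_{\R^N}\nabla W_i\cdot\nabla U=0$, are finite-codimension constraints and do not by themselves force $\vrh_0\equiv 0$ or $W_i\equiv 0$. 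The logic ``orthogonal to the kernel of the linearized operator, hence zero'' only works once you know the limit actually lies in that kernel, which requires the equation.

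The paper's proof closes this gap by taking $\vrh_n$ to be the \emph{exact} maximizer of the constrained variational problem (the supremum in \eqref{2.11} is attained because the weight lies in $L^{N/2}(M)$); the associated Lagrange-multiplier identity \eqref{2.13} then supplies the PDE that your argument lacks. The paper first shows the multipliers attached to the constraint functionals vanish (Step~1), then passes the equation to the limit at each scale to get eigenvalue equations $\mcl_g\vrh_\infty=\mu_\infty u_0^{2^*-2}\vrh_\infty$ and $-\Delta\tvrh_{j\infty}=\mu_\infty U^{2^*-2}\tvrh_{j\infty}$ with $\mu_\infty\le 2^*-1$ (Steps~2 and~3), and only then invokes orthogonality together with nondegeneracy of $u_0$ (resp.\ the classification of the linearized bubble kernel) to kill the limits. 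Adopting this variational framing would also dissolve your own stated worry about mass ``escaping detection'': once the limits vanish at each scale, the fact that $U^{2^*-2}\in L^{N/(N-2)}(\R^N)$ pairs against the $L^{N/2}$-weak limit of $\tvrh_{in}^2$ controls the tails uniformly, giving $\int_M\big(u_0+\sum_i\mcv_i\big)^{2^*-2}\vrh_n^2\,dv_g\to 0$ directly (Step~4), contradicting the normalization. Two smaller slips: the claim ``$2^*-2\le 1$ for $N\ge 4$'' is false in the relevant range $3\le N\le 5$, where $2^*-2=4/(N-2)>1$ (your inequality still holds for exponents $p\ge 1$, just by a different argument); and the orthogonality to $\mcv_i$ is not redundant—it is precisely what excludes the first eigenfunction $U$ of the linearized bubble operator in the limit, which is needed because the limiting Lagrange multiplier $\mu_\infty$ may lie at or below $1$ rather than equaling $2^*-1$.
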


We will also need estimates for the $L^{2N/(N+2)}(M)$-norm of $\I_2$, $\I_3$, and $\I_4$.
\begin{lemma}\label{s2l2}
We have
\begin{equation}\label{u0rho}
\|\I_2\|_{L^{\frac{2N}{N+2}}(M)}+\|\I_3\|_{L^{\frac{2N}{N+2}}(M)}+\|\I_4\|_{L^{\frac{2N}{N+2}}(M)} \lesssim \mcq + \max_{\ell}\delta_{\ell}^{\frac{N-2}{2}}.
\end{equation}
\end{lemma}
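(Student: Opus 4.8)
The plan is to estimate each of the three terms $\I_2$, $\I_3$, $\I_4$ separately in the $L^{2N/(N+2)}(M)$-norm, using the elementary pointwise inequalities for powers together with the pointwise bounds on $u_0$, the $\mcv_i$'s, and their geometric interaction quantities $q_{ij}$. For $\I_3$, which captures the cross-interaction among distinct bubble-like functions, I would use the fact that for $p = 2^*-1 \le 2$ when $N \ge 6$ but $p > 2$ when $3 \le N \le 5$; since we are in the regime $3 \le N \le 5$ here, $2^*-1 = \frac{N+2}{N-2} \in (2,7]$, so one must be a little careful and use the inequality $|(\sum a_i)^p - \sum a_i^p| \lesssim \sum_{i \ne j} a_i a_j^{p-1}$ valid for nonnegative $a_i$. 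Then I would bound $\|\mcv_i \mcv_j^{2^*-2}\|_{L^{2N/(N+2)}(M)}$ by a constant multiple of $q_{ij}$ (up to the usual $o(\max_\ell \delta_\ell^{(N-2)/2})$ correction coming from the non-flatness of the metric near the poles and the tail term $(1-\chi)U_{\delta,0}(r_0/2)$ in \eqref{ui}); this is essentially the standard interaction integral estimate, which I would carry out in $g$-normal coordinates around $\xi_i$ using \eqref{eq:dist1}, \eqref{eq:dist2}, splitting $M$ into the regions closest to each $\xi_i$ and the far region, and comparing with the Euclidean computation as in \cite{BC} or \cite{FG}.

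For $\I_2$, which measures the interaction between $u_0$ and the sum of bubbles, I would again use $|(a+b)^p - a^p - b^p| \lesssim a b^{p-1} + a^{p-1} b$ for nonnegative $a, b$ with $a = u_0$ bounded above and below by positive constants and $b = \sum_i \mcv_i$. The term $\|u_0^{2^*-2} \sum_i \mcv_i\|_{L^{2N/(N+2)}(M)}$ is controlled by $\sum_i \|\mcv_i\|_{L^{2N/(N+2)}(M)} \lesssim \sum_i \delta_i^{(N-2)/2}$ (here the low dimension is what makes $\mcv_i$ lie in $L^{2N/(N+2)}$ with the right decay rate — one checks $\int_M \mcv_i^{2N/(N+2)} dv_g$ converges for $N \le 5$ and scales like $\delta_i^{N(N-2)/(N+2)}$, whose power-$(N+2)/(2N)$ is $\delta_i^{(N-2)/2}$). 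The term $\|u_0 (\sum_i \mcv_i)^{2^*-2}\|_{L^{2N/(N+2)}(M)}$ is dominated by $\sum_i \|\mcv_i^{2^*-2}\|_{L^{2N/(N+2)}(M)}$, and $\int_M \mcv_i^{(2^*-2) \cdot 2N/(N+2)} dv_g = \int_M \mcv_i^{2^*-2\cdot\frac{2}{2^*}\cdot\ldots}$ — more cleanly, $(2^*-2)\cdot\frac{2N}{N+2} = \frac{4}{N-2}\cdot\frac{2N}{N+2} = \frac{8N}{(N-2)(N+2)}$, and one verifies this integral also scales like a power of $\delta_i$ whose $(N+2)/(2N)$-th root is $\lesssim \delta_i^{(N-2)/2}$ for $N \le 5$. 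Thus $\|\I_2\|_{L^{2N/(N+2)}} \lesssim \sum_\ell \delta_\ell^{(N-2)/2} \lesssim \max_\ell \delta_\ell^{(N-2)/2}$.

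For $\I_4 = \sum_i (-\mcl_g \mcv_i + \mcv_i^{2^*-1})$, I would estimate each summand separately. Writing $\mcv_i = \chi \mcu_{\delta_i,\xi_i} + (1-\chi) U_{\delta_i,0}(r_0/2)$, inside $B^g_{r_0/2}(\xi_i)$ we have $\mcv_i = \mcu_{\delta_i,\xi_i} = U_{\delta_i,0}(d_g(\cdot,\xi_i))$, and in $g$-normal coordinates $-\Delta_g \mcv_i + \mcv_i^{2^*-1}$ is, via \eqref{maeu}, the sum of the Euclidean term $-\Delta U_{\delta_i,0} + U_{\delta_i,0}^{2^*-1} = 0$ plus the curvature corrections $-(g^{ij}-\delta^{ij})\partial^2_{ij}\mcv_i + g^{ij}\Gamma^k_{ij}\partial_k \mcv_i$ plus the zeroth-order term $\kappa_N R_g \mcv_i$; by \eqref{exp} these are $\mco(d_g^2 |\nabla^2 U_{\delta_i,0}| + d_g |\nabla U_{\delta_i,0}| + U_{\delta_i,0})$, and integrating the $2N/(N+2)$-power in normal coordinates gives a bound $\lesssim \delta_i^{(N-2)/2}$ for $N \le 5$ (this is a routine but slightly tedious scaling computation — change variables $y = \delta_i z$, isolate the $\delta_i$-power, and check convergence of the $z$-integral, noting the extra $d_g^2$, $d_g$ weights improve convergence). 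In the annulus $r_0/2 \le d_g \le r_0$ and beyond, $\mcv_i$ and all its derivatives are $\mco(\delta_i^{(N-2)/2})$ uniformly, so those contributions are trivially $\mco(\delta_i^{(N-2)/2})$. Summing over $i$ gives $\|\I_4\|_{L^{2N/(N+2)}} \lesssim \max_\ell \delta_\ell^{(N-2)/2} \le \mcq + \max_\ell \delta_\ell^{(N-2)/2}$.

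\textbf{Main obstacle.} The delicate part is the interaction estimate for $\I_3$: one must show $\|\mcv_i\mcv_j^{2^*-2}\|_{L^{2N/(N+2)}(M)} \lesssim q_{ij}$ (plus the harmless $\delta$-corrections), and when $2^*-1 > 2$ — i.e., precisely in the dimensions $3 \le N \le 5$ treated here — the naive symmetric bound is not enough; one needs the sharper inequality keeping the factor $\mcv_j^{2^*-2}$ on the more concentrated bubble, and then a careful region decomposition of $M$ (near $\xi_i$, near $\xi_j$, and away from both) with the Euclidean interaction integrals evaluated as in \cite{BC, FG}, all transplanted to $(M,g)$ via the distance expansions \eqref{eq:dist1}–\eqref{eq:dist2}. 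Everything else is routine scaling analysis made easy by the low-dimensional hypothesis $N \le 5$.
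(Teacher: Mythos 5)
Your proposal is correct and follows essentially the same route as the paper: split into $\I_2,\I_3,\I_4$, apply the elementary power inequality \eqref{iqu} to reduce to the $L^{2N/(N+2)}$-norms of $u_0^{2^*-2}\mcv_i$, $u_0\mcv_i^{2^*-2}$, and $\mcv_i^{2^*-2}\mcv_j$, invoke the interaction estimate (Lemma \ref{a22}) for the cross terms, and expand $-\mcl_g\mcv_i+\mcv_i^{2^*-1}$ in $g$-normal coordinates using \eqref{maeu}–\eqref{exp} for $\I_4$. The paper packages the last step via the explicit identity \eqref{i4} and pointwise bounds \eqref{cutt} together with the prepared estimate \eqref{uln1}, but the substance is identical to what you describe.
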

\begin{proof}
By Lemma \ref{a4} below, it holds that
\begin{equation}\label{uln1}
\|\mcv_i\|_{L^{\frac{2N}{N+2}}(M)} \lesssim \delta_i^{\frac{N-2}{2}} \quad \text{and} \quad \left\|\mcv_i^{2^*-2}\right\|_{L^{\frac{2N}{N+2}}(M)} \lesssim \delta_i^{\frac{N-2}{2}}.
\end{equation}
Using \eqref{iqu}, \eqref{uln1}, and $u_0 \in L^{\infty}(M)$, we readily compute
\begin{equation}\label{i2r}
\|\I_2\|_{L^{\frac{2N}{N+2}}(M)} \lesssim \sum_{i=1}^{\nu} \left\|u_0\mcv_i^{2^*-2}\right\|_{L^{\frac{2N}{N+2}}(M)} + \sum_{i=1}^{\nu} \left\|u_0^{2^*-2}\mcv_i\right\|_{L^{\frac{2N}{N+2}}(M)} \lesssim \max_{\ell}\delta_{\ell}^{\frac{N-2}{2}}.
\end{equation}
Besides, \eqref{iqu} and Lemma \ref{a22} tell us that
\begin{equation}\label{i3r}
\|\I_3\|_{L^{\frac{2N}{N+2}}(M)} \lesssim \sum_{i \ne j} \left\|\mcv^{2^*-2}_i \mcv_j\right\|_{L^{\frac{2N}{N+2}}(M)} \lesssim \mcq.
\end{equation}
Let $x = \exp_{\xi_i}^g(y) \in M$ for $y \in B_{r_0}(0)$ and $\chi_i(x) = \chi(d_g(x,\xi_i))$. We have
\begin{equation}\label{i4}
\begin{aligned}
\I_4(x) &= \sum_{i=1}^{\nu} \left[\left\{\(\chi_i(x)U_{\delta_i,0}(y)+(1-\chi_i(x))U_{\delta_i,0}(\tfrac{r_0}{2})\)^{2^*-1} - \chi_i(x)U_{\delta_i,0}^{2^*-1}(y)\right\} \right. \\
&\hspace{35pt} + \left. (\Delta_g\chi_i)(x) \(U_{\delta_i,0}(y)-U_{\delta_i,0}(\tfrac{r_0}{2})\) + \la \nabla_g\chi_i(x), \nabla_gU_{\delta_i,0}(y)\ra_g\right] \\
&\ - \sum_{i=1}^{\nu} \left[\chi_i(x) \left\{\mcl_g(U_{\delta_i,0}(y)) + (\Delta U_{\delta_i,0})(y)\right\} + \ka_NR_g(x)(1-\chi_i(x))U_{\delta_i,0}(\tfrac{r_0}{2})\right].
\end{aligned}
\end{equation}
By applying \eqref{chi0}, \eqref{maeu}, and \eqref{exp}, we easily check that
\begin{equation}\label{cutt}
\begin{cases}
\displaystyle \left\|\(\chi_iU_{\delta_i,0}(y)+(1-\chi_i)U_{\delta_i,0}(\tfrac{r_0}{2})\)^{2^*-1} - \chi_i U_{\delta_i,0}^{2^*-1}(y)\right\|_{L^{\frac{2N}{N+2}}(M)} \lesssim \delta_i^{N-2}, \\
\displaystyle \left\|(\Delta_g\chi_i) \(U_{\delta_i,0}(y)-U_{\delta_i,0}(\tfrac{r_0}{2})\) + \la \nabla_g\chi_i, \nabla_gU_{\delta_i,0}(y)\ra_g\right\|_{L^{\frac{2N}{N+2}}(M)} \lesssim \delta_i^{\frac{N-2}{2}}, \\
\displaystyle |\chi_i(x) \left\{\mcl_g(U_{\delta_i,0}(y)) + (\Delta U_{\delta_i,0})(y)\right\} + \ka_NR_g(x)(1-\chi_i(x))U_{\delta_i,0}(\tfrac{r_0}{2})| \lesssim \chi(|y|)U_{\delta_i,0}(y)+\delta_i^{\frac{N-2}{2}}.
\end{cases}
\end{equation}

From \eqref{uln1} again, we observe
\begin{equation}\label{i4r}
\|\I_4\|_{L^{\frac{2N}{N+2}}(M)} \lesssim \max_{\ell}\delta_{\ell}^{\frac{N-2}{2}}.
\end{equation}
Putting \eqref{i2r}, \eqref{i3r}, and \eqref{i4r} together, we obtain \eqref{u0rho}.
\end{proof}

\subsection{Proof of Theorem \ref{th1.2}}\label{su2.3}
One can decompose the function $\rho = u-(u_0+\sum_{i=1}^{\nu}\mcv_i)$ in Subsection \ref{subsec:set} as
\begin{equation}\label{dec}
\rho = \rho_1 + \sum_{i=1}^{\nu} \beta_i\mcv_i + \sum_{i=1}^{\nu}\sum_{k=0}^N \beta_i^k\mcz^k_i + \sum_{m=1}^L \vth_m \psi_m \quad \text{for some } \beta_i, \beta_i^k, \vth_m \in \R,\, \rho_1 \in E^{\perp}
\end{equation}
where $E^{\perp}$ is the space defined in \eqref{eq:Eperp}. We will accomplish the proof of Theorem \ref{th1.2}, that is, the verification of \eqref{rf} in two stages; Propositions \ref{pr2.2} and \ref{lou0}.
\begin{prop}\label{pr2.2}
Let $\mcq$ be the quantity in \eqref{rq}. It holds that
\[\|\rho\|_{H^1(M)} \lesssim \|f\|_{H^{-1}(M)} + \mcq + \max_{\ell=1,\ldots,\nu}\delta_{\ell}^{\frac{N-2}{2}}.\]
\end{prop}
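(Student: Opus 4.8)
The plan is to test equation \eqref{eqrho} against $\rho$ itself, using the orthogonality conditions $\langle \rho,\mcz^k_i\rangle_{H^1(M)}=0$ together with the decomposition \eqref{dec} and the coercivity estimate of Proposition \ref{41}. First I would pair the first line of \eqref{eqrho} with $\rho$ in the $H^1(M)$-duality, obtaining
\[
\|\rho\|_{H^1(M)}^2 - (2^*-1)\int_M \Big(u_0+\textstyle\sum_i\mcv_i\Big)^{2^*-2}\rho^2\, dv_g = \langle f+\I_1[\rho]+\I_2+\I_3+\I_4,\rho\rangle.
\]
The left-hand side is not yet coercive because $\rho\notin E^{\perp}$ in general; the components of $\rho$ along $\mcv_i$, $\mcz^k_i$, and $\psi_m$ in \eqref{dec} must be handled separately. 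For the $\psi_m$-directions with $m\le L$ I would exploit that $\bmu_m<2^*-1$, so the quadratic form $\|\cdot\|_{H^1(M)}^2-(2^*-1)\int u_0^{2^*-2}(\cdot)^2$ is already negative-definite on $\mathrm{span}\{\psi_1,\dots,\psi_L\}$ (with a correction of size $\mco(\max_\ell\delta_\ell^{(N-2)/2})$ coming from replacing $u_0^{2^*-2}$ by $(u_0+\sum\mcv_i)^{2^*-2}$, controlled by Lemma \ref{le2p} and \eqref{uln1}), and this sign works \emph{in our favor}. The genuinely dangerous directions are $\mcv_i$ and $\mcz^k_i$: here I would use the orthogonality $\langle\rho,\mcz^k_i\rangle_{H^1(M)}=0$ to show $\beta_i^k=\mco(\|\rho\|_{H^1(M)}(\mcq+\max_\ell\delta_\ell^{(N-2)/2}))$ via the Gram matrix estimates in Lemma \ref{le2p}, and bound the $\beta_i$ by testing the equation against $\mcv_i$ (see below), so that $\rho = \rho_1 + \sum_i\beta_i\mcv_i + (\text{small})$ with $\rho_1\in E^{\perp}$.

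Next I would apply Proposition \ref{41} to $\rho_1$: writing $\rho = \rho_1 + (\rho-\rho_1)$ and expanding the quadratic form, the $\rho_1$-part absorbs into $(1-c_0)\|\rho_1\|_{H^1(M)}^2$, while the cross terms and the $(\rho-\rho_1)$-part are controlled by $\|\rho-\rho_1\|_{H^1(M)}$, which in turn is bounded by $\sum_i|\beta_i|\,\|\mcv_i\|_{H^1(M)}$ plus the small quantities just described. For the right-hand side: $\langle f,\rho\rangle \le \|f\|_{H^{-1}(M)}\|\rho\|_{H^1(M)}$; the superquadratic remainder satisfies $\|\I_1[\rho]\|_{L^{2N/(N+2)}(M)}\lesssim \|\rho\|_{H^1(M)}^{2^*-1}+\|\rho\|_{H^1(M)}^{\min(2,2^*-1)}\cdot(\text{interaction factor})$ — since $3\le N\le5$ gives $2^*-1\ge 7/3>2$ only for $N=3$; in any case $\langle\I_1[\rho],\rho\rangle = o(\|\rho\|_{H^1(M)}^2)$ because $\|\rho\|_{H^1(M)}\le\vep_1$ is small — and $\langle\I_2+\I_3+\I_4,\rho\rangle \le (\|\I_2\|+\|\I_3\|+\|\I_4\|)_{L^{2N/(N+2)}(M)}\|\rho\|_{L^{2^*}(M)} \lesssim (\mcq+\max_\ell\delta_\ell^{(N-2)/2})\|\rho\|_{H^1(M)}$ by Lemma \ref{s2l2} and Sobolev embedding. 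Collecting everything yields an inequality of the shape
\[
\|\rho\|_{H^1(M)}^2 \lesssim \Big(\|f\|_{H^{-1}(M)} + \mcq + \max_\ell\delta_\ell^{(N-2)/2}\Big)\|\rho\|_{H^1(M)} + o(1)\|\rho\|_{H^1(M)}^2 + \sum_i|\beta_i|^2,
\]
so it remains to estimate the $\beta_i$.

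To control $\beta_i$ I would test \eqref{eqrho} against $\mcv_i$: the left-hand side produces $\langle\rho,\mcv_i\rangle_{H^1(M)} - (2^*-1)\int(u_0+\sum\mcv_j)^{2^*-2}\rho\,\mcv_i$, and since by \eqref{dec} and Lemma \ref{le2p} we have $\langle\rho,\mcv_i\rangle_{H^1(M)} = \beta_i\|\mcv_i\|_{H^1(M)}^2 + \mco(\text{small})\|\rho\|_{H^1(M)}$, while the integral term is $\mco(\|\rho\|_{H^1(M)})$ times a small factor (the bubbles $\mcv_i^{2^*-2}$ concentrate, and $\int\mcv_i^{2^*-1}\rho$ is handled by Hölder with the tail estimates), and the right-hand side contributes $\|f\|_{H^{-1}(M)}\|\mcv_i\|_{H^1(M)} + \langle\I_1[\rho]+\I_2+\I_3+\I_4,\mcv_i\rangle$. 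The key point — the one place the orthogonality to $\mcz^k_i$ does \emph{not} suffice and $u_0$ genuinely interferes — is bounding $\langle\I_2,\mcv_i\rangle$ and $\langle\I_4,\mcv_i\rangle$; here one uses that $\mcv_i$ is (almost) a solution so that $\I_4$ is small in a weighted sense, and that $\I_2$ is a genuine interaction term between $u_0$ and the bubbles, each contribution being $\mco(\mcq+\max_\ell\delta_\ell^{(N-2)/2})$. This gives $|\beta_i|\lesssim \|f\|_{H^{-1}(M)}+\mcq+\max_\ell\delta_\ell^{(N-2)/2}+ o(1)\|\rho\|_{H^1(M)}$. Substituting back and absorbing the $o(1)\|\rho\|_{H^1(M)}^2$ terms on the left completes the proof. \textbf{I expect the main obstacle to be} the careful bookkeeping of the interaction integrals $\langle\I_2,\mcv_i\rangle$ and the cross terms $\int(u_0+\sum\mcv_j)^{2^*-2}\rho\,\mcv_i$ — precisely the places where the positive limit $u_0$ enters and where, as the authors flag in Subsection~\ref{subsec:novel}(1), the extra term $\max_\ell\delta_\ell^{(N-2)/2}$ (non-comparable to $\mcq$) is forced to appear; the blow-up-based coercivity input of Proposition~\ref{41} is used essentially as a black box.
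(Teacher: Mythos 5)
Your proposal follows essentially the same route as the paper: decompose $\rho=\rho_1+\sum_i\beta_i\mcv_i+\sum_{i,k}\beta_i^k\mcz_i^k+\sum_m\vth_m\psi_m$ with $\rho_1\in E^\perp$, invoke the coercivity of Proposition \ref{41} on $\rho_1$, bound the $\beta_i^k$ via the orthogonality $\langle\rho,\mcz_i^k\rangle_{H^1(M)}=0$ and Lemma \ref{le2p}, bound the $\beta_i$ by testing \eqref{eqrho} against $\mcv_j$, and feed in Lemma \ref{s2l2} for the $L^{2N/(N+2)}$-norms of $\I_2,\I_3,\I_4$. The paper organizes this as two lemmas (Lemma \ref{le2.3}: $\|\rho_1\|\lesssim\|f\|+\mca+\mcq+\max_\ell\delta_\ell^{(N-2)/2}$; Lemma \ref{le24}: $\mca\lesssim\|f\|+\mcq+\max_\ell\delta_\ell^{(N-2)/2}$), but the ingredients are identical to yours.

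One small point deserves a correction. You write that on $\mathrm{span}\{\psi_1,\dots,\psi_L\}$ the quadratic form $\|\cdot\|_{H^1(M)}^2-(2^*-1)\int_M u_0^{2^*-2}(\cdot)^2\,dv_g$ is negative-definite and that ``this sign works in our favor.'' The sign by itself does not help when testing against $\rho$: if $Q$ denotes the full quadratic form and $\rho=\rho_1+\rho_2$, then $Q(\rho)=Q(\rho_1)+Q(\rho_2)+2B(\rho_1,\rho_2)$, and a negative $Q(\rho_2)$ only shows $(1-c_0)\|\rho_1\|^2\lesssim|\langle\text{RHS},\rho\rangle|+|Q(\rho_2)|+\cdots$, so you still need an independent bound on $|\vth_m|$ (the sign of $Q(\rho_2)$ doesn't let you drop it). What the condition $\bmu_s<2^*-1$ actually buys is the \emph{non-vanishing} of the coefficient $2^*-1-\bmu_s$ when one tests \eqref{eqrho} directly against $\psi_s$: the left-hand side yields $(2^*-1-\bmu_s)\vth_s\int_Mu_0^{2^*-2}\psi_s^2\,dv_g$ up to controlled errors, and one solves for $|\vth_s|$ (see \eqref{bp}; non-degeneracy of $u_0$ rules out $\bmu_s=2^*-1$). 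This is a separate test just like the $\mcv_j$-test for $\beta_j$, not a favorable sign in the $\rho$-test. With that correction, your proposal matches the paper's proof.
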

\noindent The quantities $\mcq$ and $\max_{\ell=1,\ldots,\nu}\delta_{\ell}^{(N-2)/2}$ are non-comparable.
We establish Proposition \ref{pr2.2} by deriving Lemmas \ref{le2.3} and \ref{le24} and then combining them.

\begin{lemma}\label{le2.3}
Define a number
\[\mca = \sum_{i=1}^{\nu}|\beta_i| + \sum_{i=1}^{\nu}\sum_{k=0}^N |\beta_i^k| + \sum_{m=1}^L|\vth_m|,\]
which is small by virtue of Lemma \ref{le2p}. It holds that
\begin{equation}\label{rr1}
\|\rho_1\|_{H^1(M)} \lesssim \|f\|_{H^{-1}(M)} + \mca + \mcq + \max_{\ell=1,\ldots,\nu}\delta_{\ell}^{\frac{N-2}{2}}.
\end{equation}
\end{lemma}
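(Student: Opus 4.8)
\textbf{Proof proposal for Lemma \ref{le2.3}.}
The plan is to test the first equation of \eqref{eqrho} against $\rho_1 \in E^{\perp}$ and exploit the coercivity estimate of Proposition \ref{41}. Since $\rho_1$ is orthogonal in $\langle\cdot,\cdot\rangle_{H^1(M)}$ to every $\mcv_i$, $\mcz^k_i$, and $\psi_m$ ($m=1,\dots,L$) occurring in the decomposition \eqref{dec}, we have $\langle\rho,\rho_1\rangle_{H^1(M)} = \|\rho_1\|_{H^1(M)}^2$, so pairing the left-hand side of \eqref{eqrho} with $\rho_1$ gives
\[
\|\rho_1\|_{H^1(M)}^2 - (2^*-1)\int_M\Big(u_0+\sum_{i=1}^{\nu}\mcv_i\Big)^{2^*-2}\rho\,\rho_1\,dv_g.
\]
Splitting $\rho = \rho_1 + (\rho-\rho_1)$ in the integral, the $\rho_1^2$-contribution is $\le c_0\|\rho_1\|_{H^1(M)}^2$ by \eqref{eq:coer}, so this expression is bounded below by $(1-c_0)\|\rho_1\|_{H^1(M)}^2$ minus the cross term $(2^*-1)\int_M(u_0+\sum_i\mcv_i)^{2^*-2}(\rho-\rho_1)\rho_1\,dv_g$. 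Using $\rho-\rho_1 = \sum_i\beta_i\mcv_i + \sum_{i,k}\beta_i^k\mcz^k_i + \sum_m\vth_m\psi_m$, Hölder's inequality, the Sobolev embedding $H^1(M)\hookrightarrow L^{2^*}(M)$, and the uniform (scale-invariant) bounds $\|(u_0+\sum_i\mcv_i)^{2^*-2}\mcv_j\|_{L^{2N/(N+2)}(M)}\lesssim1$ together with their analogues with $\mcz^k_j$ or $\psi_m$ in place of $\mcv_j$, the cross term is $\lesssim\mca\,\|\rho_1\|_{H^1(M)}$.

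For the right-hand side of \eqref{eqrho}, pairing with $\rho_1$ produces four contributions. The term with $f$ is at most $\|f\|_{H^{-1}(M)}\|\rho_1\|_{H^1(M)}$; the terms with $\I_2+\I_3+\I_4$ are, by Lemma \ref{s2l2} and the Sobolev embedding, at most $\lesssim(\mcq+\max_{\ell}\delta_{\ell}^{(N-2)/2})\|\rho_1\|_{H^1(M)}$. The nonlinear term $\I_1[\rho]$ is the delicate one: since $2^*-1>2$ for $3\le N\le5$, the elementary inequality $\big||a+t|^{2^*-1}-|a|^{2^*-1}-(2^*-1)|a|^{2^*-3}a\,t\big|\lesssim|a|^{2^*-3}t^2+|t|^{2^*-1}$ yields $|\I_1[\rho]|\lesssim(u_0+\sum_i\mcv_i)^{2^*-3}\rho^2+|\rho|^{2^*-1}$, and Hölder with exponents chosen so that $(u_0+\sum_i\mcv_i)^{2^*-3}\in L^{2N/(6-N)}(M)$ (a uniformly bounded norm, since $u_0+\sum_i\mcv_i\in L^{2^*}(M)$ with bounded norm and $2^*-3>0$ when $N<6$) gives $\|\I_1[\rho]\|_{L^{2N/(N+2)}(M)}\lesssim\|\rho\|_{H^1(M)}^2$. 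Hence the $\I_1[\rho]$-contribution is $\lesssim\|\rho\|_{H^1(M)}^2\|\rho_1\|_{H^1(M)}$, and because \eqref{dec} with Lemma \ref{le2p} gives $\|\rho\|_{H^1(M)}\le\|\rho_1\|_{H^1(M)}+C\mca$, while $\|\rho\|_{H^1(M)}$ is small by \eqref{tui}, this is $o(1)\big(\|\rho_1\|_{H^1(M)}+\mca\big)\|\rho_1\|_{H^1(M)}$.

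Combining the two sides yields
\[
(1-c_0)\|\rho_1\|_{H^1(M)}^2 \lesssim \Big(\|f\|_{H^{-1}(M)}+\mca+\mcq+\max_{\ell}\delta_{\ell}^{\frac{N-2}{2}}\Big)\|\rho_1\|_{H^1(M)} + o(1)\|\rho_1\|_{H^1(M)}^2,
\]
and after absorbing the last term into the left-hand side (legitimate once $\vep_0$ is small) and dividing by $\|\rho_1\|_{H^1(M)}$ we obtain \eqref{rr1}. I expect the main obstacle to be the bookkeeping for $\I_1[\rho]$ and for the cross term: one must confirm that all ``potential'' factors $(u_0+\sum_i\mcv_i)^{2^*-2}$ or $(u_0+\sum_i\mcv_i)^{2^*-3}$ multiplied by a single bubble-like function (or by an eigenfunction $\psi_m$) have $L^p(M)$-norms bounded uniformly in the concentration parameters $(\delta_i,\xi_i)$, which is exactly where the scale invariance of the bubble integrals and the restriction $N\le5$ (ensuring $2^*-1>2$ and $2^*-3>0$) are used.
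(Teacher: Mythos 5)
Your proof is correct and follows essentially the same line as the paper: test \eqref{eqrho} against $\rho_1$, split $\rho = \rho_1 + (\rho-\rho_1)$ in the quadratic form and apply the coercivity estimate \eqref{eq:coer} to the $\rho_1^2$ part, bound the cross term $\int(u_0+\sum_i\mcv_i)^{2^*-2}(\rho-\rho_1)\rho_1\,dv_g$ by $\mca\|\rho_1\|_{H^1(M)}$ via H\"older, control $\I_2+\I_3+\I_4$ by Lemma \ref{s2l2}, control $\I_1[\rho]$ using \eqref{ab6} and $\|\I_1[\rho]\|_{L^{2N/(N+2)}(M)}\lesssim\|\rho\|_{H^1(M)}^2$, then use $\|\rho\|_{H^1(M)}\lesssim\|\rho_1\|_{H^1(M)}+\mca$ and smallness of $\|\rho\|_{H^1(M)}$ to absorb. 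This matches the paper's chain \eqref{pp1}--\eqref{rb1} step for step.
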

\begin{proof}
Using \eqref{ab6}, we obtain
\begin{equation}\label{i1r}
\|\I_1[\rho]\|_{L^{\frac{2N}{N+2}}(M)} \lesssim \|\rho\|_{H^1(M)}^2 + \|\rho\|_{H^1(M)}^{2^*-1} \simeq \|\rho\|_{H^1(M)}^2.
\end{equation}

By testing \eqref{eqrho} with $\rho_1$ and then invoking \eqref{u0rho}, \eqref{dec}, and \eqref{i1r}, we arrive at
\begin{equation}\label{pp1}
\begin{aligned}
\|\rho_1\|_{H^1(M)}^2 &= (2^*-1)\int_M \bigg(u_0+\sum_{i=1}^{\nu}\mcv_i\bigg)^{2^*-2}\rho\rho_1 dv_g + \mco\(\|f\|_{H^{-1}(M)}\|\rho_1\|_{H^1(M)}\) \\
&\ + \mco\(\|\rho\|_{H^1(M)}^2\|\rho_1\|_{H^1(M)}\) + \mco\(\Big(\mcq + \max_{\ell}\delta_{\ell}^{\frac{N-2}{2}}\Big) \|\rho_1\|_{H^1(M)}\).
\end{aligned}
\end{equation}
In addition, since $\rho_1 \in E^{\perp}$, Proposition \ref{41} gives
\begin{equation}\label{p41}
(2^*-1) \int_M \bigg(u_0+\sum_{i=1}^{\nu}\mcv_i\bigg)^{2^*-2} \rho_1^2 dv_g \le c_0\|\rho_1\|_{H^1(M)}^2 \quad \text{for some } c_0 \in (0,1).
\end{equation}
We see from H\"older's inequality that
\begin{equation}\label{ap1}
\begin{medsize}
\displaystyle (2^*-1) \int_M \bigg(u_0+\sum_{i=1}^{\nu}\mcv_i\bigg)^{2^*-2} \rho_1 \(\sum_{i=1}^{\nu} \beta_i\mcv_i + \sum_{i=1}^{\nu}\sum_{k=0}^N \beta_i^k\mcz^k_i + \sum_{m=1}^L \vth_m\psi_m\) dv_g = \mco\(\mca\|\rho_1\|_{H^1(M)}\)
\end{medsize}
\end{equation}
and from \eqref{dec} that
\begin{equation}\label{rb1}
\|\rho\|_{H^1(M)} \lesssim \|\rho_1\|_{H^1(M)} + \mca.
\end{equation}
Plugging \eqref{p41}--\eqref{rb1} into \eqref{pp1} produces \eqref{rr1} as desired.
\end{proof}

\begin{lemma}\label{le24}
It holds that
\begin{equation}\label{bb}
\mca \lesssim \|f\|_{H^{-1}(M)} + \mcq + \max_{\ell=1,\ldots,\nu}\delta_{\ell}^{\frac{N-2}{2}}.
\end{equation}
\end{lemma}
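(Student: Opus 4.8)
The plan is to estimate each coefficient $\beta_i$, $\beta_i^k$, and $\vth_m$ in the decomposition \eqref{dec} by testing equation \eqref{eqrho} against the appropriate element of the spanning set, exploiting the near-orthogonality relations from Lemma \ref{le2p} and the structure of the right-hand side. The orthogonality condition $\langle \rho, \mcz_i^k\rangle_{H^1(M)} = 0$ in \eqref{eqrho} is the crucial simplification: it removes the $\mcz_i^k$ components entirely when we test with $\mcv_i$ or $\mcz_i^k$, leaving only the principal (essentially diagonal) inner products plus controlled cross terms of size $\mco(\mcq + \max_\ell \delta_\ell^{(N-2)/2})$. I would organize the proof as three groups of estimates, one per type of coefficient.

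First I would estimate the $\vth_m$'s. Test \eqref{eqrho} against $\psi_m$ for $m = 1,\dots,L$. On the left, $\langle \rho, \psi_m\rangle_{H^1(M)} = \langle \mcl_g \rho, \psi_m\rangle$ picks out $\bmu_m$-weighted terms; combined with the quadratic-form term $(2^*-1)\int_M (u_0+\sum\mcv_i)^{2^*-2}\rho\psi_m\,dv_g$, which to leading order is $(2^*-1)\bmu_m^{-1}\vth_m$ modulo errors involving $\mcv_i$-weighted integrals controlled by Lemma \ref{le2p} and the smallness of $\mca$, one gets $(\bmu_m - (2^*-1))\vth_m$ on the left up to lower order. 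Since $\bmu_m \ne 2^*-1$ for $m \le L$ (by the very definition of $L$), this coefficient is bounded away from zero, so $|\vth_m|$ is controlled by $\|f\|_{H^{-1}(M)}$ together with the $L^{2N/(N+2)}(M)$-norms of $\I_1[\rho], \I_2, \I_3, \I_4$ paired against $\psi_m \in L^\infty$, i.e. by $\|f\|_{H^{-1}(M)} + \|\rho\|_{H^1(M)}^2 + \mcq + \max_\ell \delta_\ell^{(N-2)/2}$ using \eqref{i1r} and Lemma \ref{s2l2}. Next, testing against $\mcv_i$ and using $\langle\rho,\mcv_i\rangle_{H^1(M)}$ expanded via \eqref{dec}: the diagonal term $\beta_i \|\mcv_i\|_{H^1(M)}^2 = \beta_i(\int U^{2^*} + o(\delta_i^{(N-2)/2}))$ dominates, cross terms with $\mcv_j$ ($j\ne i$), $\mcz_j^k$, $\psi_m$ are $\mco((\mcq + \max_\ell\delta_\ell^{(N-2)/2})\mca)$ or $\mco(\delta_i^{(N-2)/2}\mca)$ by Lemma \ref{le2p}; meanwhile the right-hand side contributions are handled as before, noting crucially that $\I_4$ tested against $\mcv_i$ needs the refined pointwise bound \eqref{cutt} rather than just the $L^{2N/(N+2)}$-bound, to produce the clean $\mco(\mcq + \max_\ell \delta_\ell^{(N-2)/2})$. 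This yields $|\beta_i| \lesssim \|f\|_{H^{-1}(M)} + \|\rho\|_{H^1(M)}^2 + \mcq + \max_\ell \delta_\ell^{(N-2)/2}$. Finally, testing against $\mcz_i^k$ gives, via the near-diagonal form $\langle \mcz_i^k,\mcz_i^l\rangle_{H^1(M)} = \|Z^k\|^2\delta^{kl} + o(\delta_i^{(N-2)/2})$, the same type of bound for $|\beta_i^k|$; here one uses the vanishing of the orthogonality inner product $\langle \rho, \mcz_i^k\rangle_{H^1(M)}=0$ directly, so the left side is purely the quadratic-form term, which to leading order is $c\,\beta_i^k$ plus controlled errors.

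Summing all these estimates over $i$, $k$, $m$ gives
\[
\mca \lesssim \|f\|_{H^{-1}(M)} + \|\rho\|_{H^1(M)}^2 + \mcq + \max_{\ell=1,\ldots,\nu}\delta_{\ell}^{\frac{N-2}{2}}.
\]
To finish, I would absorb the $\|\rho\|_{H^1(M)}^2$ term: by \eqref{rb1} we have $\|\rho\|_{H^1(M)} \lesssim \|\rho_1\|_{H^1(M)} + \mca$, and by Lemma \ref{le2.3}, $\|\rho_1\|_{H^1(M)} \lesssim \|f\|_{H^{-1}(M)} + \mca + \mcq + \max_\ell \delta_\ell^{(N-2)/2}$, so $\|\rho\|_{H^1(M)}^2 \lesssim (\|f\|_{H^{-1}(M)} + \mca + \mcq + \max_\ell\delta_\ell^{(N-2)/2})^2$. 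Since all of $\mca$, $\mcq$, $\max_\ell\delta_\ell^{(N-2)/2}$, and (for $\vep_0$ small) $\|f\|_{H^{-1}(M)}$ are $o(1)$, the square is negligible compared to the first power, so the quadratic term is absorbed into the left side after rearranging, yielding \eqref{bb}.

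The main obstacle I anticipate is bookkeeping the error terms in the right order: one must verify that testing $\I_4$ (the bubble-error term) against $\mcv_i$ and $\mcz_i^k$ genuinely produces $\mco(\mcq + \max_\ell \delta_\ell^{(N-2)/2})$ and not something larger — this requires the pointwise estimates \eqref{cutt} and a careful split of the integral into the region near $\xi_i$ (where the truncated bubble and its Laplacian interact with $\mcv_i$) and the far region (where everything is $\mco(\delta_i^{(N-2)/2})$); and one must make sure the appearance of the non-comparable quantity $\max_\ell\delta_\ell^{(N-2)/2}$ — absent in the Euclidean analysis of \cite{DSW} — is tracked consistently, since it arises precisely from the interaction of $u_0 > 0$ with the bubbles through $\I_2$ and $\I_4$. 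The eigenvalue-gap argument for the $\vth_m$'s is conceptually clean but requires care that the perturbation of the quadratic form by the $\mcv_i$'s does not close the gap, which is exactly why $\vep_0$ (hence $\delta_i$) must be taken small.
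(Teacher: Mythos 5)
Your plan is essentially the paper's: test the equation against $\mcv_i$ and $\psi_m$ to extract $\beta_i$ and $\vth_m$, use Lemma~\ref{le2p} for the near-orthogonality, pair the right-hand side errors via \eqref{i1r} and Lemma~\ref{s2l2}, and absorb the resulting $o(\mca)$-terms. The one real difference is your handling of $\beta_i^k$: you propose to test \eqref{eqrho} against $\mcz_i^k$ (killing $\langle\rho,\mcz_i^k\rangle_{H^1(M)}$ by orthogonality and reading $\beta_i^k$ off the remaining quadratic form), which works but necessarily brings the right-hand side error into the estimate; the paper instead observes that since $\rho_1 \in E^\perp$, the orthogonality $\langle\rho,\mcz_j^q\rangle_{H^1(M)}=0$ alone, expanded via \eqref{dec} and Lemma~\ref{le2p}, gives the purely algebraic relation \eqref{z11}, hence $|\beta_j^q| \lesssim o(1)\mca$ with no contribution from $f$ or the $\I_j$'s. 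Both routes yield \eqref{bb}, but the paper's is cleaner and avoids re-estimating the right-hand side.

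Two smaller imprecisions to flag. First, when testing against $\mcv_i$, the full left-hand side is $\langle\rho,\mcv_i\rangle_{H^1(M)}-(2^*-1)\int_M(u_0+\sum\mcv_j)^{2^*-2}\rho\mcv_i\,dv_g$; you expand only the first term. Once the quadratic form is included, the diagonal coefficient becomes $-(2^*-2)\int_{\R^N}U^{2^*}$ (cf.\ \eqref{teu2}) rather than $+\int_{\R^N}U^{2^*}$ — still nondegenerate, so the conclusion survives, but the bookkeeping as stated is off. Second, the claim that $\int_M\I_4\mcv_i\,dv_g$ requires the pointwise bound \eqref{cutt} is not needed here: H\"older with $\|\I_4\|_{L^{2N/(N+2)}(M)}\lesssim\max_\ell\delta_\ell^{(N-2)/2}$ from Lemma~\ref{s2l2} and $\|\mcv_i\|_{L^{2^*}(M)}\lesssim 1$ already gives exactly $\mco(\max_\ell\delta_\ell^{(N-2)/2})$. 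The refined pointwise estimate \eqref{cutt} is only needed later, in Lemma~\ref{le2.6}, where one has to improve the $\int_M\I_4\mcz_j^0\,dv_g$ estimate to $o(\max_\ell\delta_\ell^{(N-2)/2})$ rather than merely $\mco(\cdot)$.
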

\begin{proof}
Firstly, given any $j \in \{1,\ldots,\nu\}$ and $q \in \{0,1,\ldots,N\}$, it holds that $\langle \rho,\mcz^q_j \rangle_{H^1(M)} = 0$, so by \eqref{dec},
\[\bigg\langle \sum_{i=1}^{\nu} \beta_i\mcv_i + \sum_{i=1}^{\nu}\sum_{k=0}^N \beta_i^k\mcz^k_i + \sum_{m=1}^L \vth_m\psi_m, \mcz^q_j \bigg\rangle_{H^1(M)} = 0.\]
By virtue of Lemma \ref{le2p}, it reads
\begin{multline}\label{z11}
|\beta_j^q| \left[\int_{\R^N} |\nabla Z^q|^2 + o\Big(\delta_j^{\frac{N-2}{2}}\Big)\right] + \sum_{(i,k) \ne (j,q)} |\beta_i^k| \left[\mco(\mcq) + o\Big(\max_{\ell}\delta_{\ell}^{\frac{N-2}{2}}\Big)\right] \\
+|\beta_j| o\Big(\delta_j^{\frac{N-2}{2}}\Big) + \sum_{i \ne j} |\beta_i| \left[\mco(\mcq) + o\Big(\max_{\ell}\delta_{\ell}^{\frac{N-2}{2}}\Big)\right] + \sum_{m=1}^L |\vth_m| \mco\Big(\delta_j^{\frac{N-2}{2}}\Big) = 0.
\end{multline}
	
Secondly, after testing \eqref{eqrho} with $\mcv_j$ for $j \in \{1,\ldots,\nu\}$, we apply \eqref{u0rho}, which yields
\begin{equation}\label{z12}
\left|\int_M \left[\mcl_g\rho - (2^*-1)\bigg(u_0+\sum_{i=1}^{\nu}\mcv_i\bigg)^{2^*-2} \rho\right]\mcv_j dv_g\right| \lesssim \|f\|_{H^{-1}(M)} + \mcq + \max_{\ell}\delta_{\ell}^{\frac{N-2}{2}} + o(\mca).
\end{equation}
Let us examine the left-hand side of \eqref{z12}. Employing \eqref{iqu}, H\"older's inequality, $u_0 \in L^{\infty}(M)$, \eqref{uln1}, Lemma \ref{a22}, and
\begin{align*}
\int_M \mcv_j^{2^*-1}\rho_1 dv_g &= \int_M \(-\mcl_g\mcv_j+\mcv_j^{2^*-1}\)\rho_1 dv_g \\
&= \mco\(\left\|\mcl_g\mcv_j-\mcv_j^{2^*-1}\right\|_{L^{\frac{2N}{N+2}}(M)}\|\rho_1\|_{H^1(M)}\) = \mco\Big(\delta_j^{\frac{N-2}{2}}\Big)\|\rho_1\|_{H^1(M)} \quad (\text{by } \eqref{i4r}),
\end{align*}
we calculate
\begin{equation}\label{teu}
\begin{aligned}
&\ \int_M \bigg(u_0+\sum_{i=1}^{\nu}\mcv_i\bigg)^{2^*-2} \rho_1\mcv_j dv_g \\
&= \int_M \mcv_j^{2^*-1}\rho_1 dv_g + \mco\bigg(\left\|u_0^{2^*-2} \mcv_j\right\|_{L^{\frac{2N}{N+2}}(M)} + \sum_{i \ne j} \left\|\mcv_i^{2^*-2}\mcv_j\right\|_{L^{\frac{2N}{N+2}}(M)} \\
&\hspace{100pt} + \left\|u_0\mcv_j^{2^*-2}\right\|_{L^{\frac{2N}{N+2}}(M)} + \sum_{i \ne j} \left\|\mcv_i\mcv_j^{2^*-2}\right\|_{L^{\frac{2N}{N+2}}(M)}\bigg) \|\rho_1\|_{H^1(M)} \\
&= \mco\Big(\mcq + \max_{\ell}\delta_{\ell}^{\frac{N-2}{2}}\Big) \|\rho_1\|_{H^1(M)}.
\end{aligned}
\end{equation}
Also, direct computations show that
\begin{equation}\label{teu2}
\begin{aligned}
&\ \left|\bigg\langle \sum_{i=1}^{\nu} \beta_i\mcv_i + \sum_{i=1}^{\nu}\sum_{k=0}^N \beta_i^k\mcz^k_i + \sum_{m=1}^L \vth_m\psi_m,\mcv_j \bigg\rangle_{H^1(M)} \right.\\
&\quad \left. - (2^*-1)\int_M\bigg(u_0+\sum_{i=1}^{\nu}\mcv_i\bigg)^{2^*-2} \left[\sum_{i=1}^{\nu} \beta_i\mcv_i + \sum_{i=1}^{\nu}\sum_{k=0}^N \beta_i^k\mcz^k_i + \sum_{m=1}^L \vth_m\psi_m\right] \mcv_j dv_g\right| \\
&= (2^*-2)|\beta_j| \int_{\R^N} U^{2^*} + o(\mca).
\end{aligned}
\end{equation}
Having $\rho_1 \in E^{\perp}$ in hand, we conclude from \eqref{z12}--\eqref{teu2} and \eqref{rr1} that
\begin{equation}\label{bu}
(2^*-2)|\beta_j|\int_{\R^N}U^{2^*} \lesssim \|f\|_{H^{-1}(M)} + \mcq + \max_{\ell}\delta_{\ell}^{\frac{N-2}{2}} + o(\mca).
\end{equation}

Lastly, given any $s \in \{1,\ldots,L\}$, by appealing to \eqref{iqu}, H\"older's inequality, $\psi_s \in L^{\infty}(M)$, and \eqref{uln1}, we find
\begin{equation}\label{ccp}
\begin{aligned}
\left|\int_M \left[\bigg(u_0+\sum_{i=1}^{\nu}\mcv_i\bigg)^{2^*-2}-u_0^{2^*-2}\right] \rho\psi_s dv_g\right| &\lesssim \sum_{i=1}^{\nu} \int_M \(\mcv_i^{2^*-2} + u_0^{2^*-3}\mcv_i\)|\rho||\psi_s| dv_g \\
&\lesssim \max_i \(\left\|\mcv_i^{2^*-2}\right\|_{L^{\frac{2N}{N+2}}(M)} + \left\|\mcv_i\right\|_{L^{\frac{2N}{N+2}}(M)}\) \\
&= \mco\Big(\max_{\ell}\delta_{\ell}^{\frac{N-2}{2}}\Big).
\end{aligned}
\end{equation}
Testing \eqref{eqrho} with $\psi_s$ and using \eqref{ccp}, we derive
\begin{equation}\label{bp}
(2^*-1-\bmu_s)|\vth_s| \int_M u_0^{2^*-2}\psi_s^2 dv_g \lesssim \|f\|_{H^{-1}(M)} + \mcq + \max_{\ell}\delta_{\ell}^{\frac{N-2}{2}} + o(\mca).
\end{equation}

\medskip
Inequality \eqref{bb} is a consequence of \eqref{z11}, \eqref{bu}, and \eqref{bp}.
\end{proof}

\begin{prop}\label{lou0}
It holds that
\begin{equation}\label{prlo}
\mcq + \max_{\ell=1,\ldots,\nu}\delta_{\ell}^{\frac{N-2}{2}} \lesssim \|f\|_{H^{-1}(M)}.
\end{equation}
\end{prop}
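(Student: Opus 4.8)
The plan is to test the equation in \eqref{eqrho} against the functions $\mcz_j^0 = \delta_j\,\frac{\pa\mcv_j}{\pa\delta_j}$ for $j=1,\dots,\nu$ and to read off the sizes of $\mcq$ and $\max_\ell\delta_\ell^{(N-2)/2}$ from the resulting $\nu$ identities. Since $\la\rho,\mcz_j^0\ra_{H^1(M)}=0$ by \eqref{eqrho}, pairing \eqref{eqrho} with $\mcz_j^0$ gives
\[
(2^*-1)\int_M\Big(u_0+\sum_{i=1}^\nu\mcv_i\Big)^{2^*-2}\rho\,\mcz_j^0\,dv_g + \int_M\big(f+\I_1[\rho]+\I_2+\I_3+\I_4\big)\mcz_j^0\,dv_g = 0 .
\]
Here $\big|\int_M f\,\mcz_j^0\,dv_g\big| \le \|f\|_{H^{-1}(M)}\|\mcz_j^0\|_{H^1(M)} = \mco(\|f\|_{H^{-1}(M)})$ by Lemma \ref{le2p}, while $\big|\int_M\I_1[\rho]\,\mcz_j^0\,dv_g\big| = \mco(\|\rho\|_{H^1(M)}^2)$ by \eqref{i1r} and the boundedness of $\|\mcz_j^0\|_{L^{2^*}(M)}$. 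For the coupling term, decompose $\rho$ as in \eqref{dec}: using that $\mcv_j^{2^*-2}\mcz_j^0$ agrees with $(2^*-1)^{-1}(-\mcl_g\mcz_j^0)$ up to an error which is $\mco(\delta_j^{(N-2)/2})$ in $L^{2N/(N+2)}(M)$, the $\rho_1$-contribution collapses to $\mco(\delta_j^{(N-2)/2}\|\rho_1\|_{H^1(M)})$ after an integration by parts (since $\la\mcz_j^0,\rho_1\ra_{H^1(M)}=0$); the cross terms are estimated as in \eqref{teu}--\eqref{ccp} via Hölder's inequality, $u_0\in L^\infty(M)$, and Lemma \ref{a22}; and the $\mca$-part is dominated through Lemma \ref{le24}. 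Invoking Proposition \ref{pr2.2} to bound $\|\rho\|_{H^1(M)}\lesssim\|f\|_{H^{-1}(M)}+\mcq+\max_\ell\delta_\ell^{(N-2)/2}$, the coupling term and the $\I_1$-term are both $\mco(\|f\|_{H^{-1}(M)}) + o\big(\mcq+\max_\ell\delta_\ell^{(N-2)/2}\big)$.

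The core of the matter is the explicit evaluation of the projections $\int_M\I_2\,\mcz_j^0\,dv_g$, $\int_M\I_3\,\mcz_j^0\,dv_g$, $\int_M\I_4\,\mcz_j^0\,dv_g$, which is precisely what Lemmas \ref{le2.4}, \ref{le29}, and \ref{le2.6} carry out. The diagonal self-interactions are negligible: for instance $\int_M\mcv_j^{2^*-1}\mcz_j^0\,dv_g=\frac{\delta_j}{2^*}\,\pa_{\delta_j}\!\int_M\mcv_j^{2^*}\,dv_g=o(\delta_j^{(N-2)/2})$ because its leading $\delta_j$-independent part $\int_{\R^N}U^{2^*}$ drops out, and the pairings against $\mcz_j^k$ ($k\ge 1$) vanish to leading order by parity. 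Hence the surviving leading contributions are: a term $\simeq u_0(\xi_j)\,\delta_j^{(N-2)/2}$ from $\I_2$ --- here the hypothesis $u_0>0$ is essential, yielding a sign-definite coefficient bounded below by a multiple of $\min_M u_0$; the bubble--bubble interactions, a linear combination $\sum_{i\ne j}c_{ij}q_{ij}$ with $c_{ij}$ depending only on $N$, coming from $\I_3$ and the cross part of $\I_4$; and curvature terms from $\I_4$ that are $o(\delta_j^{(N-2)/2})$ in the range $3\le N\le 5$. Consequently, for every $j$ the identity above reduces to $b_N\,u_0(\xi_j)\,\delta_j^{(N-2)/2} + \sum_{i\ne j}(c_{ij}+o(1))q_{ij} = \mco(\|f\|_{H^{-1}(M)}) + o\big(\mcq+\max_\ell\delta_\ell^{(N-2)/2}\big)$ with $b_N>0$.

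It remains to extract $\mcq+\max_\ell\delta_\ell^{(N-2)/2}\lesssim\|f\|_{H^{-1}(M)}$ from these $\nu$ relations. Summing over $j$ takes care of the $u_0$-terms at once, since $u_0(\xi_j)\ge\min_M u_0>0$, so that $\max_\ell\delta_\ell^{(N-2)/2}\lesssim\sum_j\delta_j^{(N-2)/2}$ is generated with a favourable sign. I expect the main obstacle to be the interaction block $\sum_{i\ne j}c_{ij}q_{ij}$, because the $c_{ij}$ need not share a sign and an unstructured sum could cancel; the remedy is to reorder the bubbles so that $\delta_1\le\cdots\le\delta_\nu$, use \eqref{rq} to see that each $q_{ij}$ is governed by $\msr_{ij}^{-(N-2)}$, and show that after summation --- or after testing against a suitable weighted combination $\sum_j a_j\mcz_j^0$ adapted to that ordering --- the attractive part of the interaction controls $\mcq$. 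This is the manifold counterpart of the interaction bookkeeping of \cite{FG,DSW}, made heavier by having to carry the non-comparable quantities $\mcq$ and $\max_\ell\delta_\ell^{(N-2)/2}$ simultaneously. Once $\mcq\lesssim\|f\|_{H^{-1}(M)}+o\big(\mcq+\max_\ell\delta_\ell^{(N-2)/2}\big)$ and $\max_\ell\delta_\ell^{(N-2)/2}\lesssim\|f\|_{H^{-1}(M)}+o\big(\mcq+\max_\ell\delta_\ell^{(N-2)/2}\big)$ are both established, the $o(\cdot)$ remainders are absorbed into the left-hand side (legitimate because $\vep_0$, hence $\mcq$, $\max_\ell\delta_\ell^{(N-2)/2}$ and $\|f\|_{H^{-1}(M)}$, are small), which yields \eqref{prlo}; combined with Proposition \ref{pr2.2} this establishes \eqref{rf} and hence Theorem \ref{th1.2}.
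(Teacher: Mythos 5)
Your framework is the paper's: test \eqref{eqrho} against $\mcz_j^0$, use Proposition \ref{pr2.2}, estimate the projections of $\I_1,\dots,\I_4$, and read off the bound. The leading-order evaluations you name (the $\mfa_N u_0(\xi_j)\delta_j^{(N-2)/2}$ term from $\I_2$, the interaction block from $\I_3$, the negligibility of $\I_4$ for $3\le N\le 5$) match Lemmas \ref{le2.4}, \ref{le29}, \ref{le2.6}. But the extraction step is where the real difficulty lies, and there your proposal has a genuine gap. You write that ``the $c_{ij}$ need not share a sign'' and propose either summing over $j$ or testing against a weighted combination $\sum_j a_j\mcz_j^0$. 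Plain summation does not work: the integral $\int_{B^g_{r_0/2}(\xi_j)}\mcu_j^{2^*-2}\delta_j\frac{\pa\mcu_j}{\pa\delta_j}\mcv_i\,dv_g$ is only known to be $\gtrsim q_{ij}$ (with a favourable sign) under the constraint $\delta_i\ge\delta_j$ (this is precisely \eqref{neqij}); for $\delta_j>\delta_i$ its sign is not controlled and the only available bound is $|{\cdot}|\lesssim q_{ij}$ from \eqref{utaj}. Summing the $\nu$ relations therefore pairs each positive-sign interaction with an opposite one of comparable magnitude, and nothing is left to conclude; the $u_0$-term cannot be isolated from an unsigned remainder. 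A weighted sum would face the same obstruction unless the weights are chosen to implement the ordered, layer-by-layer elimination below, which you have not specified.

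The paper's remedy is sharper than what you sketch: it is not a one-shot summation but a downward induction on $l$ after ordering $\delta_1\ge\cdots\ge\delta_\nu$, with hypothesis $(\textbf{P}_l):\ \sum_{j=l}^\nu\sum_{i=1}^{j-1}q_{ij}\lesssim\|f\|_{H^{-1}(M)}+o\big(\mcq+\max_\ell\delta_\ell^{(N-2)/2}\big)$ (following \cite[Lemma 2.3]{DSW}). The base case $l=\nu$ uses that for the \emph{smallest} bubble every $i\ne\nu$ satisfies $\delta_i\ge\delta_\nu$, so \eqref{neqij} makes \emph{all} interaction contributions and the $u_0(\xi_\nu)\delta_\nu^{(N-2)/2}$ term simultaneously nonnegative, and \eqref{nece} bounds each separately. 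The inductive step with $j=l$ controls the ``wrong-sign'' interactions $i>l$ by the already-established $(\textbf{P}_{l+1})$ and \eqref{utaj}. Finally $\max_\ell\delta_\ell^{(N-2)/2}=\delta_1^{(N-2)/2}$ is extracted from the $j=1$ relation after $\mcq\lesssim\|f\|_{H^{-1}(M)}+o(\max_\ell\delta_\ell^{(N-2)/2})$ is in hand, as in \eqref{fi2}, followed by absorption of the $o(\cdot)$ remainders. Note also that establishing \eqref{neqij} is itself nontrivial here: the added piece $(1-\chi(d_g(\cdot,\xi_i)))U_{\delta_i,0}(\tfrac{r_0}{2})$ in $\mcv_i$ was designed precisely so the sign-definiteness survives when $\xi_i$ and $\xi_j$ are $O(r_0)$ apart (Cases 1--3 in Lemma \ref{le29}), and your proposal does not engage with this. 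Two minor side remarks: your heuristic $\int_M\mcv_j^{2^*-1}\mcz_j^0\,dv_g=\frac{\delta_j}{2^*}\pa_{\delta_j}\int_M\mcv_j^{2^*}dv_g$ is not literally true because $\mcv_j$ depends on $\delta_j$ also through the constant plateau $U_{\delta_j,0}(\tfrac{r_0}{2})$, though the conclusion $o(\delta_j^{(N-2)/2})$ holds; and the interaction block comes entirely from $\I_3$, while $\I_4$ contributes only $o(\max_\ell\delta_\ell^{(N-2)/2})$ by Lemma \ref{le2.6}, with no cross term at leading order.
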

\noindent Its derivation is the most involved part of the proof of \eqref{rf}. Fix $j \in \{1,\ldots,\nu\}$. By testing \eqref{eqrho} with $\mcz^0_j$, we obtain
\begin{align}
\int_M \I_2\mcz^0_j dv_g + \int_M \I_3\mcz^0_j dv_g + \int_M \I_4\mcz^0_j dv_g &= -\int_M f\mcz^0_j dv_g -\int_M \I_1[\rho]\mcz^0_j dv_g \label{tezk1} \\
&\ + \int_M \bigg[\mcl_g\rho - (2^*-1)\bigg(u_0+\sum_{i=1}^{\nu}\mcv_i\bigg)^{2^*-2}\rho\bigg] \mcz^0_j dv_g. \nonumber
\end{align}
In Lemmas \ref{le2.4}--\ref{le2.6}, we analyze the left-hand side of \eqref{tezk1} term by term, which is essential in proving \eqref{prlo}.
\begin{lemma}\label{le2.4}
Let $\mfa_N = \frac{\alpha_N^{2^*-1}}{2^*} |\S^{N-1}| > 0$. For any $j \in \{1,\ldots,\nu\}$, we have
\begin{equation}\label{eq:I2Z}
\int_M \I_2\mcz^0_j dv_g = \mfa_N u_0(\xi_j) \delta_j^{\frac{N-2}{2}} + o\Big(\mcq + \max_{\ell=1,\ldots,\nu}\delta_{\ell}^{\frac{N-2}{2}}\Big).
\end{equation}
\end{lemma}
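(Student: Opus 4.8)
\textbf{Proof strategy for Lemma \ref{le2.4}.}
The plan is to expand $\I_2$ from its definition in \eqref{eq:I2} and show that, after pairing against $\mcz^0_j$, the only surviving contribution comes from the cross term between $u_0$ and the single bubble $\mcv_j$ centered at the point where we are testing, while all other pieces are absorbed into $o(\mcq + \max_{\ell}\delta_{\ell}^{(N-2)/2})$. Writing $W := \sum_{i=1}^{\nu}\mcv_i$, we have $\I_2 = (u_0+W)^{2^*-1} - u_0^{2^*-1} - W^{2^*-1}$. Since $u_0$ is bounded and smooth, and each $\mcv_i$ is $L^{2N/(N+2)}$-small, the elementary inequality \eqref{iqu} lets us replace $\I_2$ pointwise (up to higher-order terms controlled as in Lemma \ref{s2l2}) by $(2^*-1)\bigl[u_0^{2^*-2}W + u_0 W^{2^*-2}\bigr]$, and then split further into the $i=j$ part and the $i\ne j$ parts of $W$. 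The contributions involving $\mcv_i$ with $i\ne j$ paired against $\mcz^0_j$ are of order $\mco(q_{ij}) + o(\max_\ell \delta_\ell^{(N-2)/2}) = o(\mcq) + o(\max_\ell \delta_\ell^{(N-2)/2})$ by the interaction estimates (Lemma \ref{a22} / Lemma \ref{le2p}-type bounds), since $\mcz^0_j$ has essentially the same decay profile as $\mcv_j$ near $\xi_j$ and is $\mco(\delta_j^{(N-2)/2})$ away from $\xi_j$.

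The heart of the computation is then $(2^*-1)\int_M \bigl[u_0^{2^*-2}\mcv_j + u_0\,\mcv_j^{2^*-2}\bigr]\mcz^0_j\,dv_g$. For $3\le N\le 5$ the term $u_0^{2^*-2}\mcv_j$ is subcritical in the bubble and its integral against $\mcz^0_j$ is $o(\delta_j^{(N-2)/2})$, so the dominant term is $(2^*-1)\int_M u_0\,\mcv_j^{2^*-2}\mcz^0_j\,dv_g$. I would localize this to the geodesic ball $B^g_{r_0/2}(\xi_j)$, pass to $g$-normal coordinates $y$ around $\xi_j$ via $x = \exp^g_{\xi_j}(y)$, use $dv_g = (1+\mco(|y|^2))\,dy$, and replace $\mcv_j$ by $U_{\delta_j,0}$ and $\mcz^0_j$ by $Z^0_{\delta_j,0}$ up to controlled errors. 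After the rescaling $y = \delta_j z$, the bubble $\mcv_j^{2^*-2}\mcz^0_j$ becomes $\delta_j^{-(N+2)/2}\cdot\delta_j^{?}\cdots$; tracking the powers, $\int_{\R^N} U^{2^*-2} Z^0\,dz$ together with the Taylor expansion $u_0(x) = u_0(\xi_j) + \mco(|y|)$ yields the leading constant. The key identity is that $U^{2^*-2}Z^0$ integrates (against the weight $u_0(\xi_j)$, which factors out) to produce $\mfa_N u_0(\xi_j)\delta_j^{(N-2)/2}$ with $\mfa_N = \frac{\alpha_N^{2^*-1}}{2^*}|\S^{N-1}|$; concretely, $\int_{\R^N} U_{1,0}^{2^*-1}\,dz$ relates to $|\S^{N-1}|$ and $\alpha_N^{2^*-1}$ after the radial integration, and the $\delta\partial_\delta$ derivative together with integration by parts produces the $\frac{1}{2^*}$ (equivalently, $\frac{N-2}{2}\cdot\frac{1}{N}\cdot$ (normalization)) factor — this is exactly the computation underlying the classical reduced-energy expansion for Schoen-type test functions, e.g. \cite{EPV}.

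The error analysis has three sources, each of which must be shown to be $o(\mcq + \max_\ell\delta_\ell^{(N-2)/2})$: (i) the replacement of $\mcv_j,\mcz^0_j$ by their Euclidean model on $B^g_{r_0/2}(\xi_j)$ and the exterior-region contribution, handled using \eqref{chi0}, \eqref{cutt}, and the $\mco(\delta_j^{(N-2)/2})$ decay of $\mcz^0_j$ outside the ball together with $u_0\in L^\infty(M)$; (ii) the metric distortion $dv_g - dy$ and the Taylor remainder $u_0(x)-u_0(\xi_j) = \mco(|y|)$, which after rescaling gain an extra factor $\delta_j$ relative to the main term and are thus $o(\delta_j^{(N-2)/2})$ for $3\le N\le5$ (this is where the low-dimension hypothesis is used — the remainder integrals $\int |z| U^{2^*-2} Z^0$ converge precisely because $N\le 5$); and (iii) the subdominant piece $u_0^{2^*-2}\mcv_j$ and the cross-bubble pieces, bounded by Lemma \ref{a22}, Lemma \ref{le2p}, and Hölder. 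I expect step (ii) to be the main technical obstacle: one must check carefully that every remainder, after the parabolic rescaling $y=\delta_j z$, carries a strictly positive power of $\delta_j$ beyond $\delta_j^{(N-2)/2}$ and that the $z$-integrals are finite — the latter being a genuine constraint forcing $N\le 5$, consistent with the hypotheses of the lemma. Summing the main term over the decomposition of $\I_2$ and collecting all errors gives \eqref{eq:I2Z}.
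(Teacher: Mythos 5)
Your overall strategy is aligned with the paper's: isolate the cross term between $u_0$ and the $j$-th bubble near $\xi_j$, and argue everything else is lower order. The constant bookkeeping is correct: $(2^*-1)\int_{\R^N} U^{2^*-2}Z^0 = \frac{N-2}{2}\int_{\R^N} U^{2^*-1} = \frac{N-2}{2N}\alpha_N^{2^*-1}|\S^{N-1}| = \mfa_N$, matching the paper's \eqref{21}. However, there are two genuine gaps in your error analysis.

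First, you claim the cross-bubble contributions are ``$\mco(q_{ij}) + o(\max_\ell\delta_\ell^{(N-2)/2}) = o(\mcq) + o(\max_\ell\delta_\ell^{(N-2)/2})$.'' The displayed equality is false: $\mco(q_{ij})$ is $\mco(\mcq)$, not $o(\mcq)$, and an $\mco(\mcq)$ error in \eqref{eq:I2Z} would destroy the subsequent argument in Proposition \ref{lou0}. What you actually need, and what the paper proves separately in Appendix \ref{subsec:tech}, is the sharper statement $\int_M \mcv_i^{2^*-2}\mcv_j\,dv_g = o(\mcq)$. Note also that Lemma \ref{a22}, which you cite for this, requires the total power to be $2^*$, whereas $\mcv_i^{2^*-2}\mcv_j$ has total power $2^*-1$; the lemma does not directly apply, and the paper's separate case analysis (Cases 1--3 of \eqref{22}) is genuinely needed.

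Second, your explanation of where $3 \le N \le 5$ is used is incorrect. The Taylor remainder $\int_{\R^N}|z|U^{2^*-2}Z^0\,dz \sim \int_0^\infty r^N(1+r^2)^{-(N+2)/2}\,dr$ converges for \emph{every} $N \ge 3$, so that is not the constraint. The low-dimension hypothesis enters instead through the subcritical error integrals, e.g. $\int_M u_0^2 W^{2^*-3}|\mcz^0_j|\,dv_g \lesssim \sum_i\int_M\mcv_i^{2^*-2}\,dv_g$ in the paper's \eqref{23} and the far-region estimate $\int_\Omega u_0^{2^*-2}\mcv_i|\mcz^0_j|\,dv_g \lesssim \int_{(B^g_{\eta\sqrt{\delta_i}})^c}\mcv_i^2\,dv_g$ in \eqref{26}: by Lemma \ref{a4} these are $o(\delta_i^{(N-2)/2})$ only when $N \le 5$ (for instance, when $N=6$, $\int_M\mcv_i^{2^*-2}\,dv_g \simeq \delta_i^2 = \delta_i^{(N-2)/2}$, which is no longer $o$). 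Relatedly, your proposed \emph{global} replacement $\I_2 \approx (2^*-1)[u_0^{2^*-2}W + u_0W^{2^*-2}]$ is at best a heuristic; the paper instead expands $\I_2$ differently on $\cup_i B^g_{\eta\sqrt{\delta_i}}(\xi_i)$ (where $W$ dominates) and on its complement (where $u_0$ dominates), via \eqref{ab6} and \eqref{i2ex}, precisely because the remainder of a single global Taylor expansion is not uniformly dominated.
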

\begin{proof}
Employing \eqref{ab6}, we see that there exists a constant $\eta>0$ such that
\begin{equation}\label{i2ex}
\begin{aligned}
\begin{medsize}
\displaystyle \I_2
\end{medsize}
&\begin{medsize}
\displaystyle = \left[(2^*-1)u_0\bigg(\sum_{i=1}^{\nu} \mcv_i\bigg)^{2^*-2} + \mco\(u_0^2 \bigg(\sum_{i=1}^{\nu} \mcv_i\bigg)^{2^*-3}\) + \mco\(u_0^{2^*-1}\)\right] 1_{\cup_{i=1}^{\nu}B^g_{\eta\sqrt{\delta_i}}(\xi_i)}
\end{medsize} \\
&\begin{medsize}
\displaystyle \ +\left[(2^*-1)u_0^{2^*-2}\sum_{i=1}^{\nu} \mcv_i + \mco\(u_0^{2^*-3} \bigg(\sum_{i=1}^{\nu}\mcv_i\bigg)^2\)
+ \mco\bigg(\bigg(\sum_{i=1}^{\nu} \mcv_i\bigg)^{2^*-1}\bigg)\right] 1_{\cap_{i=1}^{\nu}\(B^g_{\eta\sqrt{\delta_i}}(\xi_i)\)^c}.
\end{medsize}
\end{aligned}
\end{equation}
We compute the integral $\int_M \I_2\mcz^0_j dv_g$ by splitting it into three steps.

\medskip \noindent \textbf{Step 1.} Since $|\mcz^0_j| \lesssim \mcv_j$, it holds that
\begin{equation}\label{21}
\begin{aligned}
&\ (2^*-1) \int_{\cup_{i=1}^{\nu} B^g_{\eta\sqrt{\delta_i}}(\xi_i)} u_0\mcv_j^{2^*-2}\mcz^0_j dv_g \\
&= (2^*-1)\left[\int_M u_0\mcv_j^{2^*-2}\mcz^0_j dv_g - \int_{\big(\cup_{i=1}^{\nu} B^g_{\eta\sqrt{\delta_i}}(\xi_i)\big)^c} u_0\mcv_j^{2^*-2}\mcz^0_j dv_g\right] \\
&= (2^*-1) \delta_j^{\frac{N-2}{2}} \int_{\{|y| \le \frac{r_0}{2\delta_j}\}} u_0\big(\exp_{\xi_j}^g(\delta_j y)\big) \(U^{2^*-2}Z^0\)(y) \(1+\mco\(\delta_j^2|y|^2\)\) dy + \mco\Big(\delta_j^{\frac{N}{2}}\Big) \\
&= \frac{N-2}{2} \delta_j^{\frac{N-2}{2}} \left[u_0(\xi_j) \int_{\R^N} U^{2^*-1} + \mco\bigg(\int_{\{|y| \le \frac{r_0}{2\delta_j}\}} |\delta_j y| U^{2^*-1}(y)dy\bigg)\right] + \mco\Big(\delta_j^{\frac{N}{2}}\Big) \\
&= \mfa_N \delta_j^{\frac{N-2}{2}} u_0(\xi_j) + o\Big(\delta_j^{\frac{N-2}{2}}\Big).
\end{aligned}
\end{equation}
In addition, by \eqref{iqu}, we have
\begin{equation}\label{22}
\begin{aligned}
\left|\int_M u_0 \left[\bigg(\sum_{i=1}^{\nu}\mcv_i\bigg)^{2^*-2} - \mcv_j^{2^*-2}\right]\mcz^0_j dv_g\right|
&\lesssim \sum_{i \ne j} \int_M \(\mcv_i^{2^*-2} + \mcv_i\mcv_j^{2^*-3}\) \left|\mcz^0_j\right| dv_g \\
&\lesssim \sum_{i \ne j} \int_M \(\mcv_i^{2^*-2}\mcv_j + \mcv_i\mcv_j^{2^*-2}\) dv_g = o(\mcq)
\end{aligned}
\end{equation}
where the last equality is proved in Appendix \ref{subsec:tech}.
	
\medskip \noindent \textbf{Step 2.} By Young's inequality and Lemma \ref{a4},
\begin{align}
\int_M u_0^2 \bigg(\sum_{i=1}^{\nu} \mcv_i\bigg)^{2^*-3} \left|\mcz^0_j\right| dv_g &\lesssim \sum_{i=1}^{\nu} \int_M \mcv_i^{2^*-3}\mcv_j dv_g \label{23} \\
&\lesssim \sum_{i=1}^{\nu} \int_M \mcv_i^{2^*-2} dv_g \lesssim
\left\{\!\begin{aligned}
&\delta_i &\text{if } N=3\\[1ex]
&\delta_i^2|\log\delta_i| &\text{if } N=4\\[1ex]
&\delta_i^2 &\text{if } N=5
\end{aligned}\right\}
= o\Big(\max_{\ell}\delta_{\ell}^{\frac{N-2}{2}}\Big). \nonumber
\end{align}
Furthermore,
\begin{equation}\label{25}
\int_{\cup_{i=1}^{\nu} B^g_{\eta\sqrt{\delta_i}}(\xi_i)} u_0^{2^*-1} \left|\mcz^0_j\right| dv_g
\lesssim \sum_{i=1}^{\nu} \bigg(\int_{B^g_{\eta\sqrt{\delta_i}}(\xi_i)} u_0^{2^*} dv_g\bigg)^{\frac{2^*-1}{2^*}} \left\|\mcz^0_j\right\|_{H^1(M)} \lesssim \max_{\ell}\delta_{\ell}^{\frac{N+2}{4}}.
\end{equation}

\medskip \noindent \textbf{Step 3.} Let us write $\Omega = \cap_{i=1}^{\nu}(B^g_{\eta\sqrt{\delta_i}}(\xi_i))^c$. From Young's inequality again, we observe
\begin{equation}\label{26}
\begin{aligned}
\sum_{i=1}^{\nu} \int_{\Omega} u_0^{2^*-2} \mcv_i \left|\mcz^0_j\right| dv_g
&\lesssim \sum_{i=1}^{\nu}\int_{\big(B^g_{\eta\sqrt{\delta_i}}(\xi_i)\big)^c} \mcv_i^2dv_g \\
&\lesssim \left\{\!\begin{aligned}
&\delta_i &\text{if } N=3\\[1ex]
&\delta_i^2|\log\delta_i| &\text{if } N=4\\[1ex]
&\delta_i^{\frac{5}{2}} &\text{if } N=5
\end{aligned}\right\}
= o\Big(\max_{\ell}\delta_{\ell}^{\frac{N-2}{2}}\Big).
\end{aligned}
\end{equation}
Since $\mcz^0_j$ is uniformly bounded on the set $\{x \in M: d_g(x,\xi_j) \ge \eta\sqrt{\delta_j}\}$, it follows that
\begin{equation}\label{28}
\sum_{i=1}^{\nu} \int_{\Omega} u_0^{2^*-3}\mcv_i^2\left|\mcz^0_j\right| dv_g \lesssim \sum_{i=1}^{\nu} \int_{\Omega} \mcv_i^2 dv_g = o\Big(\max_{\ell}\delta_{\ell}^{\frac{N-2}{2}}\Big)
\end{equation}
and
\begin{equation}\label{29}
\int_{\Omega} \bigg(\sum_{i=1}^{\nu}\mcv_i\bigg)^{2^*-1} \left|\mcz^0_j\right| dv_g \lesssim \sum_{i=1}^{\nu} \int_{\Omega} \mcv_i^{2^*-1} dv_g \lesssim \max_{\ell}\delta_{\ell}^{\frac{N}{2}}.
\end{equation}
Putting \eqref{21}--\eqref{22}, \eqref{23}--\eqref{25}, and \eqref{26}--\eqref{29} together, we finish the proof of \eqref{eq:I2Z}.
\end{proof}

\begin{lemma}\label{le29}
For any $j \in \{1,\ldots,\nu\}$, it holds that
\begin{equation}\label{eq:I3Z}
\int_M \I_3\mcz^0_j dv_g = (2^*-1) \sum_{i \ne j} \int_{B^g_{r_0/2}(\xi_j)}\mcu_j^{2^*-2}\delta_j\frac{\pa \mcu_j}{\pa \delta_j} \mcv_idv_g + o(\mcq)
\end{equation}
where $\mcu_j := \mcu_{\delta_j,\xi_j}$ is defined by \eqref{udx}. Moreover, if $\delta_i \ge \delta_j$ for some indices $1 \le i \ne j \le \nu$, then
\begin{equation}\label{neqij}
\int_{B^g_{r_0/2}(\xi_j)}\mcu_j^{2^*-2}\delta_j\frac{\pa \mcu_j}{\pa \delta_j} \mcv_idv_g \gtrsim q_{ij}
\end{equation}
provided $q_{ij}$ in \eqref{rq} small.
\end{lemma}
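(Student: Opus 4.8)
\emph{Plan.} For \eqref{eq:I3Z} the guiding principle is that $\mcz^0_j$ is concentrated near $\xi_j$, where $\mcv_j$ dominates the remaining bubble‑like functions, so that after a Taylor‑type expansion of $\I_3$ around $\mcv_j$ only the first‑order term contributes to leading order. Writing $w := \sum_{i\ne j}\mcv_i$ and using $2^*-1 > 2$ for $3 \le N \le 5$, the elementary inequalities behind \eqref{iqu} and \eqref{ab6} give, pointwise on $M$,
\[
\I_3 = (2^*-1)\,\mcv_j^{2^*-2}\,w + (2^*-1)\sum_{k \ne j}\mcv_k^{2^*-2}\sum_{i \ne k}\mcv_i + R,
\]
where $R$ is a finite sum of terms each dominated pointwise by $\min(\mcv_k,\mcv_l)^{2^*-2}\max(\mcv_k,\mcv_l)$ or $\min(\mcv_k,\mcv_l)^2\max(\mcv_k,\mcv_l)^{2^*-3}$ with $k\ne l$, or by a genuine triple product of distinct $\mcv_\ell$'s. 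Since $|\mcz^0_j| \lesssim \mcv_j$ on $M$, $|\mcz^0_j| \lesssim \delta_j^{(N-2)/2}$ on $(B^g_{r_0/2}(\xi_j))^c$, and $\mcv_j = \mcu_j$, $\mcz^0_j = \delta_j\pa_{\delta_j}\mcu_j$ on $B^g_{r_0/2}(\xi_j)$ (because $\chi \equiv 1$ there), I would multiply the display by $\mcz^0_j$, integrate, and argue: (i) the piece $(2^*-1)\int_{B^g_{r_0/2}(\xi_j)}\mcv_j^{2^*-2}w\,\mcz^0_j\,dv_g$ is exactly the main term of \eqref{eq:I3Z}, while its counterpart over $(B^g_{r_0/2}(\xi_j))^c$ is $\lesssim \delta_j^{(N+2)/2}\sum_{i\ne j}\int_M\mcv_i\,dv_g \lesssim \sum_{i\ne j}\delta_j^{(N+2)/2}\delta_i^{(N-2)/2} = o(\mcq)$, using $\int_M\mcv_i\,dv_g \lesssim \delta_i^{(N-2)/2}$ and $\mcq \gtrsim (\delta_i\delta_j)^{(N-2)/2}$; (ii) the remaining terms, bounded via $\mcz^0_j = \mco(\mcv_j)$ together with $\mcv_a\mcv_b \le \tfrac12(\mcv_a^2+\mcv_b^2)$ and $\mcv_b \le \mcv_a$ on $\{\mcv_b\le\mcv_a\}$, reduce to finite combinations of $\int_M\mcv_a^{2^*-2}\mcv_b^2\,dv_g$ with $a \ne b$, each $o(\mcq)$ by Lemma \ref{a22} and Appendix \ref{subsec:tech}. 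Summing (i)--(ii) yields \eqref{eq:I3Z}.

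For \eqref{neqij}, assume $\delta_i \ge \delta_j$. Using $\mcu_j^{2^*-2}\delta_j\frac{\pa\mcu_j}{\pa\delta_j} = \frac{\delta_j}{2^*-1}\frac{\pa}{\pa\delta_j}(\mcu_j^{2^*-1})$ and passing to $g$‑normal coordinates $y$ about $\xi_j$ (so that $\mcu_j(\exp^g_{\xi_j}(y)) = U_{\delta_j,0}(|y|)$ and $dv_g = (1+\mco(|y|^2))\,dy$), the rescaling $y = \delta_j z$ turns the left side of \eqref{neqij} into
\[
\delta_j^{\frac{N-2}{2}}\int_{\{|z| \le \frac{r_0}{2\delta_j}\}}\big(U^{2^*-2}Z^0\big)(z)\,\mcv_i\big(\exp^g_{\xi_j}(\delta_j z)\big)\big(1+\mco(\delta_j^2|z|^2)\big)\,dz.
\]
The hypothesis $\delta_i \ge \delta_j$ together with $q_{ij}$ small forces $\max(\delta_i,d_g(\xi_i,\xi_j)) \gg \delta_j$, so the relative oscillation of $\mcv_i$ over $\{d_g(\cdot,\xi_j)\lesssim\delta_j\}$ is $o(1)$; since moreover $U^{2^*-2}Z^0$ decays like $|z|^{-N-2}$ and $\int_{B_{\delta_i}(\xi_i)}\mcv_i\,dv_g = \mco(\delta_i^{(N+2)/2})$, the tail of the $z$‑integral and any spike of $\mcv_i$ inside $B^g_{r_0/2}(\xi_j)$ contribute only $o(q_{ij})$ (and when $d_g(\xi_i,\xi_j) > r_0$ there is no spike, $\mcv_i$ being constant on $B^g_{r_0/2}(\xi_j)$). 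Hence the displayed integral equals $c_N\,\mcv_i(\xi_j)\,\delta_j^{(N-2)/2}(1+o(1))$ with $c_N := \int_{\R^N}U^{2^*-2}Z^0\,dz > 0$, positivity following from $\frac{d}{d\delta}\int_{\R^N}U_{\delta,0}^{2^*-1}\,dy > 0$. Finally, $\delta_i \ge \delta_j$ gives $\frac{\delta_i}{\delta_j} + \frac{\delta_j}{\delta_i} + \frac{d_g(\xi_i,\xi_j)^2}{\delta_i\delta_j} \simeq \frac{\delta_i^2+d_g(\xi_i,\xi_j)^2}{\delta_i\delta_j}$, whence a direct computation yields $q_{ij} \simeq \delta_j^{(N-2)/2}\mcv_i(\xi_j)$ (treating separately $d_g(\xi_i,\xi_j)\le r_0/2$ via the explicit $\mcu_i$, and $d_g(\xi_i,\xi_j)>r_0/2$ where $\mcv_i(\xi_j)\simeq\delta_i^{(N-2)/2}$); combining this with the previous sentence gives \eqref{neqij} for $q_{ij}$ small.

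\emph{Main obstacle.} The delicate part is the bookkeeping in step (ii): one must verify that, after the elementary trade‑offs on the overlap regions, every term other than the first‑order one on $B^g_{r_0/2}(\xi_j)$ really collapses to interaction integrals $\int_M\mcv_a^{2^*-2}\mcv_b^2\,dv_g$ that Lemma \ref{a22} controls — this is subtle because $2^*-1$ is non‑integral when $N = 5$ and because genuine multi‑bubble interactions appear once $\nu \ge 3$. In \eqref{neqij}, the corresponding care is to check that the cut‑off, the metric expansion, and the non‑constancy of $\mcv_i$ on $B^g_{r_0/2}(\xi_j)$ perturb the leading term only by $o(q_{ij})$, so that the sign encoded in $c_N > 0$ is preserved.
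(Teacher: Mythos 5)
For \eqref{eq:I3Z} your route is essentially the paper's: expand $\I_3$ to first order around the dominant bubble, reduce the quadratic (and higher) remainders to integrals of the form $\int_M\mcv_a^{2^*-2}\mcv_b^2\,dv_g$ via the domination bookkeeping you describe, and control the exterior piece crudely. The paper simply defers the same expansion to \cite[Lemma~2.1]{DSW}, so there is no substantive difference here; the one caveat is that your displayed ``pointwise'' identity for $\I_3$ is not an identity but an approximate decomposition whose validity hinges on partitioning $M$ by the index of the dominant bubble, and you should say so explicitly lest the term $-\sum_{i\ne j}\mcv_i^{2^*-1}$ look stranded.

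For \eqref{neqij} your route is genuinely different from the paper's. The paper splits into three cases according to $d_g(\xi_i,\xi_j)$: in the intermediate regime $r_0\le d_g(\xi_i,\xi_j)\le\tfrac{3r_0}{2}$ it \emph{sandwiches} $\mcv_i$ between $U_{\delta_i,0}(r_0)$ and $U_{\delta_i,0}(r_0/2)$ according to the sign of $Z^0$ and then checks by \emph{explicit numerical evaluation} of $\int_{B_1(0)}U^{2^*-2}Z^0$ and $\int_{B_1^c(0)}U^{2^*-2}Z^0$ (for $N=3,4,5$, see \eqref{eq:J1J2}) that the worst-case lower bound is still positive — a check that is visibly close to tight and depends on the specific cutoff scale $r_0/2$ versus $r_0$. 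In the near regime $d_g(\xi_i,\xi_j)\le r_0$ it appeals to the Bahri interaction estimate \cite[(F16)]{B} for the bubble piece $J_3$ and separately controls the cutoff correction $J_4$. You instead Taylor-expand $\mcv_i$ around its value $\mcv_i(\xi_j)$ in all regimes, which is cleaner: the leading constant $c_N=\int_{\R^N}U^{2^*-2}Z^0>0$ is manifestly positive, so no numerical verification and no sensitivity to the cutoff radii arise. In principle this works, since the Lipschitz constant of $\mcv_i$ on $B^g_{r_0/2}(\xi_j)$ is $\mco(\delta_i^{(N-2)/2})$ whenever $d_g(\xi_i,\xi_j)\gtrsim r_0$, giving a Taylor error of order $q_{ij}\delta_j/r_0 = o(q_{ij})$; and when $d_g(\xi_i,\xi_j)\lesssim r_0$ one is in the Bahri setting anyway.

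However, as written your justification of the error being $o(q_{ij})$ is not adequate. The claim that ``$\int_{B_{\delta_i}(\xi_i)}\mcv_i\,dv_g=\mco(\delta_i^{(N+2)/2})$'' implies the spike contributes $o(q_{ij})$ does not follow: pairing this $L^1$ mass against the pointwise size of $\mcu_j^{2^*-2}\delta_j\pa_{\delta_j}\mcu_j$ only gives a bound of order $q_{ij}$, not $o(q_{ij})$. The smallness must instead come from the cancellation in $\int U^{2^*-2}Z^0$ coupled with a genuine first-order Taylor (Lipschitz) estimate of $\mcv_i-\mcv_i(\xi_j)$, distinguishing the regimes $|z|\lesssim 1$, $1\lesssim|z|\lesssim\delta_i/\delta_j$, and $|z|\gtrsim\delta_i/\delta_j$ — i.e. exactly the content of \cite[(F16)]{B}, which you should cite rather than gesture at. There is also a minor but real slip: $\mcv_i$ is constant on $B^g_{r_0/2}(\xi_j)$ only when $d_g(\xi_i,\xi_j)\ge\tfrac{3r_0}{2}$; for $r_0< d_g(\xi_i,\xi_j)<\tfrac{3r_0}{2}$ the transition annulus $\{r_0/2\le d_g(\cdot,\xi_i)\le r_0\}$ intersects $B^g_{r_0/2}(\xi_j)$ and $\mcv_i$ does vary there (this is precisely the paper's Case~2). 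So the strategy is sound and arguably preferable to the paper's, but the crucial error estimate is asserted rather than proved, and the stated justification for the spike term is incorrect as given.
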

\begin{proof}
Arguing as in \cite[Lemma 2.1]{DSW} and using the estimate
\[\sum_{i \ne j} \int_{\big(B^g_{r_0/2}(\xi_j)\big)^c} \mcv_j^{2^*-2}\delta_j\frac{\pa \mcv_j}{\pa \delta_j}\mcv_i dv_g = \mco\bigg(\sum_{i \ne j} \delta_i^{\frac{N-2}{2}}\delta_j^{\frac{N+2}{2}}\bigg) = o\Big(\sum_{i \ne j}q_{ij}\Big) = o(\mcq),\]
we get
\begin{equation}\label{eq:le29}
\begin{aligned}
\int_M \I_3\mcz^0_j dv_g &= (2^*-1) \sum_{i \ne j} \int_M \mcv_j^{2^*-2}\delta_j\frac{\pa \mcv_j}{\pa \delta_j}\mcv_i dv_g + o(\mcq) \\
&= (2^*-1)\sum_{i \ne j} \int_{B^g_{r_0/2}(\xi_j)} \mcu_j^{2^*-2}\delta_j\frac{\pa \mcu_j}{\pa \delta_j} \mcv_i dv_g + o(\mcq),
\end{aligned}
\end{equation}
so \eqref{eq:I3Z} is true.

\medskip
In the rest of the proof, we establish \eqref{neqij} for all indices $1 \le i \ne j \le \nu$ satisfying $\delta_i \ge \delta_j$.
To achieve this, we analyze the integral in the rightmost side of \eqref{eq:le29} by considering three cases.
Let $r_0 > 0$ be a small number appearing in \eqref{ui}.

\medskip \noindent \textbf{Case 1. ($d_g(\xi_i,\xi_j) \ge \frac{3r_0}{2}$):} If $d_g(x,\xi_j)\le \frac{r_0}{2}$, then $d_g(x,\xi_i)\ge r_0$ and so $\mcv_i = U_{\delta_i,0}(\frac{r_0}{2})$. Hence
\[\int_{B^g_{r_0/2}(\xi_j)} \mcu_j^{2^*-2}\delta_j\frac{\pa \mcu_j}{\pa \delta_j} \mcv_i dv_g \simeq U_{\delta_i,0}\(\tfrac{r_0}{2}\) \delta_j^{\frac{N-2}{2}}\int_{\R^N}U^{2^*-2}Z^0 \gtrsim (\delta_i\delta_j)^{\frac{N-2}{2}} \simeq q_{ij}.\]

\medskip \noindent \textbf{Case 2. ($r_0 \le d_g(\xi_i,\xi_j) \le \frac{3r_0}{2}$):} If $d_g(x,\xi_j)\le \frac{r_0}{2}$, then $\frac{r_0}{2} \le d_g(x,\xi_i)\le 2r_0$ and so
\begin{equation}\label{eq:mcvi}
\begin{cases}
U_{\delta_i,0}(r_0)\le \mcv_i\le U_{\delta_i,0}\(\tfrac{r_0}{2}\) &\text{if } \frac{r_0}{2} \le d_g(x,\xi_i) \le r_0,\\
\mcv_i = U_{\delta_i,0}\(\tfrac{r_0}{2}\) &\text{if } r_0 \le d_g(x,\xi_i) \le 2r_0.
\end{cases}
\end{equation}
We write
\begin{multline*}
\int_{B^g_{r_0/2}(\xi_j)} \mcu_j^{2^*-2}\delta_j\frac{\pa \mcu_j}{\pa \delta_j} \mcv_i dv_g = \int_{B^g_{r_0/2}(\xi_j) \cap \big(B^g_{r_0}(\xi_i) \setminus B^g_{r_0/2}(\xi_i)\big)} \mcu_j^{2^*-2}\delta_j\frac{\pa \mcu_j}{\pa \delta_j} \mcv_i dv_g\\
+ U_{\delta_i,0}\(\tfrac{r_0}{2}\) \int_{B^g_{r_0/2}(\xi_j) \cap \big(B^g_{2r_0}(\xi_i) \setminus B^g_{r_0}(\xi_i)\big)} \mcu_j^{2^*-2}\delta_j\frac{\pa \mcu_j}{\pa \delta_j} dv_g =: J_1+J_2.
\end{multline*}
The definition of the function $\frac{\pa \mcu_j}{\pa \delta_j}$ and \eqref{eq:mcvi} yield
\begin{align*}
J_1 &\ge U_{\delta_i,0}(r_0) \int_{\big(B^g_{r_0/2}(\xi_j) \setminus B^g_{\delta_j}(\xi_j)\big) \cap \big(B^g_{r_0}(\xi_i) \setminus B^g_{r_0/2}(\xi_i)\big)} \mcu_j^{2^*-2}\delta_j\frac{\pa \mcu_j}{\pa \delta_j} dv_g \\
&\ + U_{\delta_i,0}\(\tfrac{r_0}{2}\) \int_{B^g_{\delta_j}(\xi_j) \cap \big(B^g_{r_0}(\xi_i) \setminus B^g_{r_0/2}(\xi_i)\big)} \mcu_j^{2^*-2}\delta_j\frac{\pa \mcu_j}{\pa \delta_j} dv_g.
\end{align*}
By direct computations,
\begin{align}
J_1+J_2 &\ge U_{\delta_i,0}(r_0) \int_{B^g_{r_0/2}(\xi_j) \setminus B^g_{\delta_j}(\xi_j)} \mcu_j^{2^*-2}\delta_j\frac{\pa \mcu_j}{\pa \delta_j} dv_g + U_{\delta_i,0}\(\tfrac{r_0}{2}\) \int_{B^g_{\delta_j}(\xi_j)} \mcu_j^{2^*-2}\delta_j\frac{\pa \mcu_j}{\pa \delta_j} dv_g \nonumber \\
&\gtrsim \delta_j^{\frac{N-2}{2}} \left[U_{\delta_i,0}(r_0) \int_{B_1^c(0)} U^{2^*-2}Z^0 + U_{\delta_i,0}\(\tfrac{r_0}{2}\) \int_{B_1(0)}U^{2^*-2}Z^0\right] \label{eq:J1J2} \\
&\simeq (\delta_i\delta_j)^{\frac{N-2}{2}} \alpha_N^{2^*} \times \left\{\!\begin{aligned}
&\frac{1}{r_0} \cdot \frac{1}{30}(2+\sqrt{2})-\frac{2}{r_0} \cdot \frac{\sqrt{2}}{30} &\text{if } N=3\\[1ex]
&\frac{1}{r_0^2} \cdot \frac{5}{48} - \frac{4}{r_0^2} \cdot \frac{1}{48} &\text{if } N=4\\[1ex]
&\frac{1}{r_0^3} \cdot \frac{1}{140}(12+\sqrt{2}) - \frac{8}{r_0^3} \cdot \frac{\sqrt{2}}{140} &\text{if } N=5
\end{aligned}\right\} \simeq (\delta_i\delta_j)^{\frac{N-2}{2}} \simeq q_{ij}. \nonumber
\end{align}

\medskip \noindent \textbf{Case 3. ($d_g(\xi_i,\xi_j) \le r_0$):} We write
\begin{align*}
&\ \int_{B^g_{r_0/2}(\xi_j)} \mcu_j^{2^*-2}\delta_j\frac{\pa \mcu_j}{\pa \delta_j} \mcv_i dv_g \\
&= \int_{B^g_{r_0/2}(\xi_j)} \mcu_j^{2^*-2} \delta_j\frac{\pa \mcu_j}{\pa \delta_j} \mcu_i dv_g +
\int_{B^g_{r_0/2}(\xi_j)} \mcu_j^{2^*-2} \delta_j\frac{\pa \mcu_j}{\pa \delta_j} (1-\chi(d_g(x,\xi_i))) \left[U_{\delta_i,0}\(\tfrac{r_0}{2}\)-\mcu_i\right] (dv_g)_x \\
&=: J_3 + J_4.
\end{align*}
By arguing as in the derivation of \cite[(F16)]{B}, we observe
\begin{equation}\label{eq:J3}
\begin{aligned}
J_3 &= \alpha_N \int_{\{|y| \le \frac{r_0}{2\delta_j}\}} \(U^{2^*-2}Z^0\)(y) \left[\frac{\delta_i}{\delta_j} + \frac{d_g\big(\exp_{\xi_j}^g(\delta_jy), \xi_i\big)^2}{\delta_i\delta_j}\right]^{-\frac{N-2}{2}} dy + o(q_{ij}) \\
&= \left[\frac{\delta_i}{\delta_j} + \frac{d_g(\xi_i,\xi_j)^2}{\delta_i\delta_j}\right]^{-\frac{N-2}{2}} \alpha_N \int_{\R^N} U^{2^*-2}Z^0 + o(q_{ij}) \simeq q_{ij}.
\end{aligned}
\end{equation}
If $d_g(\xi_i,\xi_j) \le \frac{r_0}{8}$, then $d_g(x,\xi_i) \le \frac{3r_0}{8}$ so that $1-\chi(d_g(x,\xi_i)) = 0$ for all $x \in B^g_{r_0/4}(\xi_j)$. It follows that
\[|J_4| \lesssim \delta_i^{\frac{N-2}{2}} \int_{B^g_{r_0/2}(\xi_j) \setminus B^g_{r_0/4}(\xi_j)} \mcu_j^{2^*-1} dv_g \simeq \delta_i^{\frac{N-2}{2}}\delta_j^{\frac{N+2}{2}} = o(q_{ij}),\]
and so $J_3+J_4 \gtrsim q_{ij}$. Thus let us assume that $\frac{r_0}{8} \le d_g(\xi_i,\xi_j) \le r_0$. By the second line of \eqref{eq:J3}, we have
\[J_3 \ge U_{\delta_i,0}(r_0) \delta_j^{\frac{N-2}{2}} \int_{\R^N} U^{2^*-2}Z^0 + o(q_{ij}).\]
Noticing that $d_g(x,\xi_i) \le \frac{3r_0}{2}$ for all $x \in B^g_{r_0/2}(\xi_j)$, we decompose $J_4$ as
\[J_4 = \int_{B^g_{r_0/2}(\xi_j) \cap \big(B^g_{r_0}(\xi_i) \setminus B^g_{r_0/2}(\xi_i)\big)} \cdots + \int_{B^g_{r_0/2}(\xi_j) \cap \big(B^g_{3r_0/2}(\xi_i) \setminus B^g_{r_0}(\xi_i)\big)} \cdots=: J_{41}+J_{42}.\]
Then, it holds that
\[J_{41} \ge \bigg[U_{\delta_i,0}\(\tfrac{r_0}{2}\) - U_{\delta_i,0}(r_0)\bigg] \int_{B^g_{\delta_j}(\xi_j) \cap \big(B^g_{r_0}(\xi_i) \setminus B^g_{r_0/2}(\xi_i)\big)} \mcu_j^{2^*-2} \delta_j\frac{\pa \mcu_j}{\pa \delta_j} dv_g\]
and
\[J_{42} \ge \bigg[U_{\delta_i,0}\(\tfrac{r_0}{2}\) - U_{\delta_i,0}\(\tfrac{3r_0}{2}\)\bigg] \int_{B^g_{\delta_j}(\xi_j) \cap \big(B^g_{3r_0/2}(\xi_i) \setminus B^g_{r_0}(\xi_i)\big)} \mcu_j^{2^*-2} \delta_j\frac{\pa \mcu_j}{\pa \delta_j} dv_g.\]
Therefore,
\begin{align*}
\begin{medsize}
\displaystyle J_3+J_4 \,
\end{medsize}
&\begin{medsize}
\displaystyle \ge \delta_j^{\frac{N-2}{2}} \left[U_{\delta_i,0}(r_0)\int_{B^c_1(0)} U^{2^*-2}Z^0 + \left\{U_{\delta_i,0}(r_0) + U_{\delta_i,0}\(\tfrac{r_0}{2}\) -U_{\delta_i,0}\(\tfrac{3r_0}{2}\)\right\} \int_{B_1(0)} U^{2^*-2}Z^0\right] + o(q_{ij})
\end{medsize} \\
&\begin{medsize}
\displaystyle \gtrsim q_{ij}
\end{medsize}
\end{align*}
where the last inequality is justified as in \eqref{eq:J1J2}. This completes the proof of \eqref{neqij}.
\end{proof}

\begin{lemma}\label{le2.6}
For any $j \in \{1,\ldots,\nu\}$, we have
\begin{equation}\label{eq:I4Z}
\int_M \I_4\mcz^0_j dv_g = o\Big(\max_{\ell=1,\ldots,\nu} \delta_{\ell}^{\frac{N-2}{2}}\Big).
\end{equation}
\end{lemma}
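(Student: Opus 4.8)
The plan is to expand
\[
\int_M \I_4\, \mcz^0_j\, dv_g \;=\; \sum_{i=1}^{\nu} \int_M \bigl(-\mcl_g\mcv_i + \mcv_i^{2^*-1}\bigr)\,\mcz^0_j\, dv_g
\]
and to control each summand by a pointwise comparison of $\I_4^{(i)} := -\mcl_g\mcv_i + \mcv_i^{2^*-1}$ with a single bubble. First I would record the global bounds
\[
\bigl|\I_4^{(i)}(x)\bigr| \;\lesssim\; \frac{\delta_i^{\frac{N-2}{2}}}{\bigl(\delta_i + d_g(x,\xi_i)\bigr)^{N-2}},
\qquad
\bigl|\mcz^0_j(x)\bigr| \;\lesssim\; \frac{\delta_j^{\frac{N-2}{2}}}{\bigl(\delta_j + d_g(x,\xi_j)\bigr)^{N-2}}
\qquad\text{for } x\in M.
\]
The second one follows from the explicit form of $\mcz^0_j$ (as in the proof of Lemma \ref{le2p}). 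For the first, I would use the decomposition \eqref{i4} of $\I_4$ together with \eqref{maeu}, \eqref{exp}, \eqref{chi0} and the elementary decay estimates for $U_{\delta_i,0}$ and its derivatives (Lemma \ref{a4}): in $g$-normal coordinates $y$ around $\xi_i$ with $d_g(x,\xi_i) \le \tfrac{r_0}{2}$ one has $\I_4^{(i)} = (g^{kl}-\delta^{kl})\pa_{kl}^2 U_{\delta_i,0} - (g^{kl}\Gamma_{kl}^m)\pa_m U_{\delta_i,0} - \ka_N R_g U_{\delta_i,0}$, which by \eqref{exp} is $\mco\bigl(|y|^2|\pa^2 U_{\delta_i,0}| + |y|\,|\pa U_{\delta_i,0}| + U_{\delta_i,0}\bigr) = \mco\bigl(\delta_i^{\frac{N-2}{2}}(\delta_i+|y|)^{-(N-2)}\bigr)$; on the annulus $\tfrac{r_0}{2}\le d_g(x,\xi_i)\le r_0$ the cut-off contributions in \eqref{cutt} are $\mco(\delta_i^{(N-2)/2})$; and on $\{d_g(\cdot,\xi_i)\ge r_0\}$, where $\mcv_i \equiv U_{\delta_i,0}(\tfrac{r_0}{2})$, one has $\I_4^{(i)} = -\ka_N R_g U_{\delta_i,0}(\tfrac{r_0}{2}) + U_{\delta_i,0}(\tfrac{r_0}{2})^{2^*-1} = \mco(\delta_i^{(N-2)/2})$. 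Since $(\delta_i+d_g(x,\xi_i))^{-(N-2)} \gtrsim 1$ on the compact $M$, the three cases combine to the stated bound.

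Inserting these two bounds,
\[
\biggl|\int_M \I_4\, \mcz^0_j\, dv_g\biggr|
\;\lesssim\; \sum_{i=1}^{\nu} \delta_i^{\frac{N-2}{2}}\delta_j^{\frac{N-2}{2}} \int_M \frac{dv_g}{\bigl(\delta_i+d_g(\cdot,\xi_i)\bigr)^{N-2}\bigl(\delta_j+d_g(\cdot,\xi_j)\bigr)^{N-2}},
\]
so the lemma reduces to controlling the scalar interaction integrals on the right. By $2ab\le a^2+b^2$ each of them is bounded by $\tfrac12\sum_{k\in\{i,j\}}\int_M(\delta_k+d_g(\cdot,\xi_k))^{-2(N-2)}\,dv_g$, and passing to geodesic polar coordinates around $\xi_k$ gives $\int_M(\delta+d_g(\cdot,\xi))^{-2(N-2)}\,dv_g \lesssim 1 + \bs{1}_{\{N=4\}}|\log\delta| + \bs{1}_{\{N=5\}}\,\delta^{-1}$, the restriction $3\le N\le5$ being precisely what keeps the exponent $2(N-2)$ in the mildly sub-/super-critical range. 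Hence the $(i,j)$-term is $\mco\bigl(\delta_i^{\frac{N-2}{2}}\delta_j^{\frac{N-2}{2}}\,(1 + \bs{1}_{\{N=4\}}|\log\min(\delta_i,\delta_j)| + \bs{1}_{\{N=5\}}\min(\delta_i,\delta_j)^{-1})\bigr)$, and a short case check (using $\delta_k\le\vep_1$) shows this is $o\bigl(\max(\delta_i,\delta_j)^{\frac{N-2}{2}}\bigr) \le o\bigl(\max_{\ell=1,\ldots,\nu}\delta_\ell^{\frac{N-2}{2}}\bigr)$ in each dimension, because the prefactor of the dominant power always carries an extra vanishing factor — a positive power of, or a logarithm beaten by a power of, $\min(\delta_i,\delta_j)$. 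Summing over the finitely many indices $i$ yields \eqref{eq:I4Z}.

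The one point requiring care is this last step: a crude application of H\"older's inequality gives only $|\int_M \I_4\, \mcz^0_j\, dv_g| \lesssim \|\I_4\|_{L^{2N/(N+2)}(M)}\|\mcz^0_j\|_{L^{2^*}(M)} = \mco(\max_\ell \delta_\ell^{(N-2)/2})$, which is not enough. One must exploit that \emph{both} $\I_4^{(i)}$ and $\mcz^0_j$ decay at the rate of a single bubble $U_{\delta,0}$ (total exponent $2(N-2)$, strictly below the scaling-critical exponent $(N-2)\cdot2^* = 2N$), and evaluate the interaction integral by hand. I expect no deeper obstacle: no sign cancellation of $\mcz^0_j$ is used since everything is estimated through absolute values, and the diagonal contribution $i=j$ is just the special case $\delta_i=\delta_j$, $\xi_i=\xi_j$ of the same computation.
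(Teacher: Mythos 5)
Your proposal is correct and rests on the same core idea as the paper's proof: a Young-type inequality ($\mcv_i\mcv_j \le \tfrac12(\mcv_i^2+\mcv_j^2)$ in the paper, $2ab\le a^2+b^2$ on the bubble profiles in your version) reduces the off-diagonal product integral to diagonal integrals of squared bubbles, which are sub-critical in $3\le N\le 5$ and hence a small $o$ of the target power $\delta^{(N-2)/2}$. The paper bounds $|\mcz^0_j| \lesssim \mcv_j$, splits $\I_4$ via \eqref{i4}--\eqref{cutt} into a globally $\mcv_i$-sized piece (handled by Young plus Lemma \ref{a4}) and an exterior/annulus piece; you instead package the whole thing into the single pointwise bound $|\I_4^{(i)}| \lesssim \delta_i^{(N-2)/2}(\delta_i+d_g(\cdot,\xi_i))^{-(N-2)}$ and evaluate $\int_M(\delta+d_g)^{-2(N-2)}\,dv_g$ by hand, which is exactly Lemma~\ref{a4} with $p=2$. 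Your concluding observation that crude H\"older on $\|\I_4\|_{L^{2N/(N+2)}}\|\mcz^0_j\|_{L^{2^*}}$ only yields $\mco(\max_\ell\delta_\ell^{(N-2)/2})$ is precisely why the paper also passes through Young's inequality rather than \eqref{i4r}.
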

\begin{proof}
By means of \eqref{i4}, \eqref{cutt}, $|\mcz^0_j| \lesssim \mcv_j$, Young's inequality, and Lemma \ref{a4}, we see
\begin{align*}
\left|\int_M \I_4\mcz^0_j dv_g\right| &\lesssim \sum_{i=1}^{\nu}\int_M \mcv_i^2 dv_g+ \sum_{i=1}^{\nu} \int_{M\setminus B^g_{r_0/2}(\xi_i)} \(\mcv_i^{2^*-1} + \mcv_i + |\nabla_g \mcv_i|_g\) \mcv_j dv_g \\
&= o\Big(\max_{\ell}\delta_{\ell}^{\frac{N-2}{2}}\Big). \qedhere
\end{align*}
\end{proof}

We are now in position to conclude the proof of Proposition \ref{lou0}.
\begin{proof}[Completion of the proof of Proposition \ref{lou0}]
Choose any $j \in \{1,\ldots,\nu\}$. Since $-\Delta Z^0 = (2^*-1)U^{2^*-2}Z^0$ in $\R^N$, it holds that
\begin{equation}\label{lina}
\left\|\mcl_g\mcz^0_j-(2^*-1)\mcv_j^{2^*-2}\mcz^0_j\right\|_{L^{\frac{2N}{N+2}}(M)} = \mco\Big(\delta_j^{\frac{N-2}{2}}\Big).
\end{equation}
By \eqref{lina} and \eqref{teu},
\begin{equation}\label{lin}
\begin{aligned}
&\ \left|\int_M \bigg[\mcl_g\rho - (2^*-1)\bigg(u_0+\sum_{i=1}^{\nu}\mcv_i\bigg)^{2^*-2}\rho\bigg] \mcz^0_j dv_g\right| \\
&\lesssim \left[\left\|\mcl_g\mcz^0_j-(2^*-1)\mcv_j^{2^*-2}\mcz^0_j\right\|_{L^{\frac{2N}{N+2}}(M)} + \Big(\mcq + \max_{\ell}\delta_{\ell}^{\frac12}\Big)\right] \|\rho\|_{H^1(M)} \\
&\lesssim \Big(\mcq + \max_{\ell}\delta_{\ell}^{\frac12}\Big) \|\rho\|_{H^1(M)}.
\end{aligned}
\end{equation}
It follows from \eqref{i1r} that
\begin{equation}\label{eq:fI1}
\left|\int_M \(f+\I_1[\rho]\)\mcz^0_j dv_g\right| \lesssim \|f\|_{H^{-1}(M)}+\|\rho\|_{H^1(M)}^2.
\end{equation}
Inserting \eqref{eq:I2Z}, \eqref{eq:I3Z}, \eqref{eq:I4Z}, \eqref{lin}, and \eqref{eq:fI1} into \eqref{tezk1} yields
\begin{multline*}
\bigg|\mfa_Nu_0(\xi_j)\delta_j^{\frac{N-2}{2}} + (2^*-1) \sum_{i \ne j} \int_{B^g_{r_0/2}(\xi_j)} \mcu_j^{2^*-2}\delta_j\frac{\pa \mcu_j}{\pa \delta_j} \mcv_i dv_g\bigg| \\
\lesssim \|f\|_{H^{-1}(M)} + \|\rho\|_{H^1(M)}^2 + o(1)\|\rho\|_{H^1(M)} + o\Big(\mcq + \max_{\ell}\delta_{\ell}^{\frac{N-2}{2}}\Big),
\end{multline*}
and so by Proposition \ref{pr2.2},
\begin{equation}\label{nece}
\bigg|\mfa_Nu_0(\xi_j)\delta_j^{\frac{N-2}{2}} + (2^*-1) \sum_{i \ne j} \int_{B^g_{r_0/2}(\xi_j)} \mcu_j^{2^*-2}\delta_j\frac{\pa \mcu_j}{\pa \delta_j} \mcv_i dv_g\bigg|
\lesssim \|f\|_{H^{-1}(M)} + o\Big(\mcq + \max_{\ell}\delta_{\ell}^{\frac{N-2}{2}}\Big)
\end{equation}
for all $j \in \{1,\ldots,\nu\}$. Additionally, we observe from Lemma \ref{a22} that
\begin{equation}\label{utaj}
\bigg|\int_{B^g_{r_0/2}(\xi_j)} \mcu_j^{2^*-2}\delta_j\frac{\pa \mcu_j}{\pa \delta_j} \mcv_i dv_g\bigg| \lesssim q_{ij} \quad \text{if } i \ne j.
\end{equation}

Without loss of generality, we may assume that $\delta_1 \ge \delta_2 \ge \cdots \ge \delta_{\nu}$. Setting an induction hypothesis
\[(\textup{\textbf{P}}_l): \quad \sum_{j=l}^{\nu}\sum_{i=1}^{j-1} q_{ij} \lesssim \|f\|_{H^{-1}(M)} + o\Big(\mcq + \max_{\ell}\delta_{\ell}^{\frac{N-2}{2}}\Big),\]
we use mathematical induction on $l = \nu, \nu-1,\ldots,2$: By employing \eqref{neqij}, \eqref{nece}, \eqref{utaj}, and the positivity of $\mfa_N$ and $u_0$, we can argue as in the proof of \cite[Lemma 2.3]{DSW} to achieve
\begin{equation}\label{fi1}
\mcq \lesssim \|f\|_{H^{-1}(M)} + o\Big(\max_{\ell}\delta_{\ell}^{\frac{N-2}{2}}\Big).
\end{equation}
Furthermore, \eqref{nece}, \eqref{utaj}, and \eqref{fi1} imply
\begin{equation}\label{fi2}
\begin{aligned}
\max_{\ell}\delta_{\ell}^{\frac{N-2}{2}} &= \delta_1^{\frac{N-2}{2}} \\
&\lesssim \bigg|\sum_{i=2}^{\nu} \int_{B^g_{r_0/2}(\xi_1)}\mcu_1^{2^*-2}\delta_1\frac{\pa \mcu_1}{\pa \delta_1} \mcv_idv_g\bigg| + \|f\|_{H^{-1}(M)} + o\Big(\mcq + \max_{\ell}\delta_{\ell}^{\frac{N-2}{2}}\Big) \\
&\lesssim \mcq + \|f\|_{H^{-1}(M)} + o\Big(\mcq + \max_{\ell}\delta_{\ell}^{\frac{N-2}{2}}\Big) \lesssim \|f\|_{H^{-1}(M)} + o\Big(\max_{\ell}\delta_{\ell}^{\frac{N-2}{2}}\Big).
\end{aligned}
\end{equation}

Proposition \ref{lou0} is a consequence of \eqref{fi1} and \eqref{fi2}. This concludes the proof.
\end{proof}

\section{The case $3 \le N \le 5$ and $u_0 = 0$}\label{se3}
This section is devoted to the proof of Theorem \ref{th1.3}. Throughout this section, we always assume that $3 \le N \le 5$ and $u_0 = 0$ on $M$.

\subsection{Setting of the problem}
Given $\xi \in M$, we recall the smooth positive function $\Lambda_{\xi}$ on $M$ that raises conformal normal coordinates around $\xi$; see \eqref{dvg}.
Using $\Lambda_{\xi}(\xi)=1$, $g_{\xi} = \Lambda_{\xi}^{4/(N-2)}g$, and the conformal covariance property of the conformal Laplacian $\mcl_g$ on $(M,g)$,
\begin{equation}\label{cova}
\mcl_{g_{\xi}}(\phi)=\Lambda_{\xi}^{-(2^*-1)}\mcl_g(\Lambda_{\xi}\phi) \quad \text{for all } \phi \in C^2(M),
\end{equation}
we find that
\begin{equation}\label{gg}
G_g(\cdot,\xi)=\Lambda_{\xi}(\cdot)G_{g_{\xi}}(\cdot,\xi)
\end{equation}
where $G_g$ is the Green's function of $\mcl_g$. In \cite[Lemma 6.4]{LP}, it was shown that
\begin{equation}\label{gx}
G_{g_{\xi}}\big(\exp_{\xi}^{g_{\xi}} y,\xi\big) = \ga_N^{-1}|y|^{2-N} + A_g(\xi) + \mco(|y|) \quad \text{$C^1$-uniformly in $y$ and $\xi$}
\end{equation}
in conformal normal coordinates $y$ around $\xi$. The quantity $A_g(\xi) \in \R$ (called the mass at $\xi$) is determined by $(M,g)$ and $\xi$, and positive by the positive mass theorem in \cite{SY1}. Besides, the map $\xi \mapsto A_g(\xi)$ is smooth.

For $i \in \{1,\ldots,\nu\}$, let $\mcv_i = \mcv_{\delta_i,\xi_i}$ be the function in \eqref{vi}.
By Assumption \ref{assum} and \eqref{eq:bubblel}, there exist $(\delta_1,\ldots,\delta_{\nu}, \xi_1,\ldots,\xi_{\nu}) \subset (0,\infty)^{\nu} \times M^{\nu}$ and $\vep_1 > 0$ small such that $\vep_1 \to 0$ as $\vep_0 \to 0$,
\[\bigg\|u-\sum_{i=1}^{\nu}\mcv_i\bigg\|_{H^1(M)} = \inf\left\{\bigg\|u-\sum_{i=1}^{\nu} \mcv_{\tde_i,\txi_i}\bigg\|_{H^1(M)}: \(\tde_i,\txi_i\) \in (0,\infty) \times M,\ i=1,\ldots,\nu\right\} \le \vep_1,\]
and \eqref{eq:bubblei} holds. Setting $\rho = u-\sum_{i=1}^{\nu}\mcv_i$ and $f = \mcl_gu-u^{2^*-1}$, we have
\begin{equation}\label{eqrho2}
\begin{cases}
\displaystyle \mcl_g\rho - (2^*-1)\bigg(\sum_{i=1}^{\nu}\mcv_i\bigg)^{2^*-2}\rho = f+\II_1[\rho]+\II_2+\II_3 \quad \text{on } M,\\
\displaystyle \big\langle \rho,\mcz^k_i \big\rangle_{H^1(M)} = 0 \quad \text{for } i=1,\ldots,\nu \text{ and } k=0,\ldots,N
\end{cases}
\end{equation}
where $\mcz^0_i = \delta_i \frac{\pa\mcv_i}{\pa\delta_i}$, $\mcz^k_i = \delta_i \frac{\pa\mcv_i}{\pa\xi_i^k}$ for $k=1,\ldots,N$,
\[\II_1[\rho] := \bigg(\sum_{i=1}^{\nu}\mcv_i+\rho\bigg)^{2^*-1} - \bigg(\sum_{i=1}^{\nu}\mcv_i\bigg)^{2^*-1} - (2^*-1)\bigg(\sum_{i=1}^{\nu}\mcv_i\bigg)^{2^*-2}\rho,\]
\[\II_2 := \bigg(\sum_{i=1}^{\nu} \mcv_i\bigg)^{2^*-1} - \sum_{i=1}^{\nu}\mcv_i^{2^*-1}, \quad \text{and} \quad \II_3 := \sum_{i=1}^{\nu} \(-\mcl_g\mcv_i+\mcv_i^{2^*-1}\).\]
To prove Theorem \ref{th1.3}, it is sufficient to verify that
\begin{equation}\label{end}
\|\rho\|_{H^1(M)} \lesssim \|f\|_{H^{-1}(M)}
\end{equation}
provided $\vep_0 > 0$ small. Since the proof of \eqref{end} is rather parallel to that of \eqref{rf}, we will minimize the overlaps and focus on the distinct parts.

\subsection{Preliminary computations}
The following two results are analogies of Lemma \ref{le2p} and Proposition \ref{41}, respectively.
\begin{lemma}\label{le3p}
Assume that $i,j \in \{1,\ldots,\nu\}$ and $k,l \in \{0,1,\ldots,N\}$. We have
\[\la \mcv_i,\mcv_i \ra_{H^1(M)} = \int_{\R^N} U^{2^*} + o\Big(\delta_i^{\frac{N-2}{2}}\Big), \quad \big\langle \mcz^k_i,\mcv_i \big\rangle_{H^1(M)} = o\Big(\delta_i^{\frac{N-2}{2}}\Big),\]
\[\big\langle \mcz^k_i,\mcz^l_i \big\rangle_{H^1(M)} = \big\|Z^k\big\|_{\dot{H}^1(\R^N)}^2 \delta^{kl} + o\Big(\delta_i^{\frac{N-2}{2}}\Big),\]
and
\[\left|\la \mcv_i,\mcv_j \ra_{H^1(M)}\right| + \left|\big\langle \mcz^k_i,\mcv_j \big\rangle_{H^1(M)}\right| + \left|\big\langle \mcz^k_i,\mcz^l_j \big\rangle_{H^1(M)}\right| = \mco(q_{ij}) + o\Big(\max_{\ell = 1,\ldots,\nu}\delta_{\ell}^{\frac{N-2}{2}}\Big)\]
provided $i \ne j$.
\end{lemma}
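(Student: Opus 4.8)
The plan is to mimic the proof of Lemma \ref{le2p}, the only genuinely new feature being that the bubble-like functions $\mcv_i$ in \eqref{vi} are built from the Green's function $G_g$ and conformal normal coordinates rather than from a simple cut-off of $\mcu_{\delta_i,\xi_i}$. First I would record the pointwise description of $\mcv_i$ and its derivatives near $\xi_i$. Working in $g_{\xi_i}$-normal coordinates $y$ around $\xi_i$ and using \eqref{gx}, namely $G_{g_{\xi_i}}(\exp_{\xi_i}^{g_{\xi_i}}y,\xi_i)=\ga_N^{-1}|y|^{2-N}+A_g(\xi_i)+\mco(|y|)$, together with $\ga_N=(N-2)|\S^{N-1}|$ and the definition of $\mcu_{\delta_i,\xi_i}^{g_{\xi_i}}$, one checks that on $B^{g_{\xi_i}}_{r_0/2}(\xi_i)$
\[
\mcv_i(\exp_{\xi_i}^{g_{\xi_i}}y)=\alpha_N\Big(\tfrac{\delta_i}{\delta_i^2+|y|^2}\Big)^{\frac{N-2}{2}}\big(1+\mco(|y|)\big)+\mco\big(\delta_i^{\frac{N-2}{2}}\big),
\]
so that $\mcv_i$ differs from the Euclidean bubble $U_{\delta_i,0}$ only by terms that vanish faster as $\delta_i\to0$; outside $B^{g_{\xi_i}}_{r_0/2}(\xi_i)$ one has $\mcv_i=\mco(\delta_i^{\frac{N-2}{2}})$ since $\ga_NG_g(\cdot,\xi_i)$ is bounded there and the bracket in \eqref{vi} is $\mco(\delta_i^{\frac{N-2}{2}})$. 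Differentiating in $\delta_i$ and in $\xi_i^k$ and using \eqref{eq:dist1}--\eqref{eq:dist2} to handle the dependence of $d_{g_{\xi}}(\cdot,\xi)$ and of $\Lambda_\xi,A_g(\xi)$ on $\xi$, one obtains for $k=1,\dots,N$
\[
\mcz^k_i(\exp_{\xi_i}^{g_{\xi_i}}y)=(N-2)\alpha_N\frac{\delta_i^{\frac N2}}{(\delta_i^2+|y|^2)^{\frac N2}}\big(y^k+\mco(|y|^3)\big)+\mco\big(\delta_i^{\frac{N-2}{2}}\big)
\]
on $B^{g_{\xi_i}}_{r_0/2}(\xi_i)$ and $\mcz^k_i=\mco(\delta_i^{\frac{N-2}{2}})$ elsewhere, exactly as in the display inside the proof of Lemma \ref{le2p}, and similarly $\mcz^0_i=\delta_i\tfrac{\pa U_{\delta_i,0}}{\pa\delta_i}+(\text{lower order})$ with $|\mcz^0_i|\lesssim\mcv_i$.

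With these asymptotic profiles in hand, the three ``diagonal'' estimates follow by the standard change of variables $x=\exp_{\xi_i}^{g_{\xi_i}}(\delta_i y)$: writing $\|u\|_{H^1(M)}^2=\int_M(|\nabla_gu|_g^2+\ka_NR_gu^2)\,dv_g$ and integrating by parts, $\la\mcv_i,\mcv_i\ra_{H^1(M)}=\int_M\mcv_i\,\mcl_g\mcv_i\,dv_g$, which by conformal covariance \eqref{cova} equals $\int_M(\Lambda_{\xi_i}^{-1}\mcv_i)\,\mcl_{g_{\xi_i}}(\Lambda_{\xi_i}^{-1}\mcv_i)\Lambda_{\xi_i}^{2^*}\,dv_g$ with respect to $g_{\xi_i}$; since $\det g_{\xi_i}=1+\mco(|y|^\theta)$ with $\theta$ large and $\mcl_{g_{\xi_i}}$ reduces to $-\Delta$ plus lower-order terms at $\xi_i$, the leading term is $\int_{\R^N}|\nabla U|^2=\int_{\R^N}U^{2^*}$ and the remainder is $o(\delta_i^{(N-2)/2)})$. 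The orthogonality-type relations $\la\mcz^k_i,\mcv_i\ra_{H^1(M)}=o(\delta_i^{(N-2)/2})$ and $\la\mcz^k_i,\mcz^l_i\ra_{H^1(M)}=\|Z^k\|_{\dot H^1(\R^N)}^2\delta^{kl}+o(\delta_i^{(N-2)/2})$ come from the analogous Euclidean facts $\int_{\R^N}\nabla Z^k\cdot\nabla U=0$ and $\int_{\R^N}\nabla Z^k\cdot\nabla Z^l=\|Z^k\|^2\delta^{kl}$ together with the same change of variables; the parity of $Z^k$ in $y^k$ kills the odd correction terms.

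For the off-diagonal bound with $i\ne j$, I would split $M$ into a neighbourhood of $\xi_i$, a neighbourhood of $\xi_j$, and the complement, and on each piece estimate $\la\mcv_i,\mcv_j\ra_{H^1(M)}=\int_M\mcv_i\,\mcl_g\mcv_j\,dv_g$ and its analogues using $\mcl_g\mcv_j=\mcv_j^{2^*-1}+\mco(\text{error})$ (cf. \eqref{eq:I3I4}, here $\II_3$), reducing everything to integrals of the form $\int\mcv_j^{2^*-1}\mcv_i$, which are controlled by $\mco(q_{ij})+o(\max_\ell\delta_\ell^{(N-2)/2})$ by the interaction estimates of Appendix \ref{a} (Lemma \ref{a22}) exactly as in Lemma \ref{le2p}; the contributions from the ``far'' region, where both $\mcv_i$ and $\mcv_j$ are of size $\delta^{(N-2)/2}$, give the $o(\max_\ell\delta_\ell^{(N-2)/2})$ term. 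I expect the only mildly delicate point to be bookkeeping the $\xi$-dependence of the conformal factor $\Lambda_\xi$, of $d_{g_\xi}(\cdot,\xi)$, and of $A_g(\xi)$ when differentiating to form $\mcz^k_i$ — one must verify that these dependences are smooth and contribute only $\mco(\delta_i^{(N-2)/2})$ — but this is a routine consequence of the $C^1$-uniformity in \eqref{gx} and the smoothness of $\xi\mapsto\Lambda_\xi$ and $\xi\mapsto A_g(\xi)$, so once the pointwise estimates of the first paragraph are established the rest is bookkeeping identical to the $u_0>0$ case. Hence no new genuine obstacle arises; the main work is producing the correct asymptotic profiles of $\mcv_i$ and $\mcz^k_i$ from \eqref{vi}, \eqref{gg}, and \eqref{gx}.
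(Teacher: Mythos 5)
Your overall strategy is the same as the paper's: derive pointwise asymptotics for $\mcv_i$ and $\mcz^k_i$ in $g_{\xi_i}$-normal coordinates, then reduce the diagonal and off-diagonal inner products to Euclidean identities and the interaction bounds of Lemma~\ref{a22}. The paper's proof is one line deep for exactly this reason: it records the asymptotics of $\mcz^k_i$ (citing EPV, Lemma~3) and declares the rest standard.

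There is, however, one imprecise step in your write-up. You transplant the display from the proof of Lemma~\ref{le2p} verbatim, i.e.\ you claim
\[
\mcz^k_i\big(\exp^{g_{\xi_i}}_{\xi_i}(y)\big) = (N-2)\alpha_N\frac{\delta_i^{\frac N2}}{(\delta_i^2+|y|^2)^{\frac N2}}\big(y^k+\mco(|y|^3)\big)+\mco\big(\delta_i^{\frac{N-2}{2}}\big),
\]
and you assert that the $\xi$-dependence of $\Lambda_\xi$, $d_{g_\xi}(\cdot,\xi)$, $A_g(\xi)$ contributes only $\mco(\delta_i^{(N-2)/2})$. That is not quite right. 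For the bubble in \eqref{vi}, one has $\mcv_{\delta,\xi}=P(\cdot,\xi)\,\mcu_{\delta,\xi}^{g_\xi}$ near $\xi$, where $P(\cdot,\xi)=\ga_N G_g(\cdot,\xi)d_{g_\xi}(\cdot,\xi)^{N-2}=1+\mco(|y|^2)$. Since $P(\xi,\xi)\equiv1$ and $\nabla_x P(\cdot,\xi)|_{x=\xi}=0$, differentiating in $\xi$ gives $\partial_{\xi^k}P=\mco(|y|)$, so $\delta_i\partial_{\xi^k}P\cdot\mcu_{\delta_i,\xi_i}^{g_{\xi_i}}$ produces a term of size $\mco\big(\delta_i^{N/2}|y|(\delta_i^2+|y|^2)^{-(N-2)/2}\big)$, which at $|y|\sim\delta_i$ is of order $\delta_i^{(6-N)/2}$ — larger than your claimed $\delta_i^{(N-2)/2}$ when $N=5$. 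This is precisely the extra error term in the paper's display, $\mco\big(\delta_i^{N/2}|y|(\delta_i^2+|y|^2)^{-(N-2)/2}\big)$, absent from Lemma~\ref{le2p}'s profile because there the cut-off bubble \eqref{ui} has no $\xi$-dependent conformal prefactor. The imprecision is not fatal: once this corrected error term is used, the same standard computations still give all the $o(\delta_i^{(N-2)/2})$ remainders, so the lemma goes through. But the pointwise estimate as you wrote it, and the statement that the conformal bookkeeping only produces $\mco(\delta_i^{(N-2)/2})$, should be amended.
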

\begin{proof}
Following the proof of \cite[Lemma 3]{EPV}, we can show that
\[\mcz^k_i(x) = \begin{cases}
\displaystyle (N-2)\alpha_N \frac{\delta_i^{\frac{N}{2}}y^k}{(\delta_i^2+|y|^2)^{\frac{N}{2}}} + \mco\(\frac{\delta_i^{\frac{N}{2}}|y|}{(\delta_i^2+|y|^2)^{\frac{N-2}{2}}}\)
&\text{if } x=\exp^{g_{\xi_i}}_{\xi_i}(y) \in B^{g_{\xi_i}}_{\frac{r_0}{2}}(\xi_i), \\
\displaystyle \mco\Big(\delta_i^{\frac{N-2}{2}}\Big), &\text{if } x \in M\setminus B^{g_{\xi_i}}_{\frac{r_0}{2}}(\xi_i)
\end{cases}\]
for $i=1,\ldots,\nu$ and $k=1,\ldots,N$. Once we have this, the rest of the proof is standard.
\end{proof}	
\begin{prop}\label{co}
Let
\begin{equation}\label{eq:wteperp}
E^{\perp} = \left\{\vrh \in H^1(M): \la \vrh,\mcv_i \ra_{H^1(M)} = \big\langle \vrh,\mcz^k_i \big\rangle_{H^1(M)} = 0 \text{ for } i=1,\ldots,\nu,\ k=0,1,\ldots,N\right\}.
\end{equation}
Then there exists a constant $c_0 \in(0,1)$ such that
\[(2^*-1) \int_M \bigg(\sum_{i=1}^{\nu}\mcv_i\bigg)^{2^*-2}\vrh^2 dv_g \le c_0\|\vrh\|_{H^1(M)}^2 \quad \text{for any } \vrh \in E^{\perp}.\]
\end{prop}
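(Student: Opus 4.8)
\textbf{Proof proposal for Proposition \ref{co}.}

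The plan is to mimic the blow-up argument that establishes Proposition \ref{41} (deferred to Appendix \ref{subsec:coer}), but in the simpler situation where $u_0 = 0$ so that the term $(u_0+\sum_i \mcv_i)^{2^*-2}$ reduces to $(\sum_i \mcv_i)^{2^*-2}$ and the eigenfunctions $\psi_m$ are no longer present. First I would argue by contradiction: suppose the conclusion fails for every $c_0 \in (0,1)$. Then, for a sequence of parameters $(\delta_{1n},\ldots,\delta_{\nu n},\xi_{1n},\ldots,\xi_{\nu n})$ satisfying \eqref{eq:bubblei} with $\vep_1 = \vep_{1n} \to 0$, one can find $\vrh_n \in E^{\perp}$ (the space \eqref{eq:wteperp} built from these parameters) with $\|\vrh_n\|_{H^1(M)} = 1$ and
\[
(2^*-1)\int_M \bigg(\sum_{i=1}^{\nu}\mcv_{in}\bigg)^{2^*-2}\vrh_n^2\, dv_g \to 1.
\]
Since the Sobolev constant forces $(2^*-1)\|\mcv_{in}\|_{L^\infty}^{2^*-2}$-type localization near each $\xi_{in}$, the mass of $(\sum_i \mcv_{in})^{2^*-2}\vrh_n^2$ concentrates in shrinking balls $B^{g}_{R\delta_{in}}(\xi_{in})$ as $R \to \infty$; on the complement the integral is $o(1)$ by Hölder and Lemma \ref{a4}-type decay estimates. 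Thus the contradiction must come from the local behavior near at least one bubble.

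Next I would perform the rescaling around each concentration point. Fix $i$ and let $y \mapsto x = \exp^{g_{\xi_{in}}}_{\xi_{in}}(\delta_{in} y)$, setting $w_{in}(y) := \delta_{in}^{(N-2)/2}\vrh_n(\exp^{g_{\xi_{in}}}_{\xi_{in}}(\delta_{in} y))$ (with the appropriate conformal factor $\Lambda_{\xi_{in}}$ absorbed, using $\Lambda_{\xi_{in}}(\xi_{in}) = 1$ and $\nabla_g\Lambda_{\xi_{in}}(\xi_{in}) = 0$). The uniform $H^1(M)$-bound on $\vrh_n$ gives a uniform $\dot{H}^1(\R^N)$-bound on $w_{in}$, so up to a subsequence $w_{in} \rightharpoonup w_i$ weakly in $\dot{H}^1(\R^N)$. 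Because $\mcv_{in}$ rescales to the standard bubble $U$ near $\xi_{in}$ (this is exactly the content of \eqref{vi} together with \eqref{gx}, and the rescaled profile is controlled as in the proof of Lemma \ref{le3p}), the interaction estimate \eqref{eq:inter} guarantees that the contributions of the other bubbles $\mcv_{jn}$, $j \ne i$, vanish in the rescaled picture. Passing to the limit in the concentration identity and in the orthogonality constraints $\langle \vrh_n, \mcv_{in}\rangle_{H^1(M)} = \langle \vrh_n, \mcz^k_{in}\rangle_{H^1(M)} = 0$ — which rescale to $\int_{\R^N} \nabla U \cdot \nabla w_i = \int_{\R^N} \nabla Z^k \cdot \nabla w_i = 0$ for $k = 0,1,\ldots,N$ — one finds that $w_i$ lies in the space $\dot{H}^1(\R^N)$-orthogonal to $\mathrm{span}\{U, Z^0,\ldots,Z^N\}$. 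On this subspace the classical non-degeneracy of the Aubin--Talenti bubble (the second variation of the Sobolev quotient, or equivalently the Bianchi--Egnell spectral gap) yields the strict inequality
\[
(2^*-1)\int_{\R^N} U^{2^*-2} w_i^2 \le c_* \int_{\R^N} |\nabla w_i|^2
\]
with a universal $c_* \in (0,1)$ independent of $i$ and $n$.

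Finally I would glue the local estimates together: summing over $i = 1,\ldots,\nu$, using that the masses concentrate in disjoint (for large $R$) neighborhoods and that the tails are $o(1)$, gives
\[
(2^*-1)\int_M \bigg(\sum_{i=1}^{\nu}\mcv_{in}\bigg)^{2^*-2}\vrh_n^2\, dv_g \le c_* \sum_{i=1}^{\nu}\int_{B^g_{R\delta_{in}}(\xi_{in})}|\nabla_g \vrh_n|_g^2\, dv_g + o(1) \le c_* + o(1) < 1,
\]
contradicting the assumption. This proves the claim with any $c_0 \in (c_*,1)$. The main obstacle I anticipate is bookkeeping the passage to the limit rigorously when the ratios $\delta_{in}/\delta_{jn}$ or the rescaled distances $d_g(\xi_{in},\xi_{jn})/\delta_{in}$ may behave very differently across indices: one must check that in the rescaling centered at $\xi_{in}$ the other bubbles either escape to infinity or collapse to a point of measure zero, so that no spurious interaction survives in the weak limit — this is where \eqref{eq:inter}/\eqref{eq:bubblei} is used crucially, exactly as in the standard Struwe-type decomposition. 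A secondary technical point is handling the conformal factors $\Lambda_{\xi_{in}}$ and the metric coefficients $g^{ij}$, $g^{ij}\Gamma_{ij}^k$ in the rescaled equation; but by \eqref{exp} (and its $g_{\xi}$-analogue) these converge to the Euclidean ones on compact sets, so they do not affect the limiting variational inequality.
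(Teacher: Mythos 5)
Your high-level strategy (argue by contradiction, rescale around each concentration point, use the spectral properties of $U$ on $\R^N$) is of the same flavor as the paper's proof of Proposition \ref{41}, which Proposition \ref{co} inherits. However, the paper's mechanism is materially different from yours, and your version has a genuine gap in the gluing step.

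The paper takes $\vrh_n$ to be a \emph{maximizer} of the weighted quadratic form subject to the constraints (see \eqref{2.11}), which produces an Euler--Lagrange equation \eqref{2.13} with Lagrange multipliers. After showing the multipliers vanish (Step 1) and that $\vrh_n\rightharpoonup 0$ in $H^1(M)$ (Step 2 --- trivial when $u_0=0$, since $\mcl_g$ is injective for $Y(M,[g])>0$), the rescaled limits $\tvrh_{j\infty}$ are shown to satisfy the linearized equation $-\Delta\tvrh_{j\infty}=\mu_\infty U^{2^*-2}\tvrh_{j\infty}$ together with the orthogonality $\int\nabla\tvrh_{j\infty}\cdot\nabla U=\int\nabla\tvrh_{j\infty}\cdot\nabla Z^k=0$; the non-degeneracy of $U$ and the bound $\mu_\infty\le 2^*-1$ then force $\tvrh_{j\infty}=0$. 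Because every limit vanishes, Step 4 shows the \emph{entire} weighted integral tends to zero --- contradiction, with no gluing required. This is precisely what the paper means in Remark 1.4(5) by avoiding the bump-function machinery of \cite{FG}.

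Your proposal instead applies the Bianchi--Egnell spectral gap directly to possibly nonzero weak limits $w_i$ and then sums. The gap is in the final chain
\[
(2^*-1)\int_M \bigg(\sum_{i}\mcv_{in}\bigg)^{2^*-2}\vrh_n^2\,dv_g
\le c_*\sum_{i}\int_{B^g_{R\delta_{in}}(\xi_{in})}|\nabla_g\vrh_n|^2_g\,dv_g + o(1)
\le c_* + o(1).
\]
The spectral gap gives $(2^*-1)\int_{\R^N}U^{2^*-2}w_i^2\le c_*\|w_i\|^2_{\dot H^1(\R^N)}$; it is a \emph{global} inequality in $\dot H^1(\R^N)$. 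Weak lower semicontinuity only gives $\|w_i\|^2_{\dot H^1(\R^N)}\le\liminf_n\int_{\R^N}|\nabla w_{in}|^2$, and this last quantity is (up to conformal factors) the full $\|\vrh_n\|^2_{H^1(M)}\approx 1$, not the local integral $\int_{B^g_{R\delta_{in}}}|\nabla_g\vrh_n|^2_g$. There is no ``local Bianchi--Egnell'' inequality over $B_R$ alone, so your second display does not follow as written. To make your route rigorous one must instead invoke the Brezis--Lieb/Struwe orthogonality of profiles --- namely that when the centers and scales satisfy the separation condition \eqref{eq:inter}/\eqref{eq:bubblei}, the weak limits satisfy $\sum_i\|w_i\|^2_{\dot H^1(\R^N)}\le\liminf_n\|\vrh_n\|^2_{H^1(M)}=1$ --- and then bound $\sum_i(2^*-1)\int U^{2^*-2}w_i^2\le c_*\sum_i\|w_i\|^2_{\dot H^1}\le c_*$. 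You allude to bookkeeping issues in the bubble-tower scenario, but the missing ingredient is this splitting lemma, not merely the interaction estimate. The paper's Euler--Lagrange/zero-limit argument sidesteps this entirely, which is why it is the cleaner route here.
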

\begin{proof}
The proof is similar to that of Proposition \ref{41}, so we omit it.
\end{proof}
We will also need estimates for the $L^{2N/(N+2)}(M)$-norm of $\II_2$ and $\II_3$.
\begin{lemma}\label{lem3.3}
We have
\[\|\II_2\|_{L^{\frac{2N}{N+2}}(M)} + \|\II_3\|_{L^{\frac{2N}{N+2}}(M)} \lesssim \mcq + \max_{\ell=1,\ldots,\nu}\delta_{\ell}^{N-2}.\]
\end{lemma}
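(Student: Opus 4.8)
The strategy is to mimic the proof of Lemma \ref{s2l2}, but with the choice of $\mcv_i$ given by \eqref{vi} rather than \eqref{ui}, and with the crucial simplification that $u_0 = 0$, so there is no $\I_2$-type term and the error from $\II_3$ replacing $\I_4$ is governed by how well $\mcv_i$ solves the Yamabe equation.

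\medskip

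First I would record the pointwise and integral bounds for $\mcv_i$ built from \eqref{vi}: using \eqref{gg}, \eqref{gx}, the conformal normal coordinate normalization \eqref{dvg}, and the truncation structure, one gets $\mcv_i \simeq \mcu_{\delta_i,\xi_i}^{g_{\xi_i}}$ near $\xi_i$ and $\mcv_i = \mco(\delta_i^{(N-2)/2})$ away from $\xi_i$, hence the analogues of \eqref{uln1}, namely $\|\mcv_i\|_{L^{2N/(N+2)}(M)} \lesssim \delta_i^{(N-2)/2}$ and $\|\mcv_i^{2^*-2}\|_{L^{2N/(N+2)}(M)} \lesssim \delta_i^{(N-2)/2}$ (these presumably follow from the appendix lemmas, e.g. Lemma \ref{a4}). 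For $\II_2$, exactly as in \eqref{i3r} one estimates $\|\II_2\|_{L^{2N/(N+2)}(M)} \lesssim \sum_{i \ne j} \|\mcv_i^{2^*-2}\mcv_j\|_{L^{2N/(N+2)}(M)} \lesssim \mcq$ by Lemma \ref{a22} (interaction estimate), with no $u_0$-cross terms present. This disposes of the first summand.

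\medskip

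The main work is $\II_3 = \sum_i (-\mcl_g\mcv_i + \mcv_i^{2^*-1})$. Fix $i$ and work in $g_{\xi_i}$-normal coordinates $y$ around $\xi_i$. The point of the Green's-function-corrected ansatz \eqref{vi} is the conformal covariance \eqref{cova}: writing $\mcv_i = \ga_N G_g(\cdot,\xi_i)[\,\cdots] = \Lambda_{\xi_i} \cdot \big(\ga_N G_{g_{\xi_i}}(\cdot,\xi_i)[\,\cdots]\big)$ via \eqref{gg}, one has $\mcl_g \mcv_i = \Lambda_{\xi_i}^{2^*-1}\, \mcl_{g_{\xi_i}}\big(\ga_N G_{g_{\xi_i}}(\cdot,\xi_i)[\,\cdots]\big)$, so the whole computation can be transferred to the metric $g_{\xi_i}$ in which $\det g_{\xi_i} = 1 + \mco(|y|^\theta)$ and the scalar curvature vanishes to high order at $\xi_i$. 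There the error $-\mcl_{g_{\xi_i}}\mcv_i^{g_{\xi_i}} + (\mcv_i^{g_{\xi_i}})^{2^*-1}$ splits into: (a) the contribution of the truncation, supported in the annulus $\{r_0/2 \le d_{g_{\xi_i}}(\cdot,\xi_i) \le r_0\}$, which is $\mco(\delta_i^{(N-2)/2})$ pointwise on a fixed-size set and hence $\mco(\delta_i^{(N-2)/2})$ in $L^{2N/(N+2)}$ — this is why the stated bound has $\delta_\ell^{N-2}$ rather than $\delta_\ell^{(N-2)/2}$: after multiplying by the Green's function weight $G_g(\cdot,\xi_i) \sim d^{2-N}$ truncated away from the pole, the relevant term near the pole is actually $\mco(\delta_i^{N-2})$ since $d^{N-2}\mcu^{g_{\xi_i}}_{\delta_i,\xi_i} - \alpha_N\delta_i^{(N-2)/2} = \mco(\delta_i^{(N-2)/2}\cdot \delta_i^2/d^2)$ type corrections enter; (b) the curvature/metric-perturbation terms, which by \eqref{dvg} with $\theta$ large and \eqref{gx} are of order $\delta_i^{(N-2)/2}$ times a convergent integral, or better, again landing in the $\delta_\ell^{N-2}$ régime for $3 \le N \le 5$ since $N-2 \le 3 \le (N-2)/2 + \text{(gain from conformal normal coords)}$; (c) the Green's-function remainder $\mco(|y|)$ from \eqref{gx}, contributing a term estimated by Hölder against a power of $\mcu$ that integrates to $\mco(\delta_i^{(N-2)/2+1})$ or smaller. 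Summing $\II_2$ and $\II_3$ gives $\|\II_2\|_{L^{2N/(N+2)}(M)} + \|\II_3\|_{L^{2N/(N+2)}(M)} \lesssim \mcq + \max_\ell \delta_\ell^{N-2}$.

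\medskip

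The main obstacle is the careful bookkeeping in step $\II_3$: one must verify that every error produced by (i) truncating at radius $r_0$, (ii) the $\mco(|y|^\theta)$ in $\det g_{\xi_i}$, and (iii) the $\mco(|y|)$ Green's-function remainder, when weighted by the singular factor $G_g(\cdot,\xi_i)$ and measured in $L^{2N/(N+2)}(M)$, is controlled by $\delta_i^{N-2}$ and not merely $\delta_i^{(N-2)/2}$ — this improvement (reflected in the exponent $N-2$ in the statement, versus $(N-2)/2$ in Lemma \ref{s2l2}) is precisely what the refined ansatz \eqref{vi} buys over \eqref{ui}, and it is exactly what makes the right-hand side in the subsequent Proposition \ref{pr3.2} and Proposition \ref{3.3} small enough to close \eqref{end}. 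Since $3 \le N \le 5$ forces $N-2 \le 3$, one also has to check $\delta_i^{N-2}$ is genuinely the dominant exponent among the error powers (e.g. $\delta_i^{N-2} \gg \delta_i^{(N-2)/2+1}$ fails for $N=3$, so one should double-check which of $N-2$ and $(N-2)/2+1$ is smaller and state the worst case — for $N=3$ these are $1$ and $3/2$, so $\delta^{N-2}$ is indeed the larger, hence the binding, term; for $N=4,5$ likewise $N-2 \le (N-2)/2+1$ iff $N \le 4$, so for $N=5$ one should instead track $\delta^{(N-2)/2+1}=\delta^{5/2}$, but $\delta^{N-2}=\delta^3 < \delta^{5/2}$, meaning the stated $\delta^{N-2}$ bound is the correct upper bound in every case). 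The interaction term $\mcq$ in $\II_2$ is routine given Lemma \ref{a22}.
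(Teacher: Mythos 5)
Your overall plan — handle $\II_2$ via Lemma \ref{a22}, then for $\II_3$ use the conformal covariance \eqref{cova} and \eqref{gg} to transfer the computation to $g_{\xi_i}$-normal coordinates and estimate piecewise in $d_{g_{\xi_i}}(x,\xi_i)$ — is aligned with the paper's proof. The $\II_2$ part is fine. But the error budget for $\II_3$ has a genuine gap and a logical error.

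\emph{The gap.} Your decomposition into (a) truncation, (b) curvature/metric-perturbation, and (c) the $\mco(|y|)$ Green's-function remainder omits the term that actually produces the $\delta_i^{N-2}$. In the region $d_{g_{\xi_i}}(x,\xi_i) \le r_0/2$ one writes $\mcv_i = \ga_N G_g(\cdot,\xi_i)F_i$ with $F_i = d_{g_{\xi_i}}(\cdot,\xi_i)^{N-2}\mcu_i^{g_{\xi_i}}$, and then $-\mcl_g\mcv_i + \mcv_i^{2^*-1}$ is (after \eqref{cova}) a combination of $\ga_N G_{g_{\xi_i}}\Delta_{g_{\xi_i}}F_i$, $2\ga_N\la\nabla G_{g_{\xi_i}},\nabla F_i\ra$, and the nonlinear term. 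Plugging in the expansion $G_{g_{\xi_i}}(\exp^{g_{\xi_i}}_{\xi_i}y,\xi_i) = \ga_N^{-1}|y|^{2-N} + A_g(\xi_i) + \mco(|y|)$, the $|y|^{2-N}$ singular part cancels exactly (that is the whole point of the Green's-function ansatz), and what remains at leading order is the \emph{constant} mass $A_g(\xi_i)$ multiplying $\Delta F_i$ and $\nabla F_i$. This gives a profile of the form $\alpha_N\ga_N A_g(\xi_i)\,\delta_i^{(N+2)/2}\,|y|^{N-4}/(\delta_i^2+|y|^2)^{N/2}$, whose $L^{2N/(N+2)}$ norm over $B_{r_0/2}$ is exactly of order $\delta_i^{N-2}$ for $3\le N\le 5$; see \eqref{ca23}--\eqref{ca232}. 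This $A_g$ contribution is the dominant one; it is neither a truncation term, nor a $\det g_{\xi_i}$ correction, nor the $\mco(|y|)$ remainder. By not isolating it, your plan never actually derives the exponent $N-2$. (Also, the truncation contribution in the annulus is $\mco(\delta_i^{(N+2)/2})$ in $L^{2N/(N+2)}$, as in \eqref{331}--\eqref{ca3}, not the $\delta_i^{(N-2)/2}$ or $\delta_i^{N-2}$ you suggest.)

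\emph{The logical error.} In the final paragraph you track a sub-term of size $\delta_i^{(N-2)/2+1}$, observe that for $N=5$ this is $\delta^{5/2}$ while $\delta^{N-2} = \delta^3$, note $\delta^3 < \delta^{5/2}$, and conclude "the stated $\delta^{N-2}$ bound is the correct upper bound." This inference runs backwards: if some piece of $\II_3$ were genuinely of size $\delta^{5/2}$, then $\|\II_3\|_{L^{2N/(N+2)}}$ would be at least of order $\delta^{5/2}$, which is \emph{not} bounded by $\delta^3$, and the lemma would be false. The resolution is not a comparison of exponents but the fact that $\delta^{(N-2)/2+1}$ is too crude an estimate for that piece; the careful computation with the $A_g$ profile yields $\delta^{N-2}$ directly (and the $\mco(|y|)$ remainder contributes $|y|^{N-3}/(\delta_i^2+|y|^2)^{N/2}$, which is strictly higher order). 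A correct argument must carry this out rather than reasoning about which of two exponents is smaller.
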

\begin{proof}
It is straightforward to check
\[\|\II_2\|_{L^{\frac{2N}{N+2}}(M)} \lesssim \sum_{i \ne j} \left\|\mcv_i^{2^*-2}\mcv_{j}\right\|_{L^{\frac{2N}{N+2}}(M)} \lesssim \mcq.\]
From now on, we are devoted to estimating the $L^{2N/(N+2)}(M)$-norm of $\II_3$. Fixing $i \in \{1,\ldots,\nu\}$, we examine three different cases determined by the distance between a point $x \in M$ and $\xi_i$.

\medskip \noindent \textbf{Case 1. ($d_{g_{\xi_i}}(x,\xi_i) \ge r_0$):} We know that $\mcv_i = \alpha_N\ga_N \delta_i^{(N-2)/2} G_g(x,\xi_i)$ and so
\begin{equation}\label{ca13}
-\mcl_g\mcv_i +\mcv_i^{2^*-1} =-\alpha_N\ga_N \delta_i^{\frac{N-2}{2}} \Lambda_{\xi_i}^{2^*-1} \mcl_{g_{\xi_i}} G_{g_{\xi_i}}(x,\xi_i) +\mcv_i^{2^*-1} = \mcv_i^{2^*-1} = \mco\Big(\delta_i^{\frac{N+2}{2}}\Big)
\end{equation}
where we used $\mcl_{g_{\xi_i}}G_{g_{\xi_i}}(x,\xi_i) = 0$ for the second equality.

\medskip \noindent \textbf{Case 2. ($d_{g_{\xi_i}}(x,\xi_i) \le \frac{r_0}{2}$):} We define
\begin{equation}\label{eq:Fi}
F_i(x) = d_{g_{\xi_i}}(x,\xi_i)^{N-2} \mcu_i^{g_{\xi_i}}(x) \quad \text{for } x = \exp_{\xi_i}^{g_{\xi_i}}y \in B_{r_0/2}^{g_{\xi_i}}(\xi_i)
\end{equation}
where $\mcu_i^{g_{\xi_i}} := \mcu^{g_{\xi_i}}_{\delta_i,\xi_i}$ is defined by \eqref{udx} so that $\mcv_i(x) = \ga_NG_g(x,\xi_i)F_i(x)$. Since $F_i(\xi_i)=0$ and $\mcl_{g_{\xi_i}}G_{g_{\xi_i}}(\cdot,\xi_i) = \delta_{\xi_i}$, it holds that
\begin{multline}\label{eq:mclv}
-\mcl_g\mcv_i+\mcv_i^{2^*-1} = \Lambda_{\xi_i}^{2^*-1} \left[\(\ga_NG_{g_{\xi_i}}(\cdot,\xi_i)F\)^{2^*-1}+\ga_NG_{g_{\xi_i}}(\cdot,\xi_i) \Delta_{g_{\xi_i}} F \right. \\
\left. + 2\ga_N \la\nabla_{g_{\xi_i}} G_{g_{\xi_i}}(\cdot,\xi_i), \nabla_{g_{\xi_i}} F\ra_{g_{\xi_i}}\right].
\end{multline}
Notice that if $x = \exp_{\xi}^{g_{\xi}}y$ for a given $\xi \in M$, $v(x)=u(y)$, $u$ is radially symmetric, and $r = |y|$ is the radial variable of polar coordinates, then \eqref{dvg} gives
\begin{equation}\label{gxvu}
\Delta_{g_{\xi}}v = \Delta u + \frac{\pa_r\sqrt{|g_{\xi}|}}{\sqrt{|g_{\xi}|}}\pa_ru = \Delta u + \mco\(r^{\theta-1}|\pa_ru|\) \quad \text{around } 0;
\end{equation}
refer to \cite[(2.18)]{GM} for more explanations. Direct computations using \eqref{gxvu} show that
\begin{equation}\label{com}
\Delta_{g_{\xi_i}} F_i(x) = -\delta_i^{-\frac{N+2}{2}}|y|^{N-2} U\(\frac{y}{\delta_i}\)^{2^*-1} + 2(N-2)^2\alpha_N \frac{\delta_i^{\frac{N+2}{2}}|y|^{N-4}}{(\delta_i^2+|y|^2)^{\frac{N}{2}}} + \mco\(\frac{\delta_i^{\frac{N+2}{2}}|y|^{\theta+N-4}}{(\delta_i^2+|y|^2)^{\frac{N}{2}}}\)
\end{equation}
and
\begin{equation}\label{com1}
\begin{aligned}
\begin{medsize}
\displaystyle \la\nabla_{g_{\xi_i}} G_{g_{\xi_i}}(x,\xi_i),\nabla_{g_{\xi_i}} F_i(x)\ra_{g_{\xi_i}}
\end{medsize}
&\begin{medsize}
\displaystyle = \pa_r G_{g_{\xi_i}}(x,\xi_i) \pa_r F_i(x)
\end{medsize} \\
&\begin{medsize}
\displaystyle = -(N-2)^2\alpha_N\ga_N^{-1}|y|^{1-N} \frac{\delta_i^{\frac{N+2}{2}}|y|^{N-3}}{(\delta_i^2+|y|^2)^{\frac{N}{2}}} + \mco\(\frac{\delta_i^{\frac{N+2}{2}}|y|^{N-3}}{(\delta_i^2+|y|^2)^{\frac{N}{2}}}\).
\end{medsize}
\end{aligned}
\end{equation}
Plugging \eqref{com}, \eqref{com1}, and \eqref{gx} into \eqref{eq:mclv}, we obtain
\begin{multline}\label{ca23}
\(-\mcl_g\mcv_i+\mcv_i^{2^*-1}\)(x) = \Lambda_{\xi_i}^{2^*-1}(x) \alpha_N\ga_NA_g(\xi_i) \delta_i^{\frac{N+2}{2}} \\
\times \left[4N \frac{|y|^{N-2}}{(\delta_i^2+|y|^2)^{\frac{N+2}{2}}} + 2(N-2)^2 \frac{|y|^{N-4}}{(\delta_i^2+|y|^2)^{\frac{N}{2}}} + \mco\(\frac{|y|^{N-3}}{(\delta_i^2+|y|^2)^{\frac{N}{2}}}\)\right].
\end{multline}
Thus
\begin{equation}\label{ca232}
\int_{B^{g_{\xi_i}}_{r_0/2}(\xi_i)} \left|\mcl_g\mcv_i-\mcv_i^{2^*-1}\right|^{\frac{2N}{N+2}} dv_g
\lesssim \int_{\{|y| \le \frac{r_0}{2}\}} \left[\frac{\delta_i^{\frac{N+2}{2}}|y|^{N-4}}{(\delta_i^2+|y|^2)^{\frac{N}{2}}}\right]^{\frac{2N}{N+2}} dy \lesssim \delta_i^{\frac{2N(N-2)}{N+2}}
\end{equation}
where we used $3 \le N \le 5$ and $dv_g = \Lambda_{\xi_i}^{-2^*} dv_{g_{\xi_i}}$.

\medskip \noindent \textbf{Case 3. ($\frac{r_0}{2} \le d_{g_{\xi_i}}(x,\xi_i) \le r_0$):} We rewrite
\[\mcv_i(x) = \chi(d_{g_{\xi_i}}(x,\xi_i)) \ga_NG_g(x,\xi_i) \left[F_i(x)-\alpha_N\delta_i^{\frac{N-2}{2}}\right] + \alpha_N\ga_N \delta_i^{\frac{N-2}{2}} G_g(x,\xi_i).\]
Then the definition of $G_g$ leads to
\begin{equation}\label{331}
-\alpha_N\ga_N \delta_i^{\frac{N-2}{2}}\mcl_gG_g(x,\xi_i) +\mcv_i^{2^*-1} = \mco\Big(\delta_i^{\frac{N+2}{2}}\Big).
\end{equation}
Moreover, it follows from \eqref{com} and \eqref{com1} that
\begin{equation}\label{332}
-\mcl_g\(\chi(d_{g_{\xi_i}}(x,\xi_i))G_g(x,\xi_i) \left[F_i(x)-\alpha_N\delta_i^{\frac{N-2}{2}}\right]\) = \mco\Big(\delta_i^{\frac{N+2}{2}}\Big).
\end{equation}
Therefore,
\begin{equation}\label{ca3}
-\mcl_g\mcv_i+\mcv_i^{2^*-1} =\mco\Big(\delta_i^{\frac{N+2}{2}}\Big).
\end{equation}

\medskip
From \eqref{ca13}, \eqref{ca232}, and \eqref{ca3}, we conclude that
\[\|\II_3\|_{L^{\frac{2N}{N+2}}(M)} \lesssim \max_{\ell}\delta_{\ell}^{N-2}.\]
This completes the proof.
\end{proof}

\subsection{Proof of Theorem \ref{th1.3}}
We adopt the approach used in Subsection \ref{su2.3}. Specifically, we decompose $\rho=u-\sum_{i=1}^{\nu}\mcv_i$ as follows:
\[\rho = \rho_1 + \sum_{i=1}^{\nu} \beta_i\mcv_i + \sum_{i=1}^{\nu} \sum_{k=0}^N \beta_i^k\mcz^k_i \quad \text{for some } \beta_i,\, \beta_i^k \in \R,\, \rho_1 \in E^{\perp}\]
where $E^{\perp}$ is the space defined in \eqref{eq:wteperp}.

By employing Lemma \ref{le3p}, Proposition \ref{co}, and Lemma \ref{lem3.3}, one can prove the following proposition as we did for Proposition \ref{pr2.2}. We omit its proof.
\begin{prop}\label{pr3.2}
Let $\mcq$ be the quantity in \eqref{rq}. It holds that
\begin{equation}\label{eq:3.2}
\|\rho\|_{H^1(M)} \lesssim \|f\|_{H^{-1}(M)} + \mcq + \max_{\ell=1,\ldots,\nu}\delta_{\ell}^{N-2}.
\end{equation}
\end{prop}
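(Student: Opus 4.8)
The plan is to run the same two–stage bootstrap that proved Theorem~\ref{th1.2}, but now against the simpler set of error terms $\II_1[\rho]$, $\II_2$, $\II_3$ and without the eigenfunction directions $\psi_m$ (since $u_0 = 0$). Concretely, I would establish (a) a bound $\|\rho\|_{H^1(M)} \lesssim \|f\|_{H^{-1}(M)} + \mca + \mcq + \max_\ell \delta_\ell^{N-2}$ for the component $\rho_1 \in E^\perp$, and (b) a bound $\mca \lesssim \|f\|_{H^{-1}(M)} + \mcq + \max_\ell \delta_\ell^{N-2}$ for the finite-dimensional coefficients $\mca := \sum_i |\beta_i| + \sum_{i,k}|\beta_i^k|$. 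Combining (a) and (b) absorbs $\mca$ and yields \eqref{eq:3.2}.

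For step (a), I would test the first equation in \eqref{eqrho2} against $\rho_1$. The main term $(2^*-1)\int_M (\sum_i \mcv_i)^{2^*-2}\rho\rho_1\,dv_g$ is split using the decomposition of $\rho$: the $\rho_1^2$ piece is controlled by the coercivity estimate of Proposition~\ref{co} with constant $c_0 \in (0,1)$, the cross terms with $\mcv_i$, $\mcz^k_i$ are $\mco(\mca\|\rho_1\|_{H^1(M)})$ by Hölder and Lemma~\ref{le3p}. The term $\langle f,\rho_1\rangle$ is $\mco(\|f\|_{H^{-1}(M)}\|\rho_1\|_{H^1(M)})$; the term with $\II_1[\rho]$ is $\mco(\|\rho\|_{H^1(M)}^2\|\rho_1\|_{H^1(M)})$ by the Taylor-type estimate \eqref{ab6}; and $\II_2$, $\II_3$ contribute $\mco((\mcq+\max_\ell\delta_\ell^{N-2})\|\rho_1\|_{H^1(M)})$ by Lemma~\ref{lem3.3}. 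Since $\vep_0$ is small, $\|\rho\|_{H^1(M)}^2$ is negligible compared with $\|\rho_1\|_{H^1(M)}$, and together with $\|\rho\|_{H^1(M)} \lesssim \|\rho_1\|_{H^1(M)} + \mca$ one divides out one power of $\|\rho_1\|_{H^1(M)}$ to close the estimate.

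For step (b), I would test \eqref{eqrho2} successively against $\mcz^q_j$ and against $\mcv_j$, exactly as in Lemma~\ref{le24}. Using $\langle\rho,\mcz^q_j\rangle_{H^1(M)}=0$ and Lemma~\ref{le3p}, the $\mcz^q_j$-tests give a nearly diagonal linear system whose off-diagonal entries are $\mco(\mcq)+o(\max_\ell\delta_\ell^{(N-2)/2})$, forcing the $\beta_i^k$'s to be controlled by the right-hand side plus $o(\mca)$. Testing against $\mcv_j$ and using that $\rho_1 \in E^\perp$, together with the identity $\int_M \mcv_j^{2^*-1}\rho_1\,dv_g = \mco(\|\mcl_g\mcv_j - \mcv_j^{2^*-1}\|_{L^{2N/(N+2)}(M)}\|\rho_1\|_{H^1(M)})$ and the $L^{2N/(N+2)}$-bound on $\II_3$ from Lemma~\ref{lem3.3}, isolates $|\beta_j|$. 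Summing, $\mca \lesssim \|f\|_{H^{-1}(M)} + \mcq + \max_\ell\delta_\ell^{N-2} + o(\mca)$, and the $o(\mca)$ term is absorbed. Plugging this into (a) finishes the proof.

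The main obstacle is bookkeeping rather than anything conceptual: one must be careful that, with $u_0 = 0$, the term $\max_\ell\delta_\ell^{N-2}$ (coming from Lemma~\ref{lem3.3}'s bound on $\II_3$) replaces the $\max_\ell\delta_\ell^{(N-2)/2}$ that appeared in Section~\ref{se2}, and that this sharper decay is what ultimately lets \eqref{prlo}-type arguments in the subsequent propositions close. Care is also needed that the coercivity constant $c_0$ from Proposition~\ref{co} is uniform as $\vep_0 \to 0$, which is guaranteed by the cited proof. Since the structure is genuinely parallel to Section~\ref{se2}, the authors indeed state that the proof is omitted; I would simply indicate the two tests above and cite Lemmas~\ref{le3p}, \ref{lem3.3} and Proposition~\ref{co}.
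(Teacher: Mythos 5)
Your proposal is correct and matches exactly the (omitted) proof the paper intends: decompose $\rho = \rho_1 + \sum_i\beta_i\mcv_i + \sum_{i,k}\beta_i^k\mcz^k_i$ with $\rho_1$ in the orthogonal complement, run the $\rho_1$-test against the coercivity estimate of Proposition~\ref{co} and the error bounds of Lemma~\ref{lem3.3}, then bound $\mca$ via the $\mcz^q_j$- and $\mcv_j$-tests using Lemma~\ref{le3p}, just as in Lemmas~\ref{le2.3} and~\ref{le24} but without the $\psi_m$-directions since $u_0 = 0$. The one substantive difference from Section~\ref{se2}---the appearance of $\max_\ell\delta_\ell^{N-2}$ in place of $\max_\ell\delta_\ell^{(N-2)/2}$---is correctly traced to Lemma~\ref{lem3.3}, and you are right that the inner-product remainders of order $o(\delta_i^{(N-2)/2})$ from Lemma~\ref{le3p} are harmless because they only multiply entries of $\mca$.
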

\noindent The term $\max_{\ell=1,\ldots,\nu} \delta_{\ell}^{N-2}$ in \eqref{eq:3.2} stems from Lemma \ref{lem3.3}. The quantities $\mcq$ and $\max_{\ell=1,\ldots,\nu} \delta_{\ell}^{N-2}$ are non-comparable.

\medskip
Owing to Proposition \ref{pr3.2}, deducing the subsequent proposition will lead us to establish \eqref{end}.
\begin{prop}\label{3.3}
It holds that
\[\mcq + \max_{\ell=1,\ldots,\nu}\delta_{\ell}^{N-2} \lesssim \|f\|_{H^{-1}(M)}.\]
\end{prop}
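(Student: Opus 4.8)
The strategy is to mimic the proof of Proposition \ref{lou0}, with the mass $A_g(\xi_j)$ now playing the role of $u_0(\xi_j)$. Fix $j\in\{1,\ldots,\nu\}$ and test \eqref{eqrho2} against $\mcz^0_j$. Since $\langle\rho,\mcz^0_j\rangle_{H^1(M)}=0$, this yields
\[
\int_M\II_2\mcz^0_j\,dv_g+\int_M\II_3\mcz^0_j\,dv_g=-\int_M\big(f+\II_1[\rho]\big)\mcz^0_j\,dv_g+\int_M\bigg[\mcl_g\rho-(2^*-1)\bigg(\sum_{i=1}^{\nu}\mcv_i\bigg)^{2^*-2}\rho\bigg]\mcz^0_j\,dv_g.
\]
The heart of the matter is to isolate on the left-hand side a diagonal term carrying the mass together with an off-diagonal term carrying the mutual interaction of the $\mcv_i$'s; this is the content of the projection Lemmas \ref{le35} and \ref{le36}.

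For the diagonal contribution I would study $\int_M(-\mcl_g\mcv_j+\mcv_j^{2^*-1})\mcz^0_j\,dv_g$ by inserting the pointwise expansion \eqref{ca23} together with the near-$\xi_j$ description $\mcz^0_j=\ga_N G_g(\cdot,\xi_j)\,d_{g_{\xi_j}}(\cdot,\xi_j)^{N-2}Z^0_{\delta_j,0}\big(d_{g_{\xi_j}}(\cdot,\xi_j)\big)$ and rescaling $y=\delta_j z$ in conformal normal coordinates; this produces a term $-c_N A_g(\xi_j)\delta_j^{N-2}$ with $c_N\neq0$ a dimensional constant of definite sign, the model integrals over $\R^N$ converging precisely because $3\le N\le5$ and the sign of $c_N$ being the one occurring in the Yamabe energy expansion of \cite{LP,Sc}. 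The remaining (off-diagonal, $i\ne j$) pieces of $\int_M\II_3\mcz^0_j\,dv_g$ are $o(\mcq)$, since $-\mcl_g\mcv_i+\mcv_i^{2^*-1}$ concentrates around $\xi_i$ at scale $\delta_i$ whereas $\mcz^0_j$ is comparatively flat there. For the interaction term $\int_M\II_2\mcz^0_j\,dv_g$ I would argue exactly as in Lemma \ref{le29}: it equals $(2^*-1)\sum_{i\ne j}\int_{B^{g_{\xi_j}}_{r_0/2}(\xi_j)}\big(\mcu_j^{g_{\xi_j}}\big)^{2^*-2}\,\delta_j\tfrac{\pa}{\pa\delta_j}\mcu_j^{g_{\xi_j}}\,\mcv_i\,dv_g+o(\mcq)$, where the surviving integral is $\mco(q_{ij})$ in general and $\gtrsim q_{ij}$ whenever $\delta_i\ge\delta_j$; the sign analysis reuses the three-case split of Lemma \ref{le29}, now with $\mcv_j\approx U_{\delta_j,0}$ near $\xi_j$ and, for $i\ne j$, $\mcv_i$ a strictly positive multiple of $G_g(\cdot,\xi_i)$ up to lower order.

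The right-hand side is routine: $|\int_M f\mcz^0_j\,dv_g|\lesssim\|f\|_{H^{-1}(M)}$, $|\int_M\II_1[\rho]\mcz^0_j\,dv_g|\lesssim\|\rho\|_{H^1(M)}^2$ by the $u_0=0$ analogue of \eqref{i1r}, and in the last integral I would move $\mcl_g$ onto $\mcz^0_j$, invoke $\|\mcl_g\mcz^0_j-(2^*-1)\mcv_j^{2^*-2}\mcz^0_j\|_{L^{2N/(N+2)}(M)}=o\big(\max_{\ell}\delta_{\ell}^{(N-2)/2}\big)$ (which comes from the computations behind Lemma \ref{lem3.3}), and treat the cross term $\big[(\sum_i\mcv_i)^{2^*-2}-\mcv_j^{2^*-2}\big]\rho$ as in \eqref{teu}. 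Putting this together and using Proposition \ref{pr3.2} to absorb $\|\rho\|_{H^1(M)}^2$ and the $o(1)\|\rho\|_{H^1(M)}$ terms into $\|f\|_{H^{-1}(M)}+o\big(\mcq+\max_{\ell}\delta_{\ell}^{N-2}\big)$, I obtain, for each $j$,
\[
\bigg|c_N A_g(\xi_j)\delta_j^{N-2}+(2^*-1)\sum_{i\ne j}\int_{B^{g_{\xi_j}}_{r_0/2}(\xi_j)}\big(\mcu_j^{g_{\xi_j}}\big)^{2^*-2}\,\delta_j\tfrac{\pa}{\pa\delta_j}\mcu_j^{g_{\xi_j}}\,\mcv_i\,dv_g\bigg|\lesssim\|f\|_{H^{-1}(M)}+o\bigg(\mcq+\max_{\ell}\delta_{\ell}^{N-2}\bigg).
\]

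Finally I would run the same induction as at the end of Proposition \ref{lou0}. After relabelling so that $\delta_1\ge\cdots\ge\delta_{\nu}$, the positivity $A_g(\xi_j)>0$ from the positive mass theorem \cite{SY1}, the definite sign of $c_N$, the lower bound $\gtrsim q_{ij}$ for $\delta_i\ge\delta_j$ and the upper bound $\mco(q_{ij})$ permit a descending induction on $l=\nu,\nu-1,\ldots,2$ — with hypothesis $\sum_{j=l}^{\nu}\sum_{i<j}q_{ij}\lesssim\|f\|_{H^{-1}(M)}+o\big(\mcq+\max_{\ell}\delta_{\ell}^{N-2}\big)$ — yielding $\mcq\lesssim\|f\|_{H^{-1}(M)}+o\big(\max_{\ell}\delta_{\ell}^{N-2}\big)$; then taking $j=1$ in the displayed inequality gives $\max_{\ell}\delta_{\ell}^{N-2}=\delta_1^{N-2}\lesssim\mcq+\|f\|_{H^{-1}(M)}+o\big(\max_{\ell}\delta_{\ell}^{N-2}\big)\lesssim\|f\|_{H^{-1}(M)}$, and the proposition follows. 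I expect the main obstacle to be the diagonal computation $\int_M(-\mcl_g\mcv_j+\mcv_j^{2^*-1})\mcz^0_j\,dv_g$: one has to verify that every metric-dependent error in \eqref{ca23} contributes strictly below the scale $\delta_j^{N-2}$ — which is precisely where $N\le5$ enters — and to pin down the sign of the surviving constant $c_N$, since the whole induction hinges on the mass term and the interaction term not cancelling.
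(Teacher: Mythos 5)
Your proposal follows essentially the same route as the paper: test \eqref{eqrho2} against $\mcz^0_j$, extract the mass term $\mfb_N A_g(\xi_j)\delta_j^{N-2}$ from $\int_M\II_3\mcz^0_j\,dv_g$ (the paper's Lemma \ref{le36}), isolate the interaction integral from $\int_M\II_2\mcz^0_j\,dv_g$ with the lower bound $\gtrsim q_{ij}$ (Lemma \ref{le35}), absorb the remainders via Proposition \ref{pr3.2}, and run the descending induction from Proposition \ref{lou0} using $A_g(\xi_j)>0$. The one place you diverge is the proof of the lower bound $\gtrsim q_{ij}$: you propose to replicate the three-case split and explicit sign computations of Lemma \ref{le29} (i.e.\ estimates of the type \eqref{eq:J1J2}), whereas the paper's Lemma \ref{le35} exploits that $\mcv_i$ is now built from the Green's function $G_g(\cdot,\xi_i)$, so the expansion \eqref{gx} supplies an additional nonnegative contribution $A_g(\xi_i)(\delta_i\delta_j)^{(N-2)/2}$ that makes $J_5\gtrsim q_{ij}$ immediate without any delicate sign balancing. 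Your route would still work but is needlessly harder; also note that the precise integrand carries a factor $\Lambda_{\xi_j}^{2^*-1}$ and the diagonal constant should be a \emph{positive} multiple of $A_g(\xi_j)\delta_j^{N-2}$, as in \eqref{eq:le36}, so that it and the interaction term reinforce rather than cancel.
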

\noindent Fix $j \in \{1,\ldots,\nu\}$. By testing \eqref{eqrho2} with $\mcz^0_j$, we obtain
\begin{equation}\label{tezk2}
\begin{aligned}
\int_M \II_2\mcz^0_j dv_g + \int_M \II_3\mcz^0_j dv_g &= -\int_M f\mcz^0_j dv_g - \int_M \II_1[\rho]\mcz^0_j dv_g \\
&\ + \int_M \left[\mcl_g\mcz^0_j - (2^*-1)\bigg(\sum_{i=1}^{\nu}\mcv_i\bigg)^{2^*-2}\mcz^0_j\right]\rho dv_g.
\end{aligned}
\end{equation}
In Lemmas \ref{le35} and \ref{le36}, we evaluate two integrals on the left-hand side of \eqref{tezk2}, respectively.

\begin{lemma}\label{le35}
For any $j \in \{1,\ldots,\nu\}$, we have
\begin{equation}\label{eq:le35}
\int_M \II_2\mcz^0_j dv_g = (2^*-1) \sum_{i \ne j} \int_{B^{g_{\xi_j}}_{r_0/2}(\xi_j)} \Lambda^{2^*-1}_{\xi_j}\(\mcu_j^{g_{\xi_j}}\)^{2^*-2} \delta_j\frac{\pa \mcu_j^{g_{\xi_j}}}{\pa\delta_j} \mcv_i dv_g + o(\mcq).
\end{equation}
Moreover, if $\delta_i \ge \delta_j$ for some indices $1 \le i \ne j \le \nu$, then
\begin{equation}\label{neqij2}
\sum_{i \ne j} \int_{B^{g_{\xi_j}}_{r_0/2}(\xi_j)} \Lambda^{2^*-1}_{\xi_j}\(\mcu_j^{g_{\xi_j}}\)^{2^*-2} \delta_j\frac{\pa \mcu_j^{g_{\xi_j}}}{\pa\delta_j}\mcv_i dv_g \gtrsim q_{ij}
\end{equation}
provided $q_{ij}$ in \eqref{rq} small.
\end{lemma}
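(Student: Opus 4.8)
The plan is to follow the strategy of Lemma \ref{le2.4} and Lemma \ref{le29} in the $u_0 > 0$ case, adapted to the present setting where the leading profile of $\mcv_i$ near $\xi_i$ is the Green's-function-weighted truncated bubble rather than a bare truncated bubble. First I would expand $\II_2 = (\sum_i \mcv_i)^{2^*-1} - \sum_i \mcv_i^{2^*-1}$ using \eqref{ab6} (the elementary inequality for powers), isolating the dominant cross term $(2^*-1)\mcv_j^{2^*-2}\sum_{i\ne j}\mcv_i$ on the region where $\mcv_j$ concentrates, and bounding all remaining terms (those involving products like $\mcv_i^{2^*-2}\mcv_k$ with $\{i,k\}\ne\{j,j\}$, or higher-order mixed products) by $o(\mcq)$ via Lemma \ref{a22} and the techniques of Appendix \ref{subsec:tech}, exactly as in \eqref{22}. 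Then, since $\mcz^0_j = \delta_j \pa\mcv_j/\pa\delta_j$ and on $B^{g_{\xi_j}}_{r_0/2}(\xi_j)$ we have $\mcv_j = \ga_N G_g(\cdot,\xi_j) d_{g_{\xi_j}}(\cdot,\xi_j)^{N-2}\mcu_j^{g_{\xi_j}}$, using $\ga_N G_g(x,\xi_j) d_{g_{\xi_j}}(x,\xi_j)^{N-2} = \Lambda_{\xi_j}(x)(1 + \mco(d_{g_{\xi_j}}(x,\xi_j)))$ coming from \eqref{gg} and \eqref{gx}, one identifies $\mcv_j^{2^*-2}\mcz^0_j$ on this ball with $\Lambda^{2^*-1}_{\xi_j}(\mcu^{g_{\xi_j}}_j)^{2^*-2}\delta_j\pa_{\delta_j}\mcu^{g_{\xi_j}}_j$ up to a relative error that is $o(\mcq)$ after integrating against $\mcv_i$ (controlled again by Lemma \ref{a22}); the contribution from $(B^{g_{\xi_j}}_{r_0/2}(\xi_j))^c$ is $\mco(\delta_i^{(N-2)/2}\delta_j^{(N+2)/2}) = o(q_{ij}) = o(\mcq)$ just as in Lemma \ref{le29}. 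This yields \eqref{eq:le35}.

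For the lower bound \eqref{neqij2}, I would mirror the three-case analysis of Lemma \ref{le29}, organized by the size of $d_{g_{\xi_j}}(\xi_i,\xi_j)$ relative to $r_0$. When $\xi_i$ is far from $\xi_j$ (say $d_{g_{\xi_j}}(\xi_i,\xi_j)\ge \frac{3r_0}{2}$, Case 1), $\mcv_i$ is a positive constant multiple of $\delta_i^{(N-2)/2}$ on $B^{g_{\xi_j}}_{r_0/2}(\xi_j)$ (the outer regime of \eqref{vi}), and one integrates $\Lambda^{2^*-1}_{\xi_j}(\mcu^{g_{\xi_j}}_j)^{2^*-2}\delta_j\pa_{\delta_j}\mcu^{g_{\xi_j}}_j$ against it, getting a multiple of $(\delta_i\delta_j)^{(N-2)/2}\simeq q_{ij}$ after the change of variables $x = \exp^{g_{\xi_j}}_{\xi_j}(\delta_j y)$ and using $\int_{\R^N} U^{2^*-2}Z^0 \ne 0$. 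The intermediate-distance case (Case 2, where the cutoff $\chi(d_{g_{\xi_i}}(\cdot,\xi_i))$ in $\mcv_i$ makes $\mcv_i$ pass between $U_{\delta_i,0}(r_0)$-type and $U_{\delta_i,0}(r_0/2)$-type values) requires splitting $B^{g_{\xi_j}}_{r_0/2}(\xi_j)$ along level sets of $d_{g_{\xi_i}}(\cdot,\xi_i)$ and using $\int_{B_1(0)}U^{2^*-2}Z^0 > 0 > \int_{B_1^c(0)}U^{2^*-2}Z^0$ together with the explicit positivity of the relevant linear combination (the $r_0$-dependent constants that appear in \eqref{eq:J1J2}) to conclude $\gtrsim q_{ij}$; this is the only place requiring care. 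The close-distance case ($d_{g_{\xi_j}}(\xi_i,\xi_j)\le r_0$, Case 3) is handled by writing $\mcv_i = \ga_N G_g(\cdot,\xi_i)F_i + (\text{cutoff correction})$ with $F_i$ as in \eqref{eq:Fi}, so that the main part $J_3$ equals $\alpha_N\int (U^{2^*-2}Z^0)(y)[\tfrac{\delta_i}{\delta_j} + \tfrac{d_{g_{\xi_j}}(\exp^{g_{\xi_j}}_{\xi_j}(\delta_j y),\xi_i)^2}{\delta_i\delta_j}]^{-(N-2)/2}dy + o(q_{ij})\simeq q_{ij}$ as in \eqref{eq:J3} (here I use that on small balls the two conformal metrics and geodesic distances are mutually comparable up to lower order, so the Bahri-type computation \cite[(F16)]{B} goes through), while the cutoff-correction term $J_4$ is either zero or dominates-with-the-right-sign exactly as in Lemma \ref{le29}.

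The main obstacle I anticipate is bookkeeping the discrepancy between the two metrics $g$ and $g_{\xi_j}$ (and between $d_g$, $d_{g_{\xi_j}}$) consistently in the error estimates: all the explicit computations are cleanest in $g_{\xi_j}$-normal coordinates where $\det g_{\xi_j} = 1 + \mco(|y|^\theta)$, but the integrals are written against $dv_g$, and $\mcv_i$ for $i\ne j$ is naturally expressed in $g_{\xi_i}$-normal coordinates. One must check that converting $dv_g \leftrightarrow dv_{g_{\xi_j}}$ via $dv_g = \Lambda^{-2^*}_{\xi_j} dv_{g_{\xi_j}}$ and expanding $\Lambda_{\xi_j} = 1 + \mco(|y|^2)$ (using $\Lambda_{\xi_j}(\xi_j) = 1$, $\nabla_g\Lambda_{\xi_j}(\xi_j) = 0$) contributes only to the $o(\mcq)$ error, and likewise that the difference $\ga_N G_g(x,\xi_j)d_{g_{\xi_j}}(x,\xi_j)^{N-2}\mcu^{g_{\xi_j}}_j(x) - \Lambda_{\xi_j}(x)\mcu^{g_{\xi_j}}_j(x)$ — which is $\mco(d_{g_{\xi_j}}(x,\xi_j)\cdot\mcu^{g_{\xi_j}}_j(x))$ by \eqref{gx} — integrates against $(\mcu^{g_{\xi_j}}_j)^{2^*-3}\delta_j\pa_{\delta_j}\mcu^{g_{\xi_j}}_j\cdot\mcv_i$ to something $o(q_{ij})$; this is a routine but slightly delicate estimate of the type in Lemma \ref{a22}. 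Once these comparisons are in place, the rest is a direct transcription of Lemmas \ref{le2.4} and \ref{le29}.
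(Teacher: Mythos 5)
Your plan for the expansion \eqref{eq:le35} matches the paper: subtract $\Lambda_{\xi_j}^{2^*-1}$ from $\{\ga_N G_g(\cdot,\xi_j)d_{g_{\xi_j}}(\cdot,\xi_j)^{N-2}\}^{2^*-1}$, control the difference via \eqref{gg}, \eqref{gx}, and the techniques behind \eqref{22}, and then argue as in Lemma \ref{le29}.

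For the lower bound \eqref{neqij2}, however, you take a genuinely different (and unnecessarily complicated) route, and part of your description rests on a misreading of the bubble-like functions \eqref{vi}. You propose to redo the three-case split of Lemma \ref{le29} in the distance $d_g(\xi_i,\xi_j)$, and in your Case 2 you describe $\mcv_i$ as ``passing between $U_{\delta_i,0}(r_0)$-type and $U_{\delta_i,0}(r_0/2)$-type values'' in the cutoff annulus and plan to rerun the \eqref{eq:J1J2}-style explicit integrals. That picture is correct for the \eqref{ui}-bubbles of Section \ref{se2}, but not for \eqref{vi}: on $\{r_0/2 \le d_{g_{\xi_i}}(\cdot,\xi_i) \le r_0\}$ the bracket in \eqref{vi} interpolates between $d^{N-2}\mcu_i^{g_{\xi_i}} = \alpha_N\delta_i^{(N-2)/2}\bigl(d^2/(\delta_i^2+d^2)\bigr)^{(N-2)/2}$ and $\alpha_N\delta_i^{(N-2)/2}$, which agree up to a relative error $\mco(\delta_i^2/r_0^2)$. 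The cutoff transition is thus invisible at leading order, and $\mcv_i$ there is simply $\alpha_N\ga_N\delta_i^{(N-2)/2}G_g(\cdot,\xi_i)\bigl(1+\mco(\delta_i^2)\bigr)$; the delicate sign bookkeeping of \eqref{eq:J1J2} is not needed, and the positivity in Cases 1--2 follows at once from $\int_{\R^N}U^{2^*-2}Z^0 > 0$ and $G_g > 0$. The paper exploits exactly this: it treats the whole range $d_g(\xi_i,\xi_j)\le r_0$ in one stroke, writing the integral as $J_5 + o(q_{ij})$ where $J_5$ is a Bahri-type integral that, after inserting the expansion $\ga_N G_{g_{\xi_i}}(\cdot,\xi_i)d_{g_{\xi_i}}^{N-2} = 1+\ga_N A_g(\xi_i)d_{g_{\xi_i}}^{N-2}+\mco(d_{g_{\xi_i}}^{N-1})$ from \eqref{gx} and the expansion of $\Lambda_{\xi_i}/\Lambda_{\xi_j}$, yields $J_5 \gtrsim q_{ij}+A_g(\xi_i)(\delta_i\delta_j)^{(N-2)/2} \ge q_{ij}$, with positivity supplied by the positive mass $A_g(\xi_i)>0$. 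The paper even records this explicitly right after the proof: the refined form of $\mcv_i$ was chosen precisely to avoid calculating integrals as in \eqref{eq:J1J2}. If you fix the Case 2 misconception, your approach collapses into the paper's argument; as written, it adds machinery that the construction \eqref{vi} was designed to remove.
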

\begin{proof}
By \eqref{gg}, \eqref{gx}, and \eqref{iqu},
\begin{multline*}
\left|\sum_{i \ne j} \int_{B^{g_{\xi_j}}_{r_0/2}(\xi_j)} \left[\left\{\ga_NG_g(\cdot,\xi_j)d_{g_{\xi_j}}(\cdot,\xi_j)^{N-2}\right\}^{2^*-1}-\Lambda^{2^*-1}_{\xi_j}\right] \(\mcu_j^{g_{\xi_j}}\)^{2^*-2}\delta_j\frac{\pa \mcu_j^{g_{\xi_j}}}{\pa\delta_j}\mcv_i dv_g\right| \\
\lesssim \sum_{i \ne j} \int_{B^{g_{\xi_j}}_{r_0/2}(\xi_j)} d_{g_{\xi_j}}(\cdot,\xi_j)^{N-2}\(\mcu_j^{g_{\xi_j}}\)^{2^*-1}\mcv_i dv_g \lesssim \sum_{i \ne j} \delta_j^{\frac{N-2}{2}} \int_{B^{g_{\xi_j}}_{r_0/2}(\xi_j)} \mcv_j^{2^*-2}\mcv_i dv_g = o(\mcq)
\end{multline*}
where the proof of \eqref{22} in Appendix \ref{subsec:tech} validates the equality on the last line. Thus we can argue as in Lemma \ref{le29} to deduce
\begin{align*}
&\ \frac{1}{2^*-1} \int_M \II_2\mcz_j^0 dv_g \\
&= \sum_{i \ne j} \int_{B^{g_{\xi_j}}_{r_0/2}(\xi_j)} \left[\ga_NG_g(x,\xi_j)d_{g_{\xi_j}}(x,\xi_j)^{N-2}\right]^{2^*-1} \(\mcu_j^{g_{\xi_j}}\)^{2^*-2}\delta_j\frac{\pa \mcu_j^{g_{\xi_j}}}{\pa\delta_j}\mcv_i dv_g + o(\mcq) \\
&= \sum_{i \ne j} \int_{B^{g_{\xi_j}}_{r_0/2}(\xi_j)} \Lambda^{2^*-1}_{\xi_j} \(\mcu_j^{g_{\xi_j}}\)^{2^*-2}\delta_j\frac{\pa \mcu_j^{g_{\xi_j}}}{\pa\delta_j}\mcv_i dv_g + o(\mcq),
\end{align*}
which is \eqref{eq:le35}.

\medskip
We next derive \eqref{neqij2} provided $d_g(\xi_i,\xi_j) \le r_0$. It is easier to handle the case $d_g(\xi_i,\xi_j) \ge r_0$. From \eqref{udx} and \eqref{gg}, we know that
\begin{align*}
&\ \int_{B^{g_{\xi_j}}_{r_0/2}(\xi_j)} \Lambda^{2^*-1}_{\xi_j}\(\mcu_j^{g_{\xi_j}}\)^{2^*-2} \delta_j\frac{\pa \mcu_j^{g_{\xi_j}}}{\pa\delta_j}\mcv_i dv_g \\
&= \int_{B^{g_{\xi_j}}_{r_0/2}(\xi_j)} \Lambda^{2^*-1}_{\xi_j}\(\mcu_j^{g_{\xi_j}}\)^{2^*-2} \delta_j\frac{\pa \mcu_j^{g_{\xi_j}}}{\pa\delta_j} \times \left[\ga_NG_g(x,\xi_i)d_{g_{\xi_i}}(x,\xi_i)^{N-2}\mcu_i^{g_{\xi_i}} \right. \\
&\hspace{55pt} \left. + (1-\chi(d_{g_{\xi_i}}(x,\xi_i))) \alpha_N\ga_N \delta_i^{\frac{N-2}{2}} G_g(x,\xi_i) \left\{1-\alpha_N^{-1} \delta_i^{-\frac{N-2}{2}} d_{g_{\xi_i}}(x,\xi_i)^{N-2}\mcu_i^{g_{\xi_i}}\right\}\right] (dv_g)_x \\
&= \alpha_N \int_{\{|y| \le \frac{r_0}{2\delta_j}\}} \(\frac{\Lambda_{\xi_i}}{\Lambda_{\xi_j}}\)\big(\exp_{\xi_j}^{g_{\xi_j}}(\delta_jy)\big) \(U^{2^*-2}Z^0\)(y) \cdot \ga_N \(G_{g_{\xi_i}}d_{g_{\xi_i}}^{N-2}\)\big(\exp_{\xi_j}^{g_{\xi_j}}(\delta_jy),\xi_i\big) \\
&\hspace{220pt} \times \left[\frac{\delta_i}{\delta_j} + \frac{d_{g_{\xi_i}}\big(\exp_{\xi_j}^{g_{\xi_j}}(\delta_jy),\xi_i\big)^2}{\delta_i\delta_j}\right]^{-\frac{N-2}{2}} dy + o(q_{ij}) \\
&=: J_5 + o(q_{ij}).
\end{align*}
Also, by using \eqref{gx}, \eqref{eq:dist1}, the equivalence between the metrics $d_{g_{\xi_i}}$, $d_{g_{\xi_j}}$, and $d_g$ on $M$, and the expansion
\[\(\frac{\Lambda_{\xi_i}}{\Lambda_{\xi_j}}\)\big(\exp_{\xi_j}^{g_{\xi_j}}(\delta_jy)\big) = \Lambda_{\xi_i}(\xi_j) + \mco\(|\delta_jy|\) \quad \text{for } y \in B_{r_0/(2\delta_j)}(0) \text{ and } 1 \le i \ne j \le \nu,\]
and reducing the size of $r_0 > 0$ if necessary, we can adopt the argument in the proof of \cite[(F16)]{B} to prove that
\begin{align*}
J_5 &= \alpha_N \Lambda_{\xi_i}(\xi_j) \int_{\{|y| \le \frac{r_0}{2\delta_j}\}} \(U^{2^*-2}Z^0\)(y)
\left[1+\ga_NA_g(\xi_i) d_{g_{\xi_i}}^{N-2}\big(\exp_{\xi_j}^{g_{\xi_j}}(\delta_jy),\xi_i\big) \right. \\
&\hspace{75pt} \left. + \mco\(d_{g_{\xi_i}}^{N-1}\big(\exp_{\xi_j}^{g_{\xi_j}}(\delta_jy),\xi_i\big)\) \right]
\left[\frac{\delta_i}{\delta_j} + \frac{d_{g_{\xi_i}}\big(\exp_{\xi_j}^{g_{\xi_j}}(\delta_jy),\xi_i\big)^2}{\delta_i\delta_j}\right]^{-\frac{N-2}{2}} dy + o(q_{ij}) \\
&\gtrsim q_{ij} + A_g(\xi_i)(\delta_i\delta_j)^{\frac{N-2}{2}} \ge q_{ij};
\end{align*}
cf. \eqref{eq:J3}. This leads to \eqref{neqij2}.
\end{proof}
\noindent In the above proof, we exploited a more refined bubble-like function $\mcv_i$ than the one used for Lemma \ref{le29}, which allowed us to avoid calculating integrals as in \eqref{eq:J1J2}.

\begin{lemma}\label{le36}
Let $\mfb_N = \frac{(N-2)^2}{2} \alpha_N^2\ga_N|\S^{N-1}| > 0$. For any $j \in \{1,\ldots,\nu\}$, we have
\begin{equation}\label{eq:le36}
\int_M \II_3\mcz^0_j dv_g = \mfb_NA_g(\xi_j)\delta_j^{N-2} + o\Big(\mcq + \max_{\ell=1,\ldots,\nu}\delta_{\ell}^{N-2}\Big).
\end{equation}
Here, $A_g(\xi_j) > 0$ thanks to the positive mass theorem in \cite{SY1}.
\end{lemma}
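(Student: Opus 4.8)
The plan is to expand
\[
\int_M \II_3\,\mcz^0_j\,dv_g \;=\; \sum_{i=1}^{\nu}\int_M\big(-\mcl_g\mcv_i+\mcv_i^{2^*-1}\big)\mcz^0_j\,dv_g
\]
and to show that only the diagonal term $i=j$ contributes to the claimed order, producing the mass term, while each off-diagonal term $i\ne j$ is of order $o\big(\mcq+\max_\ell\delta_\ell^{N-2}\big)$. The tools I would use are: the pointwise expansion \eqref{ca23} of $-\mcl_g\mcv_i+\mcv_i^{2^*-1}$ on $B^{g_{\xi_i}}_{r_0/2}(\xi_i)$ (with its explicit coefficients $4N$ and $2(N-2)^2$); the decay $-\mcl_g\mcv_i+\mcv_i^{2^*-1}=\mco(\delta_i^{(N+2)/2})$ on $M\setminus B^{g_{\xi_i}}_{r_0/2}(\xi_i)$ (Cases 1 and 3 in the proof of Lemma~\ref{lem3.3}); the bound $|\mcz^0_j|\lesssim\mcv_j$ on $M$ (from Lemma~\ref{le3p} together with $\mcv_j\simeq U_{\delta_j,0}(d_{g_{\xi_j}}(\cdot,\xi_j))$ near $\xi_j$); and the interaction estimates of Lemmas~\ref{a22} and \ref{a4}.

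For the off-diagonal terms I would fix $i\ne j$ and split $M$ into the ball $B^{g_{\xi_i}}_{r_0/2}(\xi_i)$ and its complement. On the complement, $|-\mcl_g\mcv_i+\mcv_i^{2^*-1}|\lesssim\delta_i^{(N+2)/2}$ and $\|\mcv_j\|_{L^1(M)}\lesssim\delta_j^{(N-2)/2}$, so this part is $\mco\big(\delta_i^{(N+2)/2}\delta_j^{(N-2)/2}\big)\le\mco\big((\max_\ell\delta_\ell)^{N}\big)=o\big(\max_\ell\delta_\ell^{N-2}\big)$. On the ball, \eqref{ca23} gives $|-\mcl_g\mcv_i+\mcv_i^{2^*-1}|\lesssim\delta_i^{(N+2)/2}|y_i|^{N-4}(\delta_i^2+|y_i|^2)^{-N/2}$ in $g_{\xi_i}$-normal coordinates $y_i$; bounding $|\mcz^0_j|\lesssim\mcv_j$ and comparing this prefactor with $U_{\delta_i,0}^{2^*-2}\simeq\delta_i^2(\delta_i^2+|y_i|^2)^{-2}$, one sees it carries an extra factor gaining a power of $\delta_i$ (pointwise for $N\ge4$, and for $N=3$ after splitting off $\{|y_i|\le\delta_i\}$), so the ball contribution is $\lesssim\delta_i^{(N-2)/2}\int U_{\delta_i,0}^{2^*-2}\mcv_j\,dv_g+\cdots=o(q_{ij})$ by Lemma~\ref{a22}; this is parallel to the argument establishing \eqref{22} in Appendix~\ref{subsec:tech} and to the interaction computations in Lemma~\ref{le35}. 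Summing over $i\ne j$ gives a total off-diagonal contribution of order $o\big(\mcq+\max_\ell\delta_\ell^{N-2}\big)$.

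For the diagonal term $i=j$, the part over $M\setminus B^{g_{\xi_j}}_{r_0/2}(\xi_j)$ is $\mco(\delta_j^{(N+2)/2})\cdot\mco(\delta_j^{(N-2)/2})=\mco(\delta_j^N)=o(\delta_j^{N-2})$. On $B^{g_{\xi_j}}_{r_0/2}(\xi_j)$, where $\chi\equiv1$, I would substitute \eqref{ca23} for $-\mcl_g\mcv_j+\mcv_j^{2^*-1}$ together with the expansion
\[
\mcz^0_j=\Lambda_{\xi_j}\Big[1+\ga_NA_g(\xi_j)\,d_{g_{\xi_j}}(\cdot,\xi_j)^{N-2}+\mco\big(d_{g_{\xi_j}}(\cdot,\xi_j)^{N-1}\big)\Big]\,\delta_j\frac{\pa U_{\delta_j,0}}{\pa\delta_j}\big(d_{g_{\xi_j}}(\cdot,\xi_j)\big),
\]
which follows from \eqref{vi}, \eqref{gg}, and \eqref{gx} by differentiating in $\delta_j$ (only the bubble factor depends on $\delta_j$). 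Using $dv_g=\Lambda_{\xi_j}^{-2^*}dv_{g_{\xi_j}}$ and $dv_{g_{\xi_j}}=(1+\mco(|y|^{\theta}))\,dy$ in $g_{\xi_j}$-normal coordinates $y$, all powers of $\Lambda_{\xi_j}$ cancel, and every cross term — those carrying an extra $\ga_NA_g(\xi_j)|y|^{N-2}$, an $\mco(|y|^{N-1})$, the $\mco\big(|y|^{N-3}(\delta_j^2+|y|^2)^{-N/2}\big)$ from \eqref{ca23}, or the $\mco(|y|^2)$ and $\mco(|y|^{\theta})$ from $\Lambda_{\xi_j}$ and the volume element — is $o(\delta_j^{N-2})$ after rescaling $y=\delta_j z$ and estimating the resulting (truncated) Euclidean integrals. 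The leading term becomes
\[
\alpha_N\ga_NA_g(\xi_j)\,\delta_j^{N-2}\int_{\{|z|\le r_0/(2\delta_j)\}}\left[4N\frac{|z|^{N-2}}{(1+|z|^2)^{\frac{N+2}{2}}}+2(N-2)^2\frac{|z|^{N-4}}{(1+|z|^2)^{\frac{N}{2}}}\right]Z^0(z)\,dz,
\]
and since the integrand is absolutely integrable over $\R^N$ with tail $\mco(\delta_j^2)$ outside the ball, letting $\delta_j\to0$ replaces the domain by $\R^N$; passing to polar coordinates and evaluating the four one-dimensional integrals via $\int_0^\infty r^{a-1}(1+r^2)^{-b}\,dr=\frac{\Gamma(a/2)\Gamma(b-a/2)}{2\,\Gamma(b)}$ yields exactly $\int_{\R^N}[\,\cdots\,]Z^0=\frac{(N-2)^2}{2}\alpha_N|\S^{N-1}|$. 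Hence the diagonal term equals $\frac{(N-2)^2}{2}\alpha_N^2\ga_N|\S^{N-1}|\,A_g(\xi_j)\,\delta_j^{N-2}+o(\delta_j^{N-2})=\mfb_NA_g(\xi_j)\delta_j^{N-2}+o(\delta_j^{N-2})$, and adding the off-diagonal estimate gives \eqref{eq:le36}. The positivity $A_g(\xi_j)>0$ is the positive mass theorem of \cite{SY1}, as recorded after \eqref{gx}.

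I expect the main obstacle to be the exact-constant computation: the four Beta-function values must telescope precisely to $\frac{(N-2)^2}{2}\alpha_N|\S^{N-1}|$ — this is exactly what produces the coefficient $\mfb_N$ — so the interplay of the coefficients $4N$ and $2(N-2)^2$ in \eqref{ca23} with the profile $Z^0$ must be tracked carefully. A secondary, purely technical difficulty is that several of the auxiliary Euclidean integrals arising from the cross terms and from the off-diagonal ball contributions diverge at infinity, so their smallness is recovered only through the truncation at radius $\sim r_0/\delta_j$; making this bookkeeping honest uniformly across $3\le N\le5$ relies on the domain-splitting estimates of Appendix~\ref{subsec:tech}.
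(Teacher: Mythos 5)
Your proposal is correct and follows essentially the same route as the paper: split into the diagonal $i=j$ contribution (which produces the mass term via the pointwise expansion \eqref{ca23}, the conformal identity \eqref{gg}, and the Green's function expansion \eqref{gx}) and off-diagonal $i\ne j$ contributions (estimated by \eqref{ca23} together with the interaction estimates of Lemma \ref{a22} and the appendix). The Beta-function computation you describe does reproduce $\mfb_N$ exactly: with $\int_0^\infty r^{2N-3}(r^2-1)(1+r^2)^{-(N+1)}\,dr=\tfrac{N-2}{2N(N-1)}$ and $\int_0^\infty r^{2N-5}(r^2-1)(1+r^2)^{-N}\,dr=\tfrac{N-3}{2(N-1)(N-2)}$, one gets $2N(N-2)\cdot\tfrac{N-2}{2N(N-1)}+(N-2)^3\cdot\tfrac{N-3}{2(N-1)(N-2)}=\tfrac{(N-2)^2}{2}$, matching \eqref{f62}. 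Two minor imprecisions do not affect correctness. First, the tail of the rescaled diagonal integral outside $\{|z|\le r_0/(2\delta_j)\}$ is $\mco(\delta_j^{N-2})$ (integrand $\sim r^{-N+1}$ in polar form), not $\mco(\delta_j^2)$, but either way it is $o(1)$ and the truncation error is $\mco(\delta_j^{2(N-2)})=o(\delta_j^{N-2})$. Second, your phrase \emph{splitting off $\{|y_i|\le\delta_i\}$} for $N=3$ undersells the work: the troublesome term is $\delta_i\int\mcu_i^3\mcu_j\,d_g(\cdot,\xi_i)^{-1}\,dv_g$ (the paper's $J_6$), whose bound $\lesssim\delta_i\mcq$ in \eqref{eq:le363} requires a finer decomposition keyed to $d_g(\xi_i,\xi_j)$, as carried out in Appendix~\ref{subsec:tech}; you do acknowledge this dependence at the end, so the overall plan is sound.
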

\begin{proof}
We provide the proof in two steps.

\medskip \noindent \textbf{Step 1.} We claim that
\begin{equation}\label{eq:le361}
\int_M \(-\mcl_g\mcv_j+\mcv_j^{2^*-1}\)\mcz^0_j dv_g =\mfb_NA_g(\xi_j)\delta_j^{N-2} + o\(\delta_j^{N-2}\).
\end{equation}

\medskip
By \eqref{ca13} and \eqref{ca3}, we have
\begin{equation}\label{f61}
\int_M \(-\mcl_g\mcv_j+\mcv_j^{2^*-1}\)\mcz^0_j dv_g = \int_{B^{g_{\xi_j}}_{r_0/2}(\xi_j)} \(-\mcl_g\mcv_j+\mcv_j^{2^*-1}\)\mcz^0_j \Lambda_{\xi_j}^{-2^*} dv_{g_{\xi_j}} + \mco\(\delta_j^N\).
\end{equation}
Estimate \eqref{ca23} and identity \eqref{gg} show
\begin{align}
&\ \int_{B^{g_{\xi_j}}_{r_0/2}(\xi_j)} \(-\mcl_g\mcv_j+\mcv_j^{2^*-1}\)\mcz^0_j \Lambda_{\xi_j}^{-2^*} dv_{g_{\xi_j}} \nonumber \\
&= 2\alpha_N^{2^*} \ga_NA_g(\xi_j) \int_{\{|y| \le \frac{r_0}{2}\}} \frac{\delta_j^{\frac{N+2}{2}}|y|^{N-2}}{(\delta_j^2+|y|^2)^{\frac{N+2}{2}}} \frac{\delta_j^{\frac{N-2}{2}}(|y|^2-\delta_j^2)}{(\delta_j^2+|y|^2)^{\frac{N}{2}}} dy \nonumber \\
&\ + (N-2)^3\alpha_N^2 \ga_NA_g(\xi_j) \int_{\{|y| \le \frac{r_0}{2}\}} \frac{\delta_j^{\frac{N+2}{2}}|y|^{N-4}}{(\delta_j^2+|y|^2)^{\frac{N}{2}}} \frac{\delta_j^{\frac{N-2}{2}}(|y|^2-\delta_j^2)}{(\delta_j^2+|y|^2)^{\frac{N}{2}}} dy + \mco\(\delta_j^{N-1}\) \label{f62} \\
&= \left[2\alpha_N^{2^*-2} \int_0^{\infty} \frac{r^{2N-3}(r^2-1)}{(1+r^2)^{N+1}} dr + (N-2)^3 \int_0^{\infty} \frac{r^{2N-5}(r^2-1)}{(1+r^2)^N} dr \right] \alpha_N^2\ga_N\left|\S^{N-1}\right| A_g(\xi_j)\delta_j^{N-2} \nonumber \\
&\ + o\(\delta_j^{N-2}\) \nonumber \\
&= \mfb_NA_g(\xi_j)\delta_j^{N-2} + o\(\delta_j^{N-2}\). \nonumber
\end{align}
Hence the claim holds.

\medskip \noindent \textbf{Step 2.} We assert that
\begin{equation}\label{eq:le362}
\int_M\(-\mcl_g\mcv_i+\mcv_i^{2^*-1}\)\mcz^0_j dv_g = o\Big(\mcq + \max_{\ell}\delta_{\ell}^{N-2}\Big) \quad \text{for } 1 \le i \ne j \le \nu.
\end{equation}

Indeed, \eqref{ca13}, \eqref{ca23}, \eqref{ca3}, and \eqref{22} yield
\[\left|\int_M\(-\mcl_g\mcv_i+\mcv_i^{2^*-1}\)\mcz^0_j dv_g\right| \lesssim \delta_i^{\frac{N-2}{2}} \int_{B^g_{r_0/2}(\xi_i)} \mcu_i^{2^*-2}\mcu_j dv_g + \mco\Big(\max_{\ell}\delta_{\ell}^N\Big) = o\Big(\mcq + \max_{\ell}\delta_{\ell}^{N-2}\Big)\]
for $N = 4,5$, while
\begin{multline*}
\left|\int_M \(-\mcl_g\mcv_i+\mcv_i^{2^*-1}\)\mcz^0_j dv_g\right| \\ \lesssim \delta_i^{\frac{1}{2}} \int_{B^g_{r_0/2}(\xi_i)} \mcu_i^4\mcu_j dv_g + \underbrace{\delta_i \int_{B^g_{r_0/2}(\xi_i)}
\frac{(\mcu_i^3\mcu_j)(x)}{d_g(x,\xi_i)} (dv_g)_x}_{=: J_6} + \mco\Big(\max_{\ell}\delta_{\ell}^3\Big) = o\Big(\mcq + \max_{\ell}\delta_{\ell}\Big)
\end{multline*}
for $N = 3$. To deduce the last inequality, we applied an estimate
\begin{equation}\label{eq:le363}
|J_6| \lesssim \delta_i\mcq = o\Big(\max_{\ell}\delta_{\ell}\Big)
\end{equation}
whose derivation is postponed to Appendix \ref{subsec:tech}. The assertion is proved.

\medskip
By virtue of \eqref{eq:le361} and \eqref{eq:le362}, estimate \eqref{eq:le36} is true.
\end{proof}

\begin{proof}[Completion of the proof of Proposition \ref{3.3}]
Choose any $j \in \{1,\ldots,\nu\}$. By following the estimation procedures of $\|\II_3\|_{L^{2N/(N+2)}(M)}$ depicted in the proof of Lemma \ref{lem3.3} and using Lemma \ref{a22}, we derive
\begin{align}
&\ \left\|\mcl_g\mcz^0_j - (2^*-1)\bigg(\sum_{i=1}^{\nu}\mcv_i\bigg)^{2^*-2} \mcz^0_j\right\|_{L^{\frac{2N}{N+2}}(M)} \nonumber \\
&\lesssim \left\|\mcl_g\mcz^0_j - (2^*-1)\mcv_j^{2^*-2} \mcz^0_j\right\|_{L^{\frac{2N}{N+2}}(M)} + \bigg\|\sum_{i \ne j} \mcv_i^{2^*-2}\mcv_j\bigg\|_{L^{\frac{2N}{N+2}}(M)} + \bigg\|\sum_{i \ne j} \mcv_i\mcv_j^{2^*-2}\bigg\|_{L^{\frac{2N}{N+2}}(M)} \nonumber \\
&= \mco\Big(\mcq + \max_{\ell}\delta_{\ell}^{N-2}\Big) = o(1). \label{lina2}
\end{align}
Moreover, we observe from \eqref{tezk2}, \eqref{eq:le35}, Lemmas \ref{le3p} and \ref{le36}, \eqref{lina2}, and Proposition \ref{pr3.2} that
\begin{equation*}
\begin{aligned}
&\ \left|\mfb_NA_g(\xi_j)\delta_j^{N-2} + (2^*-1)\sum_{i \ne j} \int_{B^{g_{\xi_j}}_{r_0/2}(\xi_j)} \Lambda^{2^*-1}_{\xi_j}\(\mcu_j^{g_{\xi_j}}\)^{2^*-2} \delta_j\frac{\pa \mcu_j^{g_{\xi_j}}}{\pa\delta_j}\mcv_i dv_g\right| \\
&\lesssim \|f\|_{H^{-1}(M)} + \|\rho\|_{H^1(M)}^2 + o(1)\|\rho\|_{H^1(M)} + o\Big(\mcq + \max_{\ell}\delta_{\ell}^{N-2}\Big) \\
&\lesssim \|f\|_{H^{-1}(M)} + o\Big(\mcq + \max_{\ell}\delta_{\ell}^{N-2}\Big).
\end{aligned}
\end{equation*}

Keeping \eqref{neqij2} and $A_g(\xi_j)>0$ in mind, we can now repeat the proof of Proposition \ref{lou0} to complete the proof. We omit the details.
\end{proof}

\section{The case $N \ge 6$ and $\nu = 1$}\label{se4}
This section is devoted to the proof of Theorem \ref{th1.4}. Throughout this section, we always assume that $N \ge 6$ and the number $\nu$ of the bubbles in Assumption \ref{assum} is $1$.
We also assume that $u_0 > 0$ on $M$ in Subsection \ref{se4.1} and $u_0 = 0$ on $M$ in Subsection \ref{se4.2}.

\medskip
Let $\mcv_{\delta,\xi}$ be the bubble-like function in \eqref{w1}. As in the previous sections, there exist $(\delta_1,\xi_1) \in (0,\infty) \times M$, $\vep_1 > 0$ small, and $\mcv_1 = \mcv_{\delta_1,\xi_1}$ such that
\[\|u-(u_0+\mcv_1)\|_{H^1(M)} = \inf\left\{\left\|u-\(u_0+\mcv_{\tde_1,\txi_1}\)\right\|_{H^1(M)}: \(\tde_1,\txi_1\) \in (0,\infty) \times M\right\} \le \vep_1.\]
In the statement of Theorem \ref{th1.4}, we imposed the condition that $\textup{Weyl}_g(\txi_1) \ne 0$ when $(M,g)$ is non-l.c.f. and either \textup{[}$N \ge 11$ and $u_0 > 0$\textup{]} or \textup{[}$N \ge 6$ and $u_0 = 0$\textup{]}.
By reducing the size of $\vep_1 > 0$ if necessary, we can assume that $\textup{Weyl}_g(\xi_1) \ne 0$.

Setting $\rho = u-(u_0+\mcv_1)$ and $f = \mcl_gu-u^{2^*-1}$, we have
\begin{equation}\label{eqrho3}
\begin{cases}
\displaystyle \mcl_g\rho - (2^*-1)(u_0+\mcv_1)^{2^*-2}\rho = f+\III_1[\rho]+\III_2+\III_3 \quad \text{on } M,\\
\displaystyle \big\langle \rho,\mcz^k_1 \big\rangle_{H^1(M)} = 0 \quad \text{for } k=0,\ldots,N
\end{cases}
\end{equation}
where $\mcz^0_1 = \delta_1 \frac{\pa \mcv_1}{\pa\delta_1}$, $\mcz^k_1 = \delta_1 \frac{\pa\mcv_1}{\pa\xi_1^k}$ for $k=1,\ldots,N$,
\[\III_1[\rho] := (u_0+\mcv_1+\rho)^{2^*-1} - (u_0+\mcv_1)^{2^*-1} - (2^*-1)(u_0+\mcv_1)^{2^*-2}\rho,\]
\begin{equation}\label{III23}
\III_2 := (u_0+\mcv_1)^{2^*-1}-u_0^{2^*-1}-\mcv_1^{2^*-1}, \quad \text{and} \quad \III_3 := -\mcl_g\mcv_1+\mcv_1^{2^*-1}.
\end{equation}
Reminding the conformal factor $\Lambda_{\xi_1}$ giving \eqref{dvg}, we write $g_{\xi_1} = \Lambda_{\xi_1}^{4/(N-2)}g$.

\subsection{The case $u_0 > 0$}\label{se4.1}
This subsection is devoted to the derivation of estimate \eqref{eq:sqe3}. We recall that $\textup{Weyl}_g(\xi_1) \ne 0$ when $N \ge 11$ and $(M,g)$ is non-l.c.f.

\begin{prop}\label{pr41}
It holds that
\begin{equation}\label{l4u1}
\|\rho\|_{H^1(M)} \lesssim \|f\|_{H^{-1}(M)} + \begin{cases}
\delta_1^2|\log \delta_1|^{\frac23} &\text{if } N=6,\\
\delta_1^{\frac{N+2}{4}} &\text{if } 7 \le N \le 13 \text{ or } [N \ge 14 \text{ and } (M,g) \text{ is l.c.f.}],\\
\delta_1^4 &\text{if } N \ge 14 \text{ and } (M,g) \text{ is non-l.c.f.}
\end{cases}
\end{equation}
\end{prop}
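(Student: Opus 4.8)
The plan is to run the same three‑stage scheme as for Proposition~\ref{pr2.2}: decompose $\rho=u-(u_0+\mcv_1)$ along the relevant directions, control its $E^{\perp}$‑component by a coercivity inequality, and control the finitely many remaining coefficients by testing \eqref{eqrho3} against $\mcv_1$, the $\mcz^k_1$'s, and the low eigenfunctions $\psi_1,\dots,\psi_L$. Concretely I would write
\[
\rho=\rho_1+\beta_1\mcv_1+\sum_{k=0}^N\beta_1^k\mcz^k_1+\sum_{m=1}^L\vth_m\psi_m,\qquad \rho_1\in E^{\perp},
\]
where $E^{\perp}$ is the $\nu=1$ instance of \eqref{eq:Eperp}, set $\mca:=|\beta_1|+\sum_{k=0}^N|\beta_1^k|+\sum_{m=1}^L|\vth_m|$ (which is $o(1)$ by the $\nu=1$ version of Lemma~\ref{le2p}), and abbreviate by $\mathcal E$ the right‑hand side of \eqref{l4u1} with the term $\|f\|_{H^{-1}(M)}$ deleted. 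The only genuinely new input compared with Section~\ref{se2} is a sharp $L^{2N/(N+2)}(M)$‑estimate of the error terms $\III_2$ and $\III_3$ in the regime $N\ge6$.

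The heart of the argument is $\|\III_2\|_{L^{2N/(N+2)}(M)}$. Since $2^*-1\le2$ for $N\ge6$, the elementary inequalities of Appendix~\ref{a} (cf.\ \eqref{ab6}) give the pointwise bound $|\III_2|\lesssim \mcv_1^{2^*-2}u_0\,\mathbf{1}_{\{\mcv_1\gtrsim u_0\}}+u_0^{2^*-2}\mcv_1\,\mathbf{1}_{\{\mcv_1\lesssim u_0\}}$; in $g_{\xi_1}$‑normal coordinates $y$ around $\xi_1$ the dividing surface $\{\mcv_1\simeq u_0\}$ sits at scale $|y|\simeq\sqrt{\delta_1}$. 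A scaling computation shows that the innermost region $|y|\lesssim\delta_1$ only contributes $\delta_1^{(N-2)/2}$, which is $o(\delta_1^{(N+2)/4})$ for $N\ge6$; the dominant contribution comes from the layer $|y|\simeq\sqrt{\delta_1}$ and produces the power $\delta_1^{(N+2)/4}$ when $N\ge7$, and $\delta_1^2|\log\delta_1|^{2/3}$ when $N=6$ since there the governing integral is logarithmically divergent (it is essential to perform the splitting at the transition layer — using either bound globally overestimates). For $\III_3=-\mcl_g\mcv_1+\mcv_1^{2^*-1}$ I would split according to $d_{g_{\xi_1}}(x,\xi_1)$ exactly as in the proof of Lemma~\ref{lem3.3}: in the l.c.f.\ case the Green's‑function ansatz together with \eqref{gx} gives $|\III_3|\lesssim A_g(\xi_1)\,\delta_1^{(N+2)/2}|y|^{N-4}(\delta_1^2+|y|^2)^{-N/2}$ near $\xi_1$ (cf.\ \eqref{ca23}), hence $\|\III_3\|_{L^{2N/(N+2)}(M)}\lesssim\delta_1^{(N+2)/2}$; in the non‑l.c.f.\ case the expansion of $R_{g_{\xi_1}}$ in conformal normal coordinates yields $|\III_3|\lesssim |y|^2\,\mcu_{\delta_1,\xi_1}^{g_{\xi_1}}$ near $\xi_1$, hence $\|\III_3\|_{L^{2N/(N+2)}(M)}\lesssim\delta_1^{(N-2)/2}$ for $6\le N\le10$ and $\lesssim\delta_1^4$ for $N\ge11$ (the threshold $N=11$ being where $(N-2)/2>4$, and $N=14$ where $(N+2)/4\ge4$). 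Comparing these in each case against $\|\III_2\|_{L^{2N/(N+2)}(M)}$ gives $\|\III_2\|_{L^{2N/(N+2)}(M)}+\|\III_3\|_{L^{2N/(N+2)}(M)}\lesssim\mathcal E$, while $\|\III_1[\rho]\|_{L^{2N/(N+2)}(M)}\lesssim\|\rho\|_{H^1(M)}^{p}$ with $p:=\min\{2,2^*-1\}>1$, a term of higher order in $\rho$.

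With these estimates in hand the rest parallels Lemmas~\ref{le2.3}--\ref{le24}. Testing \eqref{eqrho3} against $\rho_1$ and using a coercivity inequality for $\mcl_g-(2^*-1)(u_0+\mcv_1)^{2^*-2}$ on $E^{\perp}$ (the single‑bubble counterpart of Proposition~\ref{41}, obtained by the same blow‑up argument and the non‑degeneracy of $u_0$) gives $\|\rho_1\|_{H^1(M)}\lesssim\|f\|_{H^{-1}(M)}+\mca+\mathcal E+\|\rho\|_{H^1(M)}^{p}$. Next I would bound $\mca$: the orthogonality relations $\langle\rho,\mcz^k_1\rangle_{H^1(M)}=0$ combined with Lemma~\ref{le2p} force $\sum_{k=0}^N|\beta_1^k|=o\big(|\beta_1|+\sum_m|\vth_m|\big)$; testing against $\mcv_1$ and isolating the non‑vanishing diagonal term $-(2^*-2)\beta_1\int_{\R^N}U^{2^*}$ gives $|\beta_1|\lesssim\|f\|_{H^{-1}(M)}+\mathcal E+o(\mca)+\|\rho\|_{H^1(M)}^{p}$; and testing against each $\psi_s$ and isolating the gap term $(2^*-1-\bmu_s)\vth_s\int_M u_0^{2^*-2}\psi_s^2$ gives the same for $|\vth_s|$, the only place where $N\ge6$ changes the reasoning being that \eqref{ccp} is replaced by $\int_M\big[(u_0+\mcv_1)^{2^*-2}-u_0^{2^*-2}\big]\rho\psi_s=O(\delta_1^2\|\rho\|_{H^1(M)})$, which follows from $|(u_0+\mcv_1)^{2^*-2}-u_0^{2^*-2}|\lesssim\mcv_1^{2^*-2}$, H\"older, and $\|\mcv_1^{2^*-2}\|_{L^{2N/(N+2)}(M)}\lesssim\delta_1^2$ (up to a logarithm when $N=6$). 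Summing yields $\mca\lesssim\|f\|_{H^{-1}(M)}+\mathcal E+\|\rho\|_{H^1(M)}^{p}$, and feeding this back into the bound for $\|\rho_1\|_{H^1(M)}$ together with $\|\rho\|_{H^1(M)}\le\|\rho_1\|_{H^1(M)}+C\mca$, then absorbing the term $\|\rho\|_{H^1(M)}^{p}$ into the left‑hand side (legitimate since $\|\rho\|_{H^1(M)}\le\vep_1$ is small), produces \eqref{l4u1}. The precise evaluation of $\int_M\III_3\mcz^0_1$, not needed here, is the content of Proposition~\ref{pr44}.

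I expect Step~1 — and within it the $\III_2$‑estimate — to be the main obstacle. The subtlety is that the dominant contribution to $\|\III_2\|_{L^{2N/(N+2)}(M)}$ does not come from the region where the bubble $\mcv_1$ swamps $u_0$ (that region alone only yields $\delta_1^{(N-2)/2}=o(\delta_1^{(N+2)/4})$) but from the transition layer $\{\mcv_1\simeq u_0\}$ at scale $|y|\simeq\sqrt{\delta_1}$, where the two competing densities $\mcv_1^{2^*-2}u_0$ and $u_0^{2^*-2}\mcv_1$ must be weighed against each other carefully; this balancing is exactly what forces the exponent $\delta_1^{(N+2)/4}$ and the logarithm at $N=6$, and, together with the matching analysis of $\III_3$, is what makes the piecewise form of \eqref{l4u1} and the threshold dimensions $N=11$ and $N=14$ appear.
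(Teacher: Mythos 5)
Your proposal follows exactly the paper's three-step route: the same $L^{2N/(N+2)}$ estimate of $\III_2$ by splitting at the transition scale $|y|\simeq\sqrt{\delta_1}$ (yielding $\delta_1^{(N+2)/4}$ with a logarithm at $N=6$), the same case analysis of $\III_3$ by l.c.f.\ versus non-l.c.f.\ with the thresholds $N=11$ and $N=14$, and the same final combination via the $\nu=1$ coercivity and coefficient-bounding argument from Proposition~\ref{pr2.2}. (Two cosmetic inaccuracies that do not affect the conclusion: for non-l.c.f.\ manifolds at $N=10$ the paper's bound on $\|\III_3\|_{L^{2N/(N+2)}}$ carries an extra factor $|\log\delta_1|^{3/5}$ which you omit, and at $N=6$ the inner-region contribution $\delta_1^{(N-2)/2}$ equals $\delta_1^{(N+2)/4}$ rather than being $o$ of it; in both cases the dominant $\III_2$ term swallows the difference.)
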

\begin{proof}
The proof is presented in three steps.

\medskip \noindent \textbf{Step 1.} Since $u_0 \lesssim \mcv_1$ if $d_{g_{\xi_1}}(x,\xi_1) \le \sqrt{\delta_1}$ and $u_0 \gtrsim \mcv_1$ if $d_{g_{\xi_1}}(x,\xi_1) \ge \sqrt{\delta_1}$, we obtain from \eqref{iqu} that
\begin{equation}\label{eq:III2est}
\begin{aligned}
|\III_2| &\lesssim \(u_0\mcv_1^{2^*-2}+u_0^{2^*-1}\)\bs{1}_{d_{g_{\xi_1}}(x,\xi_1) \le \sqrt{\delta_1}} + \(u_0^{2^*-2}\mcv_1+\mcv_1^{2^*-1}\)\bs{1}_{d_{g_{\xi_1}}(x,\xi_1) \ge \sqrt{\delta_1}} \\
&\lesssim u_0\mcv_1^{2^*-2}\bs{1}_{d_{g_{\xi_1}}(x,\xi_1) \le \sqrt{\delta_1}} + u_0^{2^*-2}\mcv_1\bs{1}_{d_{g_{\xi_1}}(x,\xi_1) \ge \sqrt{\delta_1}}.
\end{aligned}
\end{equation}
Direct computations show
\begin{equation}\label{4i32}
\|\III_2\|_{L^{\frac{2N}{N+2}}(M)} \lesssim \begin{cases}
\delta_1^2|\log \delta_1|^{\frac23} &\text{if } N=6,\\
\delta_1^{\frac{N+2}{4}} &\text{if } N \ge 7.
\end{cases}
\end{equation}

\medskip \noindent \textbf{Step 2.} (1) We first assume that $(M,g)$ is non-l.c.f. so that $\mcv_1 = \Lambda_{\xi_1}\chi(d_{g_{\xi_1}}(\cdot,\xi_1))\mcu_{\delta_1,{\xi_1}}^{g_{\xi_1}}$ on $M$. By \eqref{gxvu},
\begin{equation}\label{4i3}
\begin{aligned}
\III_3(x) &= \Lambda_{\xi_1}^{2^*-1}(x) \(\Delta\chi U_{\delta_1,0} + 2\nabla\chi \cdot \nabla U_{\delta_1,0} + \chi\Delta U_{\delta_1,0} + \chi^{2^*-1}U_{\delta_1,0}^{2^*-1}\)(y) \\
&\ - \Lambda_{\xi_1}^{2^*-1}(x) \ka_NR_{g_{\xi_1}}(x) (\chi U_{\delta_1,0})(y) + \mco\(|y|^{\theta-1}|\nabla U_{\delta_1,0}(y)|\)
\end{aligned}
\end{equation}
for $x = \exp_{\xi_1}^{g_{\xi_1}}(y) \in B_{r_0}^{g_{\xi_1}}(\xi_1)$. On the one hand,
\begin{equation}\label{4i}
\left|\Lambda_{\xi_1}^{2^*-1}(x)\(\Delta\chi U_{\delta_1,0} + 2\nabla\chi \cdot \nabla U_{\delta_1,0} + \chi\Delta U_{\delta_1,0} + \chi^{2^*-1}U_{\delta_1,0}^{2^*-1}\)(y)\right| \lesssim \delta_1^{\frac{N-2}{2}}.
\end{equation}
On the other hand, since \eqref{dvg} implies
\begin{equation}\label{tay}
R_{g_{\xi}}(\xi)=0, \quad \nabla_{g_{\xi}} R_{g_{\xi}}(\xi)=0, \quad \text{and} \quad \Delta_{g_{\xi}}R_{g_{\xi}}(\xi)=-\frac{1}{6}|\textup{Weyl}_g(\xi)|^2_g \quad \text{for } \xi \in M,
\end{equation}
we have that $R_{g_{\xi_1}}(x)=\mco(|y|^2)$, and so
\begin{multline}\label{4i2}
\(\int_{\{|y| \le r_0\}} \left|-\Lambda_{\xi_1}^{2^*-1}(x) \ka_NR_{g_{\xi_1}}(x) (\chi U_{\delta_1,0})(y) + \mco\(|y|^{\theta-1}|\nabla U_{\delta_1,0}(y)|\)\right|^{\frac{2N}{N+2}} dy\)^{\frac{N+2}{2N}} \\
\lesssim \(\int_{\{|y| \le r_0\}} \(|y|^2 U_{\delta_1,0}(y)\)^{\frac{2N}{N+2}}dy\)^{\frac{N+2}{2N}} \lesssim \begin{cases}
\delta_1^{\frac{N-2}{2}} &\text{if } 6\le N \le 9,\\
\delta_1^4|\log\delta_1|^{\frac35} &\text{if } N=10,\\
\delta_1^4 &\text{if } N \ge 11.
\end{cases}
\end{multline}
Thus \eqref{4i3}, \eqref{4i}, and \eqref{4i2} produce
\begin{equation}\label{123}
\|\III_3\|_{L^{\frac{2N}{N+2}}(M)} \lesssim \begin{cases}
\delta_1^{\frac{N-2}{2}} &\text{if } 6 \le N \le 9 \text{ and } (M,g) \text{ is non-l.c.f.}, \\
\delta_1^4|\log\delta_1|^{\frac35} &\text{if } N=10 \text{ and } (M,g) \text{ is non-l.c.f.}, \\
\delta_1^4 &\text{if } N \ge 11 \text{ and } (M,g) \text{ is non-l.c.f}.
\end{cases}
\end{equation}
(2) We next assume that $(M,g)$ is l.c.f. so that $\mcv_1 = \ga_NG_g(\cdot,\xi_1) [\chi(d_{g_{\xi_1}}(\cdot,\xi_1))d_{g_{\xi_1}}(\cdot,\xi_1)^{N-2} \mcu_{\delta_1,{\xi_1}}^{g_{\xi_1}}
+ (1-\chi(d_{g_{\xi_1}}(\cdot,\xi_1)))\alpha_N\delta_1^{(N-2)/2}]$ on $M$ and \eqref{gx} still holds. Then the proof of Lemma \ref{lem3.3} gives
\begin{equation}\label{i3ex}
\III_3(x) = \begin{cases}
\begin{aligned}
&\Lambda_{\xi_1}^{2^*-1}(x) \alpha_N\ga_NA_g(\xi_1) \delta_1^{\frac{N+2}{2}} \left[4N \frac{|y|^{N-2}}{(\delta_1^2+|y|^2)^{\frac{N+2}{2}}} \right. \\
&\hspace{40pt} \left. + 2(N-2)^2 \frac{|y|^{N-4}}{(\delta_1^2+|y|^2)^{\frac{N}{2}}} + \mco\(\frac{|y|^{N-3}}{(\delta_1^2+|y|^2)^{\frac{N}{2}}}\)\right]
\end{aligned}
&\text{if } d_{g_{\xi_1}}(x,\xi_1) \le \tfrac{r_0}{2}, \\
\mco\Big(\delta_1^{\frac{N+2}{2}}\Big) &\text{if } d_{g_{\xi_1}}(x,\xi_1) \ge \tfrac{r_0}{2}.
\end{cases}
\end{equation}
By employing \eqref{i3ex}, we compute
\begin{equation}\label{iii3}
\begin{aligned}
\|\III_3\|_{L^{\frac{2N}{N+2}}(M)} & \lesssim \left\|\frac{\delta_1^{\frac{N+2}{2}}}{\(\delta_1^2+|\cdot|^2\)^2}\right\|_{L^{\frac{2N}{N+2}}(B_{r_0/2}(0))} +\delta_1^{\frac{N+2}{2}} \\
&\lesssim \begin{cases}
\delta_1^4|\log\delta_1|^{\frac23} &\text{if } N=6 \text{ and } (M,g) \text{ is l.c.f.,}\\
\delta_1^{\frac{N+2}{2}} &\text{if } N \ge 7 \text{ and } (M,g) \text{ is l.c.f.}
\end{cases}
\end{aligned}
\end{equation}

\medskip \noindent \textbf{Step 3.} An analogous argument to the proof of Proposition \ref{pr2.2}
(namely, we use a coercivity estimate for $u_0+\mcv_1$ as in Proposition \ref{41}, a decomposition of $\rho$ similar to \eqref{dec}, and analogs of Lemmas \ref{le2.3} and \ref{le24}) with \eqref{123} and \eqref{iii3} yields
\begin{equation}\label{eq:rhoest}
\begin{aligned}
\|\rho\|_{H^1(M)} &\lesssim \|f\|_{H^{-1}(M)} + \|\III_2\|_{L^{\frac{2N}{N+2}}(M)} + \|\III_3\|_{L^{\frac{2N}{N+2}}(M)} \\
&\lesssim \|f\|_{H^{-1}(M)} + \begin{cases}
\delta_1^2|\log \delta_1|^{\frac23} &\text{if } N=6,\\
\delta_1^{\frac{N+2}{4}} &\text{if } 7 \le N \le 13 \text{ or } [N \ge 14 \text{ and } (M,g) \text{ is l.c.f.}],\\
\delta_1^4 &\text{if } N \ge 14 \text{ and } (M,g) \text{ is non-l.c.f.}
\end{cases}
\end{aligned}
\end{equation}
The proof is done.
\end{proof}

We derive a pointwise estimate of $\rho$ that will be useful later.
\begin{lemma}\label{le4.2}
Assume either $6 \le N \le 13$ or [$N \ge 14$ and $(M,g)$ is l.c.f.]. Then there exist a function $\trh_0 \in H^1(M)$ and numbers $\tc_0,\tc_1,\ldots,\tc_N \in \R$ satisfying the nonlinear equation
\begin{equation}\label{trho0}
\begin{cases}
\displaystyle \mcl_g\trh_0 - \left[(u_0+\mcv_1+\trh_0)^{2^*-1} - u_0^{2^*-1} - \mcv_1^{2^*-1}\right] = \sum_{k=0}^N \tc_k\mcl_g\mcz^k_1 \quad \text{on } M, \\
\displaystyle \big\langle \trh_0,\mcz^k_1 \big\rangle_{H^1(M)} = 0 \quad \text{for } k=0,1,\ldots,N
\end{cases}
\end{equation}
with estimates
\begin{equation}\label{tr0}
|\trh_0(x)| \lesssim \delta_1 \left[ \frac{\delta_1}{\delta_1^2+d_{g_{\xi_1}}(x,\xi_1)^2} \bs{1}_{d_{g_{\xi_1}}(x,\xi_1) \le \sqrt{\delta_1}}
+ \(\frac{\delta_1}{\delta_1^2+d_{g_{\xi_1}}(x,\xi_1)^2}\)^{\frac{N-4}{2}} \bs{1}_{d_{g_{\xi_1}}(x,\xi_1) \ge \sqrt{\delta_1}}\right]
\end{equation}
and
\begin{equation}\label{cj}
\sum_{k=0}^N|\tc_k| \lesssim \delta_1^{\frac{N-2}{2}}.
\end{equation}
Furthermore, if we let $\trh_1 = \rho-\trh_0$ so that
\begin{equation}\label{trho1}
\begin{cases}
\displaystyle \mcl_g\trh_1 - \left[(u_0+\mcv_1+\trh_0+\trh_1)^{2^*-1} -(u_0+\mcv_1+\trh_0)^{2^*-1}\right] = f+\III_3-\sum_{k=0}^N \tc_k \mcl_g\mcz^k_1 \quad \text{on } M,\\
\displaystyle \big\langle \trh_1,\mcz^k_1 \big\rangle_{H^1(M)} = 0 \quad \text{for } k=0,1,\ldots,N,
\end{cases}
\end{equation}
then it holds that
\begin{equation}\label{tr1}
\|\trh_1\|_{H^1(M)} \lesssim \|f\|_{H^{-1}(M)} + \|\III_3\|_{L^{\frac{2N}{N+2}}(M)} + \delta_1^{\frac{N+2}{2}}.
\end{equation}
\end{lemma}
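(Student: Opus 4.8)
\textbf{Proof proposal for Lemma \ref{le4.2}.}

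The plan is to construct $\trh_0$ by a fixed-point argument relative to the ansatz $u_0+\mcv_1$, then extract the pointwise decay \eqref{tr0} from the equation it satisfies, and finally obtain \eqref{tr1} for $\trh_1=\rho-\trh_0$ by subtracting the two defining equations and invoking the coercivity already available. First I would set up the Lyapunov–Schmidt framework: let $K_1=\textup{span}\{\mcz^0_1,\ldots,\mcz^N_1\}$ and $K_1^{\perp}$ its $H^1(M)$-orthogonal complement, with $\Pi^{\perp}$ the projection. On $K_1^{\perp}$ the linearized operator $\mcl_g-(2^*-1)(u_0+\mcv_1)^{2^*-2}$ is invertible with norm bounds uniform in $\delta_1$ — this follows from a single-bubble version of the coercivity inequality in Proposition \ref{41} (the non-degeneracy of $u_0$ is used here exactly as in that proposition). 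The right-hand side driving $\trh_0$ is $\III_2=(u_0+\mcv_1)^{2^*-1}-u_0^{2^*-1}-\mcv_1^{2^*-1}$, whose $L^{2N/(N+2)}(M)$-norm is controlled by \eqref{4i32}. A standard contraction-mapping argument on $\Pi^{\perp}$ (using \eqref{ab6} to bound the quadratic remainder $\III_1[\trh_0]$, and the smallness of $\|\III_2\|_{L^{2N/(N+2)}(M)}$) produces a unique small solution $\trh_0\in K_1^{\perp}$ with $\|\trh_0\|_{H^1(M)}\lesssim\|\III_2\|_{L^{2N/(N+2)}(M)}$, and the Lagrange multipliers $\tc_k$ are read off by testing \eqref{trho0} against $\mcz^k_1$; using Lemma \ref{le2p} (single-bubble case) and the estimate $\|\III_2\,\mcz^k_1\|_{L^1}\lesssim\delta_1^{(N-2)/2}$ — which one checks by the same region splitting $\{d_{g_{\xi_1}}(\cdot,\xi_1)\lessgtr\sqrt{\delta_1}\}$ used in \eqref{eq:III2est} — yields \eqref{cj}.

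The main obstacle, and the heart of the lemma, is the pointwise bound \eqref{tr0}. Here I would not argue variationally but rewrite \eqref{trho0} as $\mcl_g\trh_0 = \III_2 + [(u_0+\mcv_1+\trh_0)^{2^*-1}-(u_0+\mcv_1)^{2^*-1}-\III_2'] + \sum_k\tc_k\mcl_g\mcz^k_1$ where the bracketed term is the superquadratic part, and apply the Green's representation $\trh_0(x)=\int_M G_g(x,y)\big(\text{RHS}\big)(y)\,(dv_g)_y$. The model for the expected profile is the solution $\trh_0^{\mathrm{model}}$ of $-\Delta w=(2^*-1)U_{\delta_1,0}^{2^*-2}w + c\,u_0(\xi_1)U_{\delta_1,0}^{2^*-2}$ on $\R^N$, whose explicit decay is $\delta_1\cdot(\delta_1/(\delta_1^2+|y|^2))$ for $|y|\le\sqrt{\delta_1}$ and $\delta_1\cdot(\delta_1/(\delta_1^2+|y|^2))^{(N-4)/2}$ for $|y|\ge\sqrt{\delta_1}$; the two-regime shape in \eqref{tr0} is precisely the inner region (where $U^{2^*-2}$ dominates) versus the outer region (where the forcing $u_0\mcv_1^{2^*-2}$, decaying like $|y|^{-4}$, controls). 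I would run a barrier/bootstrap argument: insert the ansatz \eqref{tr0} with a large constant into the Green's integral, estimate $\int G_g(x,y)|y|^{2-N}(\text{terms})$ using the standard convolution estimates for $|y|^{-a}\ast|y|^{2-N}$ and Lemma \ref{a22}-type bounds, and verify the ansatz reproduces itself. The restriction to $6\le N\le 13$ (or l.c.f.) is what keeps $\|\III_3\|_{L^{2N/(N+2)}(M)}$ — and more importantly the contribution of $\III_3$-type errors that would otherwise enter — subordinate to the profile \eqref{tr0}; for $N\ge 14$ non-l.c.f. the Weyl-tensor term in $\III_3$ would dominate and change the right-hand side, which is why that case is excluded here.

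Finally, for \eqref{tr1}: subtracting \eqref{trho0} from \eqref{eqrho3} shows $\trh_1=\rho-\trh_0$ solves \eqref{trho1}, i.e. $\mcl_g\trh_1-\big[(u_0+\mcv_1+\trh_0+\trh_1)^{2^*-1}-(u_0+\mcv_1+\trh_0)^{2^*-1}\big]=f+\III_3-\sum_k\tc_k\mcl_g\mcz^k_1$, with $\trh_1\in K_1^{\perp}$ by construction. Linearizing the bracket around $u_0+\mcv_1$ and absorbing the error — using $\|\trh_0\|_{H^1(M)}=o(1)$, the quadratic estimate \eqref{ab6}, and Proposition \ref{pr41} to know $\|\rho\|_{H^1(M)}=o(1)$ hence $\|\trh_1\|_{H^1(M)}=o(1)$ — the coercivity inequality (single-bubble analog of Proposition \ref{41}) applied to $\trh_1\in K_1^{\perp}$ gives $\|\trh_1\|_{H^1(M)}^2\lesssim \big(\|f\|_{H^{-1}(M)}+\|\III_3\|_{L^{2N/(N+2)}(M)}+\|\sum_k\tc_k\mcl_g\mcz^k_1\|_{H^{-1}(M)}\big)\|\trh_1\|_{H^1(M)}$. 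Since $\|\mcl_g\mcz^k_1\|_{H^{-1}(M)}\lesssim\|\mcz^k_1\|_{H^1(M)}\lesssim 1$, the multiplier term is $\lesssim\sum_k|\tc_k|\lesssim\delta_1^{(N-2)/2}\lesssim\delta_1^{(N+2)/2}\cdot\delta_1^{-2}$ — wait, more simply $\delta_1^{(N-2)/2}$ is already dominated by what we want only if we note $\|\mcl_g\mcz_1^k\|_{H^{-1}}$ can in fact be estimated by $\delta_1^{(N+2)/2}$ in the relevant norm after using $-\Delta Z^k=(2^*-1)U^{2^*-2}Z^k$ as in \eqref{lina}; combining gives \eqref{tr1}. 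This last step is routine once the coercivity and \eqref{cj} are in hand, so I expect essentially all the difficulty to sit in establishing \eqref{tr0}.
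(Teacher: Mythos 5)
Your high-level strategy --- Lyapunov--Schmidt reduction, Green's representation for the pointwise decay, then coercivity for $\trh_1$ --- is the right family of ideas, and you correctly recognize that \eqref{tr0} is where the difficulty lives. The paper in fact performs the fixed point directly in the weighted $L^{\infty}$-norm $\|\cdot\|_*$ of Definition \ref{deb1}, via the linear theory of Proposition \ref{a5}, rather than first solving in $H^1(M)$ and bootstrapping. But there are two genuine gaps in your sketch.

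The first gap is the pointwise estimate \eqref{tr0} itself. The barrier/bootstrap you describe does not close: after inserting the ansatz $|\trh_0|\lesssim C\,V$ into the Green's representation and applying Lemma \ref{aux}, the linear term $(2^*-1)\int_M G_g(x,z)(u_0+\mcv_1)^{2^*-2}(z)\trh_0(z)\,dv_g$ comes back only as $\lesssim C\,\frac{\delta_1^2}{\delta_1^2+d_{g_{\xi_1}}(x,\xi_1)^2}\log\big(2+\tfrac{d_{g_{\xi_1}}(x,\xi_1)}{\delta_1}\big)V(x)$; the prefactor is small for $d_{g_{\xi_1}}(x,\xi_1)\gg\delta_1$ but is $\mco(|\log\delta_1|)$, hence not small, in the inner region $d_{g_{\xi_1}}(x,\xi_1)\lesssim\delta_1$, so iterating does not shrink the constant there. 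The paper (Proposition \ref{a5}) handles this by an a priori estimate proved by contradiction: Step 2 forces a putative maximizing point of $|\trh_0|V^{-1}$ into the ball $d_{g_{\xi_1}}(x,\xi_1)\lesssim\delta_1$, and Steps 3--4 blow up to the limit linearized problem $-\Delta w=(2^*-1)U^{2^*-2}w$ on $\R^N$, where the orthogonality $\langle\trh_0,\mcz^k_1\rangle_{H^1(M)}=0$ and the characterization of the kernel as $\mathrm{span}\{Z^0,\ldots,Z^N\}$ give a contradiction. This compactness/blow-up step is the real content and is absent from your account.

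The second gap is the derivation of \eqref{tr1}. You bound the multiplier term by $\|\sum_k\tc_k\mcl_g\mcz^k_1\|_{H^{-1}(M)}\lesssim\sum_k|\tc_k|\,\|\mcz^k_1\|_{H^1(M)}\lesssim\delta_1^{(N-2)/2}$, which is \emph{larger} than the target $\delta_1^{(N+2)/2}$, and your attempted salvage --- that $\|\mcl_g\mcz^k_1\|_{H^{-1}(M)}\lesssim\delta_1^{(N+2)/2}$ via \eqref{lina} --- is false: \eqref{lina} bounds $\mcl_g\mcz^0_1-(2^*-1)\mcv_1^{2^*-2}\mcz^0_1$, but $\|\mcv_1^{2^*-2}\mcz^0_1\|_{L^{2N/(N+2)}(M)}\simeq 1$, so $\|\mcl_g\mcz^k_1\|_{H^{-1}(M)}\simeq 1$. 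Moreover coercivity cannot be applied directly to $\trh_1$, since Proposition \ref{41} requires orthogonality to $\mcv_1$ and the $\psi_m$ as well as to the $\mcz^k_1$, whereas $\trh_1$ is orthogonal only to the $\mcz^k_1$. The paper instead repeats the decomposition argument of Proposition \ref{pr2.2}: the term $\sum_k\tc_k\mcl_g\mcz^k_1$ contributes $0$ when paired against the $E^{\perp}$-component of $\trh_1$, and contributes $\sum_k|\tc_k|\,|\langle\mcz^k_1,\mcv_1\rangle_{H^1(M)}|+\sum_{k,m}|\tc_k|\,|\langle\mcz^k_1,\psi_m\rangle_{H^1(M)}|\lesssim\delta_1^{(N-2)/2}\cdot\mco\big(\delta_1^{(N-2)/2}\big)\lesssim\delta_1^{(N+2)/2}$ in the remaining directions (Lemma \ref{le2p}). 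Exploiting these orthogonality relations, rather than taking a crude $H^{-1}$-norm, is the observation missing from your proof.
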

\begin{proof}
Rewriting \eqref{eq:III2est}, we have
\begin{equation}\label{i32}
|\III_2| \lesssim \(\mcu_1^{g_{\xi_1}}\)^{2^*-2}\bs{1}_{d_{g_{\xi_1}}(x,\xi_1) \le \sqrt{\delta_1}} + \mcu_1^{g_{\xi_1}} \bs{1}_{d_{g_{\xi_1}}(x,\xi_1) \ge \sqrt{\delta_1}}.
\end{equation}
Hence, by applying Proposition \ref{a5} with $\tih=\III_1[\trh_0]+\III_2$ and the Banach fixed-point theorem, we obtain a solution $\trh_0$ to \eqref{trho0} and numbers $\tc_0, \tc_1, \ldots, \tc_N \in \R$ satisfying \eqref{tr0} and \eqref{cj}; refer to the proof of \cite[Proposition 5.4]{DSW} where similar computations were performed.
Testing \eqref{trho0} with $\trh_0$ and then using \eqref{tr0} and \eqref{4i32} (or \eqref{i32}), we easily observe that $\|\trh_0\|_{H^1(M)}$ and $\|\trh_1\|_{H^1(M)}$ are small.

By conducting calculations analogous to those in Proposition \ref{pr2.2}, we find
\begin{align*}
\|\trh_1\|_{H^1(M)} &\lesssim \|f\|_{H^{-1}(M)} + \|\III_3\|_{L^{\frac{2N}{N+2}}(M)} \\
&\ + \sum_{k=0}^N|\tc_k| \left|\int_M \mcl_g\mcz^k_1\mcv_1 dv_g\right| + \sum_{k=0}^N |\tc_k| \sum_{m=1}^L \left|\int_M \mcl_g\mcz^k_1\psi_m dv_g\right| \\
&\lesssim \|f\|_{H^{-1}(M)} + \|\III_3\|_{L^{\frac{2N}{N+2}}(M)} + \delta_1^{\frac{N+2}{2}},
\end{align*}
so \eqref{tr1} is true.
\end{proof}
\noindent By utilizing the previous lemma, one can improve \eqref{l4u1} for $N=6$.
\begin{cor}\label{cr43}
Suppose that $N=6$. It holds that
\begin{equation}\label{eq:cr43}
\|\rho\|_{H^1(M)} \lesssim \|f\|_{H^{-1}(M)} + \delta_1^2|\log \delta_1|^{\frac12}.
\end{equation}
\end{cor}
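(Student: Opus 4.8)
The plan is to bootstrap from the rough bound in Proposition~\ref{pr41} (which gives $\|\rho\|_{H^1(M)} \lesssim \|f\|_{H^{-1}(M)} + \delta_1^2|\log\delta_1|^{2/3}$ when $N=6$) by exploiting the finer decomposition $\rho = \trh_0 + \trh_1$ from Lemma~\ref{le4.2}, in which $\trh_0$ absorbs the worst part of $\III_2$ and satisfies the pointwise bound \eqref{tr0}, while $\trh_1$ is controlled by $\|f\|_{H^{-1}(M)} + \|\III_3\|_{L^{2N/(N+2)}(M)} + \delta_1^{(N+2)/2}$ via \eqref{tr1}. For $N=6$ we have $\|\III_3\|_{L^{2N/(N+2)}(M)} \lesssim \delta_1^{(N-2)/2} = \delta_1^2$ (no logarithm, from \eqref{123} since $(M,g)$ non-l.c.f.\ with $6 \le N \le 9$; in the l.c.f.\ case \eqref{iii3} gives $\delta_1^4|\log\delta_1|^{2/3}$, which is even smaller), and $\delta_1^{(N+2)/2} = \delta_1^4$, so \eqref{tr1} already yields $\|\trh_1\|_{H^1(M)} \lesssim \|f\|_{H^{-1}(M)} + \delta_1^2$. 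Thus the only place where the suboptimal exponent $2/3$ on the logarithm enters is through $\|\trh_0\|_{H^1(M)}$, and the task reduces to showing $\|\trh_0\|_{H^1(M)} \lesssim \delta_1^2 |\log\delta_1|^{1/2}$.

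To estimate $\|\trh_0\|_{H^1(M)}$ I would test equation \eqref{trho0} against $\trh_0$ itself. The left-hand side produces $\|\trh_0\|_{H^1(M)}^2$ minus a quadratic form $\int_M[(u_0+\mcv_1+\trh_0)^{2^*-1}-u_0^{2^*-1}-\mcv_1^{2^*-1}]\trh_0\,dv_g$; using the smallness of $\|\trh_0\|_{H^1(M)}$ together with a coercivity estimate analogous to Proposition~\ref{41} (valid since $\trh_0 \perp \mcz^k_1$ and the $\psi_m$-components can be handled because $u_0$ is non-degenerate), the dominant contributions are controlled, and one is left needing $\left|\int_M \III_2\,\trh_0\,dv_g\right|$ (the $\sum_k \tc_k \langle \mcl_g\mcz^k_1,\trh_0\rangle$ term vanishes by orthogonality). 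So the heart of the matter is the pairing $\int_M \III_2 \trh_0\, dv_g$, which by Hölder is at most $\|\III_2\|_{L^{2N/(N+2)}(M)}\|\trh_0\|_{L^{2^*}(M)} \lesssim \delta_1^2|\log\delta_1|^{2/3}\|\trh_0\|_{H^1(M)}$ — that is exactly the lossy route. Instead I would insert the \emph{pointwise} bounds \eqref{i32} for $\III_2$ and \eqref{tr0} for $\trh_0$ and integrate directly: on $\{d_{g_{\xi_1}}(\cdot,\xi_1)\le\sqrt{\delta_1}\}$ one integrates $(\mcu_1^{g_{\xi_1}})^{2^*-2}\cdot \delta_1 \cdot \frac{\delta_1}{\delta_1^2+d^2}$, and on $\{d_{g_{\xi_1}}(\cdot,\xi_1)\ge\sqrt{\delta_1}\}$ one integrates $\mcu_1^{g_{\xi_1}}\cdot\delta_1\cdot(\frac{\delta_1}{\delta_1^2+d^2})^{(N-4)/2}$; passing to normal coordinates $y=\delta_1 z$ and computing, the $N=6$ case is precisely where these integrals become logarithmically divergent and produce a single clean factor $|\log\delta_1|$, giving $\left|\int_M \III_2\trh_0\,dv_g\right| \lesssim \delta_1^2|\log\delta_1|\,$(in fact one keeps it as $\delta_1^4|\log\delta_1|$ against a genuine bound, so that $\|\trh_0\|_{H^1(M)}^2 \lesssim \delta_1^4|\log\delta_1|$, i.e.\ $\|\trh_0\|_{H^1(M)} \lesssim \delta_1^2|\log\delta_1|^{1/2}$).

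Combining $\|\rho\|_{H^1(M)} \le \|\trh_0\|_{H^1(M)} + \|\trh_1\|_{H^1(M)} \lesssim \|f\|_{H^{-1}(M)} + \delta_1^2|\log\delta_1|^{1/2} + \delta_1^2 \lesssim \|f\|_{H^{-1}(M)} + \delta_1^2|\log\delta_1|^{1/2}$ gives \eqref{eq:cr43}. The main obstacle I anticipate is the careful bookkeeping in the pointwise estimate of $\int_M \III_2\,\trh_0\,dv_g$: one must track exactly where the $N=6$ borderline produces $|\log\delta_1|^1$ rather than $|\log\delta_1|^{4/3}$ (the latter is what the crude Hölder/Sobolev argument yields after squaring $\delta_1^2|\log\delta_1|^{2/3}$), and one must be sure that the contribution of the far region $\{d \ge \sqrt{\delta_1}\}$ and the $\psi_m$-projection terms arising when adapting the coercivity argument are genuinely lower order — here the hypothesis that $u_0$ is non-degenerate and $u_0>0$ is used. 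A secondary technical point is verifying that the fixed-point construction of $\trh_0$ in Lemma~\ref{le4.2} is compatible with testing against $\trh_0$, i.e.\ that the orthogonality $\langle \trh_0, \mcz^k_1\rangle_{H^1(M)} = 0$ really does kill the $\tc_k$-terms, which it does by the second line of \eqref{trho0}.
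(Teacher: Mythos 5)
Your proof follows the paper's route: decompose $\rho = \trh_0 + \trh_1$ via Lemma~\ref{le4.2}, control $\trh_1$ by \eqref{tr1} (using $\|\III_3\|_{L^{3/2}(M)} \lesssim \delta_1^2$ in dimension $6$, which is correct), and bound $\|\trh_0\|_{H^1(M)}$ by testing \eqref{trho0} against $\trh_0$ and inserting the pointwise bounds \eqref{tr0} and \eqref{i32}. The key computation $\int_M |\III_2||\trh_0|\,dv_g \lesssim \delta_1^4|\log\delta_1|$ that you single out is exactly the dominant contribution in the paper's argument, and your observation that the $\tc_k$-terms drop by the orthogonality $\langle \trh_0,\mcz^k_1\rangle_{H^1(M)} = 0$ is right.

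There is one misstep worth flagging: the coercivity argument you invoke to absorb the quadratic term $(2^*-1)\int_M (u_0+\mcv_1)^{2^*-2}\trh_0^2\,dv_g$ is not available. Lemma~\ref{le4.2} imposes orthogonality only to the $\mcz^k_1$, so $\trh_0$ is in general \emph{not} orthogonal to $\mcv_1$ or to the $\psi_m$, and the analogue of Proposition~\ref{41} does not apply to $\trh_0$ directly; repairing it would require decomposing off the $\mcv_1$- and $\psi_m$-components and estimating them, which is precisely the extra bookkeeping the pointwise decomposition is designed to avoid. The paper's (and the correct) route is to treat this term the same way you treat $\int \III_2\trh_0$: plug \eqref{tr0} into the integrand and compute directly. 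For $N=6$ this gives $\int_M(u_0+\mcv_1)^{2^*-2}\trh_0^2\,dv_g \lesssim \delta_1^4$ and $\int_M|\trh_0|^{2^*}dv_g \lesssim \delta_1^6|\log\delta_1|$, both of lower order than $\delta_1^4|\log\delta_1|$, so one simply adds the three pieces and concludes $\|\trh_0\|^2_{H^1(M)} \lesssim \delta_1^4|\log\delta_1|$. With that replacement your argument matches the paper's proof.
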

\begin{proof}
By \eqref{trho0}, \eqref{tr0}, and \eqref{i32},
\[\|\trh_0\|_{H^1(M)}^2 \lesssim \int_M \left[|\III_2||\trh_0| + (2^*-1)(u_0+\mcv_1)^{2^*-2}\trh_0^2 + |\trh_0|^{2^*}\right] dv_g \lesssim \delta_1^4|\log\delta_1|.\]
It follows from \eqref{123}, \eqref{iii3}, and \eqref{tr1} that
\[\|\rho\|_{H^1(M)} \lesssim \|\trh_0\|_{H^1(M)} + \|\trh_1\|_{H^1(M)}\lesssim \|f\|_{H^{-1}(M)} + \delta_1^2|\log \delta_1|^{\frac12}. \qedhere\]
\end{proof}

As in the previous sections, \eqref{eq:sqe3} is a consequence of Proposition \ref{pr41}, Corollary \ref{cr43}, and the following proposition.
\begin{prop}\label{pr44}
When $N \ge 6$ and $(M,g)$ is l.c.f., we have
\begin{equation}\label{eq:pr441}
\delta_1^{\frac{N-2}{2}} \lesssim \|f\|_{H^{-1}(M)}.
\end{equation}
When $(M,g)$ is non-l.c.f., we have
\begin{equation}\label{eq:pr442}
\left\{\!\begin{aligned}
&\delta_1^{\frac{N-2}{2}} &\text{if } 6 \le N \le 10\\[1ex]
&\delta_1^4 &\text{if } N \ge 11
\end{aligned}\right\} \lesssim \|f\|_{H^{-1}(M)}.
\end{equation}
\end{prop}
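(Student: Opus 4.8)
The idea is to extract a necessary lower bound on $\delta_1$ from the orthogonality relation obtained by testing the equation \eqref{eqrho3} with $\mcz^0_1=\delta_1\frac{\pa\mcv_1}{\pa\delta_1}$. Concretely, I would start from the identity
\[
\int_M \III_2\,\mcz^0_1\,dv_g + \int_M \III_3\,\mcz^0_1\,dv_g
= -\int_M f\,\mcz^0_1\,dv_g - \int_M \III_1[\rho]\,\mcz^0_1\,dv_g
+ \int_M\Big[\mcl_g\mcz^0_1 - (2^*-1)(u_0+\mcv_1)^{2^*-2}\mcz^0_1\Big]\rho\,dv_g,
\]
which follows from \eqref{eqrho3} and the orthogonality $\langle\rho,\mcz^0_1\rangle_{H^1(M)}=0$. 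The right-hand side is controlled by $\|f\|_{H^{-1}(M)}$ together with quadratic-in-$\rho$ and small-times-$\|\rho\|$ remainders; using $\|\III_1[\rho]\|_{L^{2N/(N+2)}(M)}\lesssim\|\rho\|_{H^1(M)}^2$ (as in \eqref{i1r}), the estimate $\|\mcl_g\mcz^0_1-(2^*-1)\mcv_1^{2^*-2}\mcz^0_1\|_{L^{2N/(N+2)}(M)}\lesssim\delta_1^{(N-2)/2}$ (since $-\Delta Z^0=(2^*-1)U^{2^*-2}Z^0$), and the already-established bounds on $\|\rho\|_{H^1(M)}$ from Proposition \ref{pr41} and Corollary \ref{cr43}, the whole right-hand side becomes $\lesssim\|f\|_{H^{-1}(M)} + o(\delta_1^{(N-2)/2})$ in the l.c.f. case, respectively $\lesssim\|f\|_{H^{-1}(M)} + o(\delta_1^4)+o(\delta_1^{(N-2)/2})$ in the non-l.c.f. case — the point being that the squared $H^1$-bound on $\rho$ beats the leading term in each regime.

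**The main computation: the left-hand side.** The heart of the proof is to show that $\int_M\III_3\,\mcz^0_1\,dv_g$ carries a definite-sign leading term of the correct order, while $\int_M\III_2\,\mcz^0_1\,dv_g$ is lower order. For $\III_2$ this is a standard estimate: $|\III_2|\lesssim u_0\mcv_1^{2^*-2}\bs1_{d_{g_{\xi_1}}\le\sqrt{\delta_1}} + u_0^{2^*-2}\mcv_1\bs1_{d_{g_{\xi_1}}\ge\sqrt{\delta_1}}$ from \eqref{eq:III2est} and $|\mcz^0_1|\lesssim\mcv_1$, so $\int_M\III_2\,\mcz^0_1\,dv_g = o(\delta_1^{(N-2)/2})$ for $u_0>0$ (one integrates, using $u_0\in L^\infty$; the pairing with $\mcv_1$ produces an extra smallness compared to the $\delta_1^{(N-2)/2}$ appearing in $\mca_N u_0(\xi_1)\delta_1^{(N-2)/2}$ in the $3\le N\le5$ case — here for $N\ge6$ the analogous term is genuinely of size $\delta_1^{(N-2)/2}$, which is exactly why this dimension range gives a different $\zeta$). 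For $\III_3$ I would split into the two cases of \eqref{w1}. When $(M,g)$ is l.c.f., $\mcv_1$ is the Green's-function-type bubble and the expansion \eqref{i3ex} for $\III_3$ — derived as in Lemma \ref{lem3.3} via \eqref{gx}, \eqref{com}, \eqref{com1} — shows, after pairing with $\mcz^0_1=\delta_1\frac{\pa\mcv_1}{\pa\delta_1}$ and computing in $g_{\xi_1}$-normal coordinates exactly as in \eqref{f62}, that $\int_M\III_3\,\mcz^0_1\,dv_g = \mfb_N A_g(\xi_1)\delta_1^{N-2}\cdot(\text{something of size }\delta_1^{(4-N)/2}) $? — no: more carefully, $\mcz^0_1$ scales like $\mcv_1$, i.e. like $\delta_1^{(N-2)/2}$ in the relevant region, so the pairing gives a leading term proportional to $A_g(\xi_1)\delta_1^{N-2}$, which is positive by the positive mass theorem \cite{SY1}, hence $\delta_1^{N-2}\lesssim\|f\|_{H^{-1}(M)}+o(\delta_1^{N-2})$ and $\delta_1^{(N-2)/2}$ then follows trivially since $\delta_1\le\vep_1$; alternatively, and this is the cleaner route, one tracks the sign in the $\mcz^0_1$-projection to get $c\,\delta_1^{(N-2)/2}\le\|f\|_{H^{-1}(M)}$ directly through the interaction of $u_0$ with the bubble. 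When $(M,g)$ is non-l.c.f. (so $\mcv_1 = \Lambda_{\xi_1}\chi\,\mcu^{g_{\xi_1}}_{\delta_1,\xi_1}$), I use the expansion \eqref{4i3}: the dominant contribution to $\int_M\III_3\,\mcz^0_1\,dv_g$ comes from the scalar-curvature term $-\ka_N R_{g_{\xi_1}}\mcv_1$, and \eqref{tay} gives $R_{g_{\xi_1}}(\exp_{\xi_1}^{g_{\xi_1}}y) = -\frac{1}{12}\langle\nabla^2_{g_{\xi_1}}R_{g_{\xi_1}}(\xi_1)y,y\rangle + O(|y|^3)$ with $\Delta_{g_{\xi_1}}R_{g_{\xi_1}}(\xi_1)=-\frac16|\textup{Weyl}_g(\xi_1)|_g^2\ne0$; pairing with $\mcz^0_1$ and integrating over $\{|y|\lesssim 1\}$ yields a leading term of size $|\textup{Weyl}_g(\xi_1)|_g^2\,\delta_1^4$ for $N\ge11$ (where the integral $\int|y|^4 U^{2^*-2}Z^0\cdot U\,dy$ over $\R^N$ converges) with a definite sign, and a leading term of the $\delta_1^{(N-2)/2}$ order coming from the $u_0$–bubble interaction when $6\le N\le10$ (where the Weyl-type integral would diverge and is instead dominated by the $\III_2$-related contribution of size $\delta_1^{(N-2)/2}$).

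**Assembling the inequality and the main obstacle.** Combining the two sides: in every case the left-hand side equals (a positive constant)$\times\,\zeta_0(\delta_1) + o(\zeta_0(\delta_1))$ where $\zeta_0(\delta_1)=\delta_1^{(N-2)/2}$ in the l.c.f. case and for $6\le N\le10$ non-l.c.f., and $\zeta_0(\delta_1)=\delta_1^4$ for $N\ge11$ non-l.c.f.; the right-hand side is $\lesssim\|f\|_{H^{-1}(M)}+o(\zeta_0(\delta_1))$, where the $o(\zeta_0(\delta_1))$ absorbs the $\|\rho\|_{H^1(M)}^2$ and $o(1)\|\rho\|_{H^1(M)}$ terms after inserting Proposition \ref{pr41} and Corollary \ref{cr43} and using that the bounds there are $\lesssim\|f\|_{H^{-1}(M)}+\zeta_0(\delta_1)^{1/2}\cdot(\text{small})$ — one must check dimension by dimension that the squared right-hand side of \eqref{l4u1}/\eqref{eq:cr43} is indeed $o(\zeta_0(\delta_1))$; e.g. for $N=6$, $(\delta_1^2|\log\delta_1|^{1/2})^2 = \delta_1^4|\log\delta_1| = o(\delta_1^2) = o(\zeta_0(\delta_1))$, and for $7\le N\le13$, $(\delta_1^{(N+2)/4})^2 = \delta_1^{(N+2)/2} = o(\delta_1^{(N-2)/2})$, while for $N\ge14$ non-l.c.f., $(\delta_1^4)^2 = \delta_1^8 = o(\delta_1^4)$, and for $N\ge14$ l.c.f., $(\delta_1^{(N+2)/4})^2 = o(\delta_1^{(N-2)/2})$ — all consistent. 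Absorbing the $o(\zeta_0(\delta_1))$ term on the left gives $\zeta_0(\delta_1)\lesssim\|f\|_{H^{-1}(M)}$, which is precisely \eqref{eq:pr441}–\eqref{eq:pr442}. The main obstacle I anticipate is the bookkeeping in the non-l.c.f. case for $6\le N\le10$: one has to verify that the would-be Weyl contribution (which is not absolutely convergent at this order) is genuinely dominated by the $u_0$–bubble interaction term of order $\delta_1^{(N-2)/2}$ and does not conspire to cancel it — this requires carefully isolating which region of integration ($|y|\sim\sqrt{\delta_1}$ versus $|y|\sim1$) produces the leading term and confirming the sign is fixed by positivity of $u_0$, much as the positivity of $u_0$ and $\mfa_N$ was used in Lemma \ref{le2.4} and the completion of Proposition \ref{lou0}. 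The secondary technical point is confirming, in the l.c.f. case, that the positive-mass term $A_g(\xi_1)>0$ does not get overwhelmed by the $u_0$-interaction term of the same or larger order $\delta_1^{(N-2)/2}$; since both contributions carry the sign dictated by positivity (of $A_g$ and of $u_0$ respectively), they reinforce rather than cancel, so the argument closes.
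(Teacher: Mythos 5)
Your overall strategy is the one the paper follows: test \eqref{eqrho3} against $\mcz^0_1$ to arrive at \eqref{tezk3}, compute the leading sign-definite projections $\int_M\III_2\mcz^0_1\,dv_g = \mfa_N u_0(\xi_1)\delta_1^{(N-2)/2}(1+o(1))$ (Lemma \ref{s43}) and $\int_M\III_3\mcz^0_1\,dv_g$ (Lemma \ref{s44}), control the right side by $\|f\|_{H^{-1}(M)}$ plus lower-order remainders using Proposition \ref{pr41} and Corollary \ref{cr43}, and absorb. You also correctly identify that $\III_2$ always produces the $\delta_1^{(N-2)/2}$ term, that $\III_3$ produces the mass term $A_g(\xi_1)\delta_1^{N-2}$ in the l.c.f.\ case and the Weyl term $|\textup{Weyl}_g(\xi_1)|^2_g\delta_1^4$ in the non-l.c.f.\ case, and that all these coefficients are positive so the contributions reinforce. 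That much matches the paper.

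However there are two genuine gaps, both in the control of the right-hand side of \eqref{tezk3}, and one logical slip.

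First, the estimate you quote, $\|\mcl_g\mcz^0_1-(2^*-1)\mcv_1^{2^*-2}\mcz^0_1\|_{L^{2N/(N+2)}(M)}\lesssim\delta_1^{(N-2)/2}$, is not correct for $N\ge 6$: as the paper's proof of Proposition \ref{pr44} records, the true bound is $\delta_1^{(N-2)/2}$ only for $6\le N\le 9$ non-l.c.f., and otherwise $\delta_1^4$, $\delta_1^4|\log\delta_1|^{3/5}$, $\delta_1^4|\log\delta_1|^{2/3}$, or $\delta_1^{(N+2)/2}$ depending on the case. More importantly, the operator you actually need to control in \eqref{tezk3} involves $(u_0+\mcv_1)^{2^*-2}$, not $\mcv_1^{2^*-2}$; the difference carries a piece $u_0^{2^*-2}\mcz^0_1$ whose $L^{2N/(N+2)}(M)$-norm is of size $\delta_1^2$ for $N\ge 7$, much larger than $\delta_1^{(N-2)/2}$. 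You have not accounted for this term at all.

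Second, and this is the essential missing idea: the term $\int_M u_0^{2^*-2}\mcz^0_1\,\rho\,dv_g$ cannot, in general, be handled by H\"older together with the $H^1$-bound on $\rho$. H\"older gives $\|\mcz^0_1\|_{L^{2N/(N+2)}(M)}\|\rho\|_{H^1(M)}\lesssim\delta_1^2\bigl(\|f\|_{H^{-1}(M)}+\delta_1^{(N+2)/4}\bigr)$ (for $N\ge 7$ in the l.c.f.\ regime), and $\delta_1^{2+(N+2)/4}=\delta_1^{(N+10)/4}$ is not $o(\delta_1^{(N-2)/2})$ as soon as $N\ge 14$, so the argument would not close for $N\ge 14$ l.c.f. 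The paper circumvents this by using Lemma \ref{le4.2}: it decomposes $\rho=\trh_0+\trh_1$ where $\trh_0$ has the explicit pointwise decay \eqref{tr0} and $\trh_1$ has the much better $H^1$-bound \eqref{tr1}, then computes $\int_M|u_0^{2^*-2}\mcz^0_1\trh_0|\,dv_g$ directly via \eqref{poi}. This decomposition is the key technical step in the proof of Proposition \ref{pr44} and is absent from your plan.

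Finally, a logical slip: from the estimate $\int_M\III_3\mcz^0_1\,dv_g=\mfb_N A_g(\xi_1)\delta_1^{N-2}(1+o(1))$ one obtains $\delta_1^{N-2}\lesssim\|f\|_{H^{-1}(M)}$, which does \emph{not} imply $\delta_1^{(N-2)/2}\lesssim\|f\|_{H^{-1}(M)}$; the implication runs the other way, since $\delta_1^{(N-2)/2}\gg\delta_1^{N-2}$. In the l.c.f.\ case it is the $\III_2$ projection from Lemma \ref{s43}, not the mass term, that provides the leading $\delta_1^{(N-2)/2}$; the mass term is strictly lower order and merely does not cancel it. Your ``cleaner route'' hints at this, but the first version of the argument as written is incorrect. (A more minor point: for $N\ge 6$ one has $2^*-1\le 2$, so the correct bound for $\int_M\III_1[\rho]\mcz^0_1\,dv_g$ is $\|\rho\|_{H^1(M)}^{2^*-1}$ rather than $\|\rho\|_{H^1(M)}^2$; using the square underestimates the remainder.)
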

\noindent By testing \eqref{eqrho3} with $\mcz^0_1$, we obtain
\begin{equation}\label{tezk3}
\begin{aligned}
\int_M \III_2\mcz^0_1 dv_g + \int_M \III_3\mcz^0_1 dv_g &= -\int_M f\mcz^0_1 dv_g - \int_M \III_1[\rho]\mcz^0_1 dv_g \\
&\ + \int_M \left[\mcl_g\mcz^0_1 - (2^*-1)(u_0+\mcv_1)^{2^*-2}\mcz^0_1\right]\rho dv_g.
\end{aligned}
\end{equation}
In Lemmas \ref{s43} and \ref{s44}, we evaluate two integrals on the left-hand side of \eqref{tezk3}, respectively.
\begin{lemma}\label{s43}
If $N \ge 6$, we have
\begin{equation}\label{eq:s43}
\int_M \III_2\mcz^0_1 dv_g = \mfa_Nu_0(\xi_1)\delta_1^{\frac{N-2}{2}} + o\Big(\delta_1^{\frac{N-2}{2}}\Big)
\end{equation}
where $\mfa_N > 0$ is the constant in \eqref{eq:I2Z}.
\end{lemma}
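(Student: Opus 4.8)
The strategy is to follow the proof of Lemma~\ref{le2.4}, specialized to a single bubble ($\nu=1$) so that no interaction quantity $q_{ij}$ appears; the one new feature is that $2^*-1\le 2$ when $N\ge 6$, which only affects the algebraic form of the expansion of $\III_2$. First I would decompose the integral. Starting from the pointwise bound \eqref{eq:III2est} obtained in Step~1 of Proposition~\ref{pr41} and refining it on $B^{g_{\xi_1}}_{\sqrt{\delta_1}}(\xi_1)$ by means of \eqref{ab6}, one writes
\[
\int_M \III_2\,\mcz^0_1\,dv_g = (2^*-1)\int_{B^{g_{\xi_1}}_{\sqrt{\delta_1}}(\xi_1)} u_0\,\mcv_1^{2^*-2}\,\mcz^0_1\,dv_g + \mcr,
\]
where, since $|\mcz^0_1|\lesssim\mcv_1$, the remainder $\mcr$ is controlled by $\int_{(B^{g_{\xi_1}}_{\sqrt{\delta_1}}(\xi_1))^c}u_0^{2^*-2}\mcv_1^2\,dv_g$ together with, on $B^{g_{\xi_1}}_{\sqrt{\delta_1}}(\xi_1)$, the integrals of terms bounded by $u_0^{2^*-1}\mcv_1$ (when $N=6$ one has $\III_2=2u_0\mcv_1$ exactly, so the latter is absent).

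The main term is then computed exactly as in Step~1 of Lemma~\ref{le2.4}, with $\mcv_1$ in place of $\mcv_j$ and $g_{\xi_1}$-normal coordinates around $\xi_1$ in place of $g$-normal coordinates. Substituting $y=\delta_1 z$ on $B^{g_{\xi_1}}_{\sqrt{\delta_1}}(\xi_1)$ (where the cut-off $\chi$ is identically $1$) and using $\Lambda_{\xi_1}(\xi_1)=1$, $\nabla_g\Lambda_{\xi_1}(\xi_1)=0$, $\det g_{\xi_1}(y)=1+\mco(|y|^\theta)$ from \eqref{dvg}, the identities \eqref{gg}--\eqref{gx} (which turn the Green-function form of $\mcv_1$ in \eqref{w1} into $\Lambda_{\xi_1}\mcu^{g_{\xi_1}}_{\delta_1,\xi_1}$ up to a multiplicative factor $1+\mco(d_{g_{\xi_1}}(\cdot,\xi_1)^{N-2})$ near $\xi_1$), the Taylor expansion $u_0(\exp^{g_{\xi_1}}_{\xi_1}(\delta_1 z))=u_0(\xi_1)+\mco(\delta_1|z|)$, and the absolute convergence of $\int_{\R^N}U^{2^*-2}Z^0$ (note $U^{2^*-2}Z^0=\mco(|z|^{-(N+2)})$ at infinity), one obtains
\[
(2^*-1)\int_{B^{g_{\xi_1}}_{\sqrt{\delta_1}}(\xi_1)} u_0\,\mcv_1^{2^*-2}\,\mcz^0_1\,dv_g = (2^*-1)\,u_0(\xi_1)\,\delta_1^{\frac{N-2}{2}}\int_{\R^N} U^{2^*-2}Z^0 + o\Big(\delta_1^{\frac{N-2}{2}}\Big).
\]
Since $(2^*-1)\int_{\R^N}U^{2^*-2}Z^0=\frac{N-2}{2}\int_{\R^N}U^{2^*-1}=\mfa_N$ — the identity underlying the computation \eqref{21} — the right-hand side equals $\mfa_Nu_0(\xi_1)\delta_1^{(N-2)/2}+o(\delta_1^{(N-2)/2})$.

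It remains to verify $\mcr=o(\delta_1^{(N-2)/2})$. The dominant part of $\mcr$ is $\int_{(B^{g_{\xi_1}}_{\sqrt{\delta_1}}(\xi_1))^c}u_0^{2^*-2}\mcv_1^2\,dv_g$; since $u_0\in L^\infty(M)$ and $\mcv_1(x)\lesssim\delta_1^{(N-2)/2}d_{g_{\xi_1}}(x,\xi_1)^{2-N}$ for $d_{g_{\xi_1}}(x,\xi_1)\ge\sqrt{\delta_1}$, a direct computation (using Lemma~\ref{a4} on the range where $\mcv_1$ is no longer comparable to the model bubble) yields $\int_{(B^{g_{\xi_1}}_{\sqrt{\delta_1}}(\xi_1))^c}\mcv_1^2\,dv_g\lesssim\delta_1^{N/2}=o(\delta_1^{(N-2)/2})$ for every $N\ge 6$. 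The remaining pieces — the integrals over $B^{g_{\xi_1}}_{\sqrt{\delta_1}}(\xi_1)$ of the terms bounded by $u_0^{2^*-1}\mcv_1$ when $N\ge 7$, the contribution of the $\chi$-truncation of $\mcv_1$ away from $\xi_1$, and the $\mco(d_{g_{\xi_1}}(\cdot,\xi_1)^{N-2})$ correction of the Green-function form near $\xi_1$ — all produce only higher powers of $\delta_1$, hence are also $o(\delta_1^{(N-2)/2})$. Combining with the previous display gives \eqref{eq:s43}.

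The crux is that the asserted rate is strictly finer than what a crude Hölder pairing provides: by \eqref{4i32}, $\|\III_2\|_{L^{2N/(N+2)}(M)}\lesssim\delta_1^{(N+2)/4}$ when $N\ge 7$, and pairing with $\mcz^0_1$, which has $\|\mcz^0_1\|_{L^{2^*}(M)}\simeq1$, would only bound $\int_M\III_2\mcz^0_1\,dv_g$ by $\mco(\delta_1^{(N+2)/4})$ — weaker than $\delta_1^{(N-2)/2}$ for every $N\ge 7$. Extracting the precise leading coefficient $\mfa_Nu_0(\xi_1)$ is therefore unavoidable; the scale-$\sqrt{\delta_1}$ splitting together with the absolutely convergent integral $\int_{\R^N}U^{2^*-2}Z^0$ is what makes it work, the underlying point being that although $u_0\mcv_1^{2^*-2}$ restricted to $B^{g_{\xi_1}}_{\sqrt{\delta_1}}(\xi_1)$ still has $L^{2N/(N+2)}$-norm of order $\delta_1^{(N+2)/4}$, its pairing against $\mcz^0_1$ concentrates where $Z^0$ is bounded and collapses to the smaller order $\delta_1^{(N-2)/2}$. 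For $N=6$ the nonlinearity is exactly quadratic and the computation is cleaner, but the error still has to be pushed down to $o(\delta_1^2)$, which is where $\int_{(B^{g_{\xi_1}}_{\sqrt{\delta_1}}(\xi_1))^c}\mcv_1^2\lesssim\delta_1^3$ enters.
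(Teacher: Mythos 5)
Your proposal is correct, and its overall architecture matches the paper's: decompose at scale $\sqrt{\delta_1}$, change to $g_{\xi_1}$-normal coordinates with $dv_g=\Lambda_{\xi_1}^{-2^*}dv_{g_{\xi_1}}$, Taylor-expand $u_0$ and $\Lambda_{\xi_1}$ around $\xi_1$, and read off the leading coefficient from the absolutely convergent integral $(2^*-1)\int_{\R^N}U^{2^*-2}Z^0=\tfrac{N-2}{2}\int_{\R^N}U^{2^*-1}=\mfa_N$. You also correctly note that for $N=6$ the identity $\III_2=2u_0\mcv_1$ is exact, so there is no $u_0^{2^*-1}$ remainder to control.

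The one place where your argument genuinely sharpens what the paper literally writes is the remainder bound. The paper's proof cites \eqref{25}, which is a H\"older pairing of $u_0^{2^*-1}$ against $\mcz^0_1$ and yields $\delta_1^{\frac{N+2}{4}}$. This is indeed $o(\delta_1^{\frac{N-2}{2}})$ for $3\le N\le 5$ (the regime of Lemma \ref{le2.4} where \eqref{25} was proved), but for $N\ge 7$ one has $\tfrac{N+2}{4}<\tfrac{N-2}{2}$, so $\delta_1^{\frac{N+2}{4}}$ is \emph{larger} than $\delta_1^{\frac{N-2}{2}}$ and the H\"older bound does not close the argument. Your direct computation — using $|\mcz^0_1|\lesssim\mcv_1$ and $\int_{B^{g_{\xi_1}}_{\sqrt{\delta_1}}(\xi_1)}\mcv_1\,dv_g\simeq\delta_1^{N/2}$, together with $\int_{(B^{g_{\xi_1}}_{\sqrt{\delta_1}}(\xi_1))^c}u_0^{2^*-2}\mcv_1^2\,dv_g\lesssim\delta_1^{N/2}$ — gives the genuinely sharper order $\delta_1^{N/2}=o(\delta_1^{\frac{N-2}{2}})$ for every $N\ge6$, which is what the lemma requires. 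In this sense your proof is more careful than the paper's terse reference and correctly identifies the quantity that actually controls the remainder in the high-dimensional single-bubble case. The observation that the Green-function form of $\mcv_1$ (for the l.c.f.~case) reduces near $\xi_1$ to $\Lambda_{\xi_1}\mcu^{g_{\xi_1}}_{\delta_1,\xi_1}$ up to a multiplicative $1+\mco(d_{g_{\xi_1}}(\cdot,\xi_1)^{N-2})$ factor, via \eqref{gg}--\eqref{gx}, is exactly what is needed to unify the two alternatives in \eqref{w1} and is consistent with the paper's one-line treatment of the l.c.f.~case.
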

\begin{proof}
When $(M,g)$ is l.c.f., one can easily check that
\[\int_M \III_2\mcz^0_1 dv_g = \int_M u_0\(\chi\Lambda_{\xi_1}\mcu_1^{g_{\xi_1}}\)^{2^*-2} \chi\Lambda_{\xi_1} \delta_1\frac{\pa \mcu_1^{g_{\xi_1}}}{\pa \delta_1}dv_g + \mco\Big(\delta_1^{\frac{N+2}{2}}\Big) = \mfa_Nu_0(\xi_1)\delta_1^{\frac{N-2}{2}}(1+o(1)).\]

If $(M,g)$ is non-l.c.f., then \eqref{eq:s43} follows from \eqref{i2ex}, \eqref{21}, \eqref{25}, \eqref{26}, $dv_g = \Lambda_{\xi_1}^{-2^*} dv_{g_{\xi_1}}$, and $\Lambda_{\xi_1}(x)=1+\mco(d_{g_{\xi_1}}(x,\xi_1)^2)$,
and the estimate
\[\int_{B^{g_{\xi_1}}_{\eta'\sqrt{\delta_1}}(\xi_1)} d_{g_{\xi_1}}(x,\xi_1)^2\(\mcu_1^{g_{\xi_1}}\)^{2^*-2} \left|\mcz^0_1\right| (dv_{g_{\xi_1}})_x \lesssim \delta_1^{\frac{N+2}{2}}|\log\delta_1| \quad \text{for a constant } \eta'>0. \qedhere\]
\end{proof}

\begin{lemma}\label{s44}
When $N \ge 6$ and $(M,g)$ is l.c.f., we have
\begin{equation}\label{13m}
\int_M \III_3\mcz^0_1 dv_g = \mfb_NA_g(\xi_1)\delta_1^{N-2}(1+o(1))
\end{equation}
where $\mfb_N > 0$ is the constant in \eqref{f62}. Also, $A_g(\xi_1) > 0$ thanks to the positive mass theorem in \cite{SY2}. When $(M,g)$ is non-l.c.f., we have
\begin{equation}\label{n13m}
\int_M\III_3\mcz^0_1 dv_g = \mfc_N|\textup{Weyl}_g(\xi_1)|^2_g \times \begin{cases}
\delta_1^4|\log\delta_1|(1+o(1)) &\text{if } N=6,\\
\delta_1^4(1+o(1)) &\text{if } N \ge 7
\end{cases}
\end{equation}
where $\mfc_6 := \frac{16}{5}$ for $N=6$ and $\mfc_N := \frac{(N-2)\alpha_N^2\ka_N}{24N} |\S^{N-1}| \int_0^{\infty} \frac{r^{N+1}(r^2-1)}{(1+r^2)^{N-1}} dr > 0$ if $N \ge 7$.
\end{lemma}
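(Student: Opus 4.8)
The plan is to expand $\III_3 = -\mcl_g\mcv_1 + \mcv_1^{2^*-1}$ in $g_{\xi_1}$-normal coordinates $y$ about $\xi_1$ and pair it with $\mcz^0_1$, using that $\mcz^0_1(\exp^{g_{\xi_1}}_{\xi_1}(y)) = \Lambda_{\xi_1}(\exp^{g_{\xi_1}}_{\xi_1}(y))\,\chi(|y|)\,Z^0_{\delta_1,0}(y)$ and $\mcz^0_1 = \mco(\delta_1^{(N-2)/2})$ away from $\xi_1$, together with $dv_g = \Lambda_{\xi_1}^{-2^*}\,dv_{g_{\xi_1}}$ and $\det g_{\xi_1} = 1 + \mco(|y|^\theta)$, so that $dv_{g_{\xi_1}} = (1+\mco(|y|^\theta))\,dy$. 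The two defining cases of $\mcv_1$ in \eqref{w1} are treated separately.

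For the l.c.f. choice of $\mcv_1$, the quantity $\III_3$ has the expansion \eqref{i3ex}, which is the single-bubble analogue of \eqref{ca23}; outside $B^{g_{\xi_1}}_{r_0/2}(\xi_1)$ it is $\mco(\delta_1^{(N+2)/2})$, so that region contributes $\mco(\delta_1^N) = o(\delta_1^{N-2})$. Hence the computation is word for word Step 1 of Lemma~\ref{le36} with $j$ replaced by $1$: inserting \eqref{i3ex} into $\int_{B^{g_{\xi_1}}_{r_0/2}(\xi_1)}\III_3\,\mcz^0_1\,\Lambda_{\xi_1}^{-2^*}\,dv_{g_{\xi_1}}$, rescaling $y = \delta_1 z$, and evaluating the two convergent radial integrals exactly as in \eqref{f62} yields $\mfb_N A_g(\xi_1)\delta_1^{N-2}(1+o(1))$; the restriction $3 \le N \le 5$ in Lemma~\ref{le36} is not used here, since those radial integrals converge in every dimension. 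Positivity of the mass $A_g(\xi_1)$ on l.c.f. manifolds is the positive mass theorem of \cite{SY2}.

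For the non-l.c.f. choice $\mcv_1 = \Lambda_{\xi_1}\,\chi(d_{g_{\xi_1}}(\cdot,\xi_1))\,\mcu^{g_{\xi_1}}_{\delta_1,\xi_1}$ I would start from the expansion \eqref{4i3}. The cut-off block $\Lambda_{\xi_1}^{2^*-1}(\Delta\chi\,U_{\delta_1,0} + 2\nabla\chi\cdot\nabla U_{\delta_1,0} + \chi\Delta U_{\delta_1,0} + \chi^{2^*-1}U_{\delta_1,0}^{2^*-1})$ is supported on $\{r_0/2 \le |y| \le r_0\}$, where it and $\mcz^0_1$ are both $\mco(\delta_1^{(N-2)/2})$, so it contributes $\mco(\delta_1^{N-2})$, which is $o(\delta_1^4)$ when $N \ge 7$ and $o(\delta_1^4|\log\delta_1|)$ when $N = 6$; the term $\mco(|y|^{\theta-1}|\nabla U_{\delta_1,0}(y)|)$ is negligible once $\theta$ is large. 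The leading contribution therefore comes from $-\Lambda_{\xi_1}^{2^*-1}\ka_N R_{g_{\xi_1}}\,\chi\,U_{\delta_1,0}$: pairing with $\mcz^0_1$ and using $dv_g = \Lambda_{\xi_1}^{-2^*}\,dv_{g_{\xi_1}}$ the conformal factors cancel exactly (since $\Lambda_{\xi_1}^{2^*-1}\cdot\Lambda_{\xi_1}\cdot\Lambda_{\xi_1}^{-2^*} = 1$), the cut-offs may be replaced by $1$ up to negligible error, and what remains is $-\ka_N\int_{\{|y| \le r_0\}} R_{g_{\xi_1}}(\exp^{g_{\xi_1}}_{\xi_1}(y))\,U_{\delta_1,0}(y)\,Z^0_{\delta_1,0}(y)\,dy\,(1+o(1))$. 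By \eqref{tay}, $R_{g_{\xi_1}}(\exp^{g_{\xi_1}}_{\xi_1}(y)) = \tfrac{1}{2}\,\pa_i\pa_j R_{g_{\xi_1}}(\xi_1)\,y^iy^j + \mco(|y|^3)$, and integrating the quadratic part against the radial density $U_{\delta_1,0}Z^0_{\delta_1,0}$ retains only its trace, producing $-\tfrac{\ka_N}{2N}\Delta_{g_{\xi_1}}R_{g_{\xi_1}}(\xi_1)\int_{\{|y| \le r_0\}}|y|^2\,U_{\delta_1,0}(y)\,Z^0_{\delta_1,0}(y)\,dy = \tfrac{\ka_N}{12N}|\textup{Weyl}_g(\xi_1)|^2_g\int_{\{|y| \le r_0\}}|y|^2\,U_{\delta_1,0}(y)\,Z^0_{\delta_1,0}(y)\,dy$, while the cubic remainder contributes $\mco(\delta_1^4)$ or less and is of strictly lower order. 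Rescaling $y = \delta_1 z$ turns the last integral into $\tfrac{N-2}{2}\alpha_N^2\,\delta_1^4\int_{\{|z| \le r_0/\delta_1\}}\frac{|z|^2(|z|^2-1)}{(1+|z|^2)^{N-1}}\,dz$, which for $N \ge 7$ converges to $\tfrac{N-2}{2}\alpha_N^2|\S^{N-1}|\big(\int_0^\infty\frac{r^{N+1}(r^2-1)}{(1+r^2)^{N-1}}dr\big)\delta_1^4$, and for $N = 6$ diverges logarithmically, of order $\delta_1^4|\log\delta_1|$. Collecting the explicit constants gives \eqref{n13m} with the stated $\mfc_N$; positivity of the $N \ge 7$ radial integral, hence of $\mfc_N$, is an elementary Beta-function evaluation.

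The main obstacle is the error accounting in the non-l.c.f. case: one must confirm that every contribution to \eqref{4i3} other than the scalar-curvature term — the cut-off derivatives, the $\theta$-order remainder, and the cubic-and-higher Taylor remainder of $R_{g_{\xi_1}}$ paired against $U_{\delta_1,0}Z^0_{\delta_1,0}$ — is genuinely $o(\delta_1^4)$, resp. $o(\delta_1^4|\log\delta_1|)$ when $N = 6$, and one must propagate the numerical constants carefully through the rescaling and the sphere-average identity $\int \pa_i\pa_j R(\xi_1)\,y^iy^j f(|y|)\,dy = \tfrac{1}{N}\Delta_{g_{\xi_1}}R_{g_{\xi_1}}(\xi_1)\int|y|^2 f(|y|)\,dy$. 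The borderline dimension $N = 6$, in which the integral defining $\mfc_N$ for $N \ge 7$ diverges and is replaced by a $|\log\delta_1|$ factor, needs separate treatment; moreover the order $\delta_1^4$ obtained here, as opposed to the order $\delta_1^{(N-2)/2}$ of Lemma~\ref{s43}, is exactly what underlies whether $\tfrac{N-2}{2}$ exceeds $4$ in the case split of \eqref{eq:pr442}.
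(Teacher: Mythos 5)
Your proposal matches the paper's argument essentially line for line, with the error accounting spelled out a bit more explicitly than the paper does. For the l.c.f. case the paper simply cites \eqref{f61}--\eqref{f62} from Lemma \ref{le36}, exactly as you do, and your observation that the restriction $3 \le N \le 5$ is never used in those two displays (the radial integrals $\int_0^\infty \frac{r^{2N-3}(r^2-1)}{(1+r^2)^{N+1}}dr$ and $\int_0^\infty \frac{r^{2N-5}(r^2-1)}{(1+r^2)^N}dr$ converge for all $N \ge 3$) is the justification the paper leaves implicit. For the non-l.c.f. case the paper writes
\[
\int_M \III_3 \mcz^0_1\, dv_g = -\ka_N\int_{\{|y|\le \frac{r_0}{2}\}} R_{g_{\xi_1}}\big(\exp_{\xi_1}^{g_{\xi_1}}(y)\big)\(U_{\delta_1,0}Z^0_{\delta_1,0}\)(y)\,dy + \mco\(\int |y|^{\theta-1}U_{\delta_1,0}^2\) + \mco\(\delta_1^{N-2}\),
\]
then Taylor-expands $R_{g_{\xi_1}}$ using \eqref{tay} and the sphere-average identity, and rescales; this is precisely your chain. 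Your cancellation remark $\Lambda_{\xi_1}^{2^*-1}\cdot\Lambda_{\xi_1}\cdot\Lambda_{\xi_1}^{-2^*}=1$ makes explicit why no conformal factor survives in the leading integral. The only small discrepancy is one the paper itself seems to contain: tracing the constants gives a factor $|\S^5|$ in the $N=6$ coefficient (as appears in $\mfd_6 = \frac{16}{5}|\S^5|$ in Proposition \ref{pr47}), while the lemma states $\mfc_6 = \frac{16}{5}$; since you deferred to the paper's stated $\mfc_N$ this does not reflect on your argument.
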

\begin{proof}
When $(M,g)$ is l.c.f., \eqref{13m} results from \eqref{f61} and \eqref{f62}.

\medskip
Assume that $(M,g)$ is non-l.c.f. By \eqref{4i3} and \eqref{4i},
\begin{align*}
\int_M \III_3\mcz^0_1 dv_g &= -\ka_N\int_{\{|y| \le \frac{r_0}{2}\}} R_{g_{\xi_1}}\big(\exp_{\xi_1}^{g_{\xi_1}}(y)\big) \(U_{\delta_1,0}Z_{\delta_1,0}^0\)(y) dy \\
&\ + \mco\(\int_{\{|y| \le \frac{r_0}{2}\}} |y|^{\theta-1}U_{\delta_1,0}^2(y) dy\) + \mco\(\delta_1^{N-2}\).
\end{align*}
Also, owing to \eqref{tay}, we know
\begin{align*}
&\ -\ka_N\int_{\{|y| \le \frac{r_0}{2}\}} R_{g_{\xi_1}}\big(\exp_{\xi_1}^{g_{\xi_1}}(y)\big) \(U_{\delta_1,0}Z_{\delta_1,0}^0\)(y) dy + \mco\(\int_{\{|y| \le \frac{r_0}{2}\}} |y|^{\theta-1}U_{\delta_1,0}^2(y) dy\) \\
&= \frac{\ka_N}{12N}|\textup{Weyl}_g(\xi_1)|^2_g \int_{\{|y| \le \frac{r_0}{2}\}} |y|^2\(U_{\delta_1,0}Z_{\delta_1,0}^0\)(y) dy + \mco\(\int_{\{|y| \le \frac{r_0}{2}\}} |y|^3U_{\delta_1,0}^2(y) dy\) \\
&= \frac{\ka_N}{12N}|\textup{Weyl}_g(\xi_1)|^2_g \delta_1^4 \int_{\{|y| \le \frac{r_0}{2\delta_1}\}} |y|^2\(UZ^0\)(y) dy + \mco
\(\left\{\!\begin{aligned}
&\delta_1^4 &\text{if } N=6\\
&\delta_1^5|\log\delta_1| &\text{if } N=7\\
&\delta_1^5 &\text{if } N \ge 8
\end{aligned}\right\}\) \\
&= \mfc_N|\textup{Weyl}_g(\xi_1)|^2_g \times \begin{cases}
\delta_1^4|\log\delta_1|(1+o(1)) &\text{if } N=6,\\
\delta_1^4(1+o(1)) &\text{if } N \ge 7.
\end{cases} \qedhere
\end{align*}
\end{proof}

\begin{proof}[Completion of the proof of Proposition \ref{pr44}]
We note from \eqref{s4a} that
\begin{multline*}
\left|\int_M \left[\mcl_g\rho-(2^*-1)(u_0+\mcv_1)^{2^*-2}\rho\right] \mcz^0_1 dv_g\right| \\
\lesssim \left\|\mcl_g\mcz^0_1-(2^*-1)\mcv_1^{2^*-2}\mcz^0_1\right\|_{L^{\frac{2N}{N+2}}(M)} \|\rho\|_{H^1(M)} + \int_M \left|u_0^{2^*-2}\mcz^0_1\rho\right|dv_g.
\end{multline*}
By direct computations, we obtain
\[\left\|\mcl_g\mcz^0_1 - (2^*-1)\mcv_1^{2^*-2}\mcz^0_1\right\|_{L^{\frac{2N}{N+2}}(M)}
\lesssim \begin{cases}
\delta_1^{\frac{N-2}{2}} &\text{if } 6 \le N \le 9 \text{ and } (M,g) \text{ is non-l.c.f.}, \\
\delta_1^4|\log\delta_1|^{\frac35} &\text{if } N=10 \text{ and } (M,g) \text{ is non-l.c.f.}, \\
\delta_1^4 &\text{if } N \ge 11 \text{ and } (M,g) \text{ is non-l.c.f.}, \\
\delta_1^4|\log\delta_1|^{\frac23} &\text{if } N=6 \text{ and } (M,g)\text{ is l.c.f.}, \\
\delta_1^{\frac{N+2}{2}} &\text{if } N \ge 7 \text{ and } (M,g)\text{ is l.c.f.}
\end{cases}\]
If $N \ge 14$ and $(M,g)$ is non-l.c.f., then Proposition \ref{pr41} shows
\[\int_M \left|u_0^{2^*-2}\mcz^0_1\rho\right| dv_g \lesssim \left\|\mcz^0_1\right\|_{L^{\frac{2N}{N+2}}(M)}\|\rho\|_{H^1(M)} \lesssim \delta_1^{\frac{N-2}{2}}\(\|f\|_{H^{-1}(M)} + \delta_1^4\).\]
Suppose that $6 \le N \le 13$ or [$N \ge 14$ and $(M,g)$ is l.c.f.]. We have
\[\int_M \left|u_0^{2^*-2}\mcz^0_1\rho\right| dv_g \lesssim \int_M \left|u_0^{2^*-2}\mcz^0_1\trh_0\right| dv_g + \left\|\mcz^0_1\right\|_{L^{\frac{2N}{N+2}}(M)}\|\trh_1\|_{H^1(M)}.\]
By \eqref{tr0} and \eqref{tr1},
\begin{equation}\label{poi}
\begin{aligned}
&\begin{medsize}
\displaystyle \ \int_M \left|u_0^{2^*-2}\mcz^0_1\trh_0\right| dv_g \end{medsize} \\
&\begin{medsize}
\displaystyle \lesssim \int_M \left|\mcz^0_1(x)\right| \delta_1 \left[\frac{\delta_1}{\delta_1^2+d_{g_{\xi_1}}(x,\xi_1)^2} \bs{1}_{d_{g_{\xi_1}}(x,\xi_1)
\le \sqrt{\delta_1}} + \(\frac{\delta_1}{\delta_1^2+d_{g_{\xi_1}}(x,\xi_1)^2}\)^{\frac{N-4}{2}} \bs{1}_{d_{g_{\xi_1}}(x,\xi_1) \ge \sqrt{\delta_1}}\right] (dv_g)_x
\end{medsize} \\
&\begin{medsize}
\displaystyle \lesssim \delta_1^{\frac{N+2}{2}}|\log\delta_1|
\end{medsize}
\end{aligned}
\end{equation}
and
\begin{multline*}
\left\|\mcz^0_1\right\|_{L^{\frac{2N}{N+2}}(M)}\|\trh_1\|_{H^1(M)} = o\(\|f\|_{H^{-1}(M)}\) \\
+ o(1) \times \begin{cases}
\delta_1^{\frac{N-2}{2}} &\text{if } 6 \le N \le 9 \text{ or } [N \ge 10 \text{ and } (M,g) \text{ is l.c.f.}],\\
\delta_1^4 &\text{if } 10 \le N \le 13 \text{ and } (M,g) \text{ is non-l.c.f.}
\end{cases}
\end{multline*}
Collecting the above calculations, we discover
\begin{multline}\label{z1p}
\left|\int_M \left[\mcl_g\rho-(2^*-1)(u_0+\mcv_1)^{2^*-2}\rho\right] \mcz^0_1 dv_g\right| \\
\lesssim o\(\|f\|_{H^{-1}(M)}\) + o(1) \times \begin{cases}
\delta_1^{\frac{N-2}{2}} &\text{if } 6 \le N \le 9 \text{ or } [N \ge 10 \text{ and } (M,g) \text{ is l.c.f.}],\\
\delta_1^4 &\text{if } N \ge 10 \text{ and } (M,g) \text{ is non-l.c.f.}
\end{cases}
\end{multline}

On the other hand, by \eqref{ab6}, Proposition \ref{pr41}, and Corollary \ref{cr43},
\begin{multline}\label{z3p}
\left|\int_M \III_1[\rho]\mcz^0_1 dv_g\right| \lesssim \int_M |\rho|^{2^*-1} \left|\mcz^0_1\right| dv_g \lesssim \|\rho\|_{H^1(M)}^{2^*-1} \\
= o\(\|f\|_{H^{-1}(M)}\) + o(1) \times \begin{cases}
\delta_1^{\frac{N-2}{2}} &\text{if } 6 \le N \le 9 \text{ or } [N \ge 10 \text{ and } (M,g) \text{ is l.c.f.}],\\
\delta_1^4 &\text{if } N \ge 10 \text{ and } (M,g) \text{ is non-l.c.f.}
\end{cases}
\end{multline}

Now, by putting Lemmas \ref{s43} and \ref{s44}, \eqref{z1p}, \eqref{z3p}, and $|\int_M f\mcz^0_1 dv_g| \lesssim \|f\|_{H^{-1}(M)}$ into \eqref{tezk3}, we obtain the desired estimates \eqref{eq:pr441} and \eqref{eq:pr442}. This concludes the proof of Proposition \ref{pr44}.
\end{proof}

\subsection{The case $u_0 = 0$}\label{se4.2}
This subsection is devoted to the derivation of estimate \eqref{eq:sqe4}. We recall that $\textup{Weyl}_g(\xi_1) \ne 0$ when $(M,g)$ is non-l.c.f.

\begin{prop}\label{pr4.7}
It holds that
\begin{equation}\label{l4q}
\|\rho\|_{H^1(M)} \lesssim \|f\|_{H^{-1}(M)} + \begin{cases}
\delta_1^4|\log\delta_1|^{\frac23} &\text{if } N=6 \text{ and } (M,g) \text{ is l.c.f.},\\
\delta_1^4|\log\delta_1|^{\frac53} &\text{if } N=6 \text{ and } (M,g) \text{ is non-l.c.f.},\\
\delta_1^{\frac{N+2}{2}} &\text{if } N \ge 7 \text{ and } (M,g) \text{ is l.c.f.},\\
\delta_1^4 &\text{if } N \ge 7 \text{ and } (M,g) \text{ is non-l.c.f.}
\end{cases}
\end{equation}
\end{prop}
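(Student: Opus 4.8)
The plan is to run the same machinery as in the proof of Proposition \ref{pr41} (and of Section \ref{se3}), exploiting one decisive simplification: since $u_0 = 0$, the term $\III_2 = (u_0+\mcv_1)^{2^*-1} - u_0^{2^*-1} - \mcv_1^{2^*-1}$ vanishes identically, so the only nontrivial contributions on the right of \eqref{eqrho3} are the quadratic remainder $\III_1[\rho]$ and the residual $\III_3 = -\mcl_g\mcv_1 + \mcv_1^{2^*-1}$. The first step is to establish the coercivity-decomposition bound
\[
\|\rho\|_{H^1(M)} \lesssim \|f\|_{H^{-1}(M)} + \|\III_3\|_{L^{\frac{2N}{N+2}}(M)}.
\]
This I would obtain exactly as in Step 3 of the proof of Proposition \ref{pr41}, now with $u_0 = 0$: decompose $\rho = \rho_1 + \beta_1\mcv_1 + \sum_{k=0}^N \beta_1^k\mcz^k_1$ with $\rho_1$ orthogonal in $H^1(M)$ to $\mcv_1$ and to $\mcz^0_1,\ldots,\mcz^N_1$ — there is no $\psi_m$-component, just as in Section \ref{se3} — apply the single-bubble coercivity estimate proved as in Proposition \ref{co}, absorb $\|\III_1[\rho]\|_{L^{\frac{2N}{N+2}}(M)} \lesssim \|\rho\|_{H^1(M)}^2 + \|\rho\|_{H^1(M)}^{2^*-1} = o(\|\rho\|_{H^1(M)})$ via \eqref{ab6}, and control $\beta_1$ and the $\beta_1^k$ by testing \eqref{eqrho3} against $\mcv_1$ and the $\mcz^k_1$ as in Lemmas \ref{le2.3}--\ref{le24}.

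It remains to estimate $\|\III_3\|_{L^{\frac{2N}{N+2}}(M)}$ for the bubble-like function $\mcv_1$ selected by \eqref{w1}, which forces a split into three regimes. If $(M,g)$ is l.c.f. (for every $N\ge 6$), then $\mcv_1$ has the Green's-function form, the expansion \eqref{gx} still holds, and the conformal-normal-coordinate computation of Lemma \ref{lem3.3} applies verbatim, giving $\|\III_3\|_{L^{\frac{2N}{N+2}}(M)} \lesssim \delta_1^4|\log\delta_1|^{2/3}$ for $N=6$ and $\lesssim \delta_1^{(N+2)/2}$ for $N \ge 7$, exactly as recorded in \eqref{iii3}. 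If $(M,g)$ is non-l.c.f. and $N\ge 11$, then $\mcv_1 = \Lambda_{\xi_1}\chi(d_{g_{\xi_1}}(\cdot,\xi_1))\mcu_{\delta_1,\xi_1}^{g_{\xi_1}}$, and the computation in Step 2(1) of the proof of Proposition \ref{pr41} (using \eqref{gxvu}, \eqref{tay}, and the smallness $R_{g_{\xi_1}}(x) = \mco(d_{g_{\xi_1}}(x,\xi_1)^2)$) gives $\|\III_3\|_{L^{\frac{2N}{N+2}}(M)} \lesssim \delta_1^4$; see \eqref{123}.

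The remaining regime — $(M,g)$ non-l.c.f. with $6\le N\le 10$ — is the genuinely new part, and I expect it to be the main obstacle. Here \eqref{w1} again prescribes the Green's-function-corrected test function $\mcv_1 = \ga_N G_g(\cdot,\xi_1)[\chi(d_{g_{\xi_1}}(\cdot,\xi_1))d_{g_{\xi_1}}(\cdot,\xi_1)^{N-2}\mcu_{\delta_1,\xi_1}^{g_{\xi_1}} + (1-\chi)\alpha_N\delta_1^{(N-2)/2}]$; writing $\mcv_1 = \ga_N G_g(\cdot,\xi_1)F_1$ with $F_1$ as in \eqref{eq:Fi} and using conformal covariance together with $F_1(\xi_1)=0$ exactly as for \eqref{eq:mclv}, one gets
\[
\III_3 = \Lambda_{\xi_1}^{2^*-1}\Big[(\ga_N G_{g_{\xi_1}}(\cdot,\xi_1)F_1)^{2^*-1} + \ga_N G_{g_{\xi_1}}(\cdot,\xi_1)\Delta_{g_{\xi_1}}F_1 + 2\ga_N\langle\nabla_{g_{\xi_1}}G_{g_{\xi_1}}(\cdot,\xi_1),\nabla_{g_{\xi_1}}F_1\rangle_{g_{\xi_1}}\Big],
\]
and the leading singularity $\ga_N^{-1}|y|^{2-N}$ of $G_{g_{\xi_1}}$ again cancels against the corresponding parts of $\Delta F_1$ and $\partial_r F_1$ as in Case~2 of Lemma \ref{lem3.3}. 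The crux is that, unlike the l.c.f. case, the expansion \eqref{gx} is no longer available here: on a non-l.c.f. manifold the sub-leading part of $G_{g_{\xi_1}}$ in conformal normal coordinates carries Weyl-curvature-driven contributions that are more singular than $\mco(|y|)$, and it is precisely these that control the size of $\III_3$. Combining them with $R_{g_{\xi_1}} = \mco(|y|^2)$ (from \eqref{tay}) and the metric error $\mco(|y|^{\theta-1}|\partial_r F_1|)$ (from \eqref{gxvu}), and carrying out the resulting weighted $L^{\frac{2N}{N+2}}$-integration, should yield the sharp estimate $\|\III_3\|_{L^{\frac{2N}{N+2}}(M)} \lesssim \delta_1^4$ for $7\le N\le 10$ and $\lesssim \delta_1^4|\log\delta_1|^{5/3}$ for $N=6$ (the logarithm arising from a borderline resonance at $N=6$); this is the estimate referred to as \eqref{124}. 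Feeding the three cases into the coercivity-decomposition bound above gives \eqref{l4q}. The difficulty is entirely concentrated in pinning down the structure and order of the sub-leading terms of $G_{g_{\xi_1}}$ for non-l.c.f. manifolds and in verifying that the cancellations truly cut the residual down to order $\delta_1^4$; everything else is a transcription of arguments already carried out in Sections \ref{se2}--\ref{se3} and Subsection \ref{se4.1}.
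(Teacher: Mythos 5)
Your overall strategy matches the paper's: you correctly observe that $\III_2 \equiv 0$ when $u_0 = 0$, establish the coercivity/decomposition bound $\|\rho\|_{H^1(M)} \lesssim \|f\|_{H^{-1}(M)} + \|\III_3\|_{L^{2N/(N+2)}(M)}$ exactly as in \eqref{eq:rhoest2}, and handle the l.c.f.\ regime for all $N\ge 6$ via Lemma~\ref{lem3.3}/\eqref{iii3} and the non-l.c.f.\ $N\ge 11$ regime via \eqref{123}, just as the paper does. You also correctly diagnose where the new work lies: the non-l.c.f.\ case with $6\le N\le 10$, where the bubble-like function \eqref{w1} is the Green's-function-corrected one and the expansion \eqref{gx} with the constant mass term is no longer valid.

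However, this last step is a genuine gap, not a routine transcription. You say the sub-leading part of $G_{g_{\xi_1}}$ "carries Weyl-curvature-driven contributions that are more singular than $\mco(|y|)$" and that the weighted integration "should yield" the bounds $\delta_1^4$ for $7\le N\le 10$ and $\delta_1^4|\log\delta_1|^{5/3}$ for $N=6$ — but you do not identify the expansion, carry out the cancellations, or compute the orders. The paper's proof hinges precisely here: it invokes the explicit Lee--Parker expansion \eqref{gx2} of $G_{g_{\xi_1}}$ in conformal normal coordinates, which for $N=6$ contains a $\log|y|$ term proportional to $|\textup{Weyl}_g(\xi_1)|^2_g$ and for $N\ge 7$ contains $|\textup{Weyl}_g(\xi_1)|^2_g |y|^{6-N}$ and $\pa_{y^ky^l}R_{g_{\xi_1}}(\xi_1)y^ky^l|y|^{4-N}$ contributions. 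It then differentiates to get \eqref{com2}, substitutes into \eqref{eq:mclv} to obtain the explicit pointwise formulas \eqref{i36} and \eqref{i37} for $\III_3$, and only then integrates to obtain \eqref{124}. The logarithm exponent $\tfrac{5}{3}$ at $N=6$ in particular arises from the $|y|^2\log|y|$ structure in \eqref{i36}, which is visible only after plugging in \eqref{gx2}. Without pinning down that expansion and verifying the cancellations between $\Delta_{g_{\xi_1}}F_1$, $\la\nabla G_{g_{\xi_1}},\nabla F_1\ra$, and the $G^{2^*-1}F^{2^*-1}$ term, the claimed orders remain a prediction, and indeed the distinction between the naive guess $\delta_1^4|\log\delta_1|^{2/3}$ (as in the l.c.f.\ case) and the actual $\delta_1^4|\log\delta_1|^{5/3}$ would not be detectable. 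This computation is the substance of the proposition for $6\le N\le 10$ non-l.c.f., and it is missing from your argument.
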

\begin{proof}
An analogous argument to the proof of Proposition \ref{pr2.2} yields
\begin{equation}\label{eq:rhoest2}
\|\rho\|_{H^1(M)} \lesssim \|f\|_{H^{-1}(M)} + \|\III_3\|_{L^{\frac{2N}{N+2}}(M)};
\end{equation}
cf. \eqref{eq:rhoest}. If $(M,g)$ is l.c.f., then \eqref{l4q} immediately follows from \eqref{eq:rhoest2} and \eqref{iii3}. In the rest of the proof, we assume that $(M,g)$ is non-l.c.f.

Let $y = (y^1,\ldots,y^N) \in B_{r_0}(0)$ and $x = \exp_{\xi_1}^{g_{\xi_1}}y \in M$. In \cite[Lemma 6.4]{LP}, it was shown that
\begin{equation}\label{gx2}
\ga_N G_{g_{\xi_1}}(x,\xi_1) =\begin{cases}
\displaystyle \frac{1}{|y|^4} - \frac{1}{1440}|\textup{Weyl}_g(\xi_1)|^2_g\log|y| + \mco(1) &\text{if } N=6,\\
\begin{aligned}
&\frac{1}{|y|^{N-2}} + \frac{\ka_N}{144(N-4)(N-6)} \frac{|\textup{Weyl}_g(\xi_1)|^2_g}{|y|^{N-6}} \\
&\ -\frac{\ka_N}{12(N-4)} \pa_{y^ky^l}R_{g_{\xi_1}}(\xi_1) \frac{y^ky^l}{|y|^{N-4}} + \mco\(\frac{1}{|y|^{N-7}}\)
\end{aligned}
&\text{if } N \ge 7,
\end{cases}
\end{equation}
where the indices $k$ and $l$ range from $1$ to $N$. If $d_{g_{\xi_1}}(x,\xi_1) \le \frac{r_0}{2}$, then by \eqref{eq:Fi}, \eqref{gx2}, and the expansion $g_{\xi_1}^{kl}(\exp_{\xi_1}^{g_{\xi_1}}y) = \delta^{kl} + \mco(|y|^2)$, it follows that
\begin{align}
&\ \ga_N \la\nabla_{g_{\xi_1}} G_{g_{\xi_1}}(x,\xi_1),\nabla_{g_{\xi_1}} F_1(x)\ra_{g_{\xi_1}} \label{com2} \\
&= \begin{cases}
\displaystyle - 384 \frac{1}{|y|^2} \frac{\delta_1^4}{(\delta_1^2+|y|^2)^3} + \mco\(\frac{\delta_1^4|y|^2}{(\delta_1^2+|y|^2)^3}\) &\text{if } N=6, \\
\begin{aligned}
&\begin{medsize}
\displaystyle -(N-2)\alpha_N \left[\frac{N-2}{|y|^2} + \frac{\ka_N}{144(N-4)}|\textup{Weyl}_g(\xi_1)|^2_g|y|^2 - \frac{(N-6)\ka_N}{12(N-4)} \pa_{y^ky^l}R_{g_{\xi_1}}(\xi_1) y^ky^l\right]
\end{medsize} \\
&\begin{medsize}
\displaystyle \hspace{195pt} \times \frac{\delta_1^{\frac{N+2}{2}}}{(\delta_1^2+|y|^2)^{\frac{N}{2}}} + \mco\(\frac{\delta_1^{\frac{N+2}{2}}|y|^3}{(\delta_1^2+|y|^2)^{\frac{N}{2}}}\)
\end{medsize}
\end{aligned}
&\text{if } N \ge 7.
\end{cases} \nonumber
\end{align}
Thus, employing \eqref{com2} instead of \eqref{com1}, one can argue as in the proof of Lemma \ref{lem3.3} to deduce
\begin{multline}\label{i36}
\III_3(x) = -\Lambda_{\xi_1}^2(x) \left[\frac{2}{5} |\textup{Weyl}_g(\xi_1)|^2_g|y|^4\log|y| \frac{\delta_1^4}{(\delta_1^2+|y|^2)^4} \right. \\
\left. + \frac{8}{15}|\textup{Weyl}_g(\xi_1)|^2_g |y|^2\log|y| \frac{\delta_1^4}{(\delta_1^2+|y|^2)^3} + \mco\(\frac{\delta_1^4|y|^2}{(\delta_1^2+|y|^2)^3}\)\right]
\end{multline}
for $N=6$ and
\begin{align}
&\begin{medsize}
\displaystyle \ \III_3(x) = \Lambda_{\xi_1}^{2^*-1}(x) \left[\frac{\alpha_N^{2^*-1}}{12(N-1)(N-4)} \left\{\frac{1}{12(N-6)}|\textup{Weyl}_g(\xi_1)|^2_g|y|^4 - \pa_{y^ky^l}R_{g_{\xi_1}}(\xi_1)y^ky^l|y|^2\right\} \frac{\delta_1^{\frac{N+2}{2}}}{(\delta_1^2+|y|^2)^{\frac{N+2}{2}}} \right.
\end{medsize} \nonumber \\
&\begin{medsize}
\displaystyle \hspace{85pt} + \frac{(N-2)^3\alpha_N}{6(N-1)(N-4)} \left\{\frac{1}{12(N-6)}|\textup{Weyl}_g(\xi_1)|^2_g|y|^2 - \pa_{y^ky^l}R_{g_{\xi_1}}(\xi_1)y^ky^l\right\} \frac{\delta_1^{\frac{N+2}{2}}}{(\delta_1^2+|y|^2)^{\frac{N}{2}}}
\end{medsize} \nonumber \\
&\begin{medsize}
\displaystyle \hspace{85pt} \left. + \mco\(\frac{\delta_1^{\frac{N+2}{2}}|y|^3}{(\delta_1^2+|y|^2)^{\frac{N}{2}}}\)\right]
\end{medsize}\label{i37}
\end{align}
for $7 \le N \le 10$. If $d_{g_{\xi_1}}(x,\xi_1) \ge \frac{r_0}{2}$, calculations similar to \eqref{ca13}, \eqref{331}, and \eqref{332} reveal that $|\III_3| \lesssim \delta_1^{\frac{N+2}{2}}$. As a result,
\begin{equation}\label{124}
\|\III_3\|_{L^{\frac{2N}{N+2}}(M)} \lesssim \begin{cases}
\delta_1^4|\log\delta_1|^{\frac53} &\text{if } N=6,\\
\delta_1^4 &\text{if } 7 \le N \le 10,
\end{cases}
\end{equation}
which together with \eqref{123} (for $N \ge 11$) and \eqref{eq:rhoest2} implies \eqref{l4q}.
\end{proof}

By \eqref{ca13}, \eqref{331}, \eqref{332}, \eqref{i3ex}, \eqref{i36}, and \eqref{i37}, it holds that
\begin{align}
&\ |\III_3(x)| \label{5i3} \\
&\lesssim \begin{cases}
\begin{medsize}
\displaystyle \delta_1^4 \left[\frac{\log (2+d_{g_{\xi_1}}(x,\xi_1))}{(\delta_1^2+d_{g_{\xi_1}}(x,\xi_1)^2)^2} \bs{1}_{d_{g_{\xi_1}}(x,\xi_1) \le \frac{r_0}{2}} + \bs{1}_{d_{g_{\xi_1}}(x,\xi_1) \ge \frac{r_0}{2}}\right]
\end{medsize}
&\begin{medsize}
\displaystyle \text{if } N=6 \text{ and } (M,g) \text{ is non-l.c.f.},
\end{medsize} \\
\begin{medsize}
\displaystyle \delta_1^{\frac{N-2}{2}} \(\frac{\delta_1}{\delta_1^2+d_{g_{\xi_1}}(x,\xi_1)^2}\)^2 \bs{1}_{d_{g_{\xi_1}}(x,\xi_1) \le \frac{r_0}{2}}
+ \delta_1^{\frac{N+2}{2}} \bs{1}_{d_{g_{\xi_1}}(x,\xi_1) \ge \frac{r_0}{2}}
\end{medsize}
&\begin{medsize}
\displaystyle \text{if } N \ge 6 \text{ and } (M,g) \text{ is l.c.f.}
\end{medsize}
\end{cases} \nonumber
\end{align}
By reasoning as in the proof of Lemma \ref{le4.2} with \eqref{5i3} in hand, we establish the following lemma. The proof is omitted.
\begin{lemma}\label{le4.8}
Assume either $N=6$ or [$N \ge 7$ and $(M,g)$ is l.c.f.]. Then there exist a function $\trh_0 \in H^1(M)$ and numbers $\tc_0, \tc_1, \ldots, \tc_N \in \R$ satisfying
\begin{equation}\label{trho02}
\begin{cases}
\displaystyle \mcl_g\trh_0 - \left[(\mcv_1+\trh_0)^{2^*-1}-\mcv_1^{2^*-1}\right] = \III_3 + \sum_{k=0}^N \tc_k \mcl_g\mcz^k_1 \quad \text{on } M, \\
\displaystyle \big\langle \trh_0,\mcz^k_1 \big\rangle_{H^1(M)} = 0 \quad \text{for } k=0,1,\ldots,N
\end{cases}
\end{equation}
with estimates
\begin{equation}\label{tr02}
|\trh_0(x)| \lesssim \begin{cases}
\begin{medsize}
\displaystyle \delta_1^4\frac{\log (2+d_{g_{\xi_1}}(x,\xi_1))}{\delta_1^2 +d_{g_{\xi_1}}(x,\xi_1)^2} \bs{1}_{d_{g_{\xi_1}}(x,\xi_1) \le \frac{r_0}{2}}+\delta_1^4\bs{1}_{d_{g_{\xi_1}}(x,\xi_1) \ge \frac{r_0}{2}}
\end{medsize}
&\begin{medsize}
\displaystyle \text{if } N=6 \text{ and } (M,g) \text{ is non-l.c.f.},
\end{medsize} \\
\begin{medsize}
\displaystyle \delta_1^{\frac{N}{2}} \frac{\delta_1}{\delta_1^2+d_{g_{\xi_1}}(x,\xi_1)^2} \bs{1}_{d_{g_{\xi_1}}(x,\xi_1) \le \frac{r_0}{2}} +\delta_1^{\frac{N+2}{2}} \bs{1}_{d_{g_{\xi_1}}(x,\xi_1) \ge \frac{r_0}{2}}
\end{medsize}
&\begin{medsize}
\displaystyle \text{if } N \ge 6 \text{ and } (M,g) \text{ is l.c.f.}
\end{medsize}
\end{cases}
\end{equation}
and
\[\sum_{k=0}^N|\tc_k| \lesssim \begin{cases}
\delta_1^4|\log\delta_1| &\text{if } N=6 \text{ and } (M,g) \text{ is non-l.c.f.},\\
\delta_1^{N-2} &\text{if } N \ge 6 \text{ and } (M,g) \text{ is l.c.f.}
\end{cases}\]
Moreover, if we let $\trh_1 = \rho-\trh_0$ so that
\[\begin{cases}
\displaystyle \mcl_g\trh_1 - \left[(\mcv_1+\trh_0+\trh_1)^{2^*-1}-(\mcv_1+\trh_0)^{2^*-1}\right] = f-\sum_{k=0}^N \tc_k \mcl_g\mcz^k_1 \quad \text{on } M,\\
\displaystyle \big\langle \trh_1,\mcz^k_1 \big\rangle_{H^1(M)} = 0 \quad \text{for } k=0,1,\ldots,N,
\end{cases}\]
then it holds that
\begin{equation}\label{tr12}
\|\trh_1\|_{H^1(M)} \lesssim \|f\|_{H^{-1}(M)} + \begin{cases}
\delta_1^6|\log\delta_1| &\text{if } N=6 \text{ and } (M,g) \text{ is non-l.c.f.},\\
\delta_1^N &\text{if } N \ge 6 \text{ and } (M,g) \text{ is l.c.f.}
\end{cases}
\end{equation}
\end{lemma}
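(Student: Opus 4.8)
The plan is to follow the strategy already used in the proof of Lemma \ref{le4.2}, adapting it to the case $u_0 = 0$ and to the two regimes governed by whether $(M,g)$ is l.c.f. or not. The starting point is to apply the linear theory of Proposition \ref{a5} (the same device invoked in Lemma \ref{le4.2}) with $\tih = \III_1[\trh_0] + \III_3$, together with the Banach fixed-point theorem, to produce $\trh_0 \in H^1(M)$ and Lagrange multipliers $\tc_0,\ldots,\tc_N$ solving \eqref{trho02}. The crucial input is the pointwise bound \eqref{5i3} on $\III_3$, which feeds directly into the weighted pointwise estimate machinery of Proposition \ref{a5} to yield \eqref{tr02}; in the l.c.f.\ case \eqref{5i3} shows $|\III_3| \lesssim \delta_1^{(N-2)/2}(\delta_1/(\delta_1^2+d_{g_{\xi_1}}^2))^2$ near $\xi_1$, which convolved against the Green's function of the linearized operator gives a decay profile one power slower, namely $\delta_1^{N/2}\cdot\delta_1/(\delta_1^2+d_{g_{\xi_1}}^2)$; in the non-l.c.f., $N=6$ case the extra logarithm in \eqref{i36} propagates to the $\log(2+d_{g_{\xi_1}}(x,\xi_1))$ factor in \eqref{tr02}. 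The bound on $\sum_k |\tc_k|$ comes from testing \eqref{trho02} against $\mcz^k_1$ and using the near-orthogonality estimates of Lemma \ref{le3p} (analogues of Lemma \ref{le2p}) together with $\|\III_3\|_{L^{2N/(N+2)}(M)}$ from \eqref{124} and \eqref{iii3}: this produces $\delta_1^4|\log\delta_1|$ when $N=6$ non-l.c.f.\ and $\delta_1^{N-2}$ in the l.c.f.\ case.

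Next I would set $\trh_1 = \rho - \trh_0$; subtracting \eqref{trho02} from \eqref{eqrho3} (with $u_0=0$, so $\III_2$ is absent) gives the stated equation for $\trh_1$, in which the inhomogeneity is now just $f - \sum_k \tc_k \mcl_g \mcz^k_1$ — the ``bad'' term $\III_3$ has been absorbed into $\trh_0$. To bound $\|\trh_1\|_{H^1(M)}$ I would rerun the coercivity-plus-decomposition argument of Proposition \ref{pr2.2} (using Proposition \ref{co} in place of Proposition \ref{41}), decomposing $\trh_1$ along $\mcv_1$, the $\mcz^k_1$, and the orthogonal complement. The $f$ contribution gives $\|f\|_{H^{-1}(M)}$; the remaining contribution is controlled by $\sum_k |\tc_k| \cdot |\langle \mcl_g \mcz^k_1, \mcv_1\rangle|$, and since $|\langle \mcl_g \mcz^k_1, \mcv_1\rangle| = \||\nabla Z^k|\ldots\|$-type quantities are $\mco(\delta_1^{(N-2)/2})$ by Lemma \ref{le3p} and elliptic estimates on $\mcz^k_1$, multiplying by the bound on $\sum_k|\tc_k|$ yields $\delta_1^4|\log\delta_1|\cdot\delta_1^{2} = \delta_1^6|\log\delta_1|$ for $N=6$ non-l.c.f.\ and $\delta_1^{N-2}\cdot\delta_1^{2} = \delta_1^N$ in the l.c.f.\ case, which is exactly \eqref{tr12}. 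Before running this, one checks as in Lemma \ref{le4.2} (by testing \eqref{trho02} with $\trh_0$ and using \eqref{tr02}, \eqref{124}, \eqref{iii3}) that $\|\trh_0\|_{H^1(M)}$ and hence $\|\trh_1\|_{H^1(M)}$ are small, so the fixed-point and coercivity arguments apply.

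The main obstacle I anticipate is extracting the precise pointwise bound \eqref{tr02}, in particular getting the exact power of $\delta_1$ and the logarithmic correction right in the non-l.c.f.\ six-dimensional case. This requires carefully tracking how the logarithmic term in the expansion \eqref{i36} of $\III_3$ interacts with the Green's-function convolution in Proposition \ref{a5}: one must verify that the $\log|y|$ weight in the source produces exactly a $\log(2+d_{g_{\xi_1}})$ weight (and not a worse power of the logarithm) in the solution, which amounts to an estimate of the form $\int |x-z|^{2-N}\,\delta_1^4\,\log(2+|z|)\,(\delta_1^2+|z|^2)^{-2}\,dz \lesssim \delta_1^4 \log(2+|x|)/(\delta_1^2+|x|^2)$, split into the regions $|z|\le \delta_1$, $\delta_1 \le |z| \le |x|/2$, $|x|/2 \le |z| \le 2|x|$, $|z|\ge 2|x|$. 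The analogous $N\ge 7$ l.c.f.\ estimate is routine once the $N=6$ case is understood. Everything else — the fixed-point setup, the multiplier bounds, the $H^1$-estimate for $\trh_1$ — is a direct transcription of the arguments already given for Lemma \ref{le4.2}, so the proof can legitimately be left to the reader with a pointer to the relevant earlier computations, exactly as the excerpt does (``The proof is omitted'').
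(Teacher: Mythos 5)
Your proposal follows the paper's intended (omitted) proof: the paper's own remark before Lemma~\ref{le4.8} says it follows ``by reasoning as in the proof of Lemma~\ref{le4.2} with \eqref{5i3} in hand,'' and you do exactly that — set up the $u_0=0$ analogue of Proposition~\ref{a5} with weight profiles read off from \eqref{5i3}, solve the contraction for $\trh_0$, and then rerun the coercivity/decomposition machinery for $\trh_1$. The structure, the splitting $\rho=\trh_0+\trh_1$, the identification of the relevant Green's-function estimates (Lemma~\ref{aux}-type), and the bookkeeping that leads to \eqref{tr12} all match.

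One point you should tighten: the bound $\sum_k|\tc_k|\lesssim \delta_1^4|\log\delta_1|$ (non-l.c.f., $N=6$) and $\delta_1^{N-2}$ (l.c.f.) does \emph{not} follow from pairing $\|\III_3\|_{L^{2N/(N+2)}(M)}$ from \eqref{124}/\eqref{iii3} with H\"older — that route gives the weaker $\delta_1^4|\log\delta_1|^{5/3}$ and $\delta_1^4|\log\delta_1|^{2/3}$. What is actually needed when testing \eqref{trho02} against $\mcz^l_1$ is the sharp evaluation of $\int_M\III_3\mcz^l_1\,dv_g$ itself, using the full expansions \eqref{i36}, \eqref{i37}, and \eqref{i3ex} (these integrals are worked out in Lemma~\ref{s44} and in the proof of Proposition~\ref{pr47}); the cancellation/smallness there is genuinely finer than what the $L^{2N/(N+2)}$-norm retains. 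With that correction, the rest of your reasoning — including the Green's-function kernel estimate at the end, which verifies that a $\log(2+|z|)$ weight in the source reproduces a $\log(2+|x|)/(\delta_1^2+|x|^2)$ profile in the solution in $N=6$ — is sound and yields exactly \eqref{tr02} and \eqref{tr12}.
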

\noindent By exploiting \eqref{trho02}--\eqref{tr12}, one can improve \eqref{l4q} for $N=6$. We skip its proof.
\begin{cor}\label{cr46}
Suppose that $N=6$. It holds that
\[\|\rho\|_{H^1(M)} \lesssim \|f\|_{H^{-1}(M)}+\begin{cases}
\delta_1^4|\log\delta_1|^{\frac12} &\text{if } (M,g) \text{ is l.c.f.},\\
\delta_1^4|\log\delta_1|^{\frac32} &\text{if } (M,g) \text{ is non-l.c.f.}
\end{cases}\]
\end{cor}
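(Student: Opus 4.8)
The argument follows the template of Corollary \ref{cr43}, with Lemma \ref{le4.8} playing the role of Lemma \ref{le4.2}. Write $\rho = \trh_0 + \trh_1$ as in Lemma \ref{le4.8}. By \eqref{tr12} one has $\|\trh_1\|_{H^1(M)} \lesssim \|f\|_{H^{-1}(M)} + \delta_1^6|\log\delta_1|$ in the non-l.c.f. case and $\|\trh_1\|_{H^1(M)} \lesssim \|f\|_{H^{-1}(M)} + \delta_1^6$ in the l.c.f. case; in either case the $\delta_1$-term is $o(\delta_1^4|\log\delta_1|^{\frac12})$, hence is absorbed into the asserted right-hand side. Since $\|\rho\|_{H^1(M)} \le \|\trh_0\|_{H^1(M)} + \|\trh_1\|_{H^1(M)}$, it remains to prove that $\|\trh_0\|_{H^1(M)} \lesssim \delta_1^4|\log\delta_1|^{\frac12}$ when $(M,g)$ is l.c.f. and $\|\trh_0\|_{H^1(M)} \lesssim \delta_1^4|\log\delta_1|^{\frac32}$ when $(M,g)$ is non-l.c.f.

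To estimate $\|\trh_0\|_{H^1(M)}$, I would test \eqref{trho02} with $\trh_0$. The orthogonality $\la\trh_0,\mcz^k_1\ra_{H^1(M)} = 0$ annihilates the term $\sum_k \tc_k\mcl_g\mcz^k_1$, and, since $2^*-1 = 2$ for $N=6$, this yields
\[\|\trh_0\|_{H^1(M)}^2 = \int_M \III_3\,\trh_0\,dv_g + \int_M \(2\mcv_1\trh_0^2 + \trh_0^3\)\,dv_g.\]
Each integral is then bounded by inserting the pointwise estimates \eqref{5i3} for $\III_3$ and \eqref{tr02} for $\trh_0$, together with $\mcv_1 \lesssim \mcu^{g_{\xi_1}}_{\delta_1,\xi_1}$ near $\xi_1$. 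The dominant contribution is $\int_M |\III_3||\trh_0|\,dv_g$. In the l.c.f. case, after passing to $g_{\xi_1}$-normal coordinates around $\xi_1$, this is $\lesssim \delta_1^8 \int_{\{|y| \le r_0/2\}} (\delta_1^2+|y|^2)^{-3}\,dy \lesssim \delta_1^8|\log\delta_1|$, the logarithm arising from the borderline divergence of $\int_0^{r_0/(2\delta_1)} r^5(1+r^2)^{-3}\,dr$ in dimension $6$; hence $\|\trh_0\|_{H^1(M)} \lesssim \delta_1^4|\log\delta_1|^{\frac12}$. In the non-l.c.f. case, $\III_3$ and $\trh_0$ each carry an additional logarithmic weight (traceable, via \eqref{i36} and the Green's function expansion \eqref{gx2}, to the special structure of dimension $6$): upon rescaling $y = \delta_1 s$ this weight becomes of size $(\log\delta_1)^2$ on the relevant bounded range of $s$, so that $\int_M|\III_3||\trh_0|\,dv_g \lesssim \delta_1^8|\log\delta_1|^3$ and therefore $\|\trh_0\|_{H^1(M)} \lesssim \delta_1^4|\log\delta_1|^{\frac32}$. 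The remaining integrals $\int_M 2\mcv_1\trh_0^2\,dv_g$ and $\int_M |\trh_0|^3\,dv_g$ carry strictly higher powers of $\delta_1$, with at most further logarithmic corrections, and are negligible; in particular no coercivity inequality is needed at this step because the pointwise bound on $\trh_0$ is already available.

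The main obstacle is the precise accounting of the logarithmic factors in the non-l.c.f. case: one must verify that the logarithmic weight in $\III_3$, the logarithmic weight in $\trh_0$, and the marginal divergence of the dimension-$6$ integral $\int r^5(1+r^2)^{-3}\,dr$ combine to give exactly $|\log\delta_1|^3$ — which is what produces the exponent $\frac32$ in the statement — rather than a higher power. One should also check that the pointwise bound \eqref{tr02} on $\trh_0$, itself obtained in Lemma \ref{le4.8} via a fixed-point argument based on Proposition \ref{a5}, is strong enough to control all of the error integrals directly, so that the passage from the identity above to the stated bound is a matter of routine (if delicate) integration.
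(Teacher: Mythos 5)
The paper explicitly skips the proof of this corollary, pointing to the estimates \eqref{trho02}--\eqref{tr12} of Lemma \ref{le4.8} and to the template already used for Corollary \ref{cr43}. Your proposal reconstructs precisely this intended argument: split $\rho=\trh_0+\trh_1$ via Lemma \ref{le4.8}, dispose of $\trh_1$ via \eqref{tr12} (whose $\delta_1$-term is $o(\delta_1^4|\log\delta_1|^{1/2})$), and control $\|\trh_0\|_{H^1(M)}$ by pairing \eqref{trho02} with $\trh_0$ itself — the orthogonality $\langle\trh_0,\mcz_1^k\rangle_{H^1(M)}=0$ annihilates $\sum_k\tc_k\mcl_g\mcz_1^k$, and $2^*-1=2$ for $N=6$ yields exactly the identity $\|\trh_0\|_{H^1(M)}^2 = \int_M\III_3\trh_0\,dv_g + 2\int_M\mcv_1\trh_0^2\,dv_g + \int_M\trh_0^3\,dv_g$, which is the analog of the first display in the paper's proof of Corollary \ref{cr43}. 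Inserting \eqref{5i3} and \eqref{tr02} and rescaling $y=\delta_1 s$ then gives $\int_M|\III_3||\trh_0|\,dv_g\lesssim\delta_1^8|\log\delta_1|$ in the l.c.f.\ case and $\lesssim\delta_1^8|\log\delta_1|^3$ in the non-l.c.f.\ case (bound the logarithmic weight by $(|\log\delta_1|+1)^2$ on the rescaled ball $|s|\lesssim\delta_1^{-1}$ and pair it with the marginal divergence $\int_0^{\delta_1^{-1}} s^5(1+s^2)^{-3}\,ds\simeq|\log\delta_1|$), which are the desired bounds. Your remark that no coercivity inequality is needed here, because the pointwise control of $\trh_0$ substitutes for it, is also accurate and is implicit in the paper's own treatment of Corollary \ref{cr43}.

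One small imprecision worth correcting: the remaining integrals $\int_M\mcv_1\trh_0^2\,dv_g$ and $\int_M|\trh_0|^3\,dv_g$ are negligible, but not because they carry a strictly higher power of $\delta_1$. For instance, in the non-l.c.f.\ case one finds $\int_M\mcv_1\trh_0^2\,dv_g\lesssim\delta_1^{10}\int_{|y|\le r_0/2}(|\log|y||+1)^2(\delta_1^2+|y|^2)^{-4}\,dy\simeq\delta_1^8|\log\delta_1|^2$, which is the same power $\delta_1^8$ as the leading term but with one fewer logarithm; similarly in the l.c.f.\ case $\int_M\mcv_1\trh_0^2\,dv_g\simeq\delta_1^8$ while the leading term is $\delta_1^8|\log\delta_1|$. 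The negligibility is a log improvement, not a power improvement, but that is all the argument needs.
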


\begin{prop}\label{pr47}
When $N \ge 6$ and $(M,g)$ is l.c.f., we have
\[\delta_1^{N-2} \lesssim \|f\|_{H^{-1}(M)}.\]
When $(M,g)$ is non-l.c.f., we have
\[\left\{\!\begin{aligned}
&\delta_1^4|\log\delta_1| &\text{if } N=6\\[1ex]
&\delta_1^4 &\text{if } N \ge 7
\end{aligned}\right\} \lesssim \|f\|_{H^{-1}(M)}.\]
\end{prop}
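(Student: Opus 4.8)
The plan is to follow the scheme of Proposition \ref{pr44}, which simplifies considerably here because $u_0\equiv0$ forces $\III_2=0$, so that \eqref{eqrho3} collapses to $\mcl_g\rho-(2^*-1)\mcv_1^{2^*-2}\rho=f+\III_1[\rho]+\III_3$. First I would test this equation against $\mcz^0_1$ and use the orthogonality $\langle\rho,\mcz^0_1\rangle_{H^1(M)}=0$ (so that $\int_M(\mcl_g\rho)\mcz^0_1 dv_g=\int_M\rho\,\mcl_g\mcz^0_1 dv_g=0$), which yields the identity
\[
\int_M\III_3\mcz^0_1 dv_g=-\int_M f\mcz^0_1 dv_g-\int_M\III_1[\rho]\mcz^0_1 dv_g-\int_M\big[(2^*-1)\mcv_1^{2^*-2}\mcz^0_1-\mcl_g\mcz^0_1\big]\rho\, dv_g.
\]
The first main task is to compute the leading order of the left-hand side. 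When $(M,g)$ is locally conformally flat, the expansion \eqref{i3ex} of $\III_3$ together with a computation identical to Step~1 of Lemma \ref{le36} (equivalently, the l.c.f.\ part of Lemma \ref{s44}) gives $\int_M\III_3\mcz^0_1 dv_g=\mfb_NA_g(\xi_1)\delta_1^{N-2}(1+o(1))$, with $A_g(\xi_1)>0$ by the positive mass theorem \cite{SY2}. When $(M,g)$ is non-l.c.f., one uses \eqref{i37} for $7\le N\le10$ and \eqref{4i3} (equivalently \eqref{n13m}) for $N\ge11$; in either case the Hessian term $\pa_{y^ky^l}R_{g_{\xi_1}}(\xi_1)y^ky^l$, being integrated against a radial function, reduces through its trace $\Delta_{g_{\xi_1}}R_{g_{\xi_1}}(\xi_1)=-\tfrac16|\textup{Weyl}_g(\xi_1)|^2_g$ (see \eqref{tay}) and merges with the genuine Weyl contribution, producing $\int_M\III_3\mcz^0_1 dv_g=\tilde{\mfc}_N|\textup{Weyl}_g(\xi_1)|^2_g\,\delta_1^4(1+o(1))$ for $N\ge7$ and $\tilde{\mfc}_6|\textup{Weyl}_g(\xi_1)|^2_g\,\delta_1^4|\log\delta_1|(1+o(1))$ for $N=6$, with an explicit constant $\tilde{\mfc}_N>0$ evaluated by elementary beta-type integrals. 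This is the step where the standing hypothesis $\textup{Weyl}_g(\xi_1)\ne0$ enters.

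The second main task is to show that every term on the right-hand side is of strictly lower order than the leading term just identified. The source term contributes exactly $O(\|f\|_{H^{-1}(M)})$, since $|\int_M f\mcz^0_1 dv_g|\lesssim\|f\|_{H^{-1}(M)}\|\mcz^0_1\|_{H^1(M)}$ and $\|\mcz^0_1\|_{H^1(M)}\lesssim1$. For the third term, Hölder's inequality bounds it by $\|(2^*-1)\mcv_1^{2^*-2}\mcz^0_1-\mcl_g\mcz^0_1\|_{L^{2N/(N+2)}(M)}\|\rho\|_{H^1(M)}$; the first factor is of the same order as $\|\III_3\|_{L^{2N/(N+2)}(M)}$ and is estimated exactly as in the proofs of Lemma \ref{lem3.3} and Proposition \ref{pr4.7} (of size $O(\delta_1^{(N+2)/2})$ in the l.c.f.\ case and $O(\delta_1^4)$ in the non-l.c.f.\ case, up to a power of $|\log\delta_1|$ when $N=6$), so by Proposition \ref{pr4.7} this term is $O(\|f\|_{H^{-1}(M)})$ plus a negligible power of $\delta_1$. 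For the quadratic term, \eqref{ab6} and Hölder give $|\int_M\III_1[\rho]\mcz^0_1 dv_g|\lesssim\|\rho\|_{H^1(M)}^{2^*-1}$; feeding in Proposition \ref{pr4.7} (and Corollary \ref{cr46} when $N=6$) shows this is of lower order than the leading term whenever $(M,g)$ is non-l.c.f.\ and whenever $(M,g)$ is l.c.f.\ with $N\le11$. For l.c.f.\ manifolds with $N\ge12$ the crude energy bound $\|\rho\|_{H^1(M)}\lesssim\|f\|_{H^{-1}(M)}+\delta_1^{(N+2)/2}$ is no longer enough, and I would instead invoke the pointwise splitting $\rho=\trh_0+\trh_1$ of Lemma \ref{le4.8}: writing $\III_1[\rho]=\III_1[\trh_0]+(\III_1[\rho]-\III_1[\trh_0])$ and using that $|\trh_0|\ll\mcv_1$ pointwise (so the negative exponents $2^*-2,2^*-3$ cause no spurious blow-up), the first piece is a genuine second-order remainder $O(\mcv_1^{2^*-3}\trh_0^2)$ controlled by the pointwise estimate \eqref{tr02}, while the second piece is bounded by $\|\mcv_1^{2^*-2}\mcz^0_1\|_{L^{2N/(N+2)}(M)}\|\trh_1\|_{H^1(M)}+\|\trh_1\|_{H^1(M)}^{2^*-1}$ using $\|\mcv_1^{2^*-2}\mcz^0_1\|_{L^{2N/(N+2)}(M)}=O(1)$ and $\|\trh_1\|_{H^1(M)}\lesssim\|f\|_{H^{-1}(M)}+\delta_1^N$; altogether this term is $O(\|f\|_{H^{-1}(M)})+o(\delta_1^{N-2})$ as well.

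Combining the two tasks, the leading term absorbs every $o(\cdot)$ error: in the l.c.f.\ case $\mfb_NA_g(\xi_1)\delta_1^{N-2}(1+o(1))=O(\|f\|_{H^{-1}(M)})+o(\delta_1^{N-2})$ gives $\delta_1^{N-2}\lesssim\|f\|_{H^{-1}(M)}$ since $A_g(\xi_1)>0$; in the non-l.c.f.\ case one gets $\delta_1^4\lesssim\|f\|_{H^{-1}(M)}$ for $N\ge7$ and $\delta_1^4|\log\delta_1|\lesssim\|f\|_{H^{-1}(M)}$ for $N=6$, since $\tilde{\mfc}_N|\textup{Weyl}_g(\xi_1)|^2_g>0$. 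The main obstacle I anticipate is the first task in the non-l.c.f.\ regime: one must verify that, after the Weyl and Hessian-of-scalar-curvature contributions in the Green's-function expansion \eqref{gx2} are combined through the trace identity \eqref{tay}, the resulting constant $\tilde{\mfc}_N$ is genuinely nonzero with a definite sign — a cancellation there would break the argument — and this demands careful bookkeeping of several beta-type integrals, distinct from the one in Lemma \ref{s44} because the test function \eqref{w1} used for $6\le N\le10$ is of Green's-function type rather than $\Lambda_{\xi_1}\chi(d_{g_{\xi_1}}(\cdot,\xi_1))\mcu^{g_{\xi_1}}_{\delta_1,\xi_1}$. A secondary difficulty is the high-dimensional l.c.f.\ bookkeeping just described, where the pointwise control of Lemma \ref{le4.8} must replace the energy bound of Proposition \ref{pr4.7} in estimating the $\III_1[\rho]$ contribution.
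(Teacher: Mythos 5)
Your proposal is correct and follows essentially the same route as the paper: $\III_2\equiv0$ since $u_0=0$, so one tests \eqref{eqrho3} against $\mcz^0_1$, computes the leading order of $\int_M\III_3\mcz^0_1\,dv_g$ (borrowing \eqref{13m} and \eqref{n13m} when $\mcv_1$ coincides with the $u_0>0$ choice, and computing anew for $6\le N\le 10$ non-l.c.f.\ where the Hessian of $R_{g_{\xi_1}}$ merges with the Weyl term via \eqref{tay}), and then absorbs the remainder terms; this is precisely what the paper does, concluding with ``argue as in Proposition~\ref{pr44}.''

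One place where you are more careful than the paper's written argument: for the $\III_1[\rho]$ pairing you observe that the crude energy bound $\|\rho\|_{H^1(M)}^{2^*-1}\lesssim\big(\|f\|_{H^{-1}(M)}+\delta_1^{(N+2)/2}\big)^{(N+2)/(N-2)}$ fails to produce $o(\delta_1^{N-2})$ once $(N+2)^2\le 2(N-2)^2$, i.e.\ for l.c.f.\ $N\ge 12$, and you replace it by the pointwise splitting $\rho=\trh_0+\trh_1$ of Lemma~\ref{le4.8}, which yields $\int_M|\trh_0|^{2^*-1}|\mcz^0_1|\,dv_g\lesssim\delta_1^{N+2}$ and $\|\trh_1\|_{H^1(M)}^{2^*-1}\lesssim\|f\|_{H^{-1}(M)}^{2^*-1}+\delta_1^{N(N+2)/(N-2)}$, both $o(\delta_1^{N-2})$. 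The paper's delegated step \eqref{z3p} invokes only the energy bound, so your use of Lemma~\ref{le4.8} there is the needed completion; the lemma is of course stated in the paper for exactly this purpose, so the spirit of the argument is unchanged. Two small caveats: (i) for $N\ge6$ the bound $|\III_1[\rho]|\lesssim|\rho|^{2^*-1}$ from \eqref{ab6} is all you need, and the $O(\mcv_1^{2^*-3}\trh_0^2)$ refinement you mention requires $2^*-1>2$ (i.e.\ $N<6$) to make sense, so it is better dropped here; (ii) the positivity of the constants $\mfd_N$ (your $\tilde{\mfc}_N$) for $6\le N\le10$ non-l.c.f.\ is indeed the crux, and the paper confirms it by explicitly evaluating the beta-type integral that survives after tracing the Hessian term.
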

\begin{proof}
Let us estimate the integral $\int_M \III_3\mcz^0_1 dv_g$.
If either $(M,g)$ is l.c.f. or [$N \ge 11$ and $(M,g)$ is non-l.c.f.], then we use the same bubble-like function $\mcv_1$ in both cases $u_0 > 0$ and $u_0 = 0$; see \eqref{w1}. Hence, we can borrow estimates \eqref{13m} and \eqref{n13m}.
In contrast, if $6 \le N \le 10$ and $(M,g)$ is non-l.c.f., the definition of $\mcv_1$ differs, so we need to compute the integral anew.

If $N=6$ and $(M,g)$ is non-l.c.f., it follows from \eqref{i36} that
\begin{align*}
\int_M \III_3\mcz^0_1 dv_g &= \frac{32}{5}|\textup{Weyl}_g(\xi_1)|^2_g \delta_1^4|\log\delta_1| \int_{\{|y| \le \frac{r_0}{2\delta_1}\}} \frac{|y|^2(7|y|^2+4)}{(1+|y|^2)^4} \frac{|y|^2-1}{(1+|y|^2)^3} dy(1+o(1)) \\
&= \mfd_6|\textup{Weyl}_g(\xi_1)|^2_g \delta_1^4|\log\delta_1| (1+o(1))
\end{align*}
where $\mfd_6 := \frac{16}{5}|\S^5| > 0$. If $7 \le N \le 10$ and $(M,g)$ is non-l.c.f., one can infer from \eqref{i37} and \eqref{tay} that
\begin{align*}
&\begin{medsize}
\displaystyle \ \int_M \III_3\mcz^0_1 dv_g
\end{medsize} \\
&\begin{medsize}
\displaystyle = \frac{(N-2)\alpha_N^2\ka_N}{72(N-4)(N-6)} |\textup{Weyl}_g(\xi_1)|^2_g\delta_1^4 \int_{\{|y| \le \frac{r_0}{2\delta_1}\}}\frac{|y|^2[(2N^2-7N+8)|y|^2+2(N-2)^2]}{(1+|y|^2)^{\frac{N+2}{2}}} \frac{|y|^2-1}{(1+|y|^2)^{\frac{N}{2}}} dy
\end{medsize} \\
&\begin{medsize}
\displaystyle \ -\frac{(N-2)\alpha_N^2\ka_N}{6(N-4)} (\pa_{y^ky^l}R_{g_{\xi_1}})(\xi_1) \delta_1^4 \int_{\{|y| \le \frac{r_0}{2\delta_1}\}} \frac{y^ky^l[(2N^2-7N+8)|y|^2+2(N-2)^2]}{(1+|y|^2)^{\frac{N+2}{2}}} \frac{|y|^2-1}{(1+|y|^2)^{\frac{N}{2}}} dy + o(\delta_1^4)
\end{medsize} \\
&\begin{medsize}
\displaystyle = \frac{(N-2)\alpha_N^2\ka_N}{24N(N-6)} \int_0^{\infty} \frac{r^{N+1}[(2N^2-7N+8)r^2+2(N-2)^2](r^2-1)}{(1+r^2)^{N+1}} dr |\S^{N-1}||\textup{Weyl}_g(\xi_1)|^2_g \delta_1^4 (1+o(1))
\end{medsize} \\
&\begin{medsize}
\displaystyle = \mfd_N|\textup{Weyl}_g(\xi_1)|^2_g\delta_1^4(1+o(1))
\end{medsize}
\end{align*}
for some $\mfd_N>0$.

Using the above estimates, we can argue as in Proposition \ref{pr44} to deduce Proposition \ref{pr47}. The details are omitted.
\end{proof}

Consequently, Propositions \ref{pr4.7} and \ref{pr47} lead to the desired estimate \eqref{eq:sqe4} for $N \ge 7$.
If $N=6$ and $(M,g)$ is l.c.f., then \eqref{eq:sqe4} follows directly from Corollary \ref{cr46} and Proposition \ref{pr47}.
If $N=6$ and $(M,g)$ is non-l.c.f., then $\delta_1^4 \le \delta_1^4|\log\delta_1| \le C_1\|f\|_{H^{-1}(M)}$ for some $C_1 > 0$ so that $\delta_1^4 \le 4C_1\|f\|_{H^{-1}(M)}|\log C_1\|f\|_{H^{-1}(M)}|^{-1}$. Since the function $t \mapsto t^4|\log t|^{\frac32}$ is increasing for $t > 0$ small and
\[\lim_{t \to 0+} \frac{1}{|\log t|} \left|\log \frac{ct}{|\log t|}\right| = 1 \quad \text{for any } c > 0,\]
it follows that
\[\delta_1^4|\log\delta_1|^{\frac32} \lesssim 4C_1\|f\|_{H^{-1}(M)} |\log C_1\|f\|_{H^{-1}(M)}|^{\frac12},\]
which implies \eqref{eq:sqe4} again.

\section{Optimality of the results}\label{se5}
This section is devoted to the proof of Theorem \ref{th1.6}.

\subsection{Optimality of \eqref{eq:sqe1} and \eqref{eq:sqe2}}\label{se5.1}
We shall consider \eqref{eq:sqe1} only, since \eqref{eq:sqe2} can be treated analogously. The proof consists of two steps.

\medskip \noindent \textbf{Step 1.} Choose any $\delta \in (0,1)$ and $(\xi_1,\ldots,\xi_\nu) \in M^{\nu}$ such that
$d_g(\xi_i,\xi_j)\ge c$ for all $1 \le i \ne j \le \nu$ and some $c > 0$.
We also take $\delta = \delta_1 = \cdots = \delta_{\nu}$ and $\vep = \delta^{(N-2)/2}$.
Let $\mcv_i = \mcv_{\delta_i,\xi_i}$ and $\mcz^k_i$ be the functions defined in \eqref{ui} and \eqref{eq:wtmcz}, respectively, and
\[u_* = u_0 + \sum_{i=1}^{\nu}\mcv_i + \vep\phi_{\delta}\]
where $\{\phi_{\delta}\}_{\delta \in (0,1)}$ is a family of nonzero smooth functions on $M$ such that $\|\phi_{\delta}\|_{H^1(M)} \simeq 1$ and $\langle \phi_{\delta},\mcz^k_i \rangle_{H^1(M)} = 0$ for $i=1,\ldots,\nu$ and $k=0,\ldots,N$. It holds that $\mcq \simeq \delta^{N-2}$.

Denoting $\rho = \vep\phi_{\delta}$, we observe
\[\|\rho\|_{H^1(M)} = \|\vep\phi_{\delta}\|_{H^1(M)} \simeq \vep.\]
We set $f$ by
\[f = \mcl_gu_*-u_*^{2^*-1} = \mcl_g\rho + u_0^{2^*-1} + \sum_{i=1}^{\nu} \mcl_g\mcv_i - \bigg(u_0+\sum_{i=1}^{\nu}\mcv_i +\rho\bigg)^{2^*-1}.\]
Then \eqref{eqrho} and \eqref{u0rho} imply
\begin{align*}
\Gamma(u_*) = \|f\|_{H^{-1}(M)} &\lesssim \|\rho\|_{H^1(M)} + \|\rho\|_{H^1(M)}^{2^*-1} + \|\I_2\|_{L^{\frac{2N}{N+2}}(M)} + \|\I_3\|_{L^{\frac{2N}{N+2}}(M)} + \|\I_4\|_{L^{\frac{2N}{N+2}}(M)} \\
&\lesssim \|\rho\|_{H^1(M)} + \mcq + \delta^{\frac{N-2}{2}} \simeq \|\rho\|_{H^1(M)} + \vep \simeq \|\rho\|_{H^1(M)}
\end{align*}
where $\I_2$, $\I_3$, and $\I_4$ are the functions defined in \eqref{eq:I2} and \eqref{eq:I3I4}.

\medskip \noindent \textbf{Step 2.} We claim that for any $\delta > 0$ small enough,
\[\inf\left\{\bigg\|u_*-\bigg(u_0+\sum_{i=1}^{\nu} \mcv_{\tde_i,\txi_i}\bigg)\bigg\|_{H^1(M)}: \(\tde_i,\txi_i\) \in (0,\infty) \times M,\, i = 1,\ldots,\nu\right\} \gtrsim \|\rho\|_{H^1(M)}\]
where $\mcv_{\tde_i,\txi_i}$ is defined by \eqref{ui}.
There exist parameters $(\tde_1,\ldots,\tde_{\nu},\txi_1,\ldots,\txi_{\nu}) \in (0,\infty)^{\nu} \times M^{\nu}$, where we still use the same notation, such that the above infimum is achieved by $\trh := u_*-(u_0+\sum_{i=1}^{\nu}\mcv_{\tde_i,\txi_i})$.

It holds that $\|\trh\|_{H^1(M)} \le \|\rho\|_{H^1(M)} \simeq\vep$, so
\[\bigg\|\sum_{i=1}^{\nu}\mcv_{\tde_i,\txi_i}-\sum_{i=1}^{\nu}\mcv_i\bigg\|_{H^1(M)} = \|\rho-\trh\|_{H^1(M)} \le \|\rho\|_{H^1(M)} + \|\trh\|_{H^1(M)} \lesssim \vep,\]
which implies that $\tde_i=(1+o(1))\delta$ and $d_g(\txi_i,\xi_i)=o(\delta)$ where $o(1) \to 0$ as $\delta \to 0$; cf. \cite[Lemma A.1]{BC}. 

We set $\vep' = \max\{|\tde_i-\delta|+d_g(\txi_i,\xi_i): i = 1,\ldots,\nu\}=o(\delta)$ and write $w_i= (\exp^g_{\xi_i})^{-1}(\txi_i)\in B_{r_0}(0)$. By Taylor's theorem with respect to the variables $(\delta,\xi)$, there exist $\mcr_i\in H^1(M)$ such that
\begin{align*}
\mcv_{\tde_i,\txi_i}-\mcv_i &= \frac{\pa \mcv_i}{\pa\delta}(\tde_i-\delta) + \la \nabla_{\tw}\mcv_{\delta,\exp^g_{\xi_i}(\tw)}\Big|_{\tw=0}, w_i\ra + \mcr_i \\
&= \frac{1}{\delta} \mcz^0_i(\tde_i-\delta) + \frac{1}{\delta} \sum_{k=1}^N \mcz^k_i \left[\big(\exp^g_{\xi_i}\big)^{-1}(\txi_i)\right]^k + \mcr_i \quad \text{on } M
\end{align*}
and $\|\mcr_i\|_{H^1(M)} = \mco(\frac{\vep'^2}{\delta^2})$ for $i=1,\ldots,\nu$. Using Lemma \ref{le2p}, one realizes
\[\max_{i\in\{1,\ldots,\nu\}}\|\mcv_{\tde_i,\txi_i}-\mcv_i\|_{H^1(M)} \simeq \frac{\vep'}{\delta} \quad \text{and} \quad
\sum_{i \ne j} \la \mcv_{\tde_i,\txi_i}-\mcv_i,\mcv_{\tde_j,\txi_j}-\mcv_j \ra_{H^1(M)} = o\(\frac{\vep'^2}{\delta^2}\).\]
Also, from $\langle \rho,\mcz^k_i \rangle_{H^1(M)} = \vep \langle \phi_{\delta},\mcz^k_i \rangle_{H^1(M)}= 0$ for $k = 0,\ldots,N$, it can be shown that
\begin{align*}
\bigg\langle \sum_{i=1}^{\nu}\Big(\mcv_{\tde_i,\txi_i}-\mcv_i\Big),\rho \bigg\rangle_{H^1(M)} &= \bigg\langle \sum_{i=1}^{\nu}\mcr_i,\rho \bigg\rangle_{H^1(M)} \lesssim \sum_{i=1}^{\nu}\|\mcr_i\|_{H^1(M)} \|\rho\|_{H^1(M)}\\
&= o(1) \bigg\|\sum_{i=1}^{\nu}\Big(\mcv_{\tde_i,\txi_i}-\mcv_i\Big)\bigg\|_{H^1(M)}\|\rho\|_{H^1(M)}.
\end{align*}

Recalling that $\trh=\rho+\sum_{i=1}^{\nu}(\mcv_{\tde_i,\txi_i}-\mcv_i)$, we get
\begin{align*}
\|\trh\|_{H^1(M)}^2 &= \|\rho\|_{H^1(M)}^2 + \bigg\|\sum_{i=1}^{\nu} \Big(\mcv_{\tde_i,\txi_i}-\mcv_i\Big)\bigg\|_{H^1(M)}^2 + 2\bigg\langle \sum_{i=1}^{\nu} \Big(\mcv_{\tde_i,\txi_i}-\mcv_i\Big),\rho \bigg\rangle_{H^1(M)} \\
&\gtrsim \|\rho\|_{H^1(M)}^2,
\end{align*}
which proves the assertion. The optimality of \eqref{eq:sqe1} was established.

\subsection{Optimality of \eqref{eq:sqe3}}\label{se5.2}
If $N \ge 14$ and $(M,g)$ is non-l.c.f., that is, when $\zeta(t) = t$, one can slightly modify the argument in the previous subsection to prove the optimality of \eqref{eq:sqe3}.
Therefore, we only need to deal with the cases that $6 \le N \le 13$ or [$N \ge 14$ and $(M,g)$ is l.c.f.]. The proof is long, so we separate it into three steps.

\medskip \noindent \textbf{Step 1.} Choose any $\delta_1 > 0$ small and $\xi_1 \in M$ such that $\textup{Weyl}_g(\xi_1) \ne 0$ provided $N \ge 11$ and $(M,g)$ is non-l.c.f. Let $\mcv_1 = \mcv_{\delta_1,\xi_1}$ be the function in \eqref{w1}.
The standard invertibility argument combined with the non-degeneracy of $u_0$ and the Banach fixed-point theorem shows the existence of $\rho \in H^1(M)$ and $(\mtc_0,\ldots,\mtc_N) \in \R^{N+1}$ such that
\begin{equation}\label{rhoex}
\begin{cases}
\displaystyle \mcl_g\rho-(2^*-1)(u_0+\mcv_1)^{2^*-2}\rho = \III_1[\rho] + \III_2 + \III_3 + \sum_{k=0}^N\mtc_k\mcl_g\mcz^k_1,\\
\displaystyle \big\langle \rho,\mcz^k_1 \big\rangle_{H^1(M)} = 0 \quad \text{for } k=0,1,\ldots,N
\end{cases}
\end{equation}
where $\III_1[\rho]$, $\III_2$, $\III_3$, and $\mcz^k_1$ are the functions appearing in \eqref{eqrho3}.
In light of Lemmas \ref{s43} and \ref{s44}, \eqref{z1p}, and \eqref{z3p}, we infer that
\begin{align*}
\sum_{k=0}^N |\mtc_k| &\lesssim \vsi_1(\delta_1) \\
&:= \begin{cases}
\delta_1^{\frac{N-2}{2}} &\text{if } [N \ge 6 \text{ and } (M,g) \text{ is l.c.f.}] \text{ or } [6 \le N \le 10 \text{ and } (M,g) \text{ is non-l.c.f.}],\\
\delta_1^4 &\text{if } 11 \le N \le 13 \text{ and } (M,g) \text{ is non-l.c.f.}
\end{cases}
\end{align*}

We write $u_* = u_0+\mcv_1+\rho$ so that
\[f := \mcl_g u_*-u_*^{2^*-1} = \sum_{k=0}^N \mtc_k \mcl_g\mcz^k_1,\]
which implies
\[\Gamma(u_*) = \|f\|_{H^{-1}(M)} \lesssim \sum_{k=0}^N |\mtc_k| \lesssim \vsi_1(\delta_1).\]
Similar to Lemma \ref{le4.2}, we have the followings:
\begin{itemize}
\item[-] The function $\rho$ can be decomposed into $\rho = \trh_0+\trh_1$ where $\trh_0$ and $\trh_1$ solve \eqref{trho0} and \eqref{trho1}, respectively.
\item[-] If $6 \le N \le 13$ or [$N \ge 14$ and $(M,g)$ is l.c.f.], then \eqref{tr0} and \eqref{cj} are valid.
\item[-] By \eqref{123} and \eqref{iii3}, it holds that
\begin{equation}\label{hp1}
\begin{aligned}
&\ \|\trh_1\|_{H^1(M)} \lesssim \|f\|_{H^{-1}(M)} + \|\III_3\|_{L^{\frac{2N}{N+2}}(M)} + \delta_1^{\frac{N+2}{2}} \\
&\lesssim \begin{cases}
\delta_1^{\frac{N-2}{2}} &\text{if } [N \ge 6 \text{ and } (M,g) \text{ is l.c.f.}] \text{ or } [6 \le N \le 9 \text{ and } (M,g) \text{ is non-l.c.f.}],\\
\delta_1^4|\log\delta_1|^{\frac35} &\text{if } N=10 \text{ and } (M,g) \text{ is non-l.c.f.,}\\
\delta_1^4 &\text{if } 11 \le N \le 13 \text{ and } (M,g) \text{ is non-l.c.f.}
\end{cases}
\end{aligned}
\end{equation}
\end{itemize}
By \eqref{l4u1} and \eqref{eq:cr43}, we observe
\begin{equation}\label{epx}
\|\rho\|_{H^1(M)} \lesssim \vsi_2(\delta_1) := \begin{cases}
\delta_1^2|\log \delta_1|^{\frac12} &\text{if } N=6,\\
\delta_1^{\frac{N+2}{4}} &\text{if } 7 \le N \le 13 \text{ or } [N \ge 14 \text{ and } (M,g) \text{ is l.c.f.}].
\end{cases}
\end{equation}

\medskip \noindent \textbf{Step 2.} We assert that
\[\|\rho\|_{H^1(M)} \gtrsim \begin{cases}
\Gamma(u_*)|\log\Gamma(u_*)|^{\frac12} &\text{if } N=6,\\
\Gamma(u_*)^{\frac{N+2}{2(N-2)}} &\text{if } 7 \le N \le 10 \text{ or } [N \ge 11 \text{ and } (M,g)\text{ is l.c.f.}],\\
\Gamma(u_*)^{\frac{N+2}{16}} &\text{if } 11 \le N \le 13 \text{ and } (M,g) \text{ is non-l.c.f.}
\end{cases}\]
By \eqref{hp1}, it is enough to show that
\[\|\trh_0\|_{H^1(M)} \gtrsim \vsi_2(\delta_1).\]

\medskip
Taking into account \eqref{trho0}, we obtain a representation formula
\[\trh_0(x) = \int_M G_g(x,z)\left[(2^*-1)(u_0+\mcv_1)^{2^*-2}\trh_0 + \III_1[\trh_0] + \III_2 + \sum_{k=0}^N \tc_k \mcl_g\mcz^k_1\right](z) (dv_g)_z\]
for $x \in M$. Employing the conformal change $g_{\xi_1} = \Lambda_{\xi_1}^{4/(N-2)}g$ and \eqref{cova}, we deduce
\[G_g(x,z) = \Lambda_{\xi_1}(x) G_{g_{\xi_1}}(x,z) \Lambda_{\xi_1}(z) \lesssim \frac{1}{d_{g_{\xi_1}}(x,z)^{N-2}}.\]
Let
\[\vsi_3(x) = \begin{cases}
\begin{medsize}
\displaystyle \int_M \frac{1}{d_{g_{\xi_1}}(x,z)^4} \left[\delta_1 \(\frac{\delta_1}{\delta_1^2+d_{g_{\xi_1}}(z,\xi_1)^2}\)^3 \bs{1}_{d_{g_{\xi_1}}(z,\xi_1) \le \sqrt{\delta_1}}
+ \frac{\delta_1^2}{\delta_1^2+d_{g_{\xi_1}}(z,\xi_1)^2} \bs{1}_{d_{g_{\xi_1}}(z,\xi_1) \ge \sqrt{\delta_1}}\right] (dv_g)_z
\end{medsize} \\
\begin{medsize}
\hspace{360pt} \text{if } N=6,
\end{medsize} \\
\begin{medsize}
\displaystyle \delta_1^2 \left[\(\frac{\delta_1}{\delta_1^2+d_{g_{\xi_1}}(x,\xi_1)^2}\)^2 \bs{1}_{d_{g_{\xi_1}}(x,\xi_1) \le \sqrt{\delta_1}}
+ \(\frac{\delta_1}{\delta_1^2+d_{g_{\xi_1}}(x,\xi_1)^2}\)^{\frac{N-6}{2}} \bs{1}_{d_{g_{\xi_1}}(x,\xi_1) \ge \sqrt{\delta_1}}\right]
\end{medsize} \\
\begin{medsize}
\hspace{215pt} \text{if } 7 \le N \le 13 \text{ or } [N \ge 14 \text{ and } (M,g) \text{ is l.c.f.}]
\end{medsize}
\end{cases}\]
for $x \in M$. Using \eqref{tr0}, \eqref{cj}, and Lemma \ref{a1}, it is not hard to check that
\begin{align*}
&\begin{medsize}
\displaystyle \left|\int_M G_g(x,z)(u_0+\mcv_1)^{2^*-2}(z)\trh_0(z) (dv_g)_z\right|
\end{medsize} \\
&\begin{medsize}
\displaystyle \lesssim \int_M \frac{\delta_1}{d_{g_{\xi_1}}(x,z)^{N-2}} \left[\(\frac{\delta_1}{\delta_1^2+d_{g_{\xi_1}}(z,\xi_1)^2}\)^3 \bs{1}_{d_{g_{\xi_1}}(z,\xi_1) \le \sqrt{\delta_1}}
+ \(\frac{\delta_1}{\delta_1^2+d_{g_{\xi_1}}(z,\xi_1)^2}\)^{\frac{N-4}{2}} \bs{1}_{d_{g_{\xi_1}}(z,\xi_1) \ge \sqrt{\delta_1}}\right] (dv_g)_z
\end{medsize} \\
&\begin{medsize}
\displaystyle \lesssim \vsi_3(x),
\end{medsize}
\end{align*}
\begin{align*}
&\ \left|\int_M G_g(x,z)\III_1[\trh_0](z) (dv_g)_z\right| \lesssim \int_M G_{g_{\xi_1}}(x,z)|\trh_0|^{2^*-1}(z) (dv_g)_z \\
&\lesssim \delta_1^{2^*} \(\frac{\delta_1}{\delta_1^2+d_{g_{\xi_1}}(x,\xi_1)^2}\)^{2^*-2} \bs{1}_{d_{g_{\xi_1}}(x,\xi_1) \le \sqrt{\delta_1}}
+ \delta_1^{2^*} \(\frac{\delta_1}{\delta_1^2+d_{g_{\xi_1}}(x,\xi_1)^2}\)^{\frac{N^2-4N-4}{2(N-2)}} \bs{1}_{d_{g_{\xi_1}}(x,\xi_1) \ge \sqrt{\delta_1}},
\end{align*}
and
\begin{align*}
\left|\sum_{k=0}^N \tc_k \int_M G_g(x,z) \(\mcl_g\mcz^k_1\)(z) (dv_g)_z\right| &\lesssim \sum_{k=0}^N |\tc_k| \(\frac{\delta_1}{\delta_1^2+d_{g_{\xi_1}}(x,\xi_1)^2}\)^{\frac{N-2}{2}} \\
&\lesssim \delta_1^{\frac{N-2}{2}} \(\frac{\delta_1}{\delta_1^2+d_{g_{\xi_1}}(x,\xi_1)^2}\)^{\frac{N-2}{2}}.
\end{align*}
Putting all the information above together, we arrive at
\[\trh_0(x) = \int_M G_g(x,z)\III_2(z) (dv_g)_z + \mfp(x) \quad \text{for } x \in M \quad \text{where } |\mfp(x)| \lesssim \vsi_3(x).\]

Meanwhile, testing \eqref{rhoex} with $\rho$ reveals
\begin{align*}
\|\rho\|_{H^1(M)}^2 &= \int_M \left[(2^*-1)(u_0 +\mcv_1)^{2^*-2}\rho +\III_1[\rho] + \III_2 + \III_3\right]\rho dv_g\\
&\ge \int_M (\III_1[\rho] + \III_2 + \III_3)\rho dv_g.
\end{align*}
Using \eqref{123}, \eqref{iii3}, and \eqref{epx}, we verify
\[\left|\int_M (\III_1[\rho]+\III_3)\rho dv_g\right| \lesssim \|\rho\|_{H^1(M)}^{2^*} + \|\III_3\|_{L^{\frac{2N}{N+2}}(M)}\|\rho\|_{H^1(M)} = o\(\vsi_2(\delta_1)^2\).\]
By \eqref{4i32} and \eqref{hp1}, we also find
\[\left|\int_M \III_2\trh_1 dv_g\right| \lesssim \|\III_2\|_{L^{\frac{2N}{N+2}}(M)}\|\trh_1\|_{H^1(M)} = o\(\vsi_2(\delta_1)^2\).\]
We turn to estimate $\int_M \III_2\trh_0 dv_g$. Note that
\begin{equation}\label{gxy}
G_g(x,z) \gtrsim G_{g_{\xi_1}}(x,z)\gtrsim \frac{1}{d_{g_{\xi_1}}(x,z)^{N-2}}>0 \quad \text{for } x,z \in B^{g_{\xi_1}}_{r_0}(\xi_1),\, x \ne z
\end{equation}
and
\begin{equation}\label{eq:III2}
\begin{aligned}
\III_2(x) &= \left[(u_0+\mcv_1)^{2^*-1}-u_0^{2^*-1}-\mcv_1^{2^*-1}\right](x) \\
&\ge (2^*-1) \left[u_0\mcv_1^{2^*-2} + u_0^{2^*-2}\mcv_1\right](x) &&\text{(by the binomial theorem)},\\
&\gtrsim \(\mcu_1^{g_{\xi_1}}\)^{2^*-2}(x) \bs{1}_{d_{g_{\xi_1}}(x,\xi_1) \le \sqrt{\delta_1}} &&\text{(by } u_0 \gtrsim 1) \quad \text{for } x \in M.
\end{aligned}
\end{equation}
Straightforward computations relying on \eqref{gxy}, \eqref{eq:III2}, and \eqref{eq:dist1} show that
\begin{align*}
&\ \int_M\int_M\III_2(z)G_g(x,z)\III_2(x) (dv_g)_x(dv_g)_z\\
&\gtrsim \int_{d_{g_{\xi_1}}(z,\xi_1) \le \sqrt{\delta_1}} \int_{d_{g_{\xi_1}}(x,\xi_1) \le \sqrt{\delta_1}}\frac{1}{d_{g_{\xi_1}}(x,z)^{N-2}} \(\mcu_1^{g_{\xi_1}}\)^{2^*-2}(x)\(\mcu_1^{g_{\xi_1}}\)^{2^*-2}(z) (dv_g)_x(dv_g)_z\\
&\gtrsim \delta_1^{N-2} \int_{\{|y| \lesssim \frac{1}{\sqrt{\delta_1}}\}} \frac{dy}{(1+|y|^2)^3} \gtrsim \begin{cases}
\delta_1^4|\log\delta_1| &\text{if } N=6,\\
\delta_1^{\frac{N+2}{2}} &\text{if } N \ge 7.
\end{cases}
\end{align*}
Furthermore, by applying \eqref{i32} and the bound
\begin{align*}
&\begin{medsize}
\displaystyle \ \int_M \(\frac{\delta_1}{\delta_1^2+d_{g_{\xi_1}}(x,\xi_1)^2}\)^2 \vsi_3(x) (dv_g)_x
\end{medsize} \\
&\begin{medsize}
\displaystyle = \int_M\left[\delta_1 \(\frac{\delta_1}{\delta_1^2+d_{g_{\xi_1}}(z,\xi_1)^2}\)^3 \bs{1}_{d_{g_{\xi_1}}(z,\xi_1) \le \sqrt{\delta_1}}
+ \frac{\delta_1^2}{\delta_1^2+d_{g_{\xi_1}}(z,\xi_1)^2} \bs{1}_{d_{g_{\xi_1}}(z,\xi_1) \ge \sqrt{\delta_1}}\right]
\end{medsize} \\
&\begin{medsize}
\displaystyle \hspace{100pt} \times \int_M \frac{1}{d_{g_{\xi_1}}(x,z)^4} \(\frac{\delta_1}{\delta_1^2+d_{g_{\xi_1}}(x,\xi_1)^2}\)^2 (dv_g)_x(dv_g)_z \quad \text{(by the Fubini-Tonelli theorem)}
\end{medsize} \\
&\begin{medsize}
\lesssim \delta_1^4 \quad \text{if } N=6,
\end{medsize}
\end{align*}
one computes
\[\left|\int_M \III_2\mfp dv_g\right| \lesssim \begin{cases}
\delta_1^{N-2} &\text{if } N=6,\, 7,\\
\delta_1^6|\log\delta_1| &\text{if } N=8,\\
\delta_1^{\frac{N+4}{2}} &\text{if } 9 \le N \le 13 \text{ or } [N \ge 14 \text{ and } (M,g) \text{ is l.c.f.}].
\end{cases}\]
This proves the claim.

\medskip \noindent \textbf{Step 3.} Finally, by adapting Step 2 of the previous subsection, one can obtain that
\[\inf\left\{\left\|u_*-\(u_0+\mcv_{\tde_1,\txi_1}\)\right\|_{H^1(M)}: \(\tde_1, \txi_1\) \in (0,\infty) \times M\right\} \gtrsim \|\rho\|_{H^1(M)}.\]
This establishes the optimality of \eqref{eq:sqe3}.

\subsection{Optimality of \eqref{eq:sqe4}}\label{se5.3}
If $N \ge 7$ and $(M,g)$ is non-l.c.f., that is, when $\zeta(t) = t$, we can slightly modify the argument in Subsection \ref{se5.1} to prove the optimality of \eqref{eq:sqe4}.
Therefore, we only need to consider the cases when $N=6$ or [$N \ge 7$ and $(M,g)$ is l.c.f.].
Because we can follow the steps of Subsection \ref{se5.2} with Lemma \ref{le4.8} in hand, we will only highlight the differences between the previous and current settings.

\medskip \noindent \textbf{Step 1.} On account of Proposition \ref{pr47}, we have
\begin{equation}\label{eq:ckest}
\begin{aligned}
\Gamma(u_*) = \|f\|_{H^{-1}(M)} &= \left\|\sum_{k=0}^N \mtc_k \mcl_g\mcz^k_1\right\|_{H^{-1}(M)} \\
&\lesssim \sum_{k=0}^N|\mtc_k| \lesssim \begin{cases}
\delta_1^4|\log\delta_1| &\text{if } N = 6 \text{ and } (M,g) \text{ is non-l.c.f.},\\
\delta_1^{N-2} &\text{if } N \ge 6 \text{ and } (M,g) \text{ is l.c.f.}
\end{cases}
\end{aligned}
\end{equation}

\medskip \noindent \textbf{Step 2.} It holds that
\begin{align*}
\trh_0(x) &= \int_M G_g(x,z) \left[(2^*-1)\mcv_1^{2^*-2}\trh_0 + \III_1[\trh_0] + \III_3 + \sum_{k=0}^N \tc_k \mcl_g\mcz^k_1\right](z) (dv_g)_z\\
&= \int_M G_g(x,z)\III_3(z)(dv_g)_z + \mfp(x) \quad \text{for } x \in M
\end{align*}
with
\[|\mfp(x)| \lesssim \begin{cases}
\begin{medsize}
\displaystyle \delta_1^6 \int_M \frac{1}{d_{g_{\xi_1}}(x,z)^4} \frac{\log (2+d_{g_{\xi_1}}(z,\xi_1))}{(\delta_1^2+d_{g_{\xi_1}}(z,\xi_1)^2)^3} (dv_g)_z
\end{medsize}
&\begin{medsize}
\text{if } N=6 \text{ and } (M,g) \text{ is non-l.c.f.},
\end{medsize} \\
\begin{medsize}
\displaystyle \delta_1^4 \(\frac{\delta_1}{\delta_1^2+d_{g_{\xi_1}}(x,\xi_1)^2}\)^2 \log\(2+\frac{d_{g_{\xi_1}}(x,\xi_1)}{\delta_1}\)
\end{medsize}
&\begin{medsize}
\text{if } N=6 \text{ and } (M,g) \text{ is l.c.f.},
\end{medsize} \\
\begin{medsize}
\displaystyle \delta_1^{\frac{N+2}{2}} \(\frac{\delta_1}{\delta_1^2+d_{g_{\xi_1}}(x,\xi_1)^2}\)^2 \bs{1}_{d_{g_{\xi_1}}(x,\xi_1) \le \frac{r_0}{2}} + \delta_1^{\frac{N+6}{2}}\bs{1}_{d_{g_{\xi_1}}(x,\xi_1) \ge \frac{r_0}{2}}
\end{medsize}
&\begin{medsize}
\text{if } N \ge 7 \text{ and } (M,g) \text{ is l.c.f.}
\end{medsize}
\end{cases}\]
From \eqref{5i3}, we discover
\[\left|\int_M \III_3\mfp dv_g\right| \lesssim
\begin{cases}
\delta_1^8|\log\delta_1|^2 &\text{if } N=6 \text{ and } (M,g) \text{ is non-l.c.f.,}\\
\delta_1^9|\log\delta_1|^2 &\text{if } N=6 \text{ and } (M,g) \text{ is l.c.f.,}\\
\delta_1^{10} &\text{if } N=7 \text{ and } (M,g) \text{ is l.c.f.,}\\
\delta_1^{12}|\log\delta_1| &\text{if } N=8 \text{ and } (M,g) \text{ is l.c.f.,}\\
\delta_1^{N+4} &\text{if } N \ge 9 \text{ and } (M,g) \text{ is l.c.f.}\\
\end{cases}\]

Recalling \eqref{gxy} and \eqref{eq:dist1}, we consider as follows:

\medskip \noindent(1) If $N=6$ and $(M,g)$ is non-l.c.f., then \eqref{i36} implies
\begin{equation}\label{eq:6nlcf}
\begin{aligned}
&\ \int_M\int_M \III_3(z)G_g(x,z)\III_3(x) (dv_g)_x(dv_g)_z\\
&\gtrsim \int_{B_{r_0/2}(0)} \int_{B_{r_0/2}(0)} \frac{1}{|y_1-y_2|^4} \frac{\delta_1^4|y_2|^2\log|y_2|}{(\delta_1^2+|y_2|^2)^3} \frac{\delta_1^4|y_1|^2\log|y_1|}{(\delta_1^2+|y_1|^2)^3} dy_1 dy_2 + \delta_1^8|\log\delta_1|^2\\
&\gtrsim \delta_1^8|\log\delta_1|^3.
\end{aligned}
\end{equation}

\medskip \noindent(2) If $N \ge 6$ and $(M,g)$ is l.c.f., then \eqref{ca13} and \eqref{i3ex} imply that $\III_3(x) \ge 0$ for $d_{g_{\xi_1}}(x,\xi_1) \ge r_0$,
\[\III_3\big(\exp_{\xi_1}^{g_{\xi_1}} y\big) \gtrsim \frac{\delta_1^{\frac{N+2}{2}}|y|^{N-2}}{(\delta_1^2+|y|^2)^{\frac{N+2}{2}}} \quad \text{for } |y| \le \tfrac{r_0}{2},\]
and
\begin{align*}
\III_3(x) &= \mfq(x) + \Lambda_{\xi_1}^{2^*-1}(x) \left[(\Delta_{g_{\xi_1}} \chi)(x) \left\{G_{g_{\xi_1}}(x,\xi_1) \(d_{g_{\xi_1}}(x,\xi_1)^{N-2}\mcu_1^{g_{\xi_1}}(x) - \alpha_N \delta_1^{\frac{N-2}{2}}\)\right\} \right. \\
&\ \left. + 2\la (\nabla_{g_{\xi_1}}\chi)(x), \nabla_{g_{\xi_1}}\left\{G_{g_{\xi_1}}(x,\xi_1) \(d_{g_{\xi_1}}(x,\xi_1)^{N-2}\mcu_1^{g_{\xi_1}}(x) -\alpha_N\delta_1^{\frac{N-2}{2}}\)\right\} \ra_{g_{\xi_1}} \right] \\
&=\mfq(x)+ \mco\Big(\delta_1^{\frac{3(N-2)}{2}}\Big) \quad \text{ for } \tfrac{r_0}{2} \le d_{g_{\xi_1}}(x,\xi_1)\le r_0
\end{align*}
where $\mfq$ is a nonnegative function on $M$. These result in
\begin{align*}
&\ \int_M\int_M \III_3(z)G_g(x,z)\III_3(x) (dv_g)_x(dv_g)_z\\
&\gtrsim \int_{B_{r_0/2}(0)}\int_{B_{r_0/2}(0)} \frac{1}{|y_1-y_2|^{N-2}} \frac{\delta_1^{\frac{N+2}{2}}|y_2|^{N-2}}{\(\delta_1^2+|y_2|^2\)^{\frac{N+2}{2}}}
\frac{\delta_1^{\frac{N+2}{2}}|y_1|^{N-2}}{\(\delta_1^2+|y_1|^2\)^{\frac{N+2}{2}}} dy_1 dy_2 + \mco\Big(\delta_1^{\frac{N+2}{2}+\frac{3(N-2)}{2}}\Big) \\
&\gtrsim \begin{cases}
\delta_1^8|\log\delta_1| &\text{if } N=6,\\
\delta_1^{N+2} &\text{if } N \ge 7.
\end{cases}
\end{align*}

\medskip \noindent \textbf{Step 3.} Combining the above computations and adapting Step 2 of Subsection \ref{se5.1}, one can derive the sharpness of \eqref{eq:sqe4}.
In particular, if $N=6$ and $(M,g)$ is non-l.c.f., then we deduce $\|\rho\|_{H^1(M)} \gtrsim \delta_1^4|\log\delta_1|^{3/2}$ from \eqref{eq:6nlcf}.
Because \eqref{eq:ckest} tells us that $\Gamma(u_*) \le C_2\delta_1^4|\log\delta_1|$ for some $C_2 > 0$, we conclude
\[\Gamma(u_*)|\log \Gamma(u_*)|^{\frac12} \le C_2\delta_1^4|\log\delta_1| \left|\log C_2 \delta_1^4|\log\delta_1|\right|^{\frac12} \lesssim \delta_1^4|\log\delta_1|^{\frac32} \lesssim \|\rho\|_{H^1(M)}.\]

\section{Proof of Corollary \ref{cor:main}}\label{se6}
We will only show the proof of (1) in Corollary \ref{cor:main} here, because the proofs of (2) and (3) are similar.

\begin{proof}[Proof of \eqref{eq:cor}]
Since $\|u\|^2_{H^1(M)} \le (\nu_0+\frac12)S^N$, we have
\begin{align*}
\Gamma(u) = \left\|\mcl_gu-u^{2^*-1}\right\|_{H^{-1}(M)} &\le\sup_{\|\vph\|_{H^1(M)}=1} \left|\la u,\vph \ra_{H^1(M)} - \int_M u^{2^*-1}\vph dv_g\right| \\
&\lesssim \|u\|_{H^1(M)}+\|u\|_{H^1(M)}^{2^*-1} \lesssim 1.
\end{align*}
Let $\Theta(u)$ be the left-hand side of \eqref{eq:cor}. Because \eqref{u00} has a trivial solution, $\Theta(u)$ is bounded by a positive constant depending only on $N$ and $\nu_0$.

Suppose that \eqref{eq:cor} is false. Then there are sequences $\{u_n\}_{n \in \N} \subset H^1(M)$ and $\{C_n\}_{n \in \N} \subset (0,\infty)$ such that
\[\|u_n\|^2_{H^1(M)} \le \(\nu_0+\tfrac12\)S^N,\ \Theta(u_n) \ge C_n\Gamma(u_n),\ \Gamma(u_n) > 0, \text{ and } C_n \to \infty \text{ as } n \to \infty,\]
since $\Gamma(u)=0$ implies that $\Theta(u)=0$. In particular, $\Gamma(u_n) \to 0$ as $n \to \infty$. By Theorem \ref{thm:Struwe},
there exist $\nu \in \{0,1,\ldots,\nu_0\}$, a smooth solution $u_0$ to \eqref{u00} with $c=1$, and a sequence of bubble-like functions $\{(\mcv_{1n},\ldots,\mcv_{\nu n})\}_{n \in \N}$ such that \eqref{eq:inter} and \eqref{eq:inter2} hold.
We may choose each $\mcv_{in}$ by \eqref{ui} if $u_0>0$ and by \eqref{vi} if $u_0=0$. Let us consider the following two cases: $\nu > 0$ and $\nu = 0$.

\medskip \noindent (1) If $\nu > 0$, then there exists $n_0 \in \N$ large such that $\{u_n\}_{n \in \N,\, n \ge n_0}$ fulfills Assumption \ref{assum}. Hence, by Theorems \ref{th1.2} and \ref{th1.3}, there exists a constant $C > 0$ independent of $n$ such that
\[\Theta(u_n) \le \left\|u_n-\bigg(u_0+\sum_{i=1}^{\nu} \mcv_{in}\bigg)\right\|_{H^1(M)} \le C\Gamma(u_n).\]

\noindent (2) If $\nu=0$, then $u_n \to u_0$ strongly in $H^1(M)$ as $n \to \infty$. By following the strategy in Section \ref{se2}, we will derive $\Theta(u_n) \le C\Gamma(u_n)$ for some $C > 0$ independent of $n$.

For the moment, we assume that $u_0 > 0$ on $M$. Setting $\rho_n = u_n-u_0$ and $f_n = \mcl_gu_n-u_n^{2^*-1}$, we decompose $\rho_n$ as
\[\rho_n = \rho_{1n} + \sum_{m=1}^L \vth_{mn} \psi_m \quad \text{where } \vth_{mn} \in \R,\, \rho_{1n} \perp \{\psi_m: m=1,\ldots,L\}\]
where $\psi_m$ is the function defined in Subsection \ref{subsec:set}. Since $u_0$ is non-degenerate by the hypothesis, there exists a constant $c_0 \in (0,1)$ such that
\begin{equation}\label{cos6}
(2^*-1)\int_M u_0^{2^*-2}\rho_{1n}^2 dv_g \le c_0\|\rho_{1n}\|^2_{H^1(M)} \quad \text{for all } n \in \N.
\end{equation}
Notice also that
\begin{equation}\label{nern}
\mcl_g\rho_n-(2^*-1)u_0^{2^*-2}\rho_n = f_n+(u_0+\rho_n)^{2^*-1}-u_0^{2^*-1}-(2^*-1)u_0^{2^*-2}\rho_n.
\end{equation}
By testing \eqref{nern} with $\rho_{1n}$ and using \eqref{cos6}, one can show
\[\|\rho_n\|_{H^1(M)} \lesssim \|\rho_{1n}\|_{H^{-1}(M)}+\sum_{m=1}^L |\vth_{mn}| \lesssim \|f_n\|_{H^{-1}(M)}+\sum_{m=1}^L|\vth_{mn}|.\]
Furthermore, for any $s \in \{1,\ldots,L\}$, testing \eqref{nern} with $\psi_s$ yields
\[(2^*-1-\bmu_s)|\vth_{sn}| \int_M u_0^{2^*-2}\psi_s^2 dv_g \lesssim \|f_n\|_{H^{-1}(M)} \quad \text{with } \bmu_s \in (0, 2^*-1),\]
so $|\vth_{sn}| \lesssim \|f_n\|_{H^{-1}(M)}$. This gives
\[\Theta(u_n) \le \|\rho_n\|_{H^1(M)} \le C\|f_n\|_{H^{-1}(M)}=C\Gamma(u_n).\]
The case $u_0 = 0$ is easier to handle.

\medskip
In both cases, we obtain
\[\infty \leftarrow C_n\le \frac{\Theta(u_n)}{\Gamma(u_n)}\le C \quad \text{as } n \to \infty,\]
which is absurd. Consequently, \eqref{eq:cor} holds.
\end{proof}

\bigskip \noindent {\small \textbf{Acknowledgement.}
H. Chen and S. Kim were supported by Basic Science Research Program through the National Research Foundation of Korea (NRF) funded by the Ministry of Science and ICT (2020R1C1C1A0\\
101013314).
We sincerely thank Bruno Premoselli and Saikat Mazumdar for providing valuable comments and references.}

\appendix
\section{Some useful estimates}\label{a}
We recall that $(M,g)$ is a smooth closed Riemannian manifold of dimension $N \ge 3$.

\medskip The next two elementary lemmas result from straightforward calculations.
\begin{lemma}
Assume that $a, b>0$. It holds that
\begin{equation}\label{s4a}
(a+b)^p-a^p=\mco(b^p) \quad \text{for } 0 < p \le 1,
\end{equation}
\begin{equation}\label{iqu}
|(a+b)^p-a^p| \lesssim a^{p-1}b+b^p, \quad |(a+b)^p-a^p-b^p| \lesssim a^{p-1}b+ab^{p-1} \quad \text{for } p \ge 1,
\end{equation}
and
\begin{equation}\label{ab6}
(a+b)^p = a^p + pa^{p-1}b + \frac{p(p-1)}{2}a^{p-2}b^2\bs{1}_{p>2} + \mco(b^p) \quad \text{for } p \ge 1.
\end{equation}
\end{lemma}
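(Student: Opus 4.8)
The plan is to read each of the three assertions as a pointwise statement about the single scalar function $t\mapsto t^p$ on $(0,\infty)$ and to prove it by combining the convexity or concavity properties of that function with Taylor's theorem; since $p$ is a fixed exponent, all implicit constants are allowed to depend on $p$, which is compatible with the paper's convention for $\mco$. I would first record the two elementary facts used repeatedly: for $0<p\le 1$ one has $(1+s)^p\le 1+s^p$ for all $s\ge 0$, so that $t^p$ is subadditive, while for $p\ge 1$ one has $(x+y)^p\le 2^p(x^p+y^p)$ and $t^p$ is superadditive.

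\textbf{The first two estimates.} For \eqref{s4a}, since $0<p\le 1$ the map $t\mapsto t^p$ is increasing and subadditive, so $0\le (a+b)^p-a^p\le b^p$, which is exactly $(a+b)^p-a^p=\mco(b^p)$. For the first inequality in \eqref{iqu} I would apply the mean value theorem, writing $(a+b)^p-a^p=pc^{p-1}b$ for some $c\in(a,a+b)$, and bound $c^{p-1}\le(a+b)^{p-1}\lesssim a^{p-1}+b^{p-1}$; nonnegativity of the left-hand side then yields $|(a+b)^p-a^p|\lesssim a^{p-1}b+b^p$. For the second inequality in \eqref{iqu} I would use that both sides are symmetric in $a$ and $b$, so that I may assume $a\ge b$; then $b^p\le a^{p-1}b$ and $(a+b)^p-a^p=\int_0^b p(a+t)^{p-1}\,dt\le p(2a)^{p-1}b$, while $(a+b)^p-a^p-b^p\ge 0$ by superadditivity, so $0\le(a+b)^p-a^p-b^p\lesssim a^{p-1}b\lesssim a^{p-1}b+ab^{p-1}$.

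\textbf{The Taylor expansion \eqref{ab6}.} Here I would split into two regimes. If $b>a$, then each of $(a+b)^p$, $a^p$, $a^{p-1}b$, and (when $p>2$) $a^{p-2}b^2$ is $\mco(b^p)$; indeed $(a+b)^p\le(2b)^p$ and $a^{p-j}b^j\le b^p$ whenever $0\le j\le p$, so the asserted identity holds with remainder $\mco(b^p)$ by inspection. If $0<b\le a$, I would invoke Taylor's theorem with Lagrange remainder for $f(t)=(a+t)^p$ about $t=0$: for $1\le p\le 2$ the first-order expansion gives $(a+b)^p=a^p+pa^{p-1}b+\tfrac{p(p-1)}{2}(a+\tau)^{p-2}b^2$ with $\tau\in(0,b)$, and since $a<a+\tau<2a$ the remainder obeys $0\le\tfrac{p(p-1)}{2}(a+\tau)^{p-2}b^2\lesssim a^{p-2}b^2\le b^p$ (the last bound because $b\le a$ and $2-p\ge 0$), which matches \eqref{ab6} with $\bs{1}_{p>2}=0$; for $p>2$ the second-order expansion gives $(a+b)^p=a^p+pa^{p-1}b+\tfrac{p(p-1)}{2}a^{p-2}b^2+\tfrac{p(p-1)(p-2)}{6}(a+\tau)^{p-3}b^3$, and again $a<a+\tau<2a$ reduces the remainder to $\mco(a^{p-3}b^3)=\mco(b^p)$, giving \eqref{ab6}.

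\textbf{Main point of care.} There is no substantial obstacle, as the argument is elementary. The one thing requiring care is that Taylor's theorem is informative only when $b$ is controlled by $a$, so the estimate in the regime $b\gtrsim a$ must be obtained separately, via the crude observation that every monomial $a^{p-j}b^j$ with $0\le j\le p$ is dominated by $b^p$ there. One must also keep track of which intermediate-point power $(a+\tau)^{p-k}$ shows up in the Lagrange remainder, since this dictates precisely which explicit term has to be displayed and which is absorbed into $\mco(b^p)$; that bookkeeping is what produces the indicator $\bs{1}_{p>2}$ on the quadratic term.
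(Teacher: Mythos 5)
Your arguments for \eqref{s4a} and both inequalities in \eqref{iqu} are correct and are the natural elementary proofs (the paper itself supplies none, dismissing the lemma as ``straightforward calculations'').

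However, your treatment of \eqref{ab6} has a genuine gap when $p>3$, and the failure is not in your method but in the statement itself. In the regime $0<b\le a$ with $p>2$, your Lagrange remainder is $\tfrac{p(p-1)(p-2)}{6}(a+\tau)^{p-3}b^3$ with $\tau\in(0,b)$, and this is indeed $\mco(a^{p-3}b^3)$. But the final reduction $\mco(a^{p-3}b^3)=\mco(b^p)$ requires $(a/b)^{p-3}\lesssim 1$; since you are in the case $a\ge b$, this holds exactly when the exponent $p-3$ is nonpositive, i.e.\ $p\le 3$. For $p>3$ and $a\gg b$ it fails, and \eqref{ab6} as written is in fact false there: with $p=4$, $a=n$, $b=1$ one has
\[
(a+b)^p-a^p-pa^{p-1}b-\tfrac{p(p-1)}{2}a^{p-2}b^2 \;=\; 4n+1,
\]
which is not $\mco(1)=\mco(b^p)$. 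The honest output of your two-term expansion is $(a+b)^p=a^p+pa^{p-1}b+\tfrac{p(p-1)}{2}a^{p-2}b^2\bs{1}_{p>2}+\mco(a^{p-3}b^3\bs{1}_{p>2}+b^p)$, and a remainder that is purely $\mco(b^p)$ requires carrying Taylor's theorem to high enough order that the surviving power of $a$ is nonpositive. This is not a vacuous objection: the paper invokes \eqref{ab6} with $p=2^*-1=\tfrac{N+2}{N-2}$, which equals $5$ when $N=3$, so your proof (like the stated lemma) does not cover all the cases in which it is actually used. The downstream $L^{2N/(N+2)}$ estimates such as \eqref{i1r} remain obtainable after keeping the intermediate monomials $a^{p-j}b^j$ for $3\le j<p$ and absorbing them by H\"older's inequality, but both \eqref{ab6} and your argument for it need either a restriction to $1\le p\le 3$ or a corrected remainder term.
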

\begin{lemma}\label{a4}
Let $\mcu_{\delta,\xi}$ be the function in \eqref{udx}, $0 < \delta < r_0$ small numbers, and $\xi \in M$. For $p > 0$, it holds that
\[\int_{B^g_{r_0}(\xi)} \mcu_{\delta,\xi}^p dv_g \lesssim \begin{cases}
\delta^{\frac{N-2}{2}p} &\text{if } 0<p<\frac{N}{N-2},\\
\delta^{\frac{N}{2}}|\log\delta| &\text{if } p=\frac{N}{N-2},\\
\delta^{N-\frac{N-2}{2}p} &\text{if } p>\frac{N}{N-2}.
\end{cases}\]
\end{lemma}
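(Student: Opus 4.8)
The plan is to pass to geodesic normal coordinates centered at $\xi$, reduce the integral to a one-dimensional radial integral, and then read off the three regimes by a single scaling substitution.

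Since $r_0$ is chosen much smaller than the injectivity radius of $(M,g)$ and $M$ is compact, the ball $B^g_{r_0}(\xi)$ sits inside a normal coordinate chart around $\xi$ for every $\xi \in M$, with $dv_g = \sqrt{\det g}\, dy$ and $\sqrt{\det g}(y) = 1 + \mco(|y|^2) \le C$ on $B_{r_0}(0)$ for a constant $C$ independent of $\xi$. In these coordinates $d_g(x,\xi) = |y|$ and $\mcu_{\delta,\xi}(x) = \alpha_N\big(\tfrac{\delta}{\delta^2+|y|^2}\big)^{\frac{N-2}{2}}$, so passing to polar coordinates $y = r\omega$ gives
\[
\int_{B^g_{r_0}(\xi)} \mcu_{\delta,\xi}^p\, dv_g \lesssim \int_0^{r_0} \Big(\frac{\delta}{\delta^2+r^2}\Big)^{\frac{(N-2)p}{2}} r^{N-1}\, dr.
\]

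Next I would substitute $r = \delta s$, which turns the right-hand side into $\delta^{N-\frac{(N-2)p}{2}} \int_0^{r_0/\delta} s^{N-1}(1+s^2)^{-\frac{(N-2)p}{2}}\, ds$. The integrand behaves like $s^{N-1-(N-2)p}$ as $s\to\infty$, so the $s$-integral converges as $r_0/\delta \to \infty$ precisely when $(N-2)p > N$. When $p > \frac{N}{N-2}$, the $s$-integral is bounded uniformly in $\delta$, giving the bound $\delta^{N-\frac{(N-2)p}{2}}$. When $p = \frac{N}{N-2}$, the integrand decays like $s^{-1}$, so the $s$-integral is $\simeq \log(r_0/\delta) \simeq |\log\delta|$, and since $N - \frac{(N-2)p}{2} = \frac{N}{2}$ in this case, one obtains $\delta^{N/2}|\log\delta|$. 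When $0 < p < \frac{N}{N-2}$, the $s$-integral diverges and $\int_0^{r_0/\delta} s^{N-1-(N-2)p}\, ds \simeq (r_0/\delta)^{N-(N-2)p}$; multiplying by the prefactor $\delta^{N-\frac{(N-2)p}{2}}$ and simplifying the exponents collapses everything to $\delta^{\frac{(N-2)p}{2}} = \delta^{\frac{N-2}{2}p}$, as claimed.

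There is no real obstacle here. The only points deserving a little care are the uniformity in $\xi$ of the volume-form comparison $dv_g \lesssim dy$ on $B^g_{r_0}(\xi)$, which follows from the compactness of $M$ together with the smallness of $r_0$, and the elementary exponent bookkeeping in the subcritical range $0 < p < \frac{N}{N-2}$ that makes the two powers of $\delta$ cancel down to $\delta^{(N-2)p/2}$. Everything else is a direct computation with the explicit radial profile of the bubble.
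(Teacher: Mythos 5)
Your proof is correct. The paper gives no proof of this lemma, stating only that it "results from straightforward calculations," and your argument—pass to normal coordinates, bound the volume form, reduce to a radial integral, and rescale $r = \delta s$ to read off the three regimes—is precisely the standard computation the authors have in mind, with all exponent bookkeeping carried out correctly.
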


\begin{lemma}\label{a22}
Suppose that $3 \le N \le 5$. Let $\mcv_{\delta,\xi}$ be a bubble-like function defined by \eqref{ui} or \eqref{vi}. For any indices $1 \le i \ne j \le \nu$, a fixed number $\tau>0$, and nonnegative exponents $p$ and $q$ such that $p+q=2^*$, it holds that
\[\int_M \mcv_{\delta_i,\xi_i}^p\mcv_{\delta_j,\xi_j}^q dv_g \lesssim \begin{cases}
q_{ij}^{\min\{p,q\}} &\text{if } |p-q|\ge \tau,\\
q_{ij}^{\frac{N}{N-2}}|\log q_{ij}| &\text{if } p=q
\end{cases}\]
provided $q_{ij}$ in \eqref{rq} small.
\end{lemma}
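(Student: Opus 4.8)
The plan is to reduce the estimate to a model Euclidean computation and then control the errors coming from the cut-off regions and the conformal change of metric. First I would fix $1 \le i \ne j \le \nu$ and may assume without loss of generality that $\delta_i \le \delta_j$, so that $q_{ij} \simeq (\delta_j/\delta_i + d_g(\xi_i,\xi_j)^2/(\delta_i\delta_j))^{-(N-2)/2}$ (recall \eqref{rq} and $\msr_{ij} \simeq q_{ij}^{-1/(N-2)}$). Next I split $M$ into the regions $B^g_{r_0/2}(\xi_i)$, $B^g_{r_0/2}(\xi_j) \setminus B^g_{r_0/2}(\xi_i)$, and the complement. On the complement of both small balls, both $\mcv_{\delta_i,\xi_i}$ and $\mcv_{\delta_j,\xi_j}$ are comparable to constants of size $\delta_i^{(N-2)/2}$ and $\delta_j^{(N-2)/2}$ respectively (by \eqref{ui}, \eqref{vi}, and \eqref{gx}), so the integral there is $\mco((\delta_i\delta_j)^{(N-2)/2} \cdot \min\{1, \ldots\})$, which is easily seen to be $o(q_{ij}^{\min\{p,q\}})$. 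The nontrivial contribution comes from the region around the more concentrated bubble $\xi_i$, where I would pass to $g$-normal (or conformal normal) coordinates $y$ centered at $\xi_i$, use \eqref{eq:dist1}--\eqref{eq:dist2} to replace $d_g(\cdot,\xi_i)$ and $d_g(\cdot,\xi_j)$ by their Euclidean counterparts up to harmless multiplicative errors, and use $dv_g = (1+\mco(|y|^2))\,dy$.

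The heart of the matter is therefore the Euclidean bound
\[
\int_{\R^N} U_{\delta_i,0}(y)^p\, U_{\delta_j,\sigma}(y)^q\, dy \lesssim
\begin{cases}
q_{ij}^{\min\{p,q\}} & \text{if } |p-q| \ge \tau,\\
q_{ij}^{N/(N-2)} |\log q_{ij}| & \text{if } p = q,
\end{cases}
\]
where $\sigma = (\exp^g_{\xi_i})^{-1}(\xi_j)$ and $|\sigma| \simeq d_g(\xi_i,\xi_j)$. This is a classical interaction estimate; I would prove it by the standard rescaling $y = \delta_i z$, reducing to $\int_{\R^N} U(z)^p U_{\delta_j/\delta_i, \sigma/\delta_i}(z)^q\,dz$, and then splitting $\R^N$ according to whether $z$ is near $0$, near $\sigma/\delta_i$, or far from both; the decay rates $U(z) \simeq |z|^{-(N-2)}$ at infinity together with $p+q = 2^* = 2N/(N-2)$ (so that $(N-2)(p+q) = 2N$, exactly the borderline integrability) produce the logarithmic factor precisely when $p=q$ and a clean power $q_{ij}^{\min\{p,q\}}$ when the exponents are separated. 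The constant $\tau$ enters only to keep the implied constant uniform: as $p \to q$ the power-type bound degenerates into the logarithmic one, and fixing $\tau > 0$ quarantines this transition.

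The main obstacle, and the place requiring genuine care rather than bookkeeping, is handling the intermediate annulus $\frac{r_0}{2} \le d_g(\cdot,\xi_i) \le r_0$ where the cut-off $\chi$ interpolates and, in the case of \eqref{vi}, where $\mcv_{\delta_i,\xi_i}$ is glued to the constant $\ga_N \alpha_N \delta_i^{(N-2)/2} G_g(\cdot,\xi_i)$ via the Green's function. There one must check that replacing $d_{g_{\xi_i}}(x,\xi_i)^{N-2}\mcu^{g_{\xi_i}}_{\delta_i,\xi_i}(x)$ by $\alpha_N\delta_i^{(N-2)/2}$ (and multiplying by the bounded factor $\ga_N G_g(\cdot,\xi_i)$, which is $\simeq 1$ there) does not worsen the estimate; since on this annulus $\mcv_{\delta_i,\xi_i} \simeq \delta_i^{(N-2)/2}$ regardless, the contribution is again $\mco((\delta_i\delta_j)^{(N-2)/2})$ times an integral over a fixed annulus, hence absorbed into the error term. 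Finally I would collect the three regional estimates; since $3 \le N \le 5$ makes $2^* = \frac{2N}{N-2} \in \{6,5,10/3\}$ and all the exponents $p,q$ lie in a bounded range, all implied constants depend only on $N$, $\nu$, $\tau$, and $(M,g)$, which completes the proof.
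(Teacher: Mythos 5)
Your proposal is essentially the same as the paper's (one-line) proof: the paper simply says to change variables to normal/conformal normal coordinates, use the metric expansion \eqref{exp} and Green's function expansion \eqref{gx}, and appeal to the Euclidean interaction estimates of Bahri or Figalli--Glaudo. You flesh this out with the regional decomposition, the rescaling $y=\delta_i z$, and the classical three-region analysis of $\int U^p U_{\lambda,\eta}^q$ near $0$, near $\eta$, and at infinity, which correctly produces $q_{ij}^{\min\{p,q\}}$ when $p\ne q$ and the logarithm when $p=q$ (since $(N-2)(p+q)=2N$ is critical). Your handling of the gluing annulus for \eqref{vi} is also correct.

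One point to tighten: you set up three regions but only explicitly estimate two. The claim that ``the nontrivial contribution comes from the region around the more concentrated bubble $\xi_i$'' is not accurate when $d_g(\xi_i,\xi_j)\gtrsim r_0$, because the peak of $\mcv_j$ then lies outside your coordinate chart at $\xi_i$, and $\int_{B^g_{r_0/2}(\xi_j)\setminus B^g_{r_0/2}(\xi_i)}\mcv_i^p\mcv_j^q\,dv_g$ is of the same order as the claimed bound, not $o$ of it. For instance, when $p<N/(N-2)<q$ one gets (using $\mcv_i\simeq\delta_i^{(N-2)/2}$ there and Lemma \ref{a4}) a contribution $\simeq\delta_i^{(N-2)p/2}\delta_j^{N-(N-2)q/2}=(\delta_i\delta_j)^{(N-2)p/2}\simeq q_{ij}^{p}=q_{ij}^{\min\{p,q\}}$. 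So this piece must be estimated on its own; the computation is elementary and symmetric to the $\xi_i$ side but cannot be folded into the ``$o$'' remainder. With that caveat addressed, your argument is complete and matches the paper's intended route.
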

\begin{proof}
By applying a change of variables, \eqref{exp}, and \eqref{gx}, and referring to the proof of \cite[(E1)--(E3)]{B} or \cite[Proposition B.2]{FG}, one can derive the above inequality.
\end{proof}
\begin{lemma}\label{a1}
Suppose that $p > 2$. Then we have
\begin{align*}
&\ \int_M \frac{1}{d_g(x,z)^{N-2}} \(\frac{\delta_i}{\delta_i^2+d_g(z,\xi_i)^2}\)^{\frac{p}{2}} (dv_g)_z \\
&\lesssim \begin{cases}
\displaystyle \delta_i^{\frac{p}{2}} \(\delta_i^2+d_g(x,\xi_i)^2\)^{-\frac{p-2}{2}} &\text{if } 2<p<N, \\
\displaystyle \delta_i^{\frac{N}{2}} \(\delta_i^2+d_g(x,\xi_i)^2\)^{-\frac{N-2}{2}} \log\(2+d_g(x,\xi_i)\delta_i^{-1}\) &\text{if } p=N, \\
\displaystyle \delta_i^{N-\frac{p}{2}} \(\delta_i^2+d_g(x,\xi_i)^2\)^{-\frac{N-2}{2}} &\text{if } p>N.
\end{cases}
\end{align*}
\end{lemma}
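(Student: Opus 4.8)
\textbf{Proof plan for Lemma \ref{a1}.}
The estimate is the standard convolution bound between the Green's-function kernel $d_g(x,z)^{2-N}$ and a rescaled bubble-type density; the point is to localize near $\xi_i$, reduce to a Euclidean computation, and then split the integration domain according to the distance of $z$ from both $x$ and $\xi_i$. First I would fix $x \in M$ and write $r = d_g(x,\xi_i)$, $\delta = \delta_i$. Away from a fixed ball $B_{r_0}^g(\xi_i)$ the density $(\delta/(\delta^2+d_g(z,\xi_i)^2))^{p/2}$ is $\mco(\delta^{p/2})$ while $\int_M d_g(x,z)^{2-N}\,(dv_g)_z = \mco(1)$, so the contribution of $M \setminus B_{r_0}^g(\xi_i)$ is $\mco(\delta^{p/2})$, which is dominated by every right-hand side (since $p > 2$ and $r$ is bounded). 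Hence it suffices to estimate the integral over $B_{r_0}^g(\xi_i)$, and there I would pass to $g$-normal coordinates centered at $\xi_i$, using the comparability $d_g(x,z) \simeq |y_x - y_z|$ and $d_g(z,\xi_i) \simeq |y_z|$ together with $dv_g = (1+\mco(|y_z|^2))\,dy_z$ from \eqref{exp}, to reduce everything to the Euclidean integral
\[
\int_{\{|y| \le r_0\}} \frac{1}{|y_x - y|^{N-2}} \(\frac{\delta}{\delta^2 + |y|^2}\)^{\frac{p}{2}} dy.
\]

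For the Euclidean integral I would split $\{|y| \le r_0\}$ into the three regions $\{|y| \le \delta\}$, $\{\delta \le |y| \le 2(|y_x|+\delta)\}$ (roughly, the ``bulk'' where the density is active), and $\{|y| \ge 2(|y_x|+\delta)\}$ (the tail), and further distinguish whether $|y_x| \lesssim \delta$ or $|y_x| \gg \delta$. In the region where $|y| \lesssim \delta$ the density is $\simeq \delta^{-p/2}$ and the kernel integrates against a ball of radius $\simeq \delta$, contributing $\simeq \delta^{-p/2}\cdot \delta^2 \cdot \max(\delta,|y_x|)^{2-N}$ when $|y_x| \gtrsim \delta$, etc.; in the intermediate annulus one uses $\delta^2+|y|^2 \simeq |y|^2$ and a dyadic decomposition in $|y|$, which produces a geometric series that sums to the stated power exactly when $p \ne N$ and to the logarithmic factor when $p = N$ (the borderline case where the dyadic sum over scales between $\delta$ and $r$ has $\simeq \log(r/\delta) = \log(2 + r\delta^{-1})$ terms of comparable size); the tail contributes $\int_{|y| \ge \cdots} |y|^{2-N}\cdot \delta^{p/2}|y|^{-p}\,dy \simeq \delta^{p/2}\max(\delta,|y_x|)^{2-p}$ when $p < N$ (convergent at infinity after truncation at $r_0$) and lower order otherwise. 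Assembling the three pieces and comparing against $(\delta^2+r^2)^{-(p-2)/2}$, $(\delta^2+r^2)^{-(N-2)/2}\log(2+r\delta^{-1})$, and $\delta^{N-p}(\delta^2+r^2)^{-(N-2)/2}$ in the ranges $2<p<N$, $p=N$, $p>N$ respectively gives the claim. A cleaner route, which I would probably follow to avoid the case analysis, is to invoke the known Riesz-potential-against-bubble estimates already recorded in the literature on these constructions — the computation is essentially \cite[Lemma B.1 or B.3]{FG} or the estimates behind \cite[(E1)--(E3)]{B} — and simply cite them after the localization step above.

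The only genuinely delicate point is the borderline case $p = N$, where one must be careful that the logarithm appears with argument $2 + d_g(x,\xi_i)\delta_i^{-1}$ (not $|\log \delta_i|$), which requires keeping track of the correct scale at which the dyadic series is cut off; this is exactly the point where a crude bound would lose the sharp dependence on $d_g(x,\xi_i)$. Everything else is routine once the reduction to the Euclidean convolution is in place, so I expect the write-up to be short: one paragraph for the localization and change of variables, and one paragraph (or a citation) for the Euclidean convolution estimate with the three-region split.
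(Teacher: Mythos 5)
Your proof outline is correct: localization to a coordinate ball, reduction to the Euclidean Riesz-potential integral, and the three-region (or dyadic annulus) decomposition distinguishing $p<N$, $p=N$, $p>N$ is exactly the standard argument, and you correctly identify the only delicate point as the borderline $p=N$ case where the dyadic cutoff must be taken at scale $d_g(x,\xi_i)$ rather than a fixed scale to produce the sharp $\log(2+d_g(x,\xi_i)\delta_i^{-1})$ factor. The paper itself takes the ``cleaner route'' you anticipate at the end and simply cites \cite[Lemma A.7]{DSW}, which records precisely this estimate; your alternative references to \cite{FG} and \cite{B} are for closely related but not identical bubble-interaction integrals, so \cite{DSW} is the more on-target citation.
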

\begin{proof}
Refer to \cite[Lemma A.7]{DSW}.
\end{proof}

\begin{lemma}\label{aux}
For any $a, b \in (1,\frac{N}{2})$, $\delta > 0$ small, and $\xi \in M$, we have
\begin{multline*}
\int_M \frac{1}{d_g(x,z)^{N-2}} \left[\(\frac{\delta}{\delta^2+d_g(z,\xi)^2}\)^a \bs{1}_{d_g(z,\xi) \le \sqrt{\delta}} + \(\frac{\delta}{\delta^2+d_g(z,\xi)^2}\)^b \bs{1}_{d_g(z,\xi) \ge \sqrt{\delta}}\right] (dv_g)_z \\
\lesssim \delta \(\frac{\delta}{\delta^2+d_g(x,\xi)^2}\)^{a-1} \bs{1}_{d_g(x,\xi) \le \sqrt{\delta}} + \delta\(\frac{\delta}{\delta^2+d_g(x,\xi)^2}\)^{b-1} \bs{1}_{d_g(x,\xi) \ge \sqrt{\delta}}
\end{multline*}
and
\begin{multline*}
\int_M \frac{1}{d_g(x,z)^{N-2}} \left[\(\frac{\delta}{\delta^2+d_g(z,\xi)^2}\)^a \bs{1}_{d_g(z,\xi) \le \frac{r_0}{2}} + \delta^a \bs{1}_{d_g(z,\xi) \ge \frac{r_0}{2}}\right] (dv_g)_z \\
\lesssim \delta \(\frac{\delta}{\delta^2+d_g(x,\xi)^2}\)^{a-1} \bs{1}_{d_g(x,\xi) \le \frac{r_0}{2}} + \delta^a \bs{1}_{d_g(x,\xi) \ge \frac{r_0}{2}}.
\end{multline*}
\end{lemma}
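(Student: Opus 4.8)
The statement to prove is Lemma~\ref{aux}, which gives two convolution-type estimates for kernels of the form $d_g(x,z)^{2-N}$ against bubble-like weights. The plan is to reduce both inequalities to essentially Euclidean computations after straightening the metric, and to exploit the two-regime structure ($d_g(z,\xi)\lesssim\sqrt\delta$ versus $d_g(z,\xi)\gtrsim\sqrt\delta$) to split the integral into pieces each of which is handled by an elementary scaling argument.

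First I would record the preliminary reductions. Since $M$ is compact and $d_g(x,z)^{2-N}$ is integrable, it suffices to estimate the integral over a fixed geodesic ball $B_{r_0}^g(\xi)$ (the complement contributes $\mco(1)$ times the $L^1$-norm of the weight, which is $\mco(\delta^{\min\{a,b\}})$ or $\mco(\delta^a)$ respectively, and is absorbed). On that ball I pass to $g$-normal coordinates around $\xi$, so that $d_g(\cdot,\xi)\simeq|\cdot|$, $d_g(x,z)\simeq|x-z|$, and $dv_g\simeq dy$ up to bounded multiplicative factors; this turns both statements into inequalities for the Riesz-type potential $\int_{\R^N}|x-z|^{2-N}W(z)\,dz$ with $W$ the explicit radial profile appearing on the left-hand side. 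For the first inequality this is exactly the kind of estimate proved in Lemma~\ref{a1} (and its two-regime refinements used throughout Section~\ref{se4}); indeed, writing the weight as a sum of the two indicator pieces, the piece supported on $\{|z|\le\sqrt\delta\}$ is pointwise $\lesssim\delta^{-a}|z|^{0}$ bounded, hence its potential is $\lesssim\delta^{-a}\int_{|z|\le\sqrt\delta}|x-z|^{2-N}dz$, which one estimates separately for $|x|\le 2\sqrt\delta$ and $|x|\ge 2\sqrt\delta$; the piece supported on $\{|z|\ge\sqrt\delta\}$ behaves like $\delta^b|z|^{-2b}$ and its potential is controlled by the $2<p<N$ case of Lemma~\ref{a1} after noting $b\in(1,\tfrac N2)$ gives $2b\in(2,N)$. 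Matching the resulting bounds against the claimed right-hand side in each of the two regimes $|x|\le\sqrt\delta$, $|x|\ge\sqrt\delta$ is then a direct comparison of powers of $\delta$ and $\delta^2+|x|^2$.

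For the second inequality the structure is the same but simpler, since the outer regime $\{|z|\ge r_0/2\}$ carries the flat weight $\delta^a$: its potential is $\lesssim\delta^a\int_{r_0/2\le|z|\le r_0}|x-z|^{2-N}dz\lesssim\delta^a$ uniformly in $x$, matching the $\bs{1}_{d_g(x,\xi)\ge r_0/2}$ term and also dominated by $\delta(\delta/(\delta^2+|x|^2))^{a-1}\simeq\delta^{2-a}|x|^{2-2a}\gtrsim\delta^a$ when $|x|\le r_0/2$ (using $a<N/2$ and $|x|\lesssim 1$). The inner piece on $\{|z|\le r_0/2\}$ is handled verbatim as the second term of the first inequality (with $b$ replaced by $a$), again via the $2<p<N$ case of Lemma~\ref{a1}. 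I would then assemble the bounds, checking that in the regime $|x|\le\sqrt\delta$ the dominant contribution is the claimed $\delta(\delta/(\delta^2+|x|^2))^{a-1}$ term and in $|x|\ge\sqrt\delta$ it is the $b-1$ (resp.\ flat $\delta^a$) term.

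The main obstacle is not any single estimate but the bookkeeping: one must carefully verify that the cross-terms (potential of the inner weight evaluated at an outer point $x$, and vice versa) are genuinely dominated by the stated right-hand side rather than producing a spurious larger term, and that the transition at $|x|\simeq\sqrt\delta$ is consistent — i.e.\ the two branches of the claimed bound agree up to constants there, which forces the exponents $a-1$ and $b-1$ to be the correct ones. The hypothesis $a,b\in(1,\tfrac N2)$ is used precisely to keep all the exponents in the convergent range of Lemma~\ref{a1} ($p=2a,2b\in(2,N)$) and to ensure the weights are locally integrable near $\xi$; I would flag at the start that these are the only places the constraint enters. Apart from that, every individual integral is a one-line scaling computation of the type already carried out in the proofs of Lemmas~\ref{a1} and~\ref{a22}, so I would simply cite those and indicate the splitting rather than redo the calculus.
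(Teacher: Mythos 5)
Your overall strategy --- pass to normal coordinates, split the bubble weight at the scale $\sqrt\delta$, and feed each piece into Riesz-potential estimates of the type in Lemma~\ref{a1} --- is the right one, and is consistent with the paper's one-line proof (a citation of the analogous lemma in Deng--Sun--Wei). However, your treatment of the inner piece does not yield the stated bound. Bounding the weight on $\{d_g(z,\xi)\le\sqrt\delta\}$ pointwise by its supremum $\delta^{-a}$ and then integrating the Riesz kernel gives, for $d_g(x,\xi)\lesssim\sqrt\delta$,
\[
\delta^{-a}\int_{\{d_g(z,\xi)\le\sqrt\delta\}} d_g(x,z)^{2-N}\,(dv_g)_z \simeq \delta^{-a}\cdot\delta = \delta^{1-a},
\]
whereas the claimed right-hand side $\delta\big(\tfrac{\delta}{\delta^2+d_g(x,\xi)^2}\big)^{a-1}$ is at most $\delta^{2-a}$ on that region; you overshoot by a full factor $\delta^{-1}$. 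The pointwise bound $\delta^{-a}$ is saturated only on the much smaller ball $d_g(z,\xi)\lesssim\delta$ (at $d_g(z,\xi)\simeq\sqrt\delta$ the weight has already dropped to $\simeq 1$), so the $L^\infty\times L^1$ argument wastes the decay. The fix is to treat the inner piece by Lemma~\ref{a1} with $p=2a\in(2,N)$, exactly as you already propose for the outer piece and for the second inequality; that gives the sharp $\delta^a(\delta^2+d_g(x,\xi)^2)^{1-a}$ on $\{d_g(x,\xi)\le\sqrt\delta\}$, which is precisely the claimed quantity.

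You rightly flag the cross-terms as the delicate point, but the proposal contains no mechanism that closes them, and this is a second genuine gap. In the paper's intended application one has $a<b$, and then the untruncated Lemma~\ref{a1} bound $\delta^a(\delta^2+d_g(x,\xi)^2)^{1-a}$ for the inner piece, evaluated at $d_g(x,\xi)\ge\sqrt\delta$, is \emph{not} dominated by $\delta\big(\tfrac{\delta}{\delta^2+d_g(x,\xi)^2}\big)^{b-1}$: their ratio is $\big(\tfrac{\delta^2+d_g(x,\xi)^2}{\delta}\big)^{b-a}$, which blows up. What saves the estimate is precisely the truncation at $\sqrt\delta$: the inner weight has total mass $\simeq\delta^{N/2}$ (using $a<N/2$), so its potential at $d_g(x,\xi)\ge 2\sqrt\delta$ is $\simeq\delta^{N/2}\,d_g(x,\xi)^{2-N}$, which is dominated by $\delta\big(\tfrac{\delta}{\delta^2+d_g(x,\xi)^2}\big)^{b-1}$ because $b<N/2$ and $d_g(x,\xi)\ge\sqrt\delta$. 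A symmetric argument exploiting the truncation handles the outer piece on $\{d_g(x,\xi)\le\sqrt\delta\}$. These are the steps that need to be written out to make the proof correct.
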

\begin{proof}
The above inequalities can be proved as in the proof of \cite[Lemma 3.6]{DSW}. The details are omitted.
\end{proof}

\begin{lemma}
For $\xi \in M$ and $y_1, y_2 \in B_{r_0}(0)$ where $r_0 > 0$ is small enough, it holds that
\begin{equation}\label{eq:dist1}
d_g\big(\exp_{\xi}^g(y_1),\exp_{\xi}^g(y_2)\big)^2 = |y_1-y_2|^2 + \mco\(\(|y_1|^2+|y_2|^2\)|y_1-y_2|^2\)
\end{equation}
and
\begin{equation}\label{eq:dist2}
\nabla_{y_2}d_g\big(\exp_{\xi}^g(y_1),\exp_{\xi}^g(y_2)\big)^2 = 2(y_2-y_1) + \mco\(\(|y_1|^2+|y_2|^2\)|y_1-y_2|\).
\end{equation}
\end{lemma}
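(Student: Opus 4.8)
The plan is to Taylor expand the squared distance in its second variable around the diagonal, using the elementary identity $d_g(\xi,\exp_\xi^g(y))^2=|y|^2$ together with the classical fact that the Hessian of the squared distance at the diagonal is twice the metric.

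Fix $\xi\in M$ and shrink $r_0$ below the injectivity radius. Write $x_a=\exp_\xi^g(y_a)$ for $a=1,2$ and set
\[
\Gamma(y_1,y_2):=d_g(x_1,x_2)^2,
\]
a smooth function on $B_{r_0}(0)\times B_{r_0}(0)$ all of whose derivatives are bounded uniformly in $\xi$ by compactness of $M$. I will use three facts. (i) For every $p$, the map $y_2\mapsto\Gamma(p,y_2)$ has a critical point at $y_2=p$ with $\Gamma(p,p)=0$, $\pa_{y_2^i}\Gamma(p,p)=0$, and Hessian $\pa_{y_2^i}\pa_{y_2^j}\Gamma(p,p)=2g_{ij}(p)$ — the last being the standard identity $\mathrm{Hess}\big(d_g(x_1,\cdot)^2\big)\big|_{x_1}=2g\big|_{x_1}$, derivable from the Jacobi-field expansion of the distance function and coinciding at a critical point with the coordinate Hessian. (ii) In $g$-normal coordinates about $\xi$ one has $g_{ij}(p)=\delta_{ij}+\mco(|p|^2)$ (cf. the inverse-metric expansion \eqref{exp}). (iii) Since $y$ are $g$-normal coordinates about $\xi$, the function $\Gamma(0,y_2)=|y_2|^2$ is exactly quadratic, so $\pa_{y_2^i}\pa_{y_2^j}\pa_{y_2^k}\Gamma(0,\cdot)\equiv0$; by smoothness in the first slot this forces $\pa_{y_2^i}\pa_{y_2^j}\pa_{y_2^k}\Gamma(p,p)=\mco(|p|)$.

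Put $v:=y_2-y_1$. A third-order Taylor expansion of $y_2\mapsto\Gamma(y_1,y_2)$ at $y_2=y_1$, using (i), (iii) and the uniform bound on fourth derivatives, gives
\[
\Gamma(y_1,y_2)=g_{ij}(y_1)v^iv^j+\mco\big(|y_1|\,|v|^3\big)+\mco\big(|v|^4\big);
\]
inserting (ii) turns the leading term into $|v|^2+\mco(|y_1|^2|v|^2)$, hence
\[
\Gamma(y_1,y_2)=|y_1-y_2|^2+\mco\big(|y_1|^2|y_1-y_2|^2\big)+\mco\big(|y_1|\,|y_1-y_2|^3\big)+\mco\big(|y_1-y_2|^4\big).
\]
Since $|v|\le|y_1|+|y_2|$, one has $|y_1|\,|v|^3\le\tfrac12(|y_1|^2+|v|^2)|v|^2$ and $|v|^4=|v|^2|v|^2\lesssim(|y_1|^2+|y_2|^2)|v|^2$, so all three error terms are $\lesssim(|y_1|^2+|y_2|^2)|y_1-y_2|^2$; this is \eqref{eq:dist1}. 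For \eqref{eq:dist2} I would not differentiate the established remainder, but instead Taylor expand the coordinate gradient directly: a second-order expansion of $y_2\mapsto\pa_{y_2^i}\Gamma(y_1,y_2)$ at $y_2=y_1$, again via (i) and (iii), yields $\pa_{y_2^i}\Gamma(y_1,y_2)=2g_{ij}(y_1)v^j+\mco(|y_1|\,|v|^2)+\mco(|v|^3)$, and substituting (ii) and applying the same triangle-inequality absorption gives $\nabla_{y_2}\Gamma(y_1,y_2)=2(y_2-y_1)+\mco\big((|y_1|^2+|y_2|^2)|y_1-y_2|\big)$.

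The only non-bookkeeping point — where one must be careful — is that the cubic-in-$v$ Taylor coefficient carries a factor $\mco(|y_1|)$ rather than merely $\mco(1)$; without this gain the term $\mco(|v|^3)$ would not be absorbable into $\mco\big((|y_1|^2+|y_2|^2)|y_1-y_2|^2\big)$, since $|v|$ small does not imply $|v|\lesssim|y_1|^2+|y_2|^2$. This gain is exactly what fact (iii) supplies; similarly the $\mco(|y_1|^2)$ (not just $\mco(|y_1|)$) in the leading coefficient is what fact (i)'s identification of the Hessian with the full metric $2g_{ij}(y_1)$ provides once combined with (ii). Everything else is Taylor's theorem with remainders uniform in $\xi$, plus the inequality $|y_1-y_2|\le|y_1|+|y_2|$. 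One may also note that facts (i) and (iii) are equivalent to the coincidence limits $[\sigma_{;a'b'}]=g_{a'b'}$ and $[\sigma_{;a'b'c'}]=0$ of Synge's world function $\sigma=\tfrac12 d_g^2$, read in the chart and using that the Christoffel symbols of $g$ at $p$ are $\mco(|p|)$ in $g$-normal coordinates about $\xi$.
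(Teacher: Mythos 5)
Your proof is correct. Note that the paper itself does not prove this lemma at all --- its ``proof'' is a one-line citation to \cite[Lemma A.8]{JK} --- so there is no in-paper argument to compare against; what you supply is a self-contained derivation. Your route (Taylor expansion of $\Gamma(y_1,\cdot)$ about the diagonal, using the coincidence limits $\Gamma(p,p)=0$, $\pa_{y_2}\Gamma(p,p)=0$, $\pa^2_{y_2}\Gamma(p,p)=2g_{ij}(p)$, the normal-coordinate expansion $g_{ij}(p)=\delta_{ij}+\mco(|p|^2)$, and the exact identity $\Gamma(0,y_2)=|y_2|^2$ to gain the factor $\mco(|p|)$ on the third-order coefficient) is sound, and you correctly isolate the two places where a naive expansion would fail: the cubic coefficient must be $\mco(|y_1|)$ rather than $\mco(1)$ for the $|v|^3$ term to be absorbable into $(|y_1|^2+|y_2|^2)|v|^2$, and the quadratic coefficient must agree with $\delta_{ij}$ to second order in $|y_1|$. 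The absorption inequalities $|y_1|\,|v|\le\tfrac12(|y_1|^2+|v|^2)$ and $|v|^2\lesssim|y_1|^2+|y_2|^2$ then close both estimates, and expanding the gradient directly (rather than differentiating the remainder of \eqref{eq:dist1}) is the right way to get \eqref{eq:dist2}. The uniformity in $\xi$ from compactness of $M$ and the smoothness of $d_g^2$ near the diagonal for $r_0$ small are also correctly invoked.
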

\begin{proof}
Refer to \cite[Lemma A.8]{JK}.
\end{proof}

\section{Technical computations}\label{a2}
\subsection{Proof of Proposition \ref{41}}\label{subsec:coer}
We argue by contradiction. Suppose that there exist sequences of parameters $\{(\delta_{in},\xi_{in})\}_{n \in \N}$, functions $\{\vrh_n\}_{n \in \N}$, and numbers $\{c_{0n}\}_{n \in \N} \subset (0,1]$ such that $\delta_{in} \to 0$ and $c_{0n} \to 1$ as $n \to \infty$, $\|\vrh_n\|_{H^1(M)}=1$ for all $n \in \N$,
\begin{equation}\label{2.11}
\begin{medsize}
\displaystyle \int_M \bigg(u_0+\sum_{i=1}^{\nu}\mcv_{in}\bigg)^{2^*-2}\vrh_n^2 dv_g = \sup \bigg\{\int_M \bigg(u_0+\sum_{i=1}^{\nu}\mcv_{i}\bigg)^{2^*-2}\vrh^2 dv_g :\|\vrh\|_{H^1(M)}=1\bigg\} \ge \frac{c_{0n}}{2^*-1},
\end{medsize}
\end{equation}
and
\begin{multline}\label{2.12}
\la \vrh_n,\mcv_{in} \ra_{H^1(M)} = \big\langle \vrh_n,\mcz^k_{in} \big\rangle_{H^1(M)} = \la \vrh_n,\psi_m \ra_{H^1(M)} = 0 \\
\text{for } i=1,\ldots,\nu,\ k=0,1,\ldots,N,\ m=1,\ldots,L.
\end{multline}
Here, $\mcv_{in} = \chi(d_g(\cdot,\xi_{in}))\mcu_{\delta_{in},\xi_{in}} + (1-\chi(d_g(\cdot,\xi_{in}))) U_{\delta_{in},0}(\frac{r_0}{2})$,
$\mcz^0_{in} = \delta_{in} \frac{\pa\mcv_{in}}{\pa\delta_{in}}$, and $\mcz^k_{in} = \delta_{in} \frac{\pa\mcv_{in}}{\pa\xi_{in}^k}$. By \eqref{2.11} and \eqref{2.12},
\begin{multline}\label{2.13}
\mcl_g\vrh_n -\mu_n\bigg(u_0+\sum_{i=1}^{\nu}\mcv_{in}\bigg)^{2^*-2}\vrh_n\\
= \sum_{i=1}^{\nu}\mu_{in}\mcl_g\mcv_{in} + \sum_{i=1}^{\nu}\sum_{k=0}^N \mu_{in}^k\mcl_g\mcz^k_{in} + \sum_{m=1}^L\tmu_{mn}u_0^{2^*-2}\psi_m \quad \text{on } M
\end{multline}
where $\mu_n, \mu_{in},\mu_{in}^k, \tmu_{mn}\in \R$ are Lagrange multipliers. Testing \eqref{2.13} with $\vrh_n$ and applying \eqref{2.12}, we arrive at
\[\mu_n = \left[\int_M \bigg(u_0+\sum_{i=1}^{\nu}\mcv_{in}\bigg)^{2^*-2}\vrh_n^2 dv_g\right]^{-1} \in [c(\nu,N,L), c_{0n}^{-1}(2^*-1)]\]
where the lower bound $c(\nu,N,L)$ is positive and dependent only on $\nu$, $N$, and $L$. Hence we may assume that $\mu_n \to \mu_{\infty} \in [c(\nu,N,L), c_{0n}^{-1}(2^*-1)]$ as $n \to \infty$.
	
Let $q_{ij,n}$, $\mcq_n$, $\msr_{ij,n}$ be the quantities introduced in \eqref{rq} where $(\xi_i,\xi_j,\delta_i,\delta_j)$ is replaced with $(\xi_{in},\xi_{jn},\delta_{in},\delta_{jn})$.
We present the rest of the proof by dividing it into four steps.

\medskip \noindent \textbf{Step 1.} We claim that
\begin{equation}\label{2.15}
\sum_{i=1}^{\nu}|\mu_{in}| + \sum_{i=1}^{\nu}\sum_{k=0}^N |\mu_{in}^k| + \sum_{m=1}^L |\tmu_{mn}| = o(1)
\end{equation}
where $o(1) \to 0$ as $n \to \infty$. To prove it, we argue as in the proof of Lemma \ref{le24}.

\medskip
Firstly, we test \eqref{2.13} with $\mcv_{jn}$ for $j \in \{1,\ldots,\nu\}$ and employ \eqref{2.12} to get
\[-\mu_n \int_M \bigg(u_0+\sum_{i=1}^{\nu}\mcv_{in}\bigg)^{2^*-2}\vrh_n \mcv_{jn} dv_g = \bigg\langle \sum_{i=1}^{\nu} \mu_{in}\mcv_{in} + \sum_{i=1}^{\nu}\sum_{k=0}^N \mu_{in}^k\mcz^k_{in} + \sum_{m=1}^L \tmu_{mn}\psi_m,\mcv_{jn} \bigg\rangle_{H^1(M)}.\]
Thus we infer from Lemma \ref{le2p} and \eqref{teu} that
\begin{align}
|\mu_n| \mco\Big(\mcq_n + \max_{\ell}\delta_{\ell n}^{\frac{N-2}{2}}\Big) &= |\mu_{jn}| \int_{\R^N} U^{2^*} + \(|\mu_{jn}| + \sum_{k=0}^N |\mu_{jn}^k|\) o\Big(\delta_{jn}^{\frac{N-2}{2}}\Big) + \sum_{m=1}^L|\tmu_{mn}| \mco\Big(\delta_{jn}^{\frac{N-2}{2}}\Big) \nonumber \\
&\ + \left[\sum_{i \ne j} |\mu_{in}| + \sum_{i \ne j}\sum_{k=0}^N |\mu_{in}^k|\right] \cdot \left[\mco(\mcq_n) + o\Big(\max_{\ell}\delta_{\ell n}^{\frac{N-2}{2}}\Big)\right] \label{2.16}
\end{align}
as $n \to \infty$.

Secondly, by testing \eqref{2.13} with $\mcz^q_{jn}$ for any $j \in \{1,\ldots,\nu\}$ and $q \in \{0,1,\ldots,N\}$, we deduce
\begin{equation}\label{2.17}
|\mu_{jn}^q| \(\int_{\R^N}|\nabla Z^q|^2 + o(1)\) = o(1)\left[\sum_{i=1}^{\nu}|\mu_{in}| + \sum_{(i,k) \ne (j,q)} |\mu_{in}^k| + \sum_{m=1}^L|\tmu_{mn}|\right] + o(1).
\end{equation}
	
Finally, we test \eqref{2.13} with $\psi_s$ for $s \in \{1,\ldots,L\}$. According to Lemma \ref{le2p}, \eqref{psij} and \eqref{ccp}, it holds that
\begin{equation}\label{2.18}
|\tmu_{sn}| \int_Mu_0^{2^*-2}\psi_s^2 dv_g = \mco\Big(\max_{\ell}\delta_{\ell n}^{\frac{N-2}{2}}\Big) \left[\sum_{i=1}^{\nu} |\mu_{in}| + \sum_{i=1}^{\nu}\sum_{k=0}^N |\mu_{in}^k|\right] + \mco\Big(\max_{\ell}\delta_{\ell n}^{\frac{N-2}{2}}\Big).
\end{equation}

Claim \eqref{2.15} now follows from \eqref{2.16}, \eqref{2.17} and \eqref{2.18}.

\medskip \noindent \textbf{Step 2.} We assert that
\begin{equation}\label{2.19}
\begin{cases}
\vrh_n \rightharpoonup 0 &\text{weakly in } H^1(M),\\
\vrh_n \to 0 &\text{strongly in } L^p(M) \text{ for } p \in (1, 2^*)
\end{cases}
\quad \text{as } n \to \infty.
\end{equation}

\medskip
Since $\|\vrh_n\|_{H^1(M)}=1$, there exists $\vrh_{\infty} \in H^1(M)$ such that
\[\begin{cases}
\vrh_n \rightharpoonup \vrh_{\infty} &\text{weakly in } H^1(M),\\
\vrh_n \to \vrh_{\infty} &\text{strongly in } L^p(M) \text{ for } p \in (1, 2^*)
\end{cases}
\quad \text{as } n \to \infty,\]
up to a subsequence. Given any $\vph\in C^{\infty}(M)$, we test \eqref{2.13} with $\vph$ and take the limit $n \to \infty$. As in \eqref{ccp}, we can derive
\[\int_M \left[\bigg(u_0+\sum_{i=1}^{\nu}\mcv_{in}\bigg)^{2^*-2} - u_0^{2^*-2}\right]\vrh_n\vph dv_g=o(1).\]
This fact, \eqref{2.12}, and \eqref{2.15} imply
\[\mcl_g \vrh_{\infty} = \mu_{\infty} u_0^{2^*-2}\vrh_{\infty} \quad \text{on } M \quad \text{and} \quad \la \vrh_{\infty},\psi_m \ra_{H^1(M)} = 0 \quad \text{for } m=1,\ldots,L,\]
which together with the non-degeneracy of $u_0$ and $\mu_{\infty}\in [c(\nu,N,L), 2^*-1]$ yields $\vrh_{\infty} = 0$ on $M$. This proves the assertion.

\medskip \noindent \textbf{Step 3.} For a fixed index $j \in \{1,\ldots,\nu\}$, let
\[\tvrh_{jn}(y) = \delta_{jn}^{\frac{N-2}{2}}\chi(\delta_{jn}|y|) \vrh_n\big(\exp_{\xi_{jn}}^g(\delta_{jn}y)\big) \quad \text{for any } y \in \R^N\]
provided $n \in \N$ large enough. We claim that
\begin{equation}\label{2.20}
\begin{cases}
\tvrh_{jn} \rightharpoonup 0 &\text{weakly in } \dot{H}^1(\R^N),\\
\tvrh_{jn} \to 0 &\text{strongly in } L^p_{\loc}(\R^N) \text{ for } p \in (1,2^*)
\end{cases} \quad \text{as } n \to \infty.
\end{equation}

Because $\|\vrh_n\|_{H^1(M)}=1$, the set $\{\tvrh_{jn}\}_{n \in \N}$ is bounded in $\dot{H}^1(\R^N)$.
By passing to a subsequence, we may assume that $\tvrh_{jn} \rightharpoonup \tvrh_{j\infty}$ weakly in $\dot{H}^1(\R^N)$ and $\tvrh_{jn} \to \tvrh_{j\infty}$ strongly in $L^p_{\loc}(\R^N)$ for all $p \in (1,2^*)$.
Given a function $\vph \in C^{\infty}_c(\R^N)$, we set
\[\tvph_{jn}(x) = \chi(d_g(x,\xi_{jn})) \delta_{jn}^{\frac{2-N}{2}} \vph\(\delta_{jn}^{-1}\big(\exp^g_{\xi_{jn}}\big)^{-1}(x)\) \quad \text{for } x \in M.\]
Testing \eqref{2.13} with $\tvph_{jn}$, we obtain
\begin{multline}\label{tpv}
\int_M \left[\la\nabla_g\vrh_n, \nabla_g\tvph_{jn}\ra_g + \ka_NR_g\vrh_n\tvph_{jn} - \mu_n\bigg(u_0+\sum_{i=1}^{\nu}\mcv_{in}\bigg)^{2^*-2}\vrh_n\tvph_{jn}\right] dv_g \\
= \bigg\langle \sum_{i=1}^{\nu} \mu_{in}\mcv_{in} + \sum_{i=1}^{\nu}\sum_{k=0}^N \mu_{in}^k\mcz^k_{in} + \sum_{m=1}^L \tmu_{mn} \psi_m,\tvph_{jn} \bigg\rangle_{H^1(M)}.
\end{multline}
It holds that $\|\tvph_{jn}\|_{H^1(M)} \le C$, so
\[\begin{cases}
\displaystyle \int_M \mcv_{jn}^{2^*-2} \vrh_n \tvph_{jn} dv_g = \int_{\{|y| \le \frac{r_0}{2\delta_{jn}}\}} U^{2^*-2}\tvrh_{jn}\vph
+ \mco\Big(\delta_{jn}^2\Big) = \int_{\R^N} U^{2^*-2}\tvrh_{j\infty}\vph + o(1),\\
\displaystyle \int_Mu_0^{2^*-2}\vrh_n\tvph_{jn}dv_g \simeq \delta_{jn}^2 \int_{\supp(\vph)} u_0^{2^*-2}(\exp_{\xi_{jn}}^g\big(\delta_{jn}y)\big) (\tvrh_{jn}{\vph})(y) dy = o(1).
\end{cases}\]
Also, if $d_g(\xi_{in},\xi_{jn}) < \frac{r_0}{4}$, then
\[\left|\int_M \mcv_{in}^{2^*-2}\vrh_n\tvph_{jn} dv_g\right| \lesssim \left\|\left[\delta_{jn}^{\frac{N-2}{2}} \mcv_{in}\(\exp_{\xi_{jn}}^g\big(\delta_{jn}\cdot\big)\)\right]^{2^*-2}\right\|_{L^{\frac{2N}{N+2}}(\supp(\vph))} = o(1) \quad \text{for } i \ne j,\]
because
\begin{align*}
&\ \(\frac{\delta_{jn}}{\delta_{in}}\)^{\frac{4N}{N+2}} \int_{\supp(\vph)} \frac{dy}{\big(1+\delta_{in}^{-2} \big|\delta_{jn}y - \big(\exp_{\xi_{jn}}^g\big)^{-1}(\xi_{in})\big|^2\big)^{\frac{4N}{N+2}}} \\
&\simeq \(\frac{\delta_{jn}}{\delta_{in}}\)^{\frac{4N}{N+2}-N} \int_{\big\{|y+\delta_{in}^{-1}(\exp_{\xi_{jn}}^g)^{-1}(\xi_{in})| \lesssim \delta_{in}^{-1}\delta_{jn}\big\}} \frac{dy}{(1+|y|^2)^{\frac{4N}{N+2}}} \\
&\lesssim \begin{cases}
\displaystyle \(\frac{\delta_{jn}}{\delta_{in}}\)^{\frac{4N}{N+2}} &\displaystyle \text{if } \lim_{n \to \infty} \frac{\delta_{jn}}{\delta_{in}}=0, \\
\displaystyle \(\frac{\delta_{jn}}{\delta_{in}}\)^{\frac{4N}{N+2}-N} + \(\frac{\delta_{jn}}{\delta_{in}}\)^{-\frac{4N}{N+2}} &\displaystyle \text{if } \lim_{n \to \infty} \frac{\delta_{jn}}{\delta_{in}}=\infty, \\
\displaystyle \msr_{ij,n}^{-{\frac{8N}{N+2}}} &\displaystyle \text{if } \lim_{n \to \infty} \frac{\delta_{jn}}{\delta_{in}} \in (0,\infty)
\end{cases}\\
&=o(1).
\end{align*}
If $d_g(\xi_{in},\xi_{jn}) \ge \frac{r_0}{4}$, then
\[\left|\int_M \mcv_{in}^{2^*-2}\vrh_n\tvph_{jn} dv_g\right|\lesssim \delta_{in}^2\int_M |\vrh_n\tvph_{jn}| dv_g=o(1) \quad \text{for } i \ne j.\]
Therefore, a reasoning analogous to \eqref{teu} demonstrates
\[\int_M \bigg(u_0+\sum_{i=1}^{\nu}\mcv_{in}\bigg)^{2^*-2}\vrh_n\tvph_{jn} dv_g = \int_{\R^N} U^{2^*-2}\tvrh_{j\infty}\vph + o(1)\]
as $n \to \infty$.

On the other hand, it is plain to verify that
\[\int_M \(\la \nabla_g\vrh_n,\nabla_g\tvph_{jn} \ra_g + \ka_NR_g\vrh_n\tvph_{jn}\) dv_g = \int_{\R^N} \nabla\tvrh_{j\infty} \cdot \nabla\vph + o(1),\]
while \eqref{2.15} guarantees
\begin{multline*}
\left|\bigg\langle \sum_{i=1}^{\nu} \mu_{in}\mcv_{in} + \sum_{i=1}^{\nu}\sum_{k=0}^N \mu_{in}^k\mcz^k_{in} + \sum_{m=1}^L \tmu_{mn}\psi_m,\tvph_{jn} \bigg\rangle_{H^1(M)}\right| \\
\lesssim \left[\sum_{i=1}^{\nu}|\mu_{in}| +\sum_{i=1}^{\nu}\sum_{k=0}^N |\mu_{in}^k| +\sum_{m=1}^L |\tmu_{mn}|\right]
\|\vph\|_{H^1(M)} = o(1).
\end{multline*}
Sending $n \to \infty$ in \eqref{tpv}, we observe from \eqref{2.12} that
\[\begin{cases}
\displaystyle -\Delta \tvrh_{j\infty} = \mu_{\infty}U^{2^*-2}\tvrh_{j\infty} \quad \text{in } \R^N, \quad \tvrh_{j\infty} \in \dot{H}^1(\R^N), \\
\displaystyle \int_{\R^N} \nabla\tvrh_{j\infty} \cdot \nabla U = \int_{\R^N} \nabla\tvrh_{j\infty} \cdot \nabla Z^k = 0 \quad \text{for all } k=0,\ldots,N.
\end{cases}\]
Because $U$ is an extremizer of the Sobolev embedding (see \eqref{eq:S}), $\tvrh_{j\infty}=0$ on $M$ as claimed.

\medskip \noindent \textbf{Step 4.} We prove that
\begin{equation}\label{2.14}
\lim_{n \to \infty} \int_M \bigg(u_0+\sum_{i=1}^{\nu}\mcv_{in}\bigg)^{2^*-2}\vrh_n^2 dv_g=0.
\end{equation}
This contradicts \eqref{2.11}, so \eqref{eq:coer} must be valid.

\medskip
We have
\[\int_M \bigg(u_0+\sum_{i=1}^{\nu}\mcv_{in}\bigg)^{2^*-2}\vrh_n^2 dv_g \lesssim \int_M u_0^{2^*-2}\vrh_n^2 dv_g + \sum_{i=1}^{\nu} \int_M \mcv_{in}^{2^*-2}\vrh_n^2 dv_g.\]
On the other hand, \eqref{2.19} gives
\[\int_M u_0^{2^*-2}\vrh_n^2 dv_g = o(1).\]
Also, we know from \eqref{2.20} that $\tvrh_{in}^2 \rightharpoonup 0$ weakly in $L^{\frac{N}{2}}(\R^N)$, so
\[\int_M \mcv_{in}^{2^*-2}\vrh_n^2 dv_g \lesssim \int_{\R^N} U^{2^*-2}\tvrh_{in}^2 + \mco(\delta_{in}^2)\|\vrh_n\|_{H^1(M)}^2 = o(1).\]
Consequently, \eqref{2.14} follows.

\subsection{Derivation of \eqref{22} and \eqref{eq:le363}}\label{subsec:tech}
We derive two estimates \eqref{22} and \eqref{eq:le363} appearing in the proofs of Lemmas \ref{le2.4} and \ref{le36}, respectively.
In this subsection, we write $d_{ij} = d_g(\xi_i,\xi_j)$, and $y_{ij} = (\exp_{\xi_i}^g)^{-1}(\xi_j)/\delta_i$ whenever it is well-defined.

\begin{proof}[Proof of \eqref{22}]
It suffices to check that $\int_M \mcv_i^{2^*-2}\mcv_j dv_g = o(\mcq)$ for $1 \le i \ne j \le \nu$. There are three possibilities:

\medskip \noindent \textbf{Case 1. ($\msr_{ij} = \frac{d_{ij}}{\sqrt{\delta_i\delta_j}}$):} It holds that $d_{ij} \ge \delta_i$ and $(\sqrt{\delta_i\delta_j}/d_{ij})^{N-2} \simeq q_{ij} \le \mcq$. Taking $x = \xi_j$ and $p = 4$ in Lemma \ref{a1}, one confirms that
\[\int_M \mcv_i^{2^*-2}\mcv_j dv_g \lesssim \left\{\!\begin{aligned}
&\delta_i \delta_j^{\frac{1}{2}} d_{ij}^{-1} &\text{if } N=3 \\[1ex]
&\delta_i^2 \delta_j d_{ij}^{-2} \log\(2+d_{ij}\delta_i^{-1}\) &\text{if } N=4 \\[1ex]
&\delta_i^2 \delta_j^{\frac{3}{2}} d_{ij}^{-2} &\text{if } N=5
\end{aligned}\right\}
= o(\mcq).\]

\medskip \noindent \textbf{Case 2. ($\msr_{ij}=\sqrt{\frac{\delta_i}{\delta_j}}$):} It holds that $d_{ij}\le \delta_i$, i.e., $|y_{ij}| \le 1$ and $({\frac{\delta_j}{\delta_i}})^{\frac{N-2}{2}} \simeq q_{ij} \le \mcq$. By \eqref{eq:dist1},
\begin{align*}
\int_M \mcv_i^{2^*-2}\mcv_j dv_g &\lesssim \int_{B^g_{r_0/2}(\xi_i)} \(\frac{\delta_i}{\delta_i^2+d_g(x,\xi_i)^2}\)^2 \(\frac{\delta_j}{\delta_j^2+d_g(x,\xi_j)^2}\)^{\frac{N-2}{2}} (dv_g)_x + \mco\Big(\delta_i^2\delta_j^{\frac{N-2}{2}}\Big) \\
&\lesssim \delta_j^{\frac{N-2}{2}} \int_{\{|y| \le \frac{r_0}{2\delta_i}\}} \frac{1}{(1+|y|^2)^2} \frac{dy}{[(\frac{\delta_j}{\delta_i})^2+|y-y_{ij}|^2]^{\frac{N-2}{2}}} + \mco\Big(\delta_i^2\delta_j^{\frac{N-2}{2}}\Big) \\
&\lesssim \delta_j^{\frac{N-2}{2}} \(1 +\int_2^{\frac{r_0}{\delta_i}} t^{-3}dt\) + \mco\Big(\delta_i^2\delta_j^{\frac{N-2}{2}}\Big) \simeq \delta_j^{\frac{N-2}{2}} = o(\mcq).
\end{align*}

\medskip \noindent \textbf{Case 3. ($\msr_{ij}=\sqrt{\frac{\delta_j}{\delta_i}}$):} It holds that $d_{ij}\le \delta_j$ and $({\frac{\delta_i}{\delta_j}})^{\frac{N-2}{2}} \simeq q_{ij} \le \mcq$. Hence
\begin{align*}
\int_M \mcv_i^{2^*-2}\mcv_j dv_g &\lesssim \frac{\delta_i^{N-2}}{\delta_j^{\frac{N-2}{2}}} \int_{\{|y| \le \frac{r_0}{2\delta_i}\}}\frac{1}{(1+|y|^2)^2}
\frac{dy}{[1+(\frac{\delta_i}{\delta_j}|y-y_{ij}|)^2]^{\frac{N-2}{2}}} + \mco\Big(\delta_i^2\delta_j^{\frac{N-2}{2}}\Big) \\
&\lesssim \frac{\delta_i^{N-2}}{\delta_j^{\frac{N-2}{2}}} \(1+\int_1^{\frac{r_0}{\delta_i}}t^{N-5}dt\) + \mco\Big(\delta_i^2\delta_j^{\frac{N-2}{2}}\Big) = o(\mcq) .
\end{align*}
Consequently, \eqref{22} is proved.
\end{proof}

\begin{proof}[Proof of \eqref{eq:le363}] Recall that $N=3$. For indices $1 \le i \ne j \le \nu$, there are three possibilities:

\medskip \noindent \textbf{Case 1. ($\msr_{ij} = \frac{d_{ij}}{\sqrt{\delta_i\delta_j}}$):} We consider two subcases separately.

\noindent \textsc{Subcase 1-1. ($d_{ij} \ge \frac{3r_0}{4}$):} We have
\[J_6 \lesssim \delta_i^{\frac32}\delta_j^{\frac12} \int_{\{|y| \le \frac{d_{ij}}{2\delta_i}\}} \frac{dy}{(1+|y|^2)^{\frac32}|y|} \lesssim \delta_i\mcq.\]

\noindent \textsc{Subcase 1-2. ($d_{ij} \le \frac{3r_0}{4}$):} If $d_g(x,\xi_i)\le d_{ij}/2$, then $d_g(x,\xi_j)\ge d_{ij}/2$. Also, if $d_g(x,\xi_i)\ge 2d_{ij}$, then $d_g(x,\xi_j)\ge d_g(x,\xi_i)/2$. Thus
\begin{align*}
J_6 &\lesssim \left[\int_{B^g_{d_{ij}/2}(\xi_i)} + \int_{B^g_{d_{ij}/2}(\xi_j)}\right]
\frac{\delta_i^{\frac52}}{(\delta_i^2+d_g(x,\xi_i)^2)^{\frac32}d_g(x,\xi_i)} \frac{\delta_j^{\frac12}}{(\delta_j^2+d_g(x,\xi_j)^2)^{\frac12}} (dv_g)_x \\
&\ +\left[\int_{\substack{\big(B_{2d_{ij}}^g(\xi_i) \setminus B_{d_{ij}/2}^g(\xi_i)\big) \\
\cap \big(B_{d_{ij}/2}^g(\xi_j)\big)^c}}
+ \int_{\substack{\big(B_{r_0/2}^g(\xi_i) \setminus B_{2d_{ij}}^g(\xi_i)\big) \\ \cap \big(B_{d_{ij}/2}^g(\xi_j)\big)^c}}\right] \frac{\delta_i^{\frac52}\delta_j^{\frac12}}{d_g(x,\xi_i)^4d_g(x,\xi_j)} (dv_g)_x \\
&\lesssim \frac{\delta_i^{\frac32}\delta_j^{\frac12}}{d_{ij}} \int_{\{|y| \le \frac{d_{ij}}{2\delta_i}\}} \frac{dy}{(1+|y|^2)^{\frac32}|y|} + \frac{\delta_i^{\frac52} \delta_j^{\frac52}}{d_{ij}^4} \int_{\{|y| \le \frac{d_{ij}}{2\delta_j}\}} \frac{dy}{(1+|y|^2)^{\frac12}} \\
&\ + \frac{\delta_i^{\frac52}\delta_j^{\frac12}}{d_{ij}} \int_{\{\frac{d_{ij}}{2}\le |y| \le 2d_{ij}\}} \frac{dy}{|y|^4}
+\delta_i^{\frac52} \delta_j^{\frac12}\int_{\{2{d_{ij}}\le |y|\le \frac{r_0}{2}\}} \frac{dy}{|y|^5} \\
&\lesssim\frac{\delta_i^{\frac32}\delta_j^{\frac12}}{d_{ij}}\lesssim \delta_i\mcq
\end{align*}
where we employed $d_{ij} \ge \max\{\delta_i,\delta_j\}$ for the third inequality.

\medskip \noindent \textbf{Case 2. ($\msr_{ij} =\sqrt{\frac{\delta_i}{\delta_j}}$):} We have
\[J_6 \lesssim \delta_i^{\frac12}\delta_j^{\frac12} \int_{\{|y| \le \frac{r_0}{2\delta_i}\}} \frac{1}{(1+|y|^2)^{\frac32}|y|} \frac{dy}{[(\frac{\delta_j}{\delta_i})^2+|y-y_{ij}|^2]^{\frac12}}
\lesssim \delta_i^{\frac12} \delta_j^{\frac12} \(1 + \int_2^{\frac{r_0}{\delta_i}} t^{-3}dt\) \lesssim \delta_i\mcq.\]

\medskip \noindent \textbf{Case 3. ($\msr_{ij} =\sqrt{\frac{\delta_j}{\delta_i}}$):} We have
\[J_6 \lesssim \frac{\delta_i^{\frac32}}{\delta_j^{\frac12}} \int_{\{|y| \le \frac{r_0}{2\delta_i}\}} \frac{1}{(1+|y|^2)^{\frac32}|y|} \frac{dy}{[1+(\frac{\delta_i}{\delta_j}|y-y_{ij}|)^2]^{\frac12}}
\lesssim \frac{\delta_i^{\frac32}}{\delta_j^{\frac12}} \(1 + \int_2^{\frac{r_0}{\delta_i}} t^{-2}dt\) \lesssim \delta_i\mcq.\]
As a result, \eqref{eq:le363} holds.
\end{proof}

\subsection{Potential analysis}
Lemma \ref{le4.2} is based on the following linear theory. One can derive an analogous result for the case $u_0=0$, which is needed in the proof of Lemma \ref{le4.8}.
\begin{defn}\label{deb1}
Given two functions
\[V(x) = \(\frac{\delta_1^2}{\delta_1^2+d_{g_{\xi_1}}(x,\xi_1)^2}\) \bs{1}_{d_{g_{\xi_1}}(x,\xi_1) \le \sqrt{\delta_1}}
+ \delta_1 \(\frac{\delta_1}{\delta_1^2+d_{g_{\xi_1}}(x,\xi_1)^2}\)^{\frac{N-4}{2}} \bs{1}_{d_{g_{\xi_1}}(x,\xi_1) \ge \sqrt{\delta_1}}\]
and
\[W(x) = \(\frac{\delta_1}{\delta_1^2+d_{g_{\xi_1}}(x,\xi_1)^2}\)^2 \bs{1}_{d_{g_{\xi_1}}(x,\xi_1) \le \sqrt{\delta_1}}
+ \(\frac{\delta_1}{\delta_1^2+d_{g_{\xi_1}}(x,\xi_1)^2}\)^{\frac{N-2}{2}} \bs{1}_{d_{g_{\xi_1}}(x,\xi_1) \ge \sqrt{\delta_1}}\]
for $x \in M$ where $g_{\xi_1} = \Lambda_{\xi_1}^{4/(N-2)}g$, we define two weighted $L^{\infty}(M)$-norms $\|\cdot\|_*$ and $\|\cdot\|_{**}$ by
\[\|\trh_0\|_* = \sup_{x \in M} |\trh_0(x)|V(x)^{-1} \quad \text{and} \quad \|\tih\|_{**} = \sup_{x \in M} |\tih(x)|W(x)^{-1}.\]
\end{defn}
\begin{prop}\label{a5}
Assume that $N \ge 6$ and $u_0>0$ is non-degenerate. Let $\mcv_1$ and $\mcz^k_1$ for $k = 0,\ldots,N$ be the functions appearing in Subsection \ref{se4.1}.
Given any data $\tih$ with $\|\tih\|_{**} < \infty$, there exist unique $\trh_0 \in H^1(M)$ and $\tc_0,\tc_1,\ldots,\tc_N \in \R$ satisfying
\begin{equation}\label{aeq}
\begin{cases}
\displaystyle \mcl_g\trh_0 - (2^*-1)(u_0+\mcv_1)^{2^*-2}\trh_0 = \tih + \sum_{k=0}^N \tc_k \mcl_g\mcz^k_1 \quad \text{on } M,\\
\displaystyle \big\langle \trh_0,\mcz^k_1 \big\rangle_{H^1(M)} = 0 \quad \text{for } k=0,1,\ldots,N
\end{cases}
\end{equation}
as well as
\begin{equation}\label{rh}
\|\trh_0\|_* \lesssim \|\tih\|_{**} \quad \text{and} \quad \sum_{k=0}^N|\tc_k| \lesssim \delta_1^{\frac{N-2}{2}}\|\tih\|_{**}.
\end{equation}
\end{prop}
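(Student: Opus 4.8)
\textbf{Proof proposal for Proposition \ref{a5}.}
The plan is to run the standard fixed-point/Fredholm scheme for linearized problems near a sum of a nondegenerate bulk solution and a bubble, but organized around the weighted norms $\|\cdot\|_*$ and $\|\cdot\|_{**}$ of Definition \ref{deb1}. First I would introduce the Hilbert space $E^{\perp} = \{\vrh \in H^1(M): \langle \vrh, \mcz^k_1 \rangle_{H^1(M)} = 0,\ k = 0,\ldots,N\}$ and the bounded self-adjoint operator $T: E^{\perp} \to E^{\perp}$ associated with the bilinear form $\langle \vrh, \vph \rangle_{H^1(M)} - (2^*-1)\int_M (u_0+\mcv_1)^{2^*-2}\vrh\vph\, dv_g$. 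The coercivity estimate for $u_0+\mcv_1$ (the single-bubble analogue of Proposition \ref{41}, which is invoked without separate statement in the proof of Proposition \ref{pr41}) shows that $T$ is invertible on $E^{\perp}$ with operator norm bounded independently of $\delta_1$. Hence, given $\tih$, there is a unique $\trh_0 \in E^{\perp}$ and coefficients $\tc_0,\ldots,\tc_N$ solving \eqref{aeq} in the weak $H^1(M)$ sense, and the $H^1(M)$-a priori bound $\|\trh_0\|_{H^1(M)} \lesssim \|\tih\|_{L^{2N/(N+2)}(M)} \lesssim \|\tih\|_{**}$ follows by testing, pairing with $\mcz^k_1$ to extract the $\tc_k$, and using $\langle \mcl_g\mcz^k_1,\mcz^l_1\rangle$ estimates together with $\|\mcz^k_1\|_{L^{2N/(N+2)}(M)} \lesssim \delta_1^{(N-2)/2}$ — this last pairing also yields $\sum_k |\tc_k| \lesssim \delta_1^{(N-2)/2}\|\tih\|_{**}$, which is the second half of \eqref{rh}.

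The substantive part is the weighted estimate $\|\trh_0\|_* \lesssim \|\tih\|_{**}$, and this is where I would spend most of the effort. The route is a representation formula plus a barrier argument. Writing $\trh_0(x) = \int_M G_g(x,z)\big[(2^*-1)(u_0+\mcv_1)^{2^*-2}\trh_0 + \tih + \sum_k \tc_k \mcl_g\mcz^k_1\big](z)\,(dv_g)_z$ and passing to $g_{\xi_1}$-normal coordinates so that $G_g(x,z) \lesssim d_{g_{\xi_1}}(x,z)^{2-N}$, I would estimate each piece against the profile $V$. The term involving $\tih$ is controlled by $\|\tih\|_{**}$ times $\int_M d_{g_{\xi_1}}(x,z)^{2-N} W(z)\,(dv_g)_z$, and the two integral inequalities in Lemma \ref{aux} (with $a=2$, $b=(N-2)/2$) are exactly tailored to bound this by $\delta_1 V(x) \lesssim V(x)$, up to the harmless constant; the $\tc_k$-term is handled by $\sum_k|\tc_k| \lesssim \delta_1^{(N-2)/2}\|\tih\|_{**}$ and the pointwise bound on $\mcl_g\mcz^k_1$ together with Lemma \ref{a1}. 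The potential term $(u_0+\mcv_1)^{2^*-2} \lesssim \mcu_1^{g_{\xi_1}\,2^*-2}$ on the region $d_{g_{\xi_1}}(x,\xi_1)\le\sqrt{\delta_1}$ and $\lesssim u_0^{2^*-2} \lesssim 1$ elsewhere; feeding $|\trh_0| \le \|\trh_0\|_* V$ into the convolution and applying Lemma \ref{aux} once more produces $\tfrac12\|\trh_0\|_* V(x)$ provided $\delta_1$ is small — the smallness being where the weighted $L^{\infty}$-contraction closes, absorbing this term into the left side. Collecting the three contributions gives $\|\trh_0\|_* \lesssim \|\tih\|_{**}$.

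I expect the main obstacle to be the bookkeeping at the transition radius $d_{g_{\xi_1}}(x,\xi_1) \simeq \sqrt{\delta_1}$, where $V$ and $W$ switch between their inner and outer profiles: one must check that the convolution estimates of Lemma \ref{aux} are applied with the correct exponents on each annular region and that the two regimes glue continuously with matching constants, so that the convolved quantity is genuinely dominated by $V$ and not merely by $V$ on each piece separately. A secondary technical point is verifying that the exponents $a=2$ and $b=(N-2)/2$ both lie in the admissible range $(1,N/2)$ of Lemma \ref{aux}; for $a=2$ this forces $N\ge 5$ and for $b$ it forces $N\ge 5$ as well, consistent with the standing hypothesis $N\ge 6$. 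The uniqueness claim in \eqref{aeq} is immediate from the invertibility of $T$ on $E^{\perp}$, since any two solutions differ by an element of $\ker T \cap E^{\perp} = \{0\}$, and the decomposition of the difference into the $\mcz^k_1$-span plus an $E^{\perp}$-component forces all coefficients to vanish.
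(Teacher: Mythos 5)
Your overall scheme — existence via Fredholm/coercivity, then a weighted a priori estimate via the Green's function representation — is the right shape, but the crucial step, the \emph{absorption} of the bubble-potential term, does not close as you claim, and this is precisely why the paper has to work harder.

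Concretely, you assert that feeding $|\trh_0| \le \|\trh_0\|_* V$ into the convolution against the potential produces $\tfrac12\|\trh_0\|_* V(x)$ ``provided $\delta_1$ is small,'' so that it can be absorbed. This is false near the concentration core. For $d_{g_{\xi_1}}(x,\xi_1) \lesssim \delta_1$, compute
\[
\int_M \frac{1}{d_{g_{\xi_1}}(x,z)^{N-2}}\,\mcv_1^{2^*-2}(z)\,V(z)\,(dv_g)_z
\simeq \int_{\{d_{g_{\xi_1}}(z,\xi_1)\le\sqrt{\delta_1}\}}\frac{1}{d_{g_{\xi_1}}(x,z)^{N-2}}\,\frac{\delta_1^4}{(\delta_1^2+d_{g_{\xi_1}}(z,\xi_1)^2)^3}\,(dv_g)_z ,
\]
and after the rescaling $z = \exp_{\xi_1}^{g_{\xi_1}}(\delta_1 w)$ all powers of $\delta_1$ cancel: the integral is $\simeq \int_{\R^N} |\hat x - w|^{2-N}(1+|w|^2)^{-3}\,dw = O(1)$, while $V(x) \simeq 1$ in the core. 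So the contribution from the bubble potential is $O(1)\,\|\trh_0\|_*\,V(x)$ with a constant that does \emph{not} decay as $\delta_1\to 0$. A fixed-point or direct absorption argument in the $\|\cdot\|_*$-norm cannot be made to work; Lemma \ref{aux} produces the profile $\frac{\delta_1^2}{\delta_1^2+d_{g_{\xi_1}}(x,\xi_1)^2}\log(2+d_{g_{\xi_1}}(x,\xi_1)/\delta_1)\,V(x)$, which is indeed $o(1)V(x)$ when $d_{g_{\xi_1}}(x,\xi_1)\gg\delta_1$, but is $\simeq V(x)$ inside $d_{g_{\xi_1}}(x,\xi_1)\lesssim\delta_1$.

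What this mismatch actually buys you is \emph{localization}, not absorption: if $\|\trh_{0n}\|_*=1$ while $\delta_{1n}+\|\tih_n\|_{**}\to 0$, the representation-formula estimate forces any point $x_n^*$ realizing a fixed fraction of the $*$-norm to lie in $d_{g_{\xi_{1n}}}(x_n^*,\xi_{1n}) \lesssim \delta_{1n}$. That is the content of the paper's Steps 2–3. The paper then passes to the rescaled functions $\hrh_{0n}(y) = \chi(\delta_{1n}|y|)\,\trh_{0n}(\exp_{\xi_{1n}}^{g_{\xi_{1n}}}(\delta_{1n}y))$, extracts a $C^{1,\eta}_{\loc}$-limit $\hrh_{0\infty}$, and shows that $\hrh_{0\infty}$ is a bounded solution of $-\Delta\hrh_{0\infty} = (2^*-1)U^{2^*-2}\hrh_{0\infty}$ orthogonal to all $Z^k$; by nondegeneracy of $U$ this forces $\hrh_{0\infty}\equiv 0$, contradicting the fact that the rescaled functions carry $\gtrsim 1$ of the $*$-norm in the core. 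You need a compactness/blow-up step of this kind — or some equivalent device that exploits orthogonality to $\{\mcz^k_1\}$ at the core scale — to dispose of the non-small $O(1)V$ contribution; it cannot be absorbed by taking $\delta_1$ small. The remaining pieces of your argument (the estimate of $\sum_k|\tc_k|$ by pairing with $\mcz^l_1$, the use of Lemma \ref{aux} for the $\tih$ and $\tc_k$ terms, existence and uniqueness via the Fredholm alternative) are sound in outline and match the paper's mechanics.
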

\begin{proof}
The existence of $\trh_0$ will follow from the standard method once \eqref{rh} is established.

\medskip
To show the first inequality of \eqref{rh}, we argue by contradiction. If it is false, there exist sequences $\{\trh_{0n}\}_{n \in \N}$, $\{\tih_n\}_{n \in \N}$,
$\{(\delta_{1n},\xi_{1n})\}_{n \in \N} \subset (0,\infty) \times M$, $\{\mcv_{\delta_{1n},\xi_{1n}}\}_{n \in \N}$, and $\{\tc_{kn}\}_{n \in \N} \subset \R$ for $k = 0,\ldots,N$ satisfying \eqref{aeq},
\[\|\trh_{0n}\|_*=1 \quad \text{for all } n \in \N, \quad \text{and} \quad \delta_{1n}+\|\tih_n\|_{**} \to 0 \quad \text{as } n \to \infty.\]
The above $*$- and $**$-norms are determined by the functions $V_n$ and $W_n$, which equal $V$ and $W$ in Definition \ref{deb1} with $(\delta_1,\xi_1)=(\delta_{1n},\xi_{1n})$, respectively.
Let $\mcv_{1n} = \mcv_{\delta_{1n},\xi_{1n}}$, $\mcz^0_{1n} = \delta_{1n}\frac{\pa\mcv_{1n}}{\pa\delta_{1n}}$, and $\mcz^k_{1n} = \delta_{1n}\frac{\pa\mcv_{1n}}{\pa\xi_{1n}^k}$ for $k = 1,\ldots,N$. For simplicity, we drop the subscript $n$ in Steps 1 and 2.

\medskip \noindent \textbf{Step 1.} We assert that
\begin{equation}\label{acj}
\sum_{k=0}^N |\tc_k| \lesssim \delta_1^{\frac{N-2}{2}} \|\tih\|_{**} + \delta_1^{\frac{N+2}{2}}|\log\delta_1| \|\trh_0\|_*.
\end{equation}

Indeed, by testing the first equation in \eqref{aeq} by $\mcz^l_1$ for $l = 0,\ldots,N$, we find
\[\bigg\langle \sum_{k=0}^N \tc_k\mcz^k_1,\mcz^l_1 \bigg\rangle_{H^1(M)} = \int_M \left[\mcl_g\trh_0 -(2^*-1)(u_0+\mcv_1)^{2^*-2}\trh_0\right]\mcz^l_1 dv_g - \int_M \tih\mcz^l_1 dv_g.\]
Direct computations show
\[\int_M \left|\mcl_g\mcz^l_1 - (2^*-1)\mcv_1^{2^*-2}\mcz^l_1\right|V dv_g \lesssim \delta_1^{\frac{N+2}{2}}.\]
Hence, using $|\mcz^l_1| \lesssim \mcu_1^{g_{\xi_1}}$, we obtain
\begin{multline*}
\left|\int_M \left[\mcl_g\trh_0 -(2^*-1)(u_0+\mcv_1)^{2^*-2}\trh_0\right]\mcz^l_1 dv_g\right| \\
\lesssim \int_M \left|u_0^{2^*-2}\mcz^l_1\trh_0\right| dv_g + \|\trh_0\|_* \int_M \left|\mcl_g\mcz^l_1 - (2^*-1)\mcv_1^{2^*-2}\mcz^l_1\right|V dv_g \lesssim \delta_1^{\frac{N+2}{2}}|\log\delta_1|\|\trh_0\|_*.
\end{multline*}
Furthermore,
\[\left|\int_M \tih\mcz^l_1 dv_g\right| \lesssim \delta_1^{\frac{N-2}{2}}\|\tih\|_{**}.\]
Since $\langle \mcz^k_1,\mcz^l_1 \rangle_{H^1(M)} = c\delta^{kl}+o(1)$ for some constant $c>0$, \eqref{acj} follows.

\medskip \noindent \textbf{Step 2.} We claim that
\begin{equation}\label{trh}
|\trh_0(x)|V(x)^{-1} \lesssim \frac{\delta_1^2}{\delta_1^2+d_{g_{\xi_1}}(x,\xi_1)^2} \log\(2+\frac{d_{g_{\xi_1}}(x,\xi_1)}{\delta_1}\) + \|\tih\|_{**} + \delta_1^2|\log\delta_1|\|\trh_0\|_*.
\end{equation}

Owing to the non-degeneracy of $u_0$, there exists the unique Green's function $G_0$ of the operator $\mcl_g-(2^*-1)u_0^{2^*-2}$. From \cite{R} with the boundedness of $u_0$, we know
\[|G_0(x,z)|\lesssim \frac{1}{d_g(x,z)^{N-2}} \lesssim \frac{1}{d_{g_{\xi_1}}(x,z)^{N-2}}\]
and so
\begin{equation}\label{eq:trh0}
|\trh_0(x)| \lesssim \int_M \frac{1}{d_{g_{\xi_1}}(x,z)^{N-2}} \left|\left[(u_0+\mcv_1)^{2^*-2}-u_0^{2^*-2}\right]\trh_0 + \tih + \sum_{k=0}^N \tc_k\mcl_g\mcz^k_1\right|(z) (dv_g)_z
\end{equation}
for $x \in M$. Let us analyze the right-hand side of \eqref{eq:trh0}. Making use of $\|\trh_0\|_*=1$ and Lemma \ref{aux}, we get
\begin{multline*}
\int_M \frac{1}{d_{g_{\xi_1}}(x,z)^{N-2}} \left|\left[(u_0+\mcv_1)^{2^*-2}-u_0^{2^*-2}\right]\trh_0\right|(z)(dv_g)_z \\
\lesssim \int_M\frac{1}{d_{g_{\xi_1}}(x,z)^{N-2}} \(\mcv_1^{2^*-2}V\)(z) (dv_g)_z \lesssim \frac{\delta_1^2}{\delta_1^2+d_{g_{\xi_1}}(x,\xi_1)^2}\log\(2+\frac{d_{g_{\xi_1}}(x,\xi_1)}{\delta_1}\)V(x)
\end{multline*}
and
\[\int_M \frac{1}{d_{g_{\xi_1}}(x,z)^{N-2}}|\tih(z)| (dv_g)_z \lesssim \|\tih\|_{**} V(x).\]
Also, since
\begin{equation}\label{poil}
\left|\(\mcl_g\mcz^k_1\)(z)\right| \lesssim \mcu_1^{g_{\xi_1}}(z)+\(\mcu_1^{g_{\xi_1}}\)^{2^*-1}(z) \quad \text{for } z \in M,
\end{equation}
we see from Lemma \ref{a1} and \eqref{acj} that
\[\int_M \frac{1}{d_{g_{\xi_1}}(x,z)^{N-2}}\left|\sum_{k=0}^N \tc_k\(\mcl_g\mcz^k_1\)(z)\right| (dv_g)_z \lesssim \(\|\tih\|_{**} + \delta_1^2|\log\delta_1|\|\trh_0\|_*\)V(x).\]
Thus \eqref{trh} holds.

\medskip \noindent \textbf{Step 3.} Since $\|\trh_{0n}\|_{*}=1$, there exists $x_n^* \in M$ such that
\[\left|\trh_{0n}(x_n^*)\right| V_n(x_n^*)^{-1} \ge \tfrac{1}{2} \quad \text{for all } n \in \N.\]
This together with \eqref{trh} guarantee that $d_{g_{\xi_{1n}}}(x_n^*,\xi_{1n}) \lesssim \delta_{1n}$.

\medskip \noindent \textbf{Step 4.} Given a cut-off function $\chi \in C^{\infty}_c([0,\infty))$ satisfying \eqref{chi0}, we define
\[\hrh_{0n}(y) = \chi(\delta_{1n}|y|) \trh_{0n}\big(\exp^{g_{\xi_{1n}}}_{\xi_{1n}}(\delta_{1n}y)\big) \quad \text{for } y \in \R^N.\]
If we write $y_n^* = \delta_{1n}^{-1}(\exp^{g_{\xi_{1n}}}_{\xi_{1n}})^{-1}(x_n^*)$, then $|y_n^*| \lesssim 1$,
\[|\hrh_{0n}(y)| \le \|\trh_{0n}\|_* \chi(\delta_{1n}|y|) V_n\big(\exp^{g_{\xi_{1n}}}_{\xi_{1n}}(\delta_{1n}y)\big) \lesssim 1,\]
and
\[|{\hrh}_{0n}(y_n^*)| \gtrsim \chi(\delta_{1n}|y_n^*|) V_n\big(\exp^{g_{\xi_{1n}}}_{\xi_{1n}}(\delta_{1n}y_n^*)\big) \gtrsim 1.\]
By standard elliptic regularity theory, there exist $\hrh_{0\infty} \in \dot{H}^1_{\text{loc}}(\R^N) \cap L^{\infty}(\R^N)$ and $y_{\infty}^* \in \R^N$ such that
\begin{equation}\label{urp}
\hrh_{0n} \to \hrh_{0\infty} \quad \text{in } C^{1,\eta}_{\loc}(\R^N) \quad \text{as } n \to \infty \quad \text{for some } \eta \in (0,1)
\end{equation}
and
\begin{equation}\label{unry}
y_n^* \to y_{\infty}^* \quad \text{as } n \to \infty \quad \text{where } |\hrh_{0\infty}(y_{\infty}^*)| \gtrsim 1 \text{ and } |y_{\infty}^*| \lesssim 1,
\end{equation}
along a subsequence. It follows from \eqref{dvg} and $\langle \trh_{0n},\mcz^k_{1n} \rangle_{H^1(M)}=0$ that
\begin{equation}\label{urzo}
\int_{\R^N} \nabla\hrh_{0\infty} \cdot \nabla Z^k = 0 \quad \text{for } k=0,1,\ldots,N.
\end{equation}
Besides, one can verify that
\[\delta_{1n}^2\ka_NR_g\big(\exp^{g_{\xi_{1n}}}_{\xi_{1n}}(\delta_{1n}\cdot)\big) \hrh_{0n} \to 0 \quad (\text{by } \eqref{urp}),\]
\[\delta_{1n}^2(u_0+\mcv_{1n})^{2^*-2}\big(\exp^{g_{\xi_{1n}}}_{\xi_{1n}}(\delta_{1n}\cdot)\big) \hrh_{0n}
\to U^{2^*-2}\hrh_{0\infty} \quad (\text{by \eqref{urp} and } u_0 \in L^{\infty}(M)),\]
\[\delta_{1n}^2\sum_{k=0}^N \tc_{kn} \(\mcl_g\mcz^k_{1n}\)\big(\exp^{g_{\xi_{1n}}}_{\xi_{1n}}(\delta_{1n}\cdot)\big) \to 0 \quad (\text{by } \eqref{acj} \text{ and } \eqref{poil}),\]
and
\[\delta_{1n}^2\tih_n\big(\exp^{g_{\xi_{1n}}}_{\xi_{1n}}(\delta_{1n}\cdot)\big) \to 0 \quad (\text{by } \|\tih_n\|_{**} \to 0)\]
uniformly in compact sets of $\R^N$ as $n\to \infty$. Therefore, passing the equation of $\hrh_{0n}$ to the limit yields
\[-\Delta\hrh_{0\infty} = (2^*-1)U^{2^*-2}\hrh_{0\infty} \quad \text{in } \R^N.\]
By \eqref{urzo}, we conclude that $\hrh_{0\infty} = 0$, which is impossible in view of \eqref{unry}. As a consequence, the first inequality of \eqref{rh} must hold. The second inequality of \eqref{rh} immediately follows from it and \eqref{acj}.
\end{proof}

\end{document}